\documentclass[11pt]{amsart}
\usepackage{amsmath}
\usepackage{amssymb, verbatim}
\usepackage{pstricks,pst-node,pst-plot}
\usepackage{comment}
\usepackage{color}
\usepackage{extarrows}

 \title[]{Stability of the tangent bundle through conifold transitions}
\author[T. C. Collins]{Tristan C. Collins}
  \email{tristanc@mit.edu}
  \address{Department of Mathematics, Massachusetts Institute of Technology, 77 Massachusetts Avenue, Cambridge, MA 02139}
\thanks{T.C.C is supported in part by NSF grant DMS-1810924, NSF CAREER grant DMS-1944952 and an Alfred P. Sloan Fellowship. } 

 \author[S. Picard]{Sebastien Picard}
  \email{spicard@math.ubc.ca}
  \address{Department of Mathematics, The University of British
    Columbia, 1984 Mathematics Road, Vancouver BC Canada V6T 1Z2}
  \thanks{}
  
 \author[S.-T. Yau]{Shing-Tung Yau}
  \email{yau@math.harvard.edu}
  \address{Department of Mathematics, Harvard University, 1 Oxford St., Cambridge, MA, 02138}
  \thanks{} 

\theoremstyle{plain}
\newtheorem{thm}{Theorem}[section]
\newtheorem{prop}[thm]{Proposition}
\newtheorem{defn}[thm]{Definition}
\newtheorem{lem}[thm]{Lemma}
\newtheorem{cor}[thm]{Corollary}

\theoremstyle{definition}

\newtheorem{rk}[thm]{Remark}

\numberwithin{equation}{section}

\newcommand{\Tr}{\textrm{Tr}\,}
\newcommand{\vol}{\textrm{vol}}

\newcommand{\del}{\partial}
\newcommand{\de}{\partial}

\newcommand{\dvol}{d{\rm vol}}

\newcommand{\dbar}{\overline{\del}}
\newcommand{\ddb}{\sqrt{-1}\del\dbar}

\newcommand{\Ric}{\mathrm{Ric}}

\newcommand{\F}{\mathcal{F}}
\newcommand{\LL}{\mathcal{L}}

\newcommand{\HH}{\mathcal{H}}

\newcommand{\E}{\mathcal{E}}
\newcommand{\Q}{\mathcal{Q}}

\newcommand{\be}{\begin{equation}}
\newcommand{\bea}{\begin{eqnarray}}
\newcommand{\eea}{\end{eqnarray}} 
\newcommand{\ee}{\end{equation}}

\renewcommand{\leq}{\leqslant}
\renewcommand{\geq}{\geqslant}

\renewcommand{\epsilon}{\varepsilon}
\renewcommand{\phi}{\varphi}

\begin{document}
\maketitle

\begin{abstract}
Let $X$ be a compact, K\"ahler, Calabi-Yau threefold and suppose $X\mapsto \underline{X}\leadsto X_t$ , for $t\in \Delta$, is a conifold transition obtained by contracting finitely many disjoint $(-1,-1)$ curves in $X$ and then smoothing the resulting ordinary double point singularities.  We show that, for $|t|\ll1$ sufficiently small, the tangent bundle $T^{1,0}X_{t}$ admits a Hermitian-Yang-Mills metric $H_t$ with respect to the conformally balanced metrics constructed by Fu-Li-Yau.  Furthermore, we describe the behavior of $H_t$ near the vanishing cycles of $X_t$ as $t\rightarrow 0$.
\end{abstract}

\section{Introduction}

Let $X$ be a compact, K\"ahler, Calabi-Yau threefold with trivial canonical bundle. Around 1985 Clemens described a general procedure for constructing new, possibly non-K\"ahler complex manifolds with trivial canonical bundle by contracting a collection of disjoint $(-1,-1)$ curves and then smoothing the resulting ordinary double point (ODP) singularities.  Such a geometric transition is now called a conifold transition and we denote it by $X\mapsto \underline{X} \leadsto X_{t}$, where $\underline{X}$ is a singular variety with ODP singularities.  Reid's fantasy conjectures \cite{Reid} that all complex threefolds with trivial canonical bundle can be connected by a sequence of conifold transitions.  The goal of this paper, motivated in part by equations from heterotic string theory, is to show that the tangent bundle $T^{1,0}X_{t}$ admits a Hermitian-Yang-Mills metrics $H_t$ with respect to a class of balanced metrics constructed by Fu-Li-Yau \cite{FLY}, and to study the geometry of these metrics as $|t|\rightarrow 0$. In order to put our work in context, we recall the origins of Calabi-Yau geometry in theoretical physics. 

If $X$ is a compact K\"ahler manifold with $c_1(X)=0$ then the third author's solution of the Calabi conjecture \cite{Yau78} implies the existence of a unique Ricci-flat K\"ahler metric in any K\"ahler class on $X$.  This result, which can be viewed as a higher dimensional analog of the Uniformization Theorem, yields a plethora of examples of compact Riemannian manifolds with zero Ricci curvature and holonomy contained in $SU(n)$.

Following the solution of the Calabi conjecture, Candelas-Horowitz-Strominger-Witten \cite{CHSW} showed that compact K\"ahler manifolds with holonomy $SU(3)$, in particular, Calabi-Yau manifolds of complex dimension $3$, are fundamental building blocks in torsion-free superstring compactifications.  Precisely, \cite{CHSW} constructed superstring compactifications with the Standard Model gauge group from a Calabi-Yau threefold together with a holomorphic vector bundle $E\rightarrow X$ admitting a Hermitian-Yang-Mills connection and satisfying the topological constraints $c_1(E)=0$, and $c_2(E)=c_2(X)$.  Of course, the natural choice to make is $E=T^{1,0}X$, but more generally the Donaldson-Uhlenbeck-Yau theorem \cite{Donaldson, UY} implies that any slope stable vector bundle satisfying the topological constraints is admissible.  In total, these works lead to an abundance of a priori distinct superstring compactifications.

Shortly thereafter, the possibility of superstring compactifications with torsion was investigated. In this case, the compactifying manifold is a complex threefold $X$ with a non-vanishing holomorphic $(3,0)$-form $\Omega$ (so that canonical bundle is trivial) equipped with a holomorphic vector bundle $E\rightarrow X$ satisfying the topological constraints $c_1(E)=0=c_1(X)$ and $c_2(E)=c_2(X)$.  In order for this data to give rise to a supersymmetric compactification, $X$ must admit a hermitian metric $g$, with associated $(1,1)$-form $\omega$, and $E$ must admit a hermitian metric $H$ solving the following system of equations, called the Strominger system \cite{Strom}. The first equation is is formulated as in \cite{LY05}.
\be \label{eq: introHSbal}
d(\|\Omega\|_{\omega} \, \omega^2) =0,
\ee
\be\label{eq: introHSYM}
\omega^2 \wedge  F_{H}=0,
\ee
\be\label{eq: introAnom}
\ddb \omega - \frac{\alpha'}{4}\left(\Tr Rm_g\wedge Rm_g  - \Tr F_{H}\wedge F_{H}\right)=0.
\ee
Here $\alpha'>0$ is the inverse string tension, $F_{H}$ denotes the curvature of the Chern connection of $(E,H)$, and $\|\Omega\|^2_{\omega}$ is the norm of $\Omega$ with respect to $g$.  It is natural to view $Rm$ as the curvature of the Chern connection on $T^{1,0}X$, though other choices of connection are admissible as proposed by Hull \cite{Hull} and further discussed in e.g. \cite{dlOSvanes,FIUV,MGF}. Note that if $g$ is K\"ahler, so that $d\omega=0$, then~\eqref{eq: introHSbal} implies that $\omega$ solves the Monge-Amp\`ere equation and is hence Ricci-flat, while ~\eqref{eq: introHSYM} is the Hermitian-Yang-Mills equation for $H$, with respect to $\omega$.  Finally,  in the K\"ahler case, the anomaly cancellation equation~\eqref{eq: introAnom} dictates that
\[
\Tr Rm\wedge Rm  - \Tr F_{H}\wedge F_{H}=0.
\]
This coupling between the Hermitian-Yang-Mills equation and the Calabi-Yau equation is highly non-trivial, but it is automatically satisfied provided $E=T^{1,0}X$.  Thus, we can view the system~\eqref{eq: introHSbal},~\eqref{eq: introHSYM},~\eqref{eq: introAnom} as a generalization of the Calabi-Yau equation to the setting of non-K\"ahler complex manifolds.  In particular, the Strominger system provides a set of equations for uniformizing non-K\"ahler complex threefolds with trivial canonical bundle which can be viewed as natural generalizations of the Calabi-Yau equation.  

This system from heterotic string theory has recently generated a great deal of interest in mathematics, both for its applications to the study of non-K\"ahler Calabi-Yau manifolds and its connections to theoretical physics.  Li-Yau \cite{LY05} constructed solutions on K\"ahler Calabi-Yau threefolds by deforming the complex structure of $T^{1,0}X\oplus \mathbb{C}^r$.  Deformations of K\"ahler solutions to more general bundles were considered by Andreas-Garcia-Fernandez \cite{AndGF, AndGF2}.  Fu-Yau \cite{FuYau} constructed solutions to the Strominger system on Calabi-Eckmann-Goldstein-Prokushkin fibrations by using a certain ansatz to reduce the system to a non-linear PDE of Monge-Amp\`ere type on a $K3$ surface.  Higher dimensional versions of the Fu-Yau construction have recently been considered in \cite{CHZ,FVG,Huang,PPZ1, PPZ3, PPZ4,PPZ8}. Further geometric constructions of solutions, in both the compact and non-compact cases, can be found in e.g. \cite{Fei1, Fei2, FHP, FeiYau, FIUV, FIUVil, FTY,MGF20,OUV}. Very recently, Phong, Zhang and the second author have introduced \cite{PPZ2} a parabolic approach via the Anomaly flow and obtained a new proof of the Fu-Yau result; see \cite{BV,FeiPhong,FPPZ, PPZ7, PPZ6, PPZ5} and the references therein.  We refer the reader to \cite{MGF,GRT,GRST,PhongSurv,TsengYau} and the references therein for more on this very active area of research.

While the plethora of solutions to (\ref{eq: introHSbal}), (\ref{eq: introHSYM}), (\ref{eq: introAnom}) is interesting from a mathematical point of view, the lack of a unique vacuum for the heterotic string is a fundamental problem for the predictive power of string theory.  A conjectural resolution of this problem was put forth by Reid \cite{Reid}, inspired by work of Clemens and Friedman.  {\em Reid's fantasy} proposes that all complex $3$-folds with trivial canonical bundle are connected by a sequence of contractions and smoothings.  Recall that a conifold transition 
\be\label{eq: introConTran}
X \rightarrow \underline{X}\leadsto X_t,
\ee
consists of a contraction followed by a smoothing, where the contraction map $X\rightarrow \underline{X}$ contracts a collection of disjoint rational curves $C_i\subset X$, with normal bundle $\mathcal{O}_{\mathbb{P}^1}(-1)^{\oplus 2}$ (called $(-1,-1)$ curves) to ordinary double point (ODP) singularities, given locally by equations
\be\label{eq: introConifold}
\left\{\sum_{i=1}^4 z_{i}^2=0\right\} \subset \mathbb{C}^4.
\ee
By work of Friedman \cite{Fried}, under appropriate assumptions there is a smoothing
\be\label{eq: introSmoothing}
\mu:\mathcal{X}\rightarrow \Delta, \qquad \Delta = \{ t\in \mathbb{C} : |t|<1\}
\ee
such that $\mu^{-1}(0)= \underline{X}$, and $\mu^{-1}(t)=X_{t}$ is a smooth complex threefold with trivial canonical bundle; see Section~\ref{sec: geoConifold} for a more thorough discussion of conifold transitions.  Locally near the ODP singularities, this smoothing is given by
\be\label{eq: introModelSmoothing}
\left\{\sum_{i=1}^4 z_i^2=t \right\} \subset \mathbb{C}^4\times\mathbb{C}.
\ee
Note that in general, $X_{t}$ will no longer be K\"ahler (even topologically), even if the initial manifold $X$ was projective.  These transitions were exploited by Clemens to construct many new examples of compact, complex threefolds with trivial canonical bundle; see \cite{Fried}. For an introduction to Calabi-Yau transitions, see \cite{Rossi}.

Green-H\"ubsch \cite{GreenHub, GreenHub1} and Candelas-Green-H\"ubsch \cite{CGH, CGH1} argued that conifold transitions could be used to connect any two Calabi-Yau manifolds realized as complete intersections in products of projective spaces. As we move along in the moduli of string vacua, string physics should smoothly interpolate through topological changes. For Type II strings, this problem was studied in \cite{GMS,Strom95}. Here we consider heterotic strings, and in order to resolve the vacuum degeneracy problem for these compactifications it is essential to understand the solvability of the system (\ref{eq: introHSbal}), (\ref{eq: introHSYM}), (\ref{eq: introAnom}) through conifold transitions.  In fact, the third author has advocated that the solvability of the Strominger system may provide a useful tool for studying Reid's fantasy as it provides a uniformization of non-K\"ahler Calabi-Yau threefolds.  

The study of the Strominger system through conifold transitions was initiated by Fu-Li-Yau \cite{FLY} who established the existence of metrics $\omega_{t}$ on $X_{t}$ solving~\eqref{eq: introHSbal} assuming the input manifold $X$ in~\eqref{eq: introConTran} is a compact, K\"ahler Calabi-Yau.  Chuan \cite{Chuan} showed that if $E\rightarrow X$ is a holomorphic vector bundle which is Mumford-Takemoto stable with respect to some K\"ahler class, holomorphically trivial in a neighborhood of the curves contracted by the map $X\rightarrow \underline{X}$, and $(\underline{X},E)$ can be smoothed to a family of holomorphic bundles $(X_t,E_t)$, then stability can be passed through the transition in the following sense: there is a hermitian metric $H_{t}$ on $E_t$ solving the Hermitian-Yang-Mills equation~\eqref{eq: introHSYM} with respect to the Fu-Li-Yau metric.  We remark that it is unclear whether such bundles $E\rightarrow X$ can be constructed so that, $c_2(E)=c_2(X)$ in $H^{2}(X,\mathbb{R})$ and in addition $c_2(E_t)= c_2(X_t)$ in $H^4(X_{t},\mathbb{R})$ after the conifold transition.  Such a situation would be necessary in order to pass solutions of the full Strominger system through conifold transitions.  In any event, the metric $H_t$ constructed by Chuan is approximately flat in a neighborhood of the vanishing cycles of the smooth $\underline{X}\leadsto X_{t}$, while the Fu-Li-Yau metric is modeled on a non-flat, K\"ahler Calabi-Yau cone metric.  

In this work we initiate the study of the Strominger system through conifold transitions in the case when the gauge bundle $E$ is taken to be $T^{1,0}X$.  Our main theorem is the following:

\begin{thm}\label{thm: introMain}
Let $X \rightarrow \underline{X}\leadsto X_t$ be a conifold transition, with $X_t$ as in~\eqref{eq: introSmoothing}. Equip $X_{t}$ with the Fu-Li-Yau balanced metric $g_{t}$ with associated $(1,1)$ form $\omega_{t}$.  Then, for all $|t|\ll 1$ sufficiently small there exists a hermitian metric $H_{t}$ on $T^{1,0}X_{t}$ solving 
\[
\omega_{t}^2\wedge F_{H_{t}} =0.
\]
Furthermore, there exists $\lambda>0$ such that if $p_i \in \underline{X}$ is a ODP singularity, then after identifying a neighborhood of $p_i \in \mathcal{X}$ with the model smoothing~\eqref{eq: introModelSmoothing}, there are constant $c_i,d_i>0$  such that on a neighborhood of the vanishing cycles given by
\[
\mathcal{R}_{\lambda} = \{ \|z\|^2 \leq |t|^{\frac{3}{3+\lambda}}\} \subset X_t,
\]
for each $k\in \mathbb{Z}_{\geq 0}$ there is a constant $C_k$ such that 
\[
| \nabla^k_{g_{co,t}} (g_{t}-c_i g_{co,t}) |_{g_{co,t}} \leq C_k |t|^\lambda \|z\|^{-{2 \over 3}k},
\]
\[
| \nabla^k_{g_{co,t}} (H_t-d_i g_{co,t}) |_{g_{co,t}} \leq C_k |t|^\lambda \|z\|^{-{2 \over 3} k}.
\]
Here $\|z\|^2= \sum_{i=1}^4 |z_i|^2$ and $g_{co,t}$ is the explicit K\"ahler Ricci-flat metric on the smoothing ~\eqref{eq: introModelSmoothing} constructed by Candelas-de la Ossa.
\end{thm}


\begin{rk}
Since both $g_t$ and $H_t$ are locally modeled on the Candelas-de la Ossa explicit K\"ahler Ricci-flat metric $g_{co,t}$ near the vanishing cycles, we see that the local metric description of conifold transitions given by Candelas-de la Ossa \cite{CO} accurately describes global non-K\"ahler conifold transitions of heterotic strings near the vanishing cycles. Our estimates give convergence, after a suitable local rescaling, of the pair $(g_t, H_t)$ to a solution of the anomaly cancellation equation (\ref{eq: introAnom}) near the ordinary double points of $\underline{X}$ as $t\rightarrow 0$. For related work on Calabi-Yau metrics ($g_t=H_t$) in the case when both sides are K\"ahler, see e.g. \cite{HeinSun,RoZh,Song,Tosatti09} and references therein. 
\end{rk}

  \begin{rk} A similar result holds if we replace the balanced Fu-Li-Yau metric $g_{t}$ with the conformally balanced metric $\check{g}_t$ obtained by conformally rescaling $g_{t}$.  In particular, the pair $(\check{g}_t, H_{t})$ simultaneously solves ~\eqref{eq: introHSbal} and~\eqref{eq: introHSYM}, and satisfies an estimate similar to Theorem \ref{thm: introMain}; see Remark~\ref{rk: confRescale}.  This implies that near the ODP singularities, at a suitable scale, the pair $(\check{g}_t, H_{t})$ converges to the Calabi-Yau solution of the Strominger system on the conifold as $|t|\rightarrow 0$. 
\end{rk}

\begin{rk}
The existence of a metric $H_t$ on $T^{1,0}X_t$ solving $\omega_t^2 \wedge F_{H_t} = 0$ implies that the tangent bundle $T^{1,0}X_t$ is stable with respect to the Fu-Li-Yau balanced class $[\omega_t]^2$. In the case when $X_t$ is topologically $\#_{k}S^3 \times S^3$ \cite{Fried,LuTian}, the stability of the tangent bundle was noted in \cite{Boz}.
\end{rk}

The third author has conjectured \cite{Yau10} that if $X$ is any complex threefold with trivial canonical bundle admitting a pair of metrics $(\omega, H)$ solving the conformally balanced equation ~\eqref{eq: introHSbal} and the Hermitian-Yang-Mills equation~\eqref{eq: introHSYM}, then there is a solution of the full Strominger system.  We hope to return to this problem, in the setting of conifold transitions, in future work.
 
 The outline of this paper is as follows.  In Section~\ref{sec: geoConifold} we discuss some background material, including the basic geometric properties of conifold transitions that will be important for our work.  In Section~\ref{sec: HYM-central} we construct a metric $H_0$ on the tangent bundle of $T\underline{X} \rightarrow \underline{X}_{reg}$.  More precisely, the metric $H_0$ is Hermitian-Yang-Mills with respect to a smooth, balanced metric $g_0$ on $\underline{X}_{reg}$ and, near the singular points of $\underline{X}$, is uniformly equivalent to the Candelas-de la Ossa Ricci-flat K\"ahler metric on the conifold~\eqref{eq: introConifold}.  Furthermore, we show that $H_0$ also satisfies scale invariant higher-order estimates.  The metric $H_0$ serves as the model metric for the Hermitian-Yang-Mills metric on $X_t$, at least away from the vanishing cycles.  In Section~\ref{sec: quant-decay} we establish quantitative, polynomial decay of $H_0$ towards a multiple of the Candelas-de la Ossa metric on the conifold. In Section~\ref{sec: approx-hym} we use $H_0$ to construct an approximately Hermitian-Yang-Mills metric $H_t$ on $X_t$ with an explicit estimate for the decay rate, with respect to $|t|$, towards a Hermitian-Yang-Mills metric.  Finally, in Section~\ref{sec: perturbation} we show that, for $|t|\ll1$ sufficiently small, $H_t$ can be perturbed to a genuine Hermitian-Yang-Mills metric on $T^{1,0}X_{t}$.  For the reader's convenience we have provided an appendix detailing the aspects of the Fu-Li-Yau construction which are important for our work.
\bigskip

{\bf Acknowledgements:} We thank M. Garcia-Fernandez for helpful comments. T.C.C. is grateful to A. Jacob for helpful discussions concerning \cite{JacobWalpuski, JacobEarpWalpuski}
  
\section{The geometry of conifold transitions}\label{sec: geoConifold}

In this section we will discuss the basic geometry of conifold transitions.  We begin with the following definition, which fixes the notion of Calabi-Yau manifold to be considered in this paper.

\begin{defn}\label{defn: CY}
A smooth Calabi-Yau threefold is a smooth complex threefold with finite fundamental group and trivial canonical bundle
\end{defn}

The primary aim of this paper is to understand the solvability of the Hermitian-Yang-Mills equation on the tangent bundle to a Calabi-Yau threefold as it passes through a conifold transition.

\begin{defn}
 A $(-1,-1)$ curve $C\subset X$ is a smooth rational curve $C\simeq \mathbb{P}^1$ such that the normal bundle $N_{C/X} \simeq \mathcal{O}_{\mathbb{P}^1}(-1)^{\oplus 2}$.
 \end{defn}
 
From \cite[Satz 7]{Grauert} there is an open neighborhood $U$ of $C$ in $X$ such that $U$ is biholomorphic to a neighbourhood of the zero section in the total space of $\mathcal{O}_{\mathbb{P}^1}(-1)\oplus \mathcal{O}_{\mathbb{P}^1}(-1)$.  In particular, this implies the existence of a contraction map
 \[
 \pi_{C}:X\rightarrow \underline{X}
 \]
 to a singular complex space $\underline{X}$ with an ordinary double point singularity at $p$ such that $\pi_{C}: X\setminus C \rightarrow \underline{X}\setminus\{p\}$ and $\pi_{C}(C)=p$.  Concretely, if $[X_1:X_2]$ denote homogeneous coordinates on $\mathbb{P}^1$, then any non-zero point in the total space $\mathcal{O}_{\mathbb{P}^1}(-1)^{\oplus 2}$ can be written uniquely as $(w_1X_1,w_1X_2, w_2X_1,w_2X_2)$.  This defines a biholomorphism from the complement of the zero section in $\mathcal{O}_{\mathbb{P}^1}(-1)^{\oplus 2}$ to the complement of the origin in the conifold
 \be\label{eq: sec2ConifoldHat}
\hat{V}_0:= \{\hat{z}_1\hat{z}_2-\hat{z}_3\hat{z}_4=0\} \subset \mathbb{C}^4.
 \ee
 This map can clearly be extended holomorphically over the zero section of $\mathcal{O}_{\mathbb{P}^1}(-1)^{\oplus 2}$ by sending $\mathbb{P}^1$ to the origin in $\mathbb{C}^4$.  After a unitary change of coordinates we can rewrite~\eqref{eq: sec2ConifoldHat} as the standard conifold
  \be\label{eq: sec2Conifold}
V_0:= \{z_1^2+z_2^2+z_3^2+z_4^2=0\} \subset \mathbb{C}^4.
 \ee
 
 We now describe another realization of the affine variety~\eqref{eq: sec2Conifold}.  Consider the Fano surface $\mathbb{P}^1\times \mathbb{P}^1$.  Denote by
 \[
 p_{i}: \mathbb{P}^1\times \mathbb{P}^1 \rightarrow \mathbb{P}^1 \quad \text{ for } i=1,2
 \]
 the projection onto the $i$-th factor.  The conifold can be realized as the blow-down of the zero section in the total space of $p_1^*\mathcal{O}_{\mathbb{P}^1}(-1)\otimes p_2^*\mathcal{O}_{\mathbb{P}^1}(-1)$. Explicitly, the global holomorphic sections of $p_1^*\mathcal{O}_{\mathbb{P}^1}(1)\otimes p_2^*\mathcal{O}_{\mathbb{P}^1}(1)$ define an embedding 
 \[
 \iota: \mathbb{P}^1\times \mathbb{P}^1 \hookrightarrow \mathbb{P}^3
 \]
 which is precisely the Segre embedding
 \[
\iota( \mathbb{P}^1\times \mathbb{P}^1)= \{ X_1X_2-X_3X_4=0\,\big|\, [X_1:X_2:X_3:X_4 ]\in \mathbb{P}^3\}.
 \]
Taking the cone over this projective variety yields~\eqref{eq: sec2ConifoldHat}.  

The singular affine variety $V_0$ given in~\eqref{eq: sec2Conifold} admits an explicit smoothing given by
\be \label{eq: modelSmoothing}
\mathcal{V} = \{z_1^2+z_2^2+z_3^2+z_4^2=t\} \subset \mathbb{C}^4\times \mathbb{C}.
\ee
We denote by $\mu:\mathcal{V}\rightarrow \mathbb{C}$ the projection $\mu(z, t) = t$, and let $V_{t} = \mu^{-1}(t)$ be the fiber over $t$.

Now suppose that $X$ is a Calabi-Yau threefold and let $C_{1},\ldots, C_{k}\subset X$ be a collection of disjoint $(-1,-1)$ curves. Let $\pi: X\rightarrow \underline{X}$ be the map contracting the $C_i$, so that $\underline{X}$ is a compact complex space with ordinary double point singularities at $p_i= \pi(C_i)$.  We have the following well-known result of Friedman \cite{Fried1, Fried}

\begin{thm}[Friedman, \cite{Fried}]\label{thm: Friedman}
There is a first order deformation of $\underline{X}$ smoothing $p_i$ if and only if there is a relation
\be\label{eq: Friedman}
\sum_{i=1}^k \lambda_i [C_i] =0 \, \text{ in } H_{2}(X,\mathbb{R})
\ee
where each $\lambda_i \ne 0$.
\end{thm}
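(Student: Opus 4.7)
The plan is to analyze first-order deformations of $\underline{X}$ via the Ext spectral sequence and to identify the obstruction to simultaneously smoothing all of the ordinary double points with a pairing against the curve classes $[C_i]\in H_2(X,\mathbb{R})$. First, I would recall that first-order deformations of $\underline{X}$ are parameterized by $\mathrm{Ext}^1(\Omega^1_{\underline{X}}, \mathcal{O}_{\underline{X}})$ and use the local-to-global Ext spectral sequence to produce the exact sequence
\[
0 \to H^1(\underline{X}, T_{\underline{X}}) \to \mathrm{Ext}^1(\Omega^1_{\underline{X}}, \mathcal{O}_{\underline{X}}) \xrightarrow{\sigma} H^0(\underline{X}, \mathcal{T}^1_{\underline{X}}) \xrightarrow{\mathrm{ob}} H^2(\underline{X}, T_{\underline{X}}),
\]
where $\mathcal{T}^1_{\underline{X}}:=\mathcal{E}xt^1(\Omega^1_{\underline{X}}, \mathcal{O}_{\underline{X}})$. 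A direct computation on the ODP model~\eqref{eq: introConifold} shows $\mathcal{T}^1_{\underline{X}}$ is a skyscraper sheaf with one-dimensional stalk at each $p_i$ generated by the smoothing direction $\sum z_i^2 \mapsto \sum z_i^2 - t$, so $H^0(\underline{X}, \mathcal{T}^1_{\underline{X}}) = \bigoplus_{i=1}^k \mathbb{C}$ and a deformation $\eta$ smooths $p_i$ precisely when the $i$-th component $\lambda_i$ of $\sigma(\eta)$ is non-zero. The theorem is therefore equivalent to the statement that a tuple $(\lambda_1,\dots,\lambda_k)$ with all $\lambda_i\neq 0$ lies in $\ker(\mathrm{ob})$ if and only if $\sum_i\lambda_i[C_i]=0$ in $H_2(X,\mathbb{R})$.

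Next, I would transport the obstruction map to the smooth threefold $X$ using the resolution $\pi: X\to \underline{X}$. Because each $C_i$ is a $(-1,-1)$ curve, the normal bundle sequence combined with the theorem on formal functions yields $R^j\pi_*T_X=0$ for $j\geq 1$ near each $p_i$; hence the Leray spectral sequence gives $H^2(\underline{X},\pi_*T_X)\cong H^2(X,T_X)$. Combined with the identification $\pi_*T_X\cong T_{\underline{X}}$ (valid since $\underline{X}$ has isolated Gorenstein singularities in dimension three), the obstruction class lands in $H^2(X,T_X)$. Serre duality on the Calabi-Yau threefold $X$ gives $H^2(X,T_X)\cong H^{2,2}(X)\cong H^{1,1}(X)^\ast$, and Poincaré duality identifies $H^{2,2}(X)$ with $H_2(X,\mathbb{C})$, so $\mathrm{ob}((\lambda_i))$ becomes a linear functional on $H^{1,1}(X)$.

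Finally, I would compute this functional explicitly by patching the local model smoothing~\eqref{eq: modelSmoothing} with weights $\lambda_i$ near each $p_i$ and tracing the resulting \v{C}ech cocycle through the connecting homomorphism and the Leray/Serre identifications. The expected outcome is
\[
\langle \mathrm{ob}((\lambda_i)),\, [\beta]\rangle = c\sum_{i=1}^k \lambda_i\int_{C_i}\beta,\qquad [\beta]\in H^{1,1}(X),
\]
for a non-zero universal constant $c$, which vanishes for all $[\beta]$ precisely when $\sum_i\lambda_i[C_i]=0$ in $H_2(X,\mathbb{C})$; and since the $[C_i]$ are real classes this is equivalent to the vanishing in $H_2(X,\mathbb{R})$. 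The main technical obstacle is this last explicit identification: choosing a \v{C}ech cover adapted to the small resolution and the local ODP model~\eqref{eq: introConifold}, tracking the cocycle through three separate identifications (local-to-global Ext, Leray for $\pi$, and Serre duality), and recognizing the resulting pairing as $\int_{C_i}\beta$. This is the content of Friedman's original argument, and it relies crucially on the explicit local geometry of the conifold and its small resolution.
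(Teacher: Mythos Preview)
The paper does not prove this theorem; it is quoted as a known result of Friedman \cite{Fried} and used as background for the existence of conifold transitions. There is therefore no proof in the paper to compare your proposal against.

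That said, your outline is essentially the standard route to Friedman's theorem: the local-to-global Ext sequence for $\mathrm{Ext}^1(\Omega^1_{\underline{X}},\mathcal{O}_{\underline{X}})$, the identification of $\mathcal{T}^1_{\underline{X}}$ as a rank-one skyscraper at each node, the transport of the obstruction to $H^2(X,T_X)$ via $\pi$, and the Serre-duality pairing against $H^{1,1}(X)$. Two small points worth tightening if you flesh this out: the isomorphism $\pi_*T_X\cong T_{\underline{X}}$ is best justified by reflexivity (both sides are reflexive sheaves agreeing outside codimension two on a normal variety), not merely by ``isolated Gorenstein in dimension three''; and the vanishing $R^1\pi_*T_X=0$ near each $C_i$ requires the formal-neighborhood computation, not just $H^1(\mathbb{P}^1,T_X|_{\mathbb{P}^1})=0$. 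You correctly flag the final \v{C}ech computation identifying $\mathrm{ob}((\lambda_i))$ with $\sum_i\lambda_i\int_{C_i}(\cdot)$ as the crux, and you are right that this is exactly Friedman's calculation; it is not reproduced in the present paper.
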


When $X$ is K\"ahler (or more generally, satisfies the $\ddb$-lemma) Kawamata \cite{Kaw}, building on work of Ran \cite{ZRan}, and independently Tian \cite{Tian} showed that the first order smoothings in Theorem~\ref{thm: Friedman} integrate to genuine smoothings.  Furthermore, by \cite[Lemma 8.2]{Fried1} if $X$ is a K\"ahler, Calabi-Yau threefold in the sense of Definition~\ref{defn: CY} then the fibers of smooth $\mu: \mathcal{X}\rightarrow \Delta$ are again Calabi-Yau.  In particular, assuming~\eqref{eq: Friedman} holds, there is a holomorphic family
\[
\mu: \mathcal{X} \rightarrow \Delta:=  \{ t\in \mathbb{C} : |t|<1\}
\]
such that $\mu^{-1}(t)= X_t$ is smooth for $t\ne 0$ and $\mu^{-1}(0)= \underline{X}$.  By a result of Kas-Schlessinger \cite{KSchles} the family $\mathcal{X}_{t}$ is locally biholomorphic to the model smoothing $\mathcal{V}$ near each ordinary double point.  We make the following definition.

\begin{defn}
Let $X$ be a smooth, compact, complex $3$-fold.  A conifold transition of $X$, denoted $X\rightarrow \underline{X} \leadsto X_{t}$ consists of a holomorphic map $\pi:X \rightarrow \underline{X}$ and family $\mu:\mathcal{X} \rightarrow \Delta$ with $\mu^{-1}(0)= \underline{X}$ such that
\begin{enumerate}
\item  $\pi: X \rightarrow \underline{X}$ contracts a collection of disjoint $(-1,-1)$ curves $C_1,\ldots, C_k$ to isolated, ordinary double point singularities $p_1, \ldots, p_k \in \underline{X}$, and $\pi$ is a biholomorphism  $X\setminus \cup_i C_i \rightarrow \underline{X} \setminus \{p_1,\ldots, p_k\}$.
\item $\mu:\mathcal{X} \rightarrow \Delta$ is a holomorphic smoothing of $\underline{X} = \mu^{-1}(0)$ and $X_t= \mu^{-1}(t)$.
\end{enumerate}
\end{defn}

Said informally, a conifold transition consists of contracting a collection of disjoint $(-1,-1)$ curves followed by smoothing the resulting double point singularities. At the level of topology, removing a neighborhood of the singular points of $\underline{X}$ leaves a boundary $S^2 \times S^3$, and either side of the transition corresponds topologically to gluing $S^2 \times B^4$ or $B^3 \times S^3$ along this boundary (see e.g. \cite{Rossi} for details). Thus conifold transitions allow travel between Calabi-Yau threefolds of different topology by degenerating 2-cycles and introducing 3-cycles. By the above discussion, Theorem~\ref{thm: Friedman} gives necessary and sufficient conditions for the existence of conifold transitions. 

\subsection{Metric geometry of the conifold}
Let us turn now to the discussion of some aspects of the metric geometry of conifold transitions.  Recall that the conifold~\eqref{eq: sec2Conifold} can be viewed as the cone over $\mathbb{P}^1\times\mathbb{P}^1$ in the negative line bundle $p_1^*\mathcal{O}_{\mathbb{P}^1}(-1)\otimes p_2^*\mathcal{O}_{\mathbb{P}^1}(-1)$.  Since $\mathbb{P}^1\times\mathbb{P}^1$ is K\"ahler-Einstein with positive Ricci curvature, it is well-known that the conifold admits a conical Calabi-Yau metric.  Explicitly, let 
\[
h_{FS} = \sum_{i=1}^4 |X_i|^2
\]
denote the Fubini-Study metric on $\mathcal{O}_{\mathbb{P}^3}(-1)$.  By direct computation we have
\be\label{eq: pullbackKE}
\iota^*h_{FS} = h_{KE}
\ee
where $h_{KE}=p_1^*h_{FS}\otimes p_2^*h_{FS}$ and $p_i^*h_{FS}$ is the pull-back of the Fubini-Study metric on $\mathbb{P}^1$ for $i=1,2$.  Define a function
\[
r: p_1^*\mathcal{O}_{\mathbb{P}^1}(-1)\otimes p_2^*\mathcal{O}_{\mathbb{P}^1}(-1) \rightarrow \mathbb{R}_{\geq 0}
\]
in the following way.  If $x \in \mathbb{P}^1\times\mathbb{P}^1$, and $\sigma$ is a local section of\\ ${p_1^*\mathcal{O}_{\mathbb{P}^1}(-1)\otimes p_2^*\mathcal{O}_{\mathbb{P}^1}(-1)}$, then define
\[
r(x,\sigma(x))^2 = \left( |\sigma|^2_{h_{KE}}\right)^{2/3}.
\]
 Clearly $r^{-1}(0)$ is precisely the zero section of $p_1^*\mathcal{O}_{\mathbb{P}^1}(-1)\otimes p_2^*\mathcal{O}_{\mathbb{P}^1}(-1)$, and hence $r$ defines a function on the conifold \eqref{eq: sec2Conifold}.  From the observation~\eqref{eq: pullbackKE}, we can write this function in terms of the coordinates on $\mathbb{C}^4$ as
\[
r^2= \|z\|^{4/3}, \qquad \|z\|^2:=\sum_{i=1}^{4}|z_i|^{2}.
\]
 An easy calculation shows that 
\[
\omega_{co,0} := \frac{3}{2}\ddb r^2
\]
defines a conical Calabi-Yau metric $g_{co,0}$ on $V_0$; note that the factor of $\frac{3}{2}$ is a harmless scaling, but we have included it to be consistent with \cite{FLY}.  More precisely, the Ricci-flat K\"ahler metric $g_{co,0}$ is a cone metric over the link $L:=\{r=1\} \subset V_0$ and can be written as
\[
g_{co,0}= \frac{3}{2}\left(dr^2 + r^2 g_{L}\right)
\]
where $g_{L}$ is (the pullback of) a Sasaki-Einstein metric on $L:=\{r=1\} \subset V_0$.  The cone $(V_0, g_{co,0})$ has a natural rescaling action generated by the vector field $r\frac{\del}{\del r}$.  The vector field
\[
\xi = J(r\frac{\del}{\del r})
\]
is tangent the level sets or $r$ and defines the Reeb vector field of the Sasaki structure on the link.  We will use frequently the holomorphic vector field
\[
\xi_{\mathbb{C}} := r\frac{\del}{\del r}-\sqrt{-1}\xi.
\]
Explicitly, the vector field $\xi_{\mathbb{C}}$ generates the $\mathbb{C}^*$ action on $V_0$ given by
\[
\lambda\cdot(z_1,z_2,z_3,z_4) = (\lambda^{3/2}z_1, \lambda^{3/2}z_2,\lambda^{3/2}z_3,\lambda^{3/2}z_4),
\]
 and one can easily check that the cone metric $g_{co,0}$ is homogeneous of degree $2$ under this action.  In particular, we have
 \begin{lem}
 The conical Calabi-Yau metric $g_{co,0}$ on the conifold has the following property:  for every $k \in \mathbb{Z}_{\geq 0}$ there is a constant $C_k$ so that
 \[
 |\nabla^k {\rm Rm}|_{g_{co,0}} \leq C_{k}r^{-2-k}.
\]
 \end{lem}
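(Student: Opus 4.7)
The plan is to exploit the homogeneity of the cone metric under the $\mathbb{C}^*$-action generated by $\xi_{\mathbb{C}}$, reducing the estimate to the compactness of the link $L = \{r = 1\}$.

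First I would set up the scaling. Let $\Phi_\lambda: V_0\setminus\{0\}\to V_0\setminus\{0\}$ denote the biholomorphism $\Phi_\lambda(z) = (\lambda^{3/2}z_1,\lambda^{3/2}z_2,\lambda^{3/2}z_3,\lambda^{3/2}z_4)$ for $\lambda\in\mathbb{R}_{>0}$. From $r^2=\|z\|^{4/3}$ one computes $r\circ\Phi_\lambda=\lambda\, r$, and since $\omega_{co,0}=\tfrac{3}{2}\ddb r^2$ it follows that
\[
\Phi_\lambda^*\omega_{co,0}=\lambda^2\omega_{co,0}, \qquad \Phi_\lambda^* g_{co,0}=\lambda^2 g_{co,0}.
\]
In particular, $\Phi_\lambda$ maps the link $L=\{r=1\}$ diffeomorphically onto the level set $\{r=\lambda\}$, and every point of $V_0\setminus\{0\}$ lies in the orbit of some point of $L$.

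Next I would record how curvature scales. Multiplying a Riemannian metric by a positive constant leaves the Levi-Civita Christoffel symbols unchanged, so for a constant rescaling $\tilde g = \lambda^2 g$ the $(1,3)$-curvature tensor is unchanged and consequently the $(0,k+4)$-tensor $\nabla^k\mathrm{Rm}$ scales by $\lambda^2$. Contracting with $\lambda^{-2}$ copies of $\tilde g^{-1}$ gives
\[
|\nabla^k\mathrm{Rm}(\tilde g)|_{\tilde g}^2 = \lambda^{-2(k+4)}\lambda^{4}\,|\nabla^k\mathrm{Rm}(g)|_g^2 = \lambda^{-2(k+2)}|\nabla^k\mathrm{Rm}(g)|_g^2.
\]
Pulling back by the isometry $\Phi_\lambda:(V_0\setminus\{0\},\lambda^2 g_{co,0})\to(V_0\setminus\{0\},g_{co,0})$, this translates into the pointwise identity
\[
|\nabla^k\mathrm{Rm}(g_{co,0})|_{g_{co,0}}\bigl(\Phi_\lambda(x)\bigr)=\lambda^{-(k+2)}\,|\nabla^k\mathrm{Rm}(g_{co,0})|_{g_{co,0}}(x).
\]

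Finally, since $g_{co,0}$ restricted to any neighborhood of the compact link $L$ is a smooth Riemannian metric, the quantity $|\nabla^k\mathrm{Rm}(g_{co,0})|_{g_{co,0}}$ is bounded on $L$ by some constant $C_k<\infty$. Given any $y\in V_0\setminus\{0\}$, set $\lambda=r(y)$ and choose $x\in L$ with $\Phi_\lambda(x)=y$; the displayed identity then yields $|\nabla^k\mathrm{Rm}(g_{co,0})|_{g_{co,0}}(y)\leq C_k r(y)^{-(k+2)}$, which is the desired bound. I do not anticipate any serious obstacle: the argument is entirely structural, and the only analytic input is the smoothness of the cone metric away from the apex (which follows from the Sasaki-Einstein description already recorded before the lemma).
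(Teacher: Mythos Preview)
Your argument is correct and is precisely the scaling argument the paper has in mind: the lemma is stated immediately after the observation that $g_{co,0}$ is homogeneous of degree $2$ under the $\mathbb{C}^*$-action, with the words ``In particular, we have'', and no further proof is given. Your write-up simply makes explicit the standard consequence of that homogeneity together with compactness of the link.
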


 \subsection{Metric geometry of the local smoothings}
 
 Candelas-de la Ossa \cite{CO} and independently Stenzel \cite{Stenz} constructed Calabi-Yau metrics on the smoothings of the conifold~\eqref{eq: modelSmoothing} using ODE techniques.  These metrics will play an important role for us.  
 
 \begin{prop}[Candelas-de la Ossa, \cite{CO}]
 Consider the smoothing of the conifold given by~\eqref{eq: modelSmoothing}, and set $V_t=\{z_1^2+z_2^2+z_3^2+z_4^2=t\} \subset \mathbb{C}^4$.  Let $\|z\|^2 =\sum_{i=1}^4 |z_i|^2$, and, for each $t \in \overline{\Delta}$, set
 \be\label{eq: COpotential}
 f_{t}(s) = 2^{-\frac{1}{3}}|t|^{2/3}\int_{0}^{\cosh^{-1}(\frac{s}{|t|})} (\sinh(2\tau)-2\tau)^{\frac{1}{3}} d\tau 
\ee
Then 
\[
\omega_{co,t} := \ddb f_t(\|z\|^2)
\]
 defines a smooth Calabi-Yau metric $g_{co,t}$ on $V_{t}$ whose tangent cone at infinity is the conifold $(V_0, g_{co,0})$.
 \end{prop}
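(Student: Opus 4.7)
The plan is to exploit the $SO(4,\C)$-symmetry of the defining equation $\sum z_i^2 = t$ to reduce the Calabi-Yau Monge-Amp\`ere equation $\omega^3 = c\,\sqrt{-1}\,\Omega_t\wedge\overline{\Omega_t}$ on $V_t$ to an ODE for a K\"ahler potential depending only on the single $SO(4,\mathbb{R})$-invariant $s := \|z\|^2$. Here $\Omega_t$ is the holomorphic $(3,0)$-form on $V_t$ furnished by the Poincar\'e residue of $(dz_1\wedge\cdots\wedge dz_4)/d(\sum z_i^2 - t)$, and $c$ is a normalization constant fixed by the choice $\omega_{co,t} = \ddb f_t(\|z\|^2)$.

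First I would derive and solve the ODE. Writing $\phi(s) = f_t(s)$ and using $\ddb\phi(s) = \phi'(s)\ddb s + \sqrt{-1}\,\phi''(s)\del s\wedge\dbar s$ on $\C^4$, one restricts to $V_t$ and expresses $\omega^3\big|_{V_t}$ as an $SO(4,\mathbb{R})$-invariant top form multiplied by an explicit rational expression in $\phi'(s)$, $\phi''(s)$, $s$ and $|t|^2$. A parallel computation for $\sqrt{-1}\,\Omega_t\wedge\overline{\Omega_t}$ reduces the Monge-Amp\`ere equation to a single first-order ODE for $\phi'(s)$. Under the substitution $s = |t|\cosh(2\tau)$ with $\tau \ge 0$, this ODE integrates in closed form; the integration constant is pinned down by smoothness at the vanishing cycle $\{\tau = 0\}$, and reproduces exactly the formula~\eqref{eq: COpotential}.

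Next I would verify that $\omega_{co,t}$ is a genuine K\"ahler metric on all of $V_t$. Away from the vanishing $S^3 = \{s = |t|\}$, positivity follows from $\phi'(s) > 0$ and $\phi''(s) > 0$, both immediate from the integrand $(\sinh(2\tau) - 2\tau)^{1/3}$ being positive on $(0,\infty)$. The delicate issue is smoothness at $S^3$: since $s$ has $S^3$ as its critical locus on $V_t$, one cannot treat $s$ as a coordinate there. The clean resolution is to identify a neighborhood of $S^3 \subset V_t$ with a neighborhood of the zero section of $T^*S^3$, in which $s - |t|$ plays the role of the squared fiber norm; the Taylor expansion $\sinh(2\tau) - 2\tau = \tfrac{4}{3}\tau^3 + O(\tau^5)$ then shows that $f_t$ is a smooth function of $s - |t|$, and a direct verification in such coordinates produces a smooth and positive definite $\ddb f_t(\|z\|^2)$ across the zero section.

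Finally, for the tangent cone at infinity, I would analyze the asymptotics of~\eqref{eq: COpotential}. As $s/|t|\to\infty$ one has $\cosh^{-1}(s/|t|) = \ln(2s/|t|) + O(|t|^2/s^2)$ and $(\sinh(2\tau)-2\tau)^{1/3} = 2^{-1/3}e^{2\tau/3}(1 + O(\tau e^{-2\tau}))$, so a direct computation gives $f_t(s) = \tfrac{3}{2}\,s^{2/3} + o(s^{2/3})$ as $s\to\infty$, matching the cone potential $\tfrac{3}{2}\|z\|^{4/3}$ of $\omega_{co,0}$ exactly to leading order. Under the rescaling $z\mapsto R^{-3/2}z$ combined with $R^{-2}g_{co,t}$ as $R\to\infty$, the rescaled K\"ahler potentials converge in $C^\infty_{\mathrm{loc}}$ on $V_0\setminus\{0\}$ to $\tfrac{3}{2}\|z\|^{4/3}$, giving convergence $(V_t, R^{-2}g_{co,t})\to (V_0, g_{co,0})$ and identifying the tangent cone at infinity. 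The main technical obstacle throughout is smoothness across the vanishing $S^3$, where the natural radial variable $s$ degenerates and one must switch to coordinates adapted to the zero section of $T^*S^3$; the remaining steps are a direct, if lengthy, computation.
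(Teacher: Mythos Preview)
The paper does not give its own proof of this proposition: it is stated as a known result of Candelas--de la Ossa \cite{CO} (and, as the paper notes immediately before the statement, independently Stenzel \cite{Stenz}), and the text simply records the explicit potential~\eqref{eq: COpotential} without derivation. So there is nothing in the paper to compare against beyond the statement itself.

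Your outline follows the standard route and is essentially correct. Reducing the Ricci-flat equation to an ODE via the $SO(4)$-symmetry, solving for $f_t'$, checking positivity and smoothness across the vanishing $S^3$ via the identification $V_t \cong T^*S^3$, and reading off the tangent cone at infinity from the leading asymptotics $f_t(s) \sim \tfrac{3}{2}s^{2/3}$ is exactly how this is done in the literature. One small slip: the substitution consistent with the integral in~\eqref{eq: COpotential} is $s = |t|\cosh(\tau)$ (so that the upper limit is $\tau_0 = \cosh^{-1}(s/|t|)$), not $s = |t|\cosh(2\tau)$; this does not affect the argument, only the bookkeeping when you carry it out. Otherwise the plan is sound and would yield a complete proof.
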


 We note that $\| z \|^2 \geq t$ on $V_t$, and the set $\{ \| z \|^2 = t \}$, which is topologically $S^3$, will sometimes be called the vanishing cycle.
 \smallskip
\par Let $\mathcal{V}$ denote the model smoothing~\eqref{eq: modelSmoothing} and let $\mu: \mathcal{V}\rightarrow \mathbb{C}$ be the projection to the $t$ coordinate. There is a natural $\mathbb{C}^*$ action on the family $\mathcal{V}$, given by
 \be\label{eq: scalingMapDef}
S_{\lambda}(z_1,z_2,z_3,z_4, t) := (\lambda^{3/2}z_1, \lambda^{3/2}z_2, \lambda^{3/2}z_3, \lambda^{3/2}z_4, \lambda^3 t).
 \ee
 so that $S_{\lambda}:V_{t} \rightarrow V_{\lambda^3t}$. Under this $\mathbb{C}^*$ action we have
 \[
 S_\lambda^*f_{\lambda^3t_0}(\|z\|^2) = |\lambda|^2f_{t_0}(\|z\|^2)
 \]
 and so, in particular, we have
 \be\label{eq: gcotPullBack}
g_{co,t} = |t|^{2/3} (S_{t^{-1/3}})^*g_{co,1}.
 \ee
Note that, strictly speaking, we should fix a branch of $\log$ in the above expression, but since the CO metric is manifestly $S^1$ invariant such a distinction is irrelevant. It follows that the CO metrics $g_{co,t}$ are generated by the $\mathbb{C}^*$ action on $\mathcal{V}$, up to rescaling.  In particular, this shows (cf. \cite[Lemma 5.1]{FLY})
 
 \begin{lem}
For each $k \in \mathbb{Z}_{\geq 0}$, and $A>0$ there is a constant $C_{k,A}>0$, independent of $t$, so that the Calabi-Yau metrics $g_{co,t}$ satisfy
 \[
\sup_{\|z\|^2\leq A|t|} |\nabla^k {\rm Rm}(g_{co,t})|_{g_{co,t}} \leq C_{k,A}|t|^{-\frac{2}{3}(2+k)}.
 \]
 \end{lem}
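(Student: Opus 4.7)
The plan is to exploit the scaling identity \eqref{eq: gcotPullBack}, namely $g_{co,t} = |t|^{2/3}(S_{t^{-1/3}})^*g_{co,1}$, to reduce the desired estimate on $V_t$ to a uniform estimate on a fixed compact region of $V_1$. Since $g_{co,1}$ is a smooth Ricci-flat K\"ahler metric on the smooth complex manifold $V_1$, the norms $|\nabla^k {\rm Rm}(g_{co,1})|_{g_{co,1}}$ are automatically uniformly bounded on any compact subset of $V_1$, so the content of the lemma is just that the full family of Candelas--de la Ossa metrics arises from $g_{co,1}$ by the explicit scaling in \eqref{eq: gcotPullBack}.

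First I would identify the relevant region on $V_1$. Restricted to fibers of $\mu$, the scaling map $S_{t^{-1/3}}$ is a biholomorphism $V_t \to V_1$ given by $(z_1,\ldots,z_4)\mapsto(t^{-1/2}z_1,\ldots,t^{-1/2}z_4)$, after fixing any branch of $t^{1/3}$ (the choice is immaterial because $g_{co,\cdot}$ depends only on $|t|$). This biholomorphism sends $\{\|z\|^2\le A|t|\}\subset V_t$ onto the compact region $K_A := \{\|w\|^2\le A\}\subset V_1$. Smoothness of $g_{co,1}$ on $\overline{K_A}$ then provides a constant $C_{k,A}$, independent of $t$, with $|\nabla^k {\rm Rm}(g_{co,1})|_{g_{co,1}}\le C_{k,A}$ on $K_A$.

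The final step is to transfer this bound to $g_{co,t}$ using \eqref{eq: gcotPullBack}. Pullback by a diffeomorphism preserves pointwise curvature norms, so $|\nabla^k {\rm Rm}((S_{t^{-1/3}})^* g_{co,1})|_{(S_{t^{-1/3}})^* g_{co,1}}(z) = |\nabla^k {\rm Rm}(g_{co,1})|_{g_{co,1}}(S_{t^{-1/3}}(z)) \le C_{k,A}$ on $\{\|z\|^2 \le A|t|\}$. For a constant conformal rescaling $g' = c^2 g$ one checks directly that $|\nabla^k {\rm Rm}(g')|_{g'} = c^{-(k+2)} |\nabla^k {\rm Rm}(g)|_g$, since the Christoffel symbols are unchanged, the lowered Riemann tensor scales by $c^2$, and each inverse metric contributes $c^{-2}$. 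Applying this with $c^2 = |t|^{2/3}$ yields $|\nabla^k {\rm Rm}(g_{co,t})|_{g_{co,t}} \le C_{k,A} |t|^{-(k+2)/3}$; absorbing an extra harmless factor of $|t|^{-(k+2)/3}$ (valid for $|t|\le 1$) gives the stated bound.

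There is no substantive obstacle: the proof is a scaling exercise, resting on the $\mathbb{C}^*$-homogeneity encoded in \eqref{eq: gcotPullBack} and the smoothness of $g_{co,1}$. The only point worth being careful about is the choice of branch of $t^{1/3}$ in defining the reduction map, which is handled by the rotational symmetry of the Candelas--de la Ossa potential $f_t$.
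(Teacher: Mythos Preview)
Your argument is correct and is exactly the approach the paper indicates: the lemma is stated immediately after \eqref{eq: gcotPullBack} as a direct consequence of the $\mathbb{C}^*$-scaling, with no further details given. Your observation that the scaling actually yields the sharper exponent $|t|^{-(2+k)/3}$ is also correct; the stated bound $|t|^{-\frac{2}{3}(2+k)}$ is simply a weaker (and for the paper's purposes sufficient) consequence for $|t|\le 1$.
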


 It will be important for us to understand the rate at which the CO
 metric converges to its tangent cone at infinity.   Consider the map
\be\label{eq: mapPhit}
\begin{aligned}
\Phi_{t}(z): V_{0}\setminus\left\{\|z\|^2 \leq \frac{|t|}{2}\right\}& \longrightarrow \, V_t \setminus \{\|z\|^2 = |t|\},\\
z&\longmapsto z + \frac{t\bar{z}}{2\|z\|^2}.
 \end{aligned}
\ee
Tracing the definitions one can check that $\| z \|^2 \leq \| \Phi_t(z) \|^2 \leq 2 \| z \|^2$ and
\be\label{eq:PhitRescale}
\Phi_{t} = S_{t^{1/3}}\circ \Phi_{1} \circ S_{t^{-1/3}}.
\ee
We have

\begin{lem}[Conlon-Hein \cite{CH}, Proposition 5.9]\label{lem: ConHeinDecay}
Under the identification $\Phi_{1}$ we have that, for all $k \in \mathbb{Z}_{\geq 0}$, there is a constant $C_k$ such that
\[
|\nabla^{k}_{g_{co,0}}( \Phi_{1}^*g_{co,1} - g_{co,0})|_{g_{co,0}} \leq C_k r^{-3-k}.
\]
\end{lem}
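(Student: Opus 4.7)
The plan is to combine an asymptotic analysis of the Candelas--de la Ossa potential $f_1$ defined by~\eqref{eq: COpotential} with the $\mathbb{C}^*$-equivariance~\eqref{eq:PhitRescale} satisfied by $\Phi_t$. First, I would compute $\Phi_1^*\|z\|^2$ on $V_0$: expanding $\sum_i \left|z_i + \bar z_i/(2\|z\|^2)\right|^2$ and using $\sum_i z_i^2 = 0$ gives
\[
\Phi_1^*\|z\|^2 = \|z\|^2 + \frac{1}{4\|z\|^2},
\]
so $\Phi_1$ is asymptotic to the identity with an error of order $\|z\|^{-2} \sim r^{-3}$.

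Next, I would derive a sharp asymptotic expansion of $f_1$. Substituting $\cosh^{-1}(s) = \log(2s) - \tfrac{1}{4s^2} + O(s^{-4})$ and $\sinh(2\cosh^{-1}(s)) = 2s\sqrt{s^2-1}$ into~\eqref{eq: COpotential} and differentiating under the integral sign yields
\[
f_1'(s) = s^{-1/3} + O(s^{-7/3}\log s),
\]
with analogous decay for all higher derivatives. These higher-derivative bounds are cleanest to obtain by iterating the ODE that the radial profile of $f_1$ satisfies as a consequence of the Ricci-flat Monge--Amp\`ere equation on $V_1$. Integrating then gives $f_1(s) = \tfrac{3}{2} s^{2/3} + c_0 + O(s^{-4/3}\log s)$ for a constant $c_0$. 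Combining with the first step via a first-order Taylor expansion, the difference of K\"ahler potentials $\Phi_1^* f_1(\|w\|^2) - \tfrac{3}{2}\|z\|^{4/3}$ on $V_0$ equals a constant plus an $O(\|z\|^{-4/3}\log\|z\|)$ remainder. Applying $\sqrt{-1}\partial\bar\partial$ on $V_0$ kills the constant, and the norm with respect to $g_{co,0}$---whose Euclidean matrix entries scale as $\|z\|^{-2/3}$---then yields the $k = 0$ bound.

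For $k \geq 1$, I would argue by rescaling. Given $z_0 \in V_0$ with $r(z_0) = R \gg 1$, choose $\lambda$ with $|\lambda| = R^{-1}$ so that $S_\lambda(z_0)$ lies on the unit link $\{r = 1\}$. Using $S_\lambda^* g_{co,0} = |\lambda|^2 g_{co,0}$, the scaling identity~\eqref{eq: gcotPullBack}, and the intertwining~\eqref{eq:PhitRescale}, the tensor $\Phi_1^* g_{co,1} - g_{co,0}$ transforms under $S_\lambda$ into $|\lambda|^2 (\Phi_{\lambda^3}^* g_{co,\lambda^3} - g_{co,0})$ on an annulus of fixed $g_{co,0}$-size around the link. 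As $R \to \infty$, the parameter $|\lambda^3| = R^{-3} \to 0$, so the smoothings $V_{\lambda^3}$ converge smoothly on compact subsets of the regular locus of $V_0$ to $V_0$ itself and $g_{co,\lambda^3}$ converges smoothly to $g_{co,0}$ there. Standard interior Schauder/Evans--Krylov estimates for the homogeneous Ricci-flat Monge--Amp\`ere equation then upgrade the $C^0$ rate from the previous step to uniform $C^k$ rates at this link scale; undoing the rescaling reinstates the factor $r^{-k}$, completing the argument.

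The hard part is the sharp asymptotic analysis of $f_1$ and its derivatives: the integral~\eqref{eq: COpotential} is not elementary, and one must track all orders of differentiation uniformly while ensuring that the subleading logarithmic terms do not overwhelm the desired $r^{-3-k}$ power decay. Exploiting the ODE satisfied by the radial profile of $f_1$ is the most efficient way to achieve this.
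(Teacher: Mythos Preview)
The paper does not prove this lemma; it cites Conlon--Hein \cite{CH} and moves on. So the comparison is between your sketch and a result taken as a black box.

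Your outline is on the right track, but there is a genuine gap in the $k=0$ step. You estimate the function $\Phi_1^*f_1(\|w\|^2) - \tfrac{3}{2}\|z\|^{4/3}$ on $V_0$ and then apply $\sqrt{-1}\partial\bar\partial$ \emph{on $V_0$}. That computes $\sqrt{-1}\partial\bar\partial\big(f_1\circ\Phi_1\big) - g_{co,0}$, not $\Phi_1^*g_{co,1} - g_{co,0}$. These two tensors differ because $\Phi_1(z)=z+\tfrac{\bar z}{2\|z\|^2}$ is not holomorphic, so pullback does not commute with $\partial\bar\partial$. Concretely, the antiholomorphic differential of $\Phi_1$ has Euclidean size $O(\|z\|^{-2})$, and when contracted with $g_{co,1}$ (whose Euclidean components are $O(\|z\|^{-2/3})$) this produces an error of Euclidean size $O(\|z\|^{-8/3})$, i.e.\ exactly $O(r^{-3})$ in the $g_{co,0}$ norm. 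So the term you dropped is of the same order as the bound you are trying to prove; you must estimate it separately. The fix is straightforward --- compute $\Phi_1^*g_{co,1}$ directly in ambient coordinates, tracking both the $f_1$-asymptotics and the $d\Phi_1$-error --- but it is not optional.

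Your scaling argument for $k\ge 1$ is essentially circular. The identity you use (correctly, up to relabeling) is
\[
S_R^*\big(\Phi_1^*g_{co,1}-g_{co,0}\big)=R^2\big(\Phi_{R^{-3}}^*g_{co,R^{-3}}-g_{co,0}\big),
\]
so controlling the $C^k$ norm of the left-hand side on $\{r\sim R\}$ is the same as controlling the $C^k$ norm of $\Phi_s^*g_{co,s}-g_{co,0}$ on the link with $s=R^{-3}$. The appeal to Evans--Krylov/Schauder does not bridge this: the difference of two Ricci-flat metrics does not satisfy a useful elliptic equation, and getting $C^k$ smallness (not just boundedness) from $C^0$ smallness requires knowing that $s\mapsto \Phi_s^*g_{co,s}$ is $C^1$ in $s$ at $s=0$ in the $C^k$ topology --- which in turn needs exactly the full asymptotic expansion of $f_1$ with all derivatives that you already obtained. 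The honest route is to differentiate the direct computation from the $k=0$ step; the scaling repackages but does not replace that work.
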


Combining Lemma~\ref{lem: ConHeinDecay} with~\eqref{eq: gcotPullBack} and ~\eqref{eq:PhitRescale} we obtain estimates for the decay rate of $g_{co,t}$ towards $g_{co,0}$.
 \begin{cor}\label{cor: decayGcotPull}
For all $k\in \mathbb{Z}_{\geq 0}$ there is a uniform constant $C_k$, independent of $t$, so that
\[
|\nabla^{k}_{g_{co,0}} \Phi_{t}^*g_{co,t} - g_{co,0}|_{g_{co,0}} \leq C_k|t| r^{-3-k}.
\]
\end{cor}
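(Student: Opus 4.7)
The plan is to use the scaling identities~\eqref{eq: gcotPullBack} and~\eqref{eq:PhitRescale} to reduce the estimate to the Conlon-Hein bound (Lemma~\ref{lem: ConHeinDecay}) at $t=1$. First I would combine the two identities. From $\Phi_t = S_{t^{1/3}}\circ\Phi_1\circ S_{t^{-1/3}}$ it follows that $S_{t^{-1/3}}\circ\Phi_t = \Phi_1\circ S_{t^{-1/3}}$, so
\[
\Phi_t^* g_{co,t} \,=\, |t|^{2/3}\bigl(S_{t^{-1/3}}\circ\Phi_t\bigr)^{\!*} g_{co,1} \,=\, |t|^{2/3}\, S_{t^{-1/3}}^{\,*}\Phi_1^* g_{co,1}.
\]
Since $g_{co,0}$ is homogeneous of degree $2$ under the $\C^*$-action, i.e. $S_\lambda^* g_{co,0} = |\lambda|^2 g_{co,0}$, the same identity with $g_{co,1}$ replaced by $g_{co,0}$ gives $|t|^{2/3} S_{t^{-1/3}}^{\,*} g_{co,0} = g_{co,0}$. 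Subtracting,
\[
\Phi_t^* g_{co,t} - g_{co,0} \,=\, |t|^{2/3}\, S_{t^{-1/3}}^{\,*}\bigl(\Phi_1^* g_{co,1} - g_{co,0}\bigr).
\]

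Next I would control the pointwise $C^k_{g_{co,0}}$ norm of $S_\lambda^* T$ for a $(0,2)$-tensor $T$ in terms of that of $T$. By naturality of the Levi-Civita connection and the pointwise norm under the diffeomorphism $S_\lambda$,
\[
|\nabla^k_{S_\lambda^* g_{co,0}}\, S_\lambda^* T|_{S_\lambda^* g_{co,0}}(p) \,=\, \bigl(|\nabla^k_{g_{co,0}} T|_{g_{co,0}}\bigr)\bigl(S_\lambda(p)\bigr),
\]
and a constant conformal rescaling of the reference metric by $c=|\lambda|^2$ multiplies the norm of the $k$-th covariant derivative of a $(0,2)$-tensor by $c^{-(k+2)/2}$. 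Together these give
\[
|\nabla^k_{g_{co,0}} S_\lambda^* T|_{g_{co,0}}(p) \,=\, |\lambda|^{k+2}\,\bigl(|\nabla^k_{g_{co,0}} T|_{g_{co,0}}\bigr)\bigl(S_\lambda(p)\bigr).
\]
Applying this to $T = \Phi_1^* g_{co,1} - g_{co,0}$ with $\lambda = t^{-1/3}$, using the homogeneity $r\circ S_\lambda = |\lambda|\, r$ together with Lemma~\ref{lem: ConHeinDecay}, yields
\[
|\nabla^k_{g_{co,0}} S_{t^{-1/3}}^{\,*} T|_{g_{co,0}} \,\le\, C_k\, |t|^{-(k+2)/3}\,\bigl(|t|^{-1/3} r\bigr)^{-3-k} \,=\, C_k\,|t|^{1/3}\, r^{-3-k}.
\]
Multiplying by the outer factor $|t|^{2/3}$ produces the claimed bound $C_k\,|t|\, r^{-3-k}$.

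There is no substantial analytic obstacle: the content lies in the $t=1$ Conlon-Hein estimate, and the present corollary is a scaling consequence. The only care required is tracking powers of $|t|$ through the interaction of the pullback by $S_{t^{-1/3}}$, the constant rescaling of the reference metric, and the homogeneity of $r$. The apparent ambiguity in choosing a branch of $t^{1/3}$ is harmless because every quantity that appears in the final estimate depends only on $|t|$, and the Candelas-de la Ossa metrics together with $r$ are $S^1$-invariant.
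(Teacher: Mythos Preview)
Your proof is correct and follows essentially the same approach as the paper: both combine the scaling identities~\eqref{eq: gcotPullBack} and~\eqref{eq:PhitRescale} to write $\Phi_t^* g_{co,t} - g_{co,0} = |t|^{2/3} S_{t^{-1/3}}^{\,*}(\Phi_1^* g_{co,1} - g_{co,0})$ and then track the powers of $|t|$ through the homogeneity of $g_{co,0}$ and $r$ to reduce to Lemma~\ref{lem: ConHeinDecay}. Your bookkeeping of the scaling of the $C^k_{g_{co,0}}$ norm of a $(0,2)$-tensor is spelled out more explicitly than in the paper, but the argument is the same.
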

\begin{proof}
From~\eqref{eq: gcotPullBack} and ~\eqref{eq:PhitRescale} we have
\[
\Phi_{t}^*g_{co,t} =|t|^{2/3} (S_{t^{-1/3}})^*\Phi_{1}^*g_{co,1}.
\]
It follows that, if $x\in V_0$, then, from Lemma~\ref{lem: ConHeinDecay} we get
\[
\begin{aligned}
|\nabla^{k}_{g_{co,0}}( \Phi_{t}^*g_{co,t} - g_{co,0})|_{g_{co,0}}(x) &=|t|^{2/3}|\nabla^{k}_{g_{co,0}} \Phi_{t}^*g_{co,t} - g_{co,0}|_{|t|^{2/3}g_{co,0}} (S_{t^{-1/3}}(x))\\
&\leq C_k|t|^{2/3}|t|^{-\frac{2}{3}\cdot \frac{2+k}{2}}r(S_{t}^{-1/3}(x))^{-3-k}\\
&=C_k|t|r(x)^{-3-k}
\end{aligned}
\]
\end{proof}

One application of this result will be to transplanting estimates for tensors on $V_0$ to estimates on the smooth varieties $V_t$.  The following lemma follows from Corollary \ref{cor: decayGcotPull} and $\| z \|^2 \leq \| \Phi(z)\|^2 \leq 2 \| z\|^2$.

\begin{lem} \label{lem:transplant}
There is a constant $R>0$ depending only on the constant $C_0$ in
Corollary~\ref{cor: decayGcotPull}, such that, if $T$ is a tensor on
some subset of $V_{0} \cap \{ r^3 >R\frac{|t|}{2}\}$ satisfying the estimate
\[
|\nabla^k_{g_{co,0}} T|_{g_{co,0} } \leq M_k r^{\lambda-k}
\]
for some $k \in \mathbb{Z}_{\geq 0}$, constant $M_k>0$ and some $\lambda$,
then $(\Phi_{t}^{-1})^*T$ defines a tensor on $V_t \cap\{ r^3 \geq \frac{R^2+1}{2R}|t|\}$ satisfying the estimate
\[
|\nabla^k_{g_{co,t}} (\Phi_{t}^{-1})^*T|_{g_{co,t}}\leq M_k' r^{\lambda-k}
\]
for constants $M_k' >0$ depending only on $M_k$ and the the constants $C_k$ appearing in Corollary~\ref{cor: decayGcotPull}.
\end{lem}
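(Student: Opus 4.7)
The plan is to argue by pullback via the diffeomorphism $\Phi_t$. Transplanting $T$ to $V_t$ and measuring it in $g_{co,t}$ is the same as measuring $T$ on $V_0$ with respect to the pulled-back metric $\Phi_t^* g_{co,t}$ and its Levi-Civita connection, and by Corollary~\ref{cor: decayGcotPull} this pulled-back metric is a small polynomial perturbation of $g_{co,0}$ once $R$ is taken large. First I need to check that $\Phi_t$ sends the source region into the claimed target region. Using $\sum z_i^2 = 0$ on $V_0$, formula~\eqref{eq: mapPhit} yields directly
\[
\|\Phi_t(z)\|^2 = \|z\|^2 + \frac{|t|^2}{4\|z\|^2},
\]
so $\|z\|^2 > R|t|/2$ forces $\|\Phi_t(z)\|^2 > (R^2+1)|t|/(2R)$. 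On this region $r(z)^3 = \|z\|^2 \geq R|t|/2$ also gives $|t|/r^3 \leq 2/R$.

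Next I choose $R$ large enough that $2C_0/R \leq 1/2$; then Corollary~\ref{cor: decayGcotPull} at $k=0$ yields $\tfrac12 g_{co,0} \leq \Phi_t^* g_{co,t} \leq 2 g_{co,0}$ and the comparability of $r$ at $z$ and $\Phi_t(z)$. The $k=0$ case follows at once: $|(\Phi_t^{-1})^* T|_{g_{co,t}}(\Phi_t(z)) = |T|_{\Phi_t^* g_{co,t}}(z) \leq C |T|_{g_{co,0}}(z) \leq C M_0 r^\lambda$. For $k \geq 1$, introduce the difference-of-connections tensor $A = \nabla^{\Phi_t^* g_{co,t}} - \nabla^{g_{co,0}}$, which can be written as a contraction of $(g_{co,0})^{-1}$ with $\nabla^{g_{co,0}}(\Phi_t^* g_{co,t} - g_{co,0})$. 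Corollary~\ref{cor: decayGcotPull} then gives $|\nabla^j_{g_{co,0}} A|_{g_{co,0}} \leq C''_j\,|t|\, r^{-4-j}$ for every $j$. Iterating the identity $\nabla^{\Phi_t^* g_{co,t}} S = \nabla^{g_{co,0}} S + A \star S$ expands $\nabla^k_{\Phi_t^* g_{co,t}} T$ as a finite sum of terms
\[
(\nabla^{j_1}_{g_{co,0}} A) \star \cdots \star (\nabla^{j_m}_{g_{co,0}} A) \star \nabla^i_{g_{co,0}} T, \qquad i + m + j_1 + \cdots + j_m = k,
\]
and using the hypothesis on $T$ at all orders $\leq k$ together with the bounds on $A$ and its derivatives, each such term is bounded by
\[
M_i \prod_{l=1}^m C''_{j_l} \cdot \Bigl(\tfrac{|t|}{r^3}\Bigr)^m \cdot r^{\lambda - k} \leq \text{const} \cdot r^{\lambda - k},
\]
thanks to $|t|/r^3 \leq 2/R$. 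Converting back to $g_{co,t}$-norms costs only a bounded factor coming from the uniform metric equivalence, yielding the stated bound with $M'_k$ depending on $M_0, \ldots, M_k$ and the relevant $C_j$.

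The main obstacle is the bookkeeping in the expansion above, but the underlying mechanism is entirely transparent: Corollary~\ref{cor: decayGcotPull} ensures $\Phi_t^* g_{co,t}$ is a uniformly controlled perturbation of $g_{co,0}$ on the specified region, so every metric-dependent operation transfers with at worst a bounded loss, and the extra factors of $|t|/r^3$ produced by each occurrence of $A$ are harmless.
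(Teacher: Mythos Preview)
Your argument is correct and is exactly the standard mechanism the paper has in mind: the paper does not actually supply a proof of this lemma, recording it ``for future reference'' as a routine consequence of Corollary~\ref{cor: decayGcotPull}. Your pullback-and-compare argument (uniform equivalence of $\Phi_t^*g_{co,t}$ and $g_{co,0}$, then control of the difference-of-connections tensor and an inductive expansion) is precisely how one fills in the details, and your region computation $\|\Phi_t(z)\|^2 = \|z\|^2 + |t|^2/(4\|z\|^2)$ together with the monotonicity of $u \mapsto u + |t|^2/(4u)$ on $u>|t|/2$ cleanly identifies the image region.
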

We also note that the estimate from Corollary \ref{cor: decayGcotPull} implies the
following estimate on
$V_t$
\be \label{eq:decayGco0Pull}
 \big| \nabla^k_{g_{co,t}} [(\Phi^{-1}_t)^*g_{co,0} - g_{co,t}]\big|_{g_{co,t}} \leq C|t| r^{-3-k}.
\ee

It will be useful later in the paper to have a well adapted system of coordinates in which to carry out our analysis.  The following, which we refer to as ``holomorphic cylindrical coordinates" were used in \cite{Chuan}.  For completeness, we recall these coordinates and prove their existence in the following
\begin{lem}\label{lem: cylCoords}
There are uniform constants $\rho>0$ and $C_{k}>0, k\in \mathbb{Z}_{\geq 0}$ with the following effect: If $\hat{z}:= (\hat{z}_1,\ldots, \hat{z}_4) \in V_{t}$ and $\hat{z}\ne 0$, then there is an open neighborhood $U_{\hat{z}} \ni \hat{z}$ and a holomorphic embedding $\psi:B_{\rho}(0)\rightarrow U_{\hat{z}}$ such that, if $(w_1,\ldots, w_{3})$ denote the local holomorphic coordinates on $B_{\rho}(0)$, then, setting $\hat{r}=r(\hat{z})$ we have
\begin{itemize}
\item[$(i)$] We have $\frac{1}{4} \hat{r} \leq r(w) \leq 4\hat{r}$ on $B_{\rho}$, and
\[
|\del^kr|_{g_{euc}}(w) \leq C_k \hat{r}
\]
where $g_{euc}$ denotes the Euclidean metric on $B_{\rho}$.
\item[$(ii)$] In these coordinates we have
\[
 C_0^{-1} g_{euc} \leq \hat{r}^{-2}g_{co,t} \leq C_0 g_{euc}
\]
and
\[
|\del^k (\hat{r}^{-2}g_{co,t})|_{g_{euc}} \leq C_k
\]
\end{itemize}
\end{lem}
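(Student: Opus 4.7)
The guiding idea is to exploit the $\mathbb{C}^*$-action $S_\lambda$ from~\eqref{eq: scalingMapDef}, under which $r\circ S_\lambda = |\lambda|\,r$ and $S_\lambda^* g_{co,\lambda^3 t} = |\lambda|^2 g_{co,t}$, in order to reduce the construction of the chart at an arbitrary scale to a uniform statement at the unit scale $r=1$. Given $\hat z\in V_t\setminus\{0\}$, set $\lambda := \|\hat z\|^{-2/3} = \hat r^{-1}$ (real positive), so that $\tilde z := S_\lambda(\hat z) = \|\hat z\|^{-1}\hat z$ satisfies $\|\tilde z\|=1$ and lies on $V_{\tilde t}$ with $\tilde t := \lambda^3 t$. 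Since $\hat z\in V_t$ forces $\|\hat z\|^2\geq |t|$, we automatically have $|\tilde t| = |t|/\|\hat z\|^2\leq 1$. It therefore suffices to construct a chart $\tilde\psi\colon B_\rho(0)\to V_{\tilde t}$ around $\tilde z$ whose derivatives, along with those of $r\circ\tilde\psi$ and $\tilde\psi^* g_{co,\tilde t}$, are bounded by constants independent of the base point. The chart at $\hat z$ is then $\psi := S_\lambda^{-1}\circ\tilde\psi$, and one verifies $\psi^* g_{co,t} = \hat r^2\,\tilde\psi^* g_{co,\tilde t}$ and $r\circ\psi = \hat r\cdot(r\circ\tilde\psi)$, yielding (i) and (ii).

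For the normalized problem, $\|\tilde z\|^2=1$ ensures that some $|\tilde z_i|\geq 1/2$, say $i=1$. Then $\partial_{z_1}(z_1^2+z_2^2+z_3^2+z_4^2-\tilde t)\big|_{\tilde z}=2\tilde z_1\neq 0$, and the holomorphic implicit function theorem provides a holomorphic function $z_1=g(z_2,z_3,z_4)$ solving the defining equation on a ball of radius $\rho>0$ around $(\tilde z_2,\tilde z_3,\tilde z_4)$. The chart $\tilde\psi$ is then the graph of $g$, translated to send $0\mapsto\tilde z$. Uniform bounds on $\rho$ and on all higher derivatives of $g$ follow from compactness of the parameter space
\[
K := \{(\tilde z,\tilde t) : \|\tilde z\|=1,\ \tilde z\in V_{\tilde t},\ |\tilde t|\leq 1\},
\]
together with continuous dependence of the implicit function on its data. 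The estimate (i) then follows from smoothness of $r(z)=\|z\|^{2/3}$ on $\|z\|>0$, while (ii) reduces to showing that $\tilde\psi^* g_{co,\tilde t}$ is uniformly equivalent to $g_{euc}$ with uniformly bounded derivatives, using smoothness of the Candelas--de la Ossa potential $f_{\tilde t}(\|z\|^2)$ from~\eqref{eq: COpotential} as a function of $(\tilde t,z)$ on compact subsets of $\mathcal{V}$.

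The one technical point requiring attention is the boundary regime $|\tilde t|$ close to $1$, where the chart at $r(\tilde z)=1$ can approach the $S^3$ vanishing cycle $\{\|z\|^2 = |\tilde t|\}$. Inspection of~\eqref{eq: COpotential} shows that $f_{\tilde t}$ extends smoothly across this cycle (this is just the statement that $g_{co,\tilde t}$ is a genuine Calabi--Yau metric on the smooth variety $V_{\tilde t}$), and the family $\{g_{co,\tilde t}\}$ depends smoothly on $\tilde t\in\overline\Delta$ in the $C^k$ topology on compact subsets of $\mathcal{V}$. Combined with compactness of $K$, this yields the uniform constants $C_k$ and $\rho$ and completes the argument.
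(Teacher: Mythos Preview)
Your argument is correct, and it is in fact somewhat more streamlined than the paper's.  The paper proceeds in three steps: it first constructs the coordinates on $V_0$ and verifies $(i)$--$(ii)$ directly; it then treats $V_1$ by splitting into a compact region $\{\|z\|^2\leq 2R\}$ and an exterior region $\{\|z\|^2>R\}$, on which it transports the $V_0$ estimates through the diffeomorphism $\Phi_1$ using the Conlon--Hein decay estimate (Lemma~\ref{lem: ConHeinDecay}); and finally it scales from $V_1$ to $V_t$ via $S_{t^{1/3}}$.  Your rescaling goes instead directly to the unit sphere $\{\|\tilde z\|=1\}$ in the fiber $V_{\tilde t}$ with $|\tilde t|\leq 1$, so that the entire normalized problem lives over the compact set $K$, and a single compactness argument replaces the separate analysis of the exterior region.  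In particular your route does not need Lemma~\ref{lem: ConHeinDecay} at all.  The price is that the compactness argument requires the family $g_{co,\tilde t}$ to vary continuously in $C^k$ through $\tilde t=0$ on $\{\|z\|\sim 1\}$; you state this as smooth dependence, which is slightly too strong (the potential involves $|t|^{2/3}$), but $C^k$-continuity suffices and follows either from the explicit integral~\eqref{eq: COpotential} or, if one prefers, from Corollary~\ref{cor: decayGcotPull}.  Both constructions produce the same coordinates $w_i=\|\hat z\|^{-1}(z_i-\hat z_i)$, so the subsequent applications (e.g.\ in Section~\ref{sec: approx-hym}) go through unchanged.
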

\begin{proof}
We begin by constructing some candidate coordinates.  Fix $\hat{z}:= (\hat{z}_1,\ldots, \hat{z}_4) \in V_{t}$.  Clearly $|\hat{z}_i| > \frac{1}{100}\|\hat{z}\|$ for some $1\leq i \leq 4$, and hence, without loss of generality we may assume $i=4$.   We claim that $w_i=z_i-\hat{z}_i$, for $1 \leq i \leq 3$ form a coordinate system near $\hat{z}$.  Indeed, consider the function
\[
F_t(z_1,z_2,z_3,z_4)= \sum_{i=1}^4 z_i^2-t.
\]
By the implicit function theorem, the coordinates $(z_1,z_2,z_3)$ form local coordinates on $\{F_t=0\}$ whenever $\frac{\del F}{\del z_4} = 2z_4 \ne 0$.  Let us examine these coordinates.  Since $r^2 = \|z\|^{4/3}$ one can easily show that $\frac{3}{2}\ddb r^2$ is a globally defined, smooth metric on $\mathbb{C}^4\setminus\{0\}$, which we still denote by $g_{co,0}$.  We claim that, up to scaling and translating, the estimates in $(i)$ always hold in these coordinates.  Indeed, from the scaling relation $r(\lambda \cdot z) = |\lambda| r(z)$ we may as well assume that $\|\hat{z}\|=1$.  It is easy to see that, for any multi-index $\alpha=(k_1,k_2,k_3,k_4) \in \mathbb{Z}_{\geq 0}^4$ we have
\[
\sup_{\{\frac{1}{8}<\|z\|<8 \}}\bigg| \frac{\del^{|\alpha|}}{\del z^\alpha} r \bigg|_{g_{euc},z}\leq C(\alpha).
\]
Since $10^{-3}<|z_4|<10$ on this region, the estimates in $(i)$ will follow from comparing $g_{euc,z}= \sum_{i=1}^4 |dz_i|^2$ to the Euclidean metric in the coordinates 
\[
(w_1,w_2,w_3)=(z_1-\hat{z}_1,z_2-\hat{z}_2,z_3-\hat{z}_3).
\]
Using the definition of the coordinates we have
\[
g_{euc,z} = \sum_{i=1}^{3}|dw_i|^2 + \frac{|\sum_{i=1}^3 z_i(w)dw_i|^2}{|z_4(w)|^2}.
\]
For clarity, let us denote by $g_{euc,w} = \sum_{i=1}^{3}|dw_i|^2 $ the Euclidean metric in the $w$ coordinates.  Then, from the Cauchy-Schwarz inequality, together with $|z_4| \geq \frac{1}{100} \|\hat{z}\| =\frac{1}{100}$ we have
\[
g_{euc,w} \leq g_{euc,z} \leq 10^3g_{euc,w}.
\]
Noting that $\{\frac{1}{8} <\|z\| < 8\}$ implies that $\{\frac{\hat{r}}{4} < r(z) < 4\hat{r}\}$ we see that $(i)$ holds in these coordinates.

Next, we address $(ii)$ in the special case $t=0$. In this case the desired estimates follow immediately from a scaling argument and the above estimates for $r$, since rescaling preserves $V_0$. 

It only remains to determine a bound for $\rho$ such that $\{\sum_{i=1}^{3}|w|<\rho\}$ implies $\{\frac{1}{8} < \|z(w)\|<8\}$. From the definition of the coordinates we have
\[
\|z(w)\|^2 = \sum_{i=1}^3 |w_i+\hat{z}_i|^2 + \big|\sum_{i=1}^3 (w_i+\hat{z}_i)^2\big|.
\]
On the one hand, we have
\[
 \sum_{i=1}^3 |w_i+\hat{z}_i|^2 \geq \frac{1}{2}\sum_{i=1}^{3}|\hat{z}_i|^2 - \sum_{i=1}^3 |w_i|^2, 
 \]
 while on the other hand
 \[
 |\sum_{i=1}^3 (w_i+\hat{z}_i)^2| \geq \big|\sum_{i=1}^3 \hat{z}_i^2\big| -5\sum_{i=1}^3 |w_i|^2 - \frac{1}{4}\sum_{i=1}^4 |\hat{z}_i|^2.
 \]
 Thanks to the fact that $1=\sum_{i=1}^3 |\hat{z}_i|^2 +\big|\sum_{i=1}^3\hat{z}_i^2\big|$ we get
 \[
 \|z(w)\|^2 \geq \frac{1}{4}- 6\sum_{i=1}^3 |w_i|^2 >\frac{1}{64}
 \]
 provided $\sum_{i=1}^3 |w_i|^2 <\rho \leq \frac{1}{8}$.  The upper bound is similar.

Next we consider the case when $\hat{z} \in V_{1}$ and $\{\|\hat{z}\|^2 > R\}$ for some large constant $R\geq 2$ to be determined.  In this region we will also use coordinates constructed from $(z_1,z_2, z_3)$, assuming as before  that $|\hat{z}_4| > \frac{1}{100}\|\hat{z}\|$.  Since we have already established the estimates $(i)$ in these coordinates, it suffices to prove $(ii)$.  Let $\hat{y} \in V_0$ with $\|\hat{y} \|^2 > \frac{1}{2}$ be defined by
\[
\Phi_1(\hat{y}) = \hat{z} \in V_1.
\]
Note that from the definition (\ref{eq: mapPhit}) of $\Phi_1$ we have $\|\hat{y}\|^2 > R/2$.  Define coordinates $(x_1,x_2,x_3)$ on $V_1$, near $\hat{z}$ by
\[
z_i = |\hat{z}|x_i+\hat{z}_i
\]
and let $(w_1,w_2,w_3)$ be the coordinates on $V_0$, centered at $\hat{w}$ constructed above.  Explicitly,
\[
z_i = |\hat{y}|w_i + \hat{y}_i, \quad 1 \leq i \leq 3
\]
where $\sum_{i=1}^3|w_i|^2 <\frac{1}{8}$.
The map $\Phi_1$ is given in these coordinates by
\be\label{eq: Phi1Coords}
w_i \longmapsto  \frac{\|\hat{y}\|}{\|\hat{z}\|} \left( w_i + \frac{\overline{w_i\|\hat{y}\| + \hat{y}_i}}{2\|\hat{y}\|\cdot\|z(w)\|^2} - \frac{\overline{\hat{y}_i}}{2\|\hat{y}\|\cdot\|\hat{z}\|^2}\right)
\ee
where
\[
\|z(w)\|^2= \sum_{i=1}^{3} \big|\|\hat{y}\|\cdot w_i+\hat{y}_i\big|^2 + \big|\sum_{i=1}^3(\|\hat{y}\|\cdot w_i+\hat{y}_i|)^2\big|.
\]
 When $\|y\|^2>\frac{1}{2}$ we have $1\leq \frac{\|\Phi_1(y)\|^2}{\|y\|^2} \leq 2$ and so 
 \be\label{eq: compareRadPhi1}
 \frac{1}{2}r(\Phi_1(y)) < r(y) < r(\Phi_1(y)).
 \ee
 From~\eqref{eq: Phi1Coords} it follows that there is a constant $c\in[\frac{1}{2}, 1]$ such that
 \[
\frac{\del}{\del w_j} x_i\circ \Phi_1= c \delta^{i}_{j} + O(R^{-1}), \qquad \frac{\del}{\del \bar{w}_j} x_i\circ \Phi_1=  O(R^{-1}),
 \]
 and, for all multi-indices $\alpha$ with $|\alpha|\geq 2$,
 \be\label{eq: Phi1smallDer}
\frac{\del^{|\alpha|}}{\del w^{\alpha}} x_i\circ \Phi_1= O(R^{-1}), \qquad \frac{\del^{|\alpha|}}{\del \bar{w}^{\alpha}} x_i\circ \Phi_1=  O(R^{-1}).
 \ee
Thus, by choosing $R$ sufficiently large we can ensure that
 \be\label{eq: compPhi1Euc}
 \frac{1}{2}g_w< \Phi_1^*g_x < 2g_w.
 \ee
 where $g_{x}$ denotes the Euclidean metric in $x$ coordinates, and $g_{w}$ denotes the Euclidean metric in $w$ coordinates. Furthermore,~\eqref{eq: Phi1smallDer} implies that, for all multi-indices $\alpha \ne 0$ we have
 \[
 \frac{\del^{|\alpha|}}{\del w^{\alpha}} \Phi_1^*g_{x}= O(R^{-1}).
 \]
By Lemma~\ref{lem: ConHeinDecay} and ~\eqref{eq: compareRadPhi1} we can choose $R$ sufficiently large so that
 \be\label{eq: 0-1uniformequiv}
 \frac{1}{2} g_{co,0} \leq \Phi_1^*g_{co,1} \leq 2 g_{co,0}.
 \ee
Combining~\eqref{eq: compPhi1Euc},~\eqref{eq: 0-1uniformequiv} and~\eqref{eq: compareRadPhi1} we deduce the estimates in $(ii)$ from Lemma~\ref{lem: ConHeinDecay} and the estimates for $g_{co,0}$ in the $w$ coordinates.  For example, we have
\[
\begin{aligned}
|\nabla_{g_{x}}r(\hat{z})^{-2} g_{co,1}|_{g_{x}} &= |\nabla_{\Phi_1^{*}g_{x}}r(\hat{z})^{-2} \Phi_1^*g_{co,1}|_{\Phi_1^*g_{x}} \\
& \leq C\left( |\nabla_{g_w}r(\hat{z})^{-2} \Phi_1^*g_{co,1}|_{g_{w}} + |O(R^{-1}) r(\hat{y})^{-2} g_{co,0}|_{g_{w}}\right)\\
&\leq C\left(|\nabla_{g_{co,0}}( \Phi_1^*g_{co,1}- g_{co,0})|_{g_{co,0}} + | r(\hat{y})^{-2}\del g_{co,0}|_{g_{w}} +1\right)\\
&\leq C_1.
\end{aligned}
\]
Higher order derivatives follow similarly by induction. Note that if $R$ is sufficiently large then $\Phi_1(\{ w : \sum_{i=1}^3|w_i|^2 <\frac{1}{8}\}) \supset \{x : \sum_{i=1}^3|x_i|^2 <\frac{1}{10}\}$ and so again, $\rho$ can be chose uniformly.

The portion of $V_1$ given by $\{z\in V_1 : \|z\|^2< 2R\}$ is compact and hence the desired coordinates can be constructed by a covering argument. It only remains to construct the coordinates on $V_t$ for $0<|t|<1$.  But for $t\ne0$ we can use the holomorphic rescaling map~\eqref{eq: scalingMapDef} to induce holomorphic coordinates on $V_t$ from those on $V_1$.  By ~\eqref{eq: gcotPullBack} the metrics $g_{co,t}$ are generated, up to a scaling parameter, by the holomorphic rescaling map.  On the other hand, the estimates in $(i),(ii)$ are invariant under this rescaling, and hence the lemma follows.
\end{proof}

\begin{rk}
Note that the construction in Lemma~\ref{lem: cylCoords} shows that there is a constant $R>0$ such that, $\hat{z} \in \{ \|z\|^2 \geq R|t|\}$ and $|\hat{z}_4| > \frac{1}{100} \|\hat{z}\|$, then the holomorphic cylindrical coordinates can be taken to be
\[
(w_1,w_2,w_3) = \frac{1}{\|\hat{z}\|}(z_1-\hat{z}_1, z_2-\hat{z}_2, z_{3}-\hat{z_3})
\]
\end{rk}

Before moving on from this local discussion we state a lemma regarding extending some of the local objects introduced above.

\begin{lem}\label{lem: trivGlob}
 If $\mu:\mathcal{X}\rightarrow \Delta$ is a global smoothing of a Calabi-Yau variety $X_0= \mu^{-1}(0)$ with ordinary double point singularities at points $\{p_1, \ldots, p_k\}$, then there are disjoint open sets $U_i \subset \mathcal{X}$, with $p_i \in U_i$ such that $U_{i}$ is biholomorphic to a neighborhood of $0$ in the model smoothing ~\eqref{eq: modelSmoothing} and,
 \begin{itemize}
 \item[$(i)$] there is a globally defined function $r: \mathcal{X} \rightarrow \mathbb{R}_{\geq0}$ such that, after identifying $U_i$ with the model smoothing, $r^2|_{U_{i}} =\|z\|^{4/3}$, and $r^{-1}(0) = \{p_1,\ldots, p_k\}$.
 \item[$(ii)$] There is a collection of closed sets $U_i' \subset U_i$, and closed sets $C^t_i\subset X_t$ smooth map $\Phi_t: X_0\setminus \cup_i U_i' \rightarrow \mu^{-1}(t)\setminus C^t_i$ such that, after identifying $U_i$ with the model smoothing we have
 \[
 U_{i}'= \left\{\|z\|^2 \leq \frac{|t|}{2}\right\}, \quad C_i^t = \{\|z\|^2=|t|\}
 \]
 and $\Phi_t\big|_{U_{i}}$ is the map $z \mapsto z+ \frac{t\bar{z}}{2\|z\|^2}$.
 \end{itemize}
 \end{lem}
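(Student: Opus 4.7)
The plan is a patching argument built on the Kas--Schlessinger theorem cited in the text. Applying that theorem yields pairwise disjoint open neighborhoods $V_i \ni p_i$ in $\mathcal{X}$ together with biholomorphisms $\Psi_i : V_i \to W_i$ onto open neighborhoods $W_i \ni 0$ in the model smoothing $\mathcal{V}$ of~\eqref{eq: modelSmoothing}. After possibly shrinking $\Delta$, the sets $U_i$ appearing in the statement will be open neighborhoods of $p_i$ with $\bar{U}_i \subset V_i$, still biholomorphic (via the restriction of $\Psi_i$) to open neighborhoods of $0$ in $\mathcal{V}$.

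For part (i), I would fix smooth cutoffs $\chi_i : \mathcal{X} \to [0,1]$ supported in $V_i$, with pairwise disjoint supports and $\chi_i \equiv 1$ on a neighborhood of $\bar{U}_i$, and set
\[
r := \sum_i \chi_i \cdot \Psi_i^*(\|z\|^{2/3}) + c\Bigl(1 - \sum_i \chi_i\Bigr)
\]
for a fixed constant $c > 0$. This is smooth on $\mathcal{X}$, equals $\Psi_i^*(\|z\|^{2/3})$ on each $U_i$ (where $\chi_i \equiv 1$ and $\chi_j \equiv 0$ for $j \ne i$), and vanishes precisely on $\{p_1, \ldots, p_k\}$, giving the required identity $r^2|_{U_i} = \|z\|^{4/3}$.

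For part (ii), the essential observation is that the explicit model map $(z,t) \mapsto (z + t\bar z/(2\|z\|^2),\,t)$ defines a smooth trivialization $(V_0 \setminus \{0\}) \times \Delta \to \mathcal{V} \setminus \{0\}$, the image indeed lying in $\mathcal{V}$ by a direct calculation using $\sum z_j^2 = 0$ on $V_0$. Pushing forward $\partial/\partial t$ under this trivialization yields a smooth vector field $Z$ on $\mathcal{V}\setminus\{0\}$ tangent to $\mathcal{V}$ and satisfying $d\mu(Z)=\partial/\partial t$; pulling back via $\Psi_i$ produces smooth vector fields $V^{(i)}$ on $V_i \setminus \{p_i\}$. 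Away from the singular set $\mu$ is a holomorphic submersion, so a partition-of-unity argument yields a smooth vector field $W$ on $\mathcal{X} \setminus \bigcup_i \bar{U}_i$ with $d\mu(W) = \partial/\partial t$. I would then patch via
\[
V := \chi_i V^{(i)} + (1-\chi_i) W
\]
on each $V_i$, and $V := W$ elsewhere, obtaining a global smooth vector field on $\mathcal{X} \setminus \{p_1,\ldots,p_k\}$ with $d\mu(V) = \partial/\partial t$ (the condition being preserved by the convex combination) and $V \equiv V^{(i)}$ on $U_i$. Finally, $\Phi_t(z)$ is defined as the value at parameter $t$ of the integral curve of $V$ starting at $z$.

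The remaining verifications are bookkeeping. On $U_i$, the flow of $V^{(i)}$ is by construction the pullback of the explicit model map, proving the claimed identity for $\Phi_t|_{U_i}$. A direct calculation using $\sum z_j^2 = 0$ gives $\|\Phi_t(z)\|^2 = \|z\|^2 + |t|^2/(4\|z\|^2)$ for $z \in V_0 \cap U_i$, so $\|z\|^2 > |t|/2$ forces $\|\Phi_t(z)\|^2 > |t|$ and hence $\Phi_t(z) \notin C_i^t$. For starting points outside $\bigcup_i U_i$, the vector field $W$ is bounded on the relevant compact set, so integral curves exist for all $t \in \Delta$ (after possibly shrinking $\Delta$) and remain close to their starting points, in particular avoiding the $C_j^t \subset U_j$. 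I do not anticipate any serious obstacle here; the only mild technical point is arranging the cutoff supports so that $V \equiv V^{(i)}$ on all of $U_i$ (and not merely on a smaller subset), which is why I take $\chi_i \equiv 1$ on a neighborhood of $\bar{U}_i$.
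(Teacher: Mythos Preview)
Your proposal is correct and follows essentially the same approach as the paper's own proof, which is only a two-sentence sketch: extend the local function $\|z\|^{4/3}$ to a global positive function via cutoffs for part~(i), and for part~(ii) observe that the local model map is the flow of a lift of $\partial/\partial t$, then glue with any global lift (e.g.\ from a Riemannian metric) via a partition of unity. You have simply filled in the details the paper omits, including the explicit verification that the image of the model map lies in $V_t$ and the observation that $\chi_i\equiv 1$ on a neighborhood of $\bar U_i$ (together with shrinking $\Delta$) is needed so that the flow of the patched field agrees with the model flow on $U_i$.
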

 \begin{proof}
 We only sketch the proof, since it is straightforward.  Given a choice of the open sets $U_i$ we extend the locally defined functions $\|z\|^{4/3}$ on $\mathcal{X}$ to globally defined positive functions which only vanish that the singular points $p_i$.  This establishes $(i)$.  To prove $(ii)$ we just observe that the locally defined maps are given by the flow of a vector field which lifts $\frac{\del}{\del t}$.  We can extend this map globally by using a partition function to glue with any lift of $\frac{\del}{\del t}$ to $\mathcal{X}\setminus U_i$ (eg. by choosing a Riemannian metric on $\mathcal{X}$).
 \end{proof}
 
 \subsection{Balanced metrics on conifold transitions}
 
 We now review the work of Fu-Li-Yau \cite{FLY} who constructed balanced metrics on non-K\"ahler Calabi-Yau threefolds using conifold transitions and gluing.
 
 \begin{defn}
 Let $(X,g)$ be a complex manifold of complex dimension $n$ with a hermitian metric.  The metric $g$ is said to be balanced if the associated $(1,1)$ form $\omega$ satisfies
 \[
 d\omega^{n-1}=0.
 \]  
 \end{defn}
 
 We have the following theorem
 
 \begin{thm}[Fu-Li-Yau \cite{FLY}, Theorem 1.2]\label{thm: FLY}
 Let $X$ be a smooth, K\"ahler, Calabi-Yau threefold, and suppose that
 $X\rightarrow \underline{X} \leadsto X_t$ is a conifold transition.
 Then, for $|t|$ sufficiently small $X_t$ admits a balanced metric
 $\omega_{{\rm FLY},t}$.
 \end{thm}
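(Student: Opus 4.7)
The plan is to reduce the problem to constructing a closed positive real $(2,2)$-form on $X_t$ by invoking Michelsohn's characterization of balanced metrics: on a complex threefold, a Hermitian $(1,1)$-form $\omega$ satisfies $d\omega^2 = 0$ if and only if $\omega^2$ is a $d$-closed positive real $(2,2)$-form, and conversely every such $(2,2)$-form $\Omega$ is the square of a unique Hermitian form. Thus it suffices to produce, for each $|t| \ll 1$, a smooth closed positive $(2,2)$-form $\Omega_t$ on $X_t$; then $\omega_{{\rm FLY},t}$ is algebraically determined by $\omega_{{\rm FLY},t}^2 = \Omega_t$. The starting data is a K\"ahler form $\omega_X$ on $X$, whose square $\omega_X^2$ is automatically closed and positive.

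First I would construct a closed positive $(2,2)$-form $\Omega_0$ on $\underline{X}_{reg}$ which, after identifying a neighborhood of each ODP $p_i$ with a neighborhood of $0 \in V_0$, agrees with $a_i \omega_{co,0}^2$ for some constants $a_i > 0$. Choose small open neighborhoods $W_i$ of the $(-1,-1)$ curves $C_i \subset X$; these deformation retract to $\mathbb{P}^1$, so $H^4(W_i;\mathbb{R}) = 0$. Hence both $\omega_X^2$ and the crepant pullback $a_i \pi^* \omega_{co,0}^2$ are $d$-exact on $W_i$, say $d\gamma_i$ and $d\gamma_i'$. Using a cutoff in the annular shell $\overline{W_i} \setminus W_i'$ to interpolate between $\gamma_i$ and $\gamma_i'$ yields a $d$-closed $(2,2)$-form on $X$ which equals $a_i \pi^*\omega_{co,0}^2$ near $C_i$ and $\omega_X^2$ outside a slightly larger $W_i$. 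Choosing the $a_i$ small enough and the annuli thin enough, positivity is preserved, and the result descends across the contraction $\pi$ to the desired $\Omega_0$ on $\underline{X}_{reg}$.

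Next I would transplant $\Omega_0$ to $X_t$ using the diffeomorphism $\Phi_t$ of Lemma~\ref{lem: trivGlob}. On $X_t \setminus \bigcup_i C_i^t$ define $\tilde{\Omega}_t := (\Phi_t^{-1})^*\Omega_0$. Near each vanishing cycle, interpolate via a cutoff in an annular shell such as $\{R|t| \le \|z\|^2 \le 2R|t|\}$ between $(\Phi_t^{-1})^*(a_i \omega_{co,0}^2)$ and $a_i \omega_{co,t}^2$, both of which are defined on $V_t \cap \{\|z\|^2 \geq |t|\}$. By~\eqref{eq:decayGco0Pull}, the difference $a_i \omega_{co,t}^2 - (\Phi_t^{-1})^*(a_i \omega_{co,0}^2)$ is $O(|t| r^{-3})$ in every $C^k$, so the interpolation produces a globally defined smooth positive $(2,2)$-form $\tilde{\Omega}_t$ on $X_t$ whose failure of closedness $d\tilde{\Omega}_t$ is supported in the annular shell and small in any reasonable scale-invariant norm.

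The final step is to correct the non-closedness. Set $\eta_t := d\tilde{\Omega}_t$, a globally exact $5$-form on $X_t$, and solve $d\alpha_t = \eta_t$ for a $(2,2)$-form $\alpha_t$; then $\Omega_t := \tilde{\Omega}_t - \alpha_t$ is closed. The \emph{main obstacle} is obtaining $\alpha_t$ with $\|\alpha_t\|_{C^0} \to 0$ as $t \to 0$, since only then does $\Omega_t$ remain strictly positive for $|t| \ll 1$. Hodge theory on the fixed manifold $X_t$ produces a solution via the Green's operator for the Laplacian, but without control uniform in $t$: as $t \to 0$ the geometry near the vanishing cycles degenerates and the injectivity radius collapses, so constants blow up. The resolution is to work in weighted H\"older or $L^2$ spaces tuned to the degenerating family, with weight given by the radial function $r$ from Lemma~\ref{lem: trivGlob}, and to exploit the fact that $\eta_t$ is localized in a thin shell at radius $\sim |t|^{1/2}$ with a size that is $O(|t|^\mu)$ for some $\mu > 0$ in the appropriate scale-invariant norm. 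Once such weighted uniform estimates are in place, Michelsohn's theorem extracts $\omega_{{\rm FLY},t}$ from $\Omega_t$, completing the proof.
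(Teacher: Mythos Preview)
Your outline matches the paper's three-stage architecture (balanced form on $\underline X_{reg}$, approximately balanced form on $X_t$, perturbation), and the appeal to Michelsohn is exactly Lemma~\ref{lem-squareroot}. But two of your steps, as written, contain gaps that the paper has to work hard to close.

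In Step~1 the positivity argument runs in the wrong direction. Interpolating $d$-primitives with a cutoff $\chi$ produces an error $d\chi\wedge(\gamma_i-\gamma_i')$ whose size is governed by the difference of the two metrics in the shell, and $\omega_X^2$ and $\omega_{co,0}^2$ are not uniformly comparable there (Lemma~\ref{lem: wsmwcoComp}); shrinking the shell only makes $|d\chi|$ larger. The paper (Appendix~\ref{sec: FLYapp}) finds that the coefficient in front of $\omega_{co,0}^2$ must in fact be taken \emph{large}, of order $R$ where $R^{-1}$ is the gluing radius, not small. The mechanism is to work with $\partial\bar\partial$-primitives rather than $d$-primitives: the glued form is $\Psi_R + C_0C_R^2\,\partial\bar\partial\bigl(\chi(R^2r^2)\,\partial\bar\partial r^2\bigr)$, with $\Psi_R$ an explicit $\partial\bar\partial$-modification of $\omega_{CY}^2$, and positivity is verified region by region using a specially designed cutoff $\chi$. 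A side benefit is that the glued form is automatically of type $(2,2)$, which naive $d$-primitive interpolation does not guarantee.

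In Step~3 the issue is again bidegree. Since $\Phi_t$ is not holomorphic, $(\Phi_t^{-1})^*\Omega_0$ is not $(2,2)$; once you pass to its $(2,2)$-part (as in Proposition~\ref{prop: FLYalmost}), the error $\eta_t=d\tilde\Omega_t$ has type $(3,2)+(2,3)$. Solving $d\alpha_t=\eta_t$ generically via Hodge theory does not return a $(2,2)$-form, and $X_t$ may be non-K\"ahler so there is no $\partial\bar\partial$-lemma to repair this. The paper's ansatz~\eqref{fly-ansatz} is $\omega_{{\rm FLY},t}^2=\omega_t^2+\theta_t+\bar\theta_t$ with $\theta_t=\partial\bar\partial^\dagger\partial^\dagger\gamma_t$, which is automatically $\partial$-closed and of type $(2,2)$; here $\gamma_t\in\Lambda^{2,3}$ solves the fourth-order elliptic equation $E_t\gamma_t=\bar\partial\omega_t^2$ with $E_t=\partial\bar\partial\bar\partial^\dagger\partial^\dagger+\partial^\dagger\bar\partial\bar\partial^\dagger\partial+\partial^\dagger\partial$. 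The uniform-in-$t$ smallness you correctly flag as the crux is then established for this specific operator (estimate~\eqref{FLY-estimate} and Lemma~\ref{lem-theta-grad-est}).
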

 
 We will need to recall some aspects of the proof of Theorem~\ref{thm: FLY} as they will play an important role in subsequent sections.  The first step \cite{FLY} is to construct a balanced metric on $\underline{X} = X_0$ by appropriately gluing a Calabi-Yau metric on $X$ with the conical Calabi-Yau metric on the conifold.  To fix notation, for each ordinary double point $p_i \in X$ we fix an identification of a neighborhood of $p_i$ with a neighborhood of the singular point in the conifold.  Define
 \[
 U_i(\epsilon)= \left\{ (z_1,z_2,z_3,z_4)\in \mathbb{C}^4 : \sum_{i=1}^4 z_i^2=0, \text{ and } \|z\|^2 \leq \epsilon\right\}
 \]
and let $U(\epsilon) = \bigcup_{i}U_{i}(\epsilon)$.  We state the following result of Fu-Li-Yau \cite{FLY};  for the reader's convenience we have given a self-contained proof in Appendix~\ref{sec: FLYapp}.
 
 \begin{prop}[Fu-Li-Yau \cite{FLY}, Proposition 2.6]\label{prop: FLY2.6}
 With the above notation, for every $0<\epsilon \ll 1$ sufficiently
 small there is a constant $M_0>0$ such that there exists a hermitian metric $g_0$ on $\underline{X}_{reg}$ whose associated $(1,1)$ form $\omega_0$ has the following properties
 \begin{itemize}
 \item[$(i)$] On $\underline{X}\setminus U(1)$ we have $\omega_0 = \omega_{CY}$ where $\omega_{CY}$ is a Calabi-Yau metric on $X$.
 \item[$(ii)$] On $U(\epsilon)\cap \underline{X}_{reg}$ we have $\omega_0 = M_0^{1/2}\epsilon^{-1/3}\omega_{co,0}$.
 \item[$(iii)$] On $U(1)\setminus U(\epsilon)$, $\omega_0^2$ is $\ddb$-exact.
 \end{itemize}
 In particular, $g_0$ is balanced on $\underline{X}_{reg}$.
 \end{prop}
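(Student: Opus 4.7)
The plan is to construct $\omega_0$ by gluing, at the level of $(2,2)$-forms, a Ricci-flat K\"ahler metric on $X$ to rescaled Candelas--de la Ossa cone metrics near each ordinary double point $p_i$. Working with $\omega^2$ rather than $\omega$ is essential, since the balanced condition $d\omega_0^2 = 0$ does not require $\omega_0$ itself to be closed. By Yau's theorem, $X$ admits a Ricci-flat K\"ahler metric $\omega_{CY}$, which pushes forward along the biholomorphism $\pi: X\setminus \bigcup_i C_i \to \underline{X}_{\text{reg}}\setminus\{p_i\}$ to a smooth K\"ahler form on this dense open subset. Fix identifications of neighborhoods of each $p_i$ with neighborhoods of $0$ in the standard conifold $V_0$. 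On $V_0\setminus\{0\}$, the cone metric $\omega_{co,0} = \frac{3}{2}\ddb r^2$ is $\ddb$-exact, and using the identity $(\ddb\phi)^2 = \ddb(\phi\,\ddb\phi)$ one gets $\omega_{co,0}^2 = \ddb \Theta$ with $\Theta = \frac{3}{2} r^2\,\omega_{co,0}$.

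The key technical input is a local lemma producing a constant $M_0 > 0$ and, for each $i$, a $(1,1)$-form $\Psi_i$ on $U_i(1)\setminus\{0\}$ with
\[
\omega_{CY}^2 - M_0\,\omega_{co,0}^2 = \ddb\Psi_i.
\]
The constant $M_0$ is fixed so that $[\omega_{CY}]$ and $M_0^{1/2}[\omega_{co,0}]$ agree on the generator of $H^2$ of the link $L \cong S^2\times S^3$ of the singularity; the existence of $\Psi_i$ then follows by combining the topological vanishing $H^4(V_0\setminus\{0\},\mathbb{R})=0$ with a local $\ddb$-lemma argument, invoking Grauert's description of a neighborhood of $C_i$ in $X$ as the total space of $\mathcal{O}(-1)^{\oplus 2}$ over $\mathbb{P}^1$ and the explicit Kähler potential for $\omega_{CY}$ available there.

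With this in hand, choose a smooth radial cutoff $\chi_\epsilon$ equal to $0$ on $U(\epsilon)$ and $1$ outside $U(1)$. Define $\omega_0^2$ in $U_i(1)$ by
\[
\omega_0^2 := M_0\epsilon^{-2/3}\,\omega_{co,0}^2 + \ddb\bigl(\chi_\epsilon\widetilde\Psi_i\bigr),
\]
where $\widetilde\Psi_i$ is an $\epsilon$-adapted combination of $\Psi_i$ and a multiple of $\Theta$ chosen so that the cutoff bridges $M_0\epsilon^{-2/3}\omega_{co,0}^2$ at the inner boundary to $\omega_{CY}^2$ at the outer boundary; extend by $\omega_{CY}^2$ outside $U(1)$. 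By construction, this $(2,2)$-form is smooth and $d$-closed everywhere, equals $M_0\epsilon^{-2/3}\omega_{co,0}^2$ on $U(\epsilon)$, equals $\omega_{CY}^2$ outside $U(1)$, and is $\ddb$-exact on the annulus $U(1)\setminus U(\epsilon)$, verifying properties (i), (ii), (iii) for the $(2,2)$-form.

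The main obstacle is then extracting a positive $(1,1)$-form $\omega_0$ whose square is the prescribed $(2,2)$-form. By Michelsohn's theorem, the map $\omega \mapsto \omega^{n-1}$ is a bijection from the cone of positive $(1,1)$-forms to the cone of strictly positive real $(n-1,n-1)$-forms on a complex $n$-fold, so on our threefold it suffices to check strict positivity of the interpolated $(2,2)$-form. Positivity is immediate on $U(\epsilon)$ and outside $U(1)$. On the annulus it follows for $\epsilon \ll 1$ by a scaling argument: at the natural metric scale $r \sim \epsilon^{1/3}$, the rescaled cone term $M_0\epsilon^{-2/3}\omega_{co,0}^2$ has unit size while $\ddb(\chi_\epsilon\widetilde\Psi_i)$ remains of bounded size (after accounting for the $\epsilon$-dependence built into $\widetilde\Psi_i$), so the cone term dominates. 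The resulting $\omega_0$ is a smooth Hermitian form on $\underline{X}_{\text{reg}}$ with $d\omega_0^2 = 0$, i.e., balanced, as required.
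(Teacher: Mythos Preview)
Your overall strategy matches the paper's: work at the level of $(2,2)$-forms, exploit $\ddb$-exactness of both $\omega_{CY}^2$ and $\omega_{co,0}^2$ on the local model, interpolate with a cutoff, and extract the $(1,1)$-root via Michelsohn. The $\ddb$-exactness step is fine (indeed, writing $\omega_{CY} = p^*\omega + \ddb\phi$ locally and using $(p^*\omega)^2 = 0$ gives $\omega_{CY}^2 = \ddb\bigl(\phi(2p^*\omega + \ddb\phi)\bigr)$), though your remark about matching $H^2$ classes on the link is a red herring: $M_0$ is not topologically determined but is simply a free large constant.

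The genuine gap is in the positivity argument. Your scaling heuristic addresses only the inner end $r \sim \epsilon^{1/3}$ of the annulus, but the transition region $\{\epsilon^{1/3} \lesssim r \lesssim 1\}$ spans arbitrarily many scales as $\epsilon \to 0$. Near $r \sim 1$ the rescaled cone term $M_0\epsilon^{-2/3}\omega_{co,0}^2$ has size $\epsilon^{-2/3}$ and must be \emph{cancelled}, not dominated, by $\ddb(\chi_\epsilon\widetilde\Psi_i)$ in order to recover $\omega_{CY}^2$; hence $\widetilde\Psi_i$ itself carries a term of order $\epsilon^{-2/3}$, and when derivatives of the cutoff land on it the resulting error is large and cannot be absorbed by a one-line argument. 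The paper handles this by using two separate cutoffs operating at different scales. One, supported on a thin annulus $\{R^{-1} < r < 2R^{-1}\}$, kills $\omega_{CY}^2$ via an explicit potential built from the degree-$\tfrac{3}{2}$ part $h_1$ of the local K\"ahler potential $\phi$ (this is where the algebraic identity $(\ddb h_1)^2 = \ddb h_1 \wedge p^*\omega = 0$ is used), with error bounded below by $-CR\,\omega_{co,0}^2$. The other is a slowly varying function $\chi$ on $\{2R^{-1} < r < 1\}$ cutting off the cone potential, and it must satisfy the differential inequality
\[
\frac{1}{s}\bigl(2\chi'(s) + s\chi''(s)\bigr) \ \ge\ -\frac{C'}{R^{4}}
\]
so that its negative contribution can be absorbed by $\omega_{CY}^2 \ge C^{-1} r^{2}\omega_{co,0}^2$ across the whole long annulus; constructing such a $\chi$ is itself a separate lemma. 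Without these two ingredients --- the explicit $h_1$-based potential and the tailored cutoff --- your interpolated $(2,2)$-form is not known to be positive on the bulk of the annulus.
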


For the remainder of the paper, $\epsilon>0$ is fixed and we will use
the corresponding metric $g_0$. It will be useful to use to have a comparison of $\omega_{CY}$ with $\omega_0$ on
$X$, away from the contracted rational curves. In a neighborhood of
the curves, the metric $\omega_{CY}$ is uniformly equivalent to the
smooth reference metric
$\omega_{sm} = \ddb r^3 + \pi^*_{\mathbb{P}^1} \omega_{FS}$ (where we recall that $r^3 = \| z
\|^2$). For $0<\lambda<1$, we can rescale by $S_\lambda: \{ 1 \leq r \leq 2 \} \rightarrow \{ \lambda \leq r \leq 2 \lambda \}$ with $S_\lambda(z) = \lambda^{3/2}z$. This gives
\[
S_\lambda^* \omega_{sm} = \lambda^3 \ddb r^3 + \pi^*_{\mathbb{P}^1} \omega_{FS},
\quad S_\lambda^* \omega_{co,0} = \lambda^2 \omega_{co,0},
\]
and by the uniform equivalence of metrics $C^{-1} \omega_{sm} \leq \omega_{co,0} \leq C \omega_{sm}$ on $\{ 1 \leq r \leq 2 \}$, we obtain $C^{-1} \lambda^2 S_\lambda^* \omega_{sm} \leq  S_\lambda^* \omega_{co,0} \leq C \lambda^{-1} S_\lambda^* \omega_{sm}$. Thus
\be \label{eq: compare-cy-co}
C^{-1} r^2 \omega_{CY} \leq \omega_0 \leq C r^{-1} \omega_{CY}
\ee
on $\{ 0 < r < 1 \}$. 
 \smallskip
\par The next step in the proof of Theorem~\ref{thm: FLY} is to construct
 approximately balanced metrics on the smooth fibers $X_{t}$.  Let
 $U(\frac{1}{2})\subset \mathcal{X}$ be a small open set containing
 the singular points $p_i$. Note that $\underline{X}\setminus
 U(\frac{1}{2})$ is diffeomorphic to $X_{t}\setminus U(\frac{1}{\sqrt{2}})$
 by the map $\Phi_t$ constructed in Lemma~\ref{lem: trivGlob}. If $\omega_0^2$ is the metric from Proposition~\ref{prop: FLY2.6}, then $(\Phi_{t}^*\omega_{0}^2)^{(2,2)}$ is positive definite for $|t|$ sufficiently small and can be glued to the Candelas-de la Ossa metric $\omega_{co,t}^2$ to obtain a positive $(2,2)$ form.  The following result can be extracted from Fu-Li-Yau \cite[Section 3]{FLY}, see for example \cite[equation (3.4)]{FLY}.
 
 \begin{prop}[Fu-Li-Yau \cite{FLY}]\label{prop: FLYalmost}
 With notation as above, for $\epsilon, |t|$ sufficiently small and $M_0$ sufficiently large there is a hermitian metric $g_t$ on $X_t$ such that the associated $(1,1)$ form $\omega_{t}$ has
 \begin{itemize}
 \item[$(i)$] $\omega_{t} = M_0^{1/2}\epsilon^{-1/3}\omega_{co,t}$ is K\"ahler Ricci-flat in $U(\epsilon) \cap X_{t}$.
 \item[$(ii)$]There is a constant $C_k$, independent of $|t|$ so that  $|d\omega_{t}^2|_{C^k(X_t,\omega_{t})} \leq C_k|t|$.
 \item[$(iii)$] As $|t|\rightarrow 0$, $\Phi_{t}^*\omega_{t}$ converges smoothly, in compact subsets of $\underline{X}\setminus\{p_1,\ldots,p_k\}$  to the balanced metric $\omega_{0}$ of Proposition~\ref{prop: FLY2.6}.
 \end{itemize}
 \end{prop}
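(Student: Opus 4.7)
The construction glues, on $X_t$, a rescaling of the Candelas--de la Ossa metric $\omega_{co,t}$ near the vanishing cycles to the balanced metric $\omega_0$ of Proposition~\ref{prop: FLY2.6} transported from $\underline{X}_{reg}$ via the diffeomorphism $\Phi_t$ of Lemma~\ref{lem: trivGlob}. Fix a smooth cutoff $\chi : [0,\infty) \to [0,1]$ with $\chi \equiv 1$ on $[0,\epsilon]$ and $\chi \equiv 0$ on $[2\epsilon,\infty)$, and define on $X_t$ the real $(1,1)$-form
\[
\omega_t := \chi(\|z\|^2)\, M_0^{1/2}\epsilon^{-1/3}\omega_{co,t} + \bigl(1 - \chi(\|z\|^2)\bigr)\bigl[(\Phi_t^{-1})^*\omega_0\bigr]^{(1,1)},
\]
where $[\,\cdot\,]^{(1,1)}$ projects onto the $(1,1)$-component of the real two-form (nontrivial only because $\Phi_t^{-1}$ is not holomorphic). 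For $|t|$ small $\omega_t$ is positive definite: on $U(\epsilon)$ this is immediate; in the transition annulus $\{\epsilon < \|z\|^2 < 2\epsilon\}$, the identity $\omega_0 = M_0^{1/2}\epsilon^{-1/3}\omega_{co,0}$ from Proposition~\ref{prop: FLY2.6}(ii), Corollary~\ref{cor: decayGcotPull}, and the bound $|\bar\partial \Phi_t^{-1}| = O(|t|/\|z\|^2)$ together show that both glued pieces agree to leading order with $M_0^{1/2}\epsilon^{-1/3}\omega_{co,t}$; outside the annulus, $\Phi_t^{-1}$ tends smoothly to the identity and $\omega_t$ is a small perturbation of $\omega_0$.

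Properties (i) and (iii) then follow directly. On $U(\epsilon) \cap X_t$ the cutoff is identically $1$, so $\omega_t = M_0^{1/2}\epsilon^{-1/3}\omega_{co,t}$, a rescaling of the K\"ahler Ricci-flat Candelas--de la Ossa metric, giving (i). For (iii), the smooth convergence $\Phi_t \to \mathrm{id}$ on compact subsets of $\underline{X} \setminus \{p_i\}$, together with $\omega_{co,t} \to \omega_{co,0}$ from Corollary~\ref{cor: decayGcotPull}, yield $\Phi_t^*\omega_t \to \omega_0$ in $C^k_{loc}$.

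The main content is (ii). Writing $d\omega_t^2 = 2\omega_t \wedge d\omega_t$, it suffices to bound $d\omega_t$. On $U(\epsilon) \cap X_t$ one has $d\omega_t = 0$ since $\omega_{co,t}$ is K\"ahler. On the complement,
\[
d\omega_t = d\chi \wedge \bigl(M_0^{1/2}\epsilon^{-1/3}\omega_{co,t} - [(\Phi_t^{-1})^*\omega_0]^{(1,1)}\bigr) + (1 - \chi)\, d\bigl[(\Phi_t^{-1})^*\omega_0\bigr]^{(1,1)}.
\]
In the transition annulus, Corollary~\ref{cor: decayGcotPull}, Lemma~\ref{lem:transplant}, and the estimate on $\bar\partial \Phi_t^{-1}$ control the first term by $C_k|t|$ in $C^k_{g_t}$, using $\omega_0 = M_0^{1/2}\epsilon^{-1/3}\omega_{co,0}$ in $U(\epsilon)$. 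For the second, the balanced equation $d\omega_0^2 = 0$ gives $d(\Phi_t^{-1})^*\omega_0^2 = 0$; combined with the decomposition $(\Phi_t^{-1})^*\omega_0 = [(\Phi_t^{-1})^*\omega_0]^{(1,1)} + 2\,\mathrm{Re}\,[(\Phi_t^{-1})^*\omega_0]^{(2,0)}$ and the bound $|[(\Phi_t^{-1})^*\omega_0]^{(2,0)}|_{C^k_{g_t}} = O(|t|/\|z\|^2)$, this forces $|d[(\Phi_t^{-1})^*\omega_0]^{(1,1)}|_{C^k_{g_t}} = O(|t|)$ uniformly on $\{\|z\|^2 \geq \epsilon\}$. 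Wedging with $\omega_t$ gives $|d\omega_t^2|_{C^k(X_t,\omega_t)} \leq C_k|t|$.

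The principal obstacle is obtaining these estimates uniformly in $|t|$ at all orders $k$. This relies on the scale-invariant holomorphic cylindrical coordinates of Lemma~\ref{lem: cylCoords}, which provide charts of uniform size in the rescaled $g_{co,t}$-geometry and bound covariant derivatives in $g_t$ independently of $|t|$, together with the $C^k$ decay bounds of Corollary~\ref{cor: decayGcotPull} and Lemma~\ref{lem:transplant} to transport estimates between $g_{co,0}$ and $g_{co,t}$ under $\Phi_t$. With these tools, the three-term expansion of $d\omega_t$ extends to all $k$ term-by-term.
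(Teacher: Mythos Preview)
Your construction glues at the level of $(1,1)$-forms, whereas Fu--Li--Yau (and the paper's description preceding the proposition) glue at the level of $(2,2)$-forms: one takes $\bigl[(\Phi_t^{-1})^*\omega_0^2\bigr]^{(2,2)}$, glues it to $M_0\epsilon^{-2/3}\omega_{co,t}^2$, and then extracts $\omega_t$ as the positive square root via Lemma~\ref{lem-squareroot}. This difference matters for (ii), and your argument for (ii) has a genuine gap.

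The problem is your claim that $\bigl|d[(\Phi_t^{-1})^*\omega_0]^{(1,1)}\bigr|_{C^k_{g_t}} = O(|t|)$ on $\{\|z\|^2 \geq \epsilon\}$. Write $\beta = (\Phi_t^{-1})^*\omega_0 = \beta^{1,1} + \beta^{2,0} + \beta^{0,2}$. The balanced equation gives $d\beta^2 = 0$, hence $\beta\wedge d\beta = 0$, and since $\beta^{2,0},\beta^{0,2} = O(|t|)$ this yields $\beta^{1,1}\wedge d\beta^{1,1} = O(|t|)$, i.e.\ $d\bigl[(\beta^{1,1})^2\bigr] = O(|t|)$. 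It does \emph{not} force $d\beta^{1,1} = O(|t|)$: at $t=0$ one has $\beta^{1,1}=\omega_0$, and outside $U(\epsilon)$ the metric $\omega_0$ is balanced but not K\"ahler, so $d\omega_0\neq 0$ there. Thus your reduction ``it suffices to bound $d\omega_t$'' fails; only $d\omega_t^2$ is small, not $d\omega_t$. A secondary issue is that your transition annulus $\{\epsilon<\|z\|^2<2\epsilon\}$ lies \emph{outside} $U(\epsilon)$, so the identity $\omega_0 = M_0^{1/2}\epsilon^{-1/3}\omega_{co,0}$ you invoke is unavailable there; moving the cutoff to $[\epsilon/2,\epsilon]$ would repair this, but not the main gap.

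The fix is to work at the $(2,2)$-level throughout: set $\Psi_t = \chi\,M_0\epsilon^{-2/3}\omega_{co,t}^2 + (1-\chi)\bigl[(\Phi_t^{-1})^*\omega_0^2\bigr]^{(2,2)}$, check positivity, and define $\omega_t$ by $\omega_t^2=\Psi_t$. Then $d\Psi_t$ involves $d\bigl[(\Phi_t^{-1})^*\omega_0^2\bigr]^{(2,2)}$, which \emph{is} $O(|t|)$ since $d(\Phi_t^{-1})^*\omega_0^2=0$ and the non-$(2,2)$ components are $O(|t|)$. This is exactly what the Fu--Li--Yau construction does.
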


 With this result, Theorem~\ref{thm: FLY} is obtained by solving a
 $4$-th order linear equation with estimates in order to perturb the
 approximately balanced metric $\omega_t$ of Proposition~\ref{prop:
   FLYalmost} to a genuine balanced metric $\omega_{{\rm FLY},t}$ for $|t|$ sufficiently small.
 \smallskip
 
 \subsection{Balanced metrics on the small resolution}
 
 The space $X$ can be viewed as a small resolution of the singular space $\underline{X}$, obtained by replacing the ordinary double points with $(-1,-1)$ rational curves.  It will be useful to us to have a sequence of degenerating balanced metrics on $X$.  
 
 Consider the total space $\pi_{\mathbb{P}^1}: \mathcal{O}_{\mathbb{P}^1}(-1)^{\oplus 2} \rightarrow \mathbb{P}^1$.  Let $h_{FS}$ denote the standard Fubini-Study metric on $\mathcal{O}_{\mathbb{P}^1}(-1)$.  If $z\in \mathbb{P}^1$, and $(u,v) \in \pi_{\mathbb{P}^1}^{-1}(z)$ denotes a point in the fiber over $z$, we define 
 \[
 r(u,v,z)^2 = \left(|u|^2_{h_{FS}} + |v|^2_{h_{FS}}\right)^{2/3}.
 \]
 It is easy to check that $r^2$ is the pull-back to $\mathcal{O}_{\mathbb{P}^1}(-1)^{\oplus 2}$  of the potential for the conical Calabi-Yau metric on the conifold.  Candelas-de la Ossa \cite{CO} constructed asymptotically conical Calabi-Yau metrics on the resolved conifold via the ansatz
 \[
 \omega_{co,a}:= \ddb f_{a}(r^3) + 4a^2\pi_{\mathbb{P}^1}^*\omega_{FS}
 \]
 where $\omega_{FS} = \ddb \log h_{FS}$ is the Fubini-Study metric on $\mathbb{P}^1$.  This metric is Calabi-Yau provided $f_{a}(x)$ satisfies the differential equation
 \[
 (xf_a'(x))^3+6a^2(xf_a'(x))^2= x^2, \qquad f_{a}'(x) \geq 0
 \]
 for $x \geq 0$.  From this expression it is straightforward to check that
 \[
 f_{a}(x) = a^2f_1\left(\frac{x}{a^3}\right).
 \]
In particular we see that
\be \label{eq: coa-scaling}
\omega_{co,a} = a^2S_{a^{-1}}^*\omega_{co, a=1}
\ee
where $S_{a}(u,v,z) = (a^{3/2}u, a^{3/2}v, z)$ is the scaling generated by the holomorphic Reeb vector field on the conifold.  The argument of Fu-Li-Yau carries over to give the following proposition; for the reader's convenience, we have provided the details in Appendix~\ref{sec: FLYapp}.

 \begin{prop}[Fu-Li-Yau \cite{FLY}, Proposition 2.6]\label{prop: FLY2.6-resolution}
For every $0<\epsilon \ll 1$ sufficiently small there is a constant
$M_0,M_1>0$ such that there exists a sequence of balanced hermitian metric
$\omega_a$ on $X$ with $a \rightarrow 0$
which has the following properties:
 \begin{itemize}
 \item[$(i)$] On $X \setminus U(1)$ we have $\omega_a = \omega_{CY}$ where $\omega_{CY}$ is a Calabi-Yau metric on $X$.
 \item[$(ii)$] On $U(\epsilon)\cap X$ we have $\omega_a =
   M_0^{1/2}\epsilon^{-1/3}\omega_{co,a}$.
 \item[$(iii)$] On $U(1)\setminus U(\epsilon)$, $\omega_a^2$ is
   $\ddb$-exact.
   \item[$(iv)$] As $a \rightarrow 0$, $\omega_a$ converges smoothly to $\omega_{0}$
     on compact subsets of $X \backslash \{ C_1, \dots, C_k \}$.
     \item[$(v)$] There is an estimate $M_1^{-1} \leq {\rm Vol}(X,g_a)
       \leq M_1$.
       \item[$(vi)$] We have $[\omega_{a}^2] = [\omega_{CY}^2] \in H^{4}(X,\mathbb{R})$. 
 \end{itemize}
 \end{prop}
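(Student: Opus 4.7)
The plan is to mirror the construction of Proposition~\ref{prop: FLY2.6} for the singular space $\underline{X}$, replacing the conifold $V_0$ with the small resolution $\mathcal{O}_{\mathbb{P}^1}(-1)^{\oplus 2}$ and the conical metric $\omega_{co,0}$ with the Candelas-de la Ossa Calabi-Yau metric $\omega_{co,a}$. The essential input that makes the argument go through uniformly in $a$ is the scaling relation~\eqref{eq: coa-scaling}, $\omega_{co,a} = a^2 S_{a^{-1}}^*\omega_{co,1}$, which implies that as $a \to 0$ the metric $\omega_{co,a}$ converges smoothly to $\omega_{co,0}$ on any fixed annular region $\{\delta \leq r \leq 1\}$, with controlled potentials.

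\textbf{Construction.} Fix $\epsilon$ small. Define $\omega_a$ piecewise: on $X \setminus U(1)$ take $\omega_a = \omega_{CY}$, giving (i); on $U(\epsilon) \cap X$ take $\omega_a = M_0^{1/2} \epsilon^{-1/3} \omega_{co,a}$, giving (ii); on the annular region $U(1)\setminus U(\epsilon)$ perform the gluing at the level of $(2,2)$-forms via $\ddb$-potentials. Since $\omega_{co,0}^2 = \tfrac{3}{2}\ddb(r^2 \omega_{co,0})$ is explicitly $\ddb$-exact and a similar representation holds for $\omega_{co,a}^2$ with uniformly controlled potentials via~\eqref{eq: coa-scaling}, and since $\omega_{CY}^2$ is $\ddb$-exact on the (locally contractible) annular region by the local $\ddb$-lemma, one can write $M_0 \epsilon^{-2/3}\omega_{co,a}^2 = \ddb\alpha_{co,a}$ and $\omega_{CY}^2 = \ddb\alpha_{CY}$. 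A smooth cutoff $\chi = \chi(r)$ transitioning between $U(\epsilon)$ and $U(1)$ then gives $\omega_a^2 := \ddb( \chi \alpha_{co,a} + (1-\chi)\alpha_{CY})$ on the annular region, which is $\ddb$-exact and hence verifies (iii) directly. The balanced condition $d\omega_a^2 = 0$ then holds piecewise: K\"ahler in the inner/outer zones and $\ddb$-exact in the annulus.

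\textbf{Positivity and convergence.} The main verification is that the interpolated $(2,2)$-form admits a smooth positive $(1,1)$-square root $\omega_a$. Here one uses the pointwise algebraic fact that on a $3$-dimensional complex vector space, $\omega \mapsto \omega^2$ is a local diffeomorphism from the cone of positive $(1,1)$-forms onto its image in strongly positive $(2,2)$-forms. Because the scaling constant $M_0^{1/2}\epsilon^{-1/3}$ is chosen (exactly as in~Proposition~\ref{prop: FLY2.6}) to match the leading order behaviour of $\omega_{CY}$ on the annular region, and because $\omega_{co,a}$ is $C^\infty$-close to $\omega_{co,0}$ on this region for $a$ small by~\eqref{eq: coa-scaling}, the interpolated form is strongly positive for $\epsilon$ and $a$ sufficiently small. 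Property (iv) then follows since on any compact subset of $X \setminus \{C_1,\ldots,C_k\}$ and for $a$ small, the construction reduces to the analogue for $\omega_0$ with $\omega_{co,a}$ replaced by $\omega_{co,0}$.

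\textbf{Volume and cohomology.} For (v), the scaling~\eqref{eq: coa-scaling} gives $\int_{U(\epsilon)\cap X}\omega_{co,a}^3 = a^6 \int_{S_{a^{-1}}(U(\epsilon))}\omega_{co,1}^3$, and since $S_{a^{-1}}(U(\epsilon)) \subset \{\|z\|^2 \leq \epsilon|a|^{-3}\}$ while the conifold-type volume growth gives $\mathrm{vol}_{\omega_{co,1}}(\{\|z\|^2 \leq R\}) = O(R^2)$ as $R\to \infty$, the inner volume is uniformly bounded in $a$, and the outer $\omega_{CY}$ contribution is fixed. For (vi), $\omega_a^2 - \omega_{CY}^2$ is a $d$-closed $(2,2)$-form compactly supported in $U(1)$; its pairing with any $2$-cycle vanishes (for cycles disjoint from $U(1)$ the forms agree, and for $\gamma = C_i$ both integrals are zero as $(2,2)$-forms restrict trivially to complex curves), and these exhaust $H_2(X,\mathbb{R})$ via a Mayer-Vietoris argument on $X = (X\setminus\{C_i\}) \cup U(1)$ with $H_*(U(1)) = \bigoplus H_*(\mathbb{P}^1)$. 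The main obstacle is the positivity of the interpolated $(2,2)$-form on the annular region, where Fu-Li-Yau's bookkeeping in the choice of the scaling $M_0^{1/2}\epsilon^{-1/3}$, combined with the quantitative control on $\omega_{co,a} - \omega_{co,0}$ from~\eqref{eq: coa-scaling}, is precisely what makes the positivity available.
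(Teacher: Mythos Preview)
Your overall strategy---reduce to the construction for $\omega_0$ and then perturb using the smooth convergence $\omega_{co,a}\to\omega_{co,0}$ on compact sets away from the zero section---is exactly the paper's approach, and your treatments of (iv), (v), (vi) are adequate. However, the heart of the proposition is the positivity of the glued $(2,2)$-form on the annulus, and here your sketch both understates the work required and contains a misconception.

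First, the naive interpolation $\ddb(\chi\,\alpha_{co,a}+(1-\chi)\,\alpha_{CY})$ is not what succeeds. Expanding produces terms of the form $\ddb\chi\wedge(\alpha_{co,a}-\alpha_{CY})$ and $\partial\chi\wedge\dbar(\alpha_{co,a}-\alpha_{CY})$, and there is no reason for these to be dominated by the positive pieces without further structure. The paper's construction (Appendix~\ref{sec: FLYapp}) is more delicate: one writes $\omega_{CY}=p^*\omega+\ddb\phi$ near the exceptional curve, isolates the degree-$\tfrac{3}{2}$ part $h_1$ of $\phi$, and subtracts from $\omega_{CY}^2$ a carefully designed $\ddb$-exact form $\ddb\Gamma_R$ so that the remainder $\Psi_R$ vanishes identically inside $\{r<R^{-1}\}$ and equals $\omega_{CY}^2$ outside $\{r>2R^{-1}\}$. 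The cone contribution is then added as $C_0R^{-1}\ddb(\chi(R^2r^2)\ddb r^2)$, where $\chi$ is not an arbitrary cutoff but is constructed via an ODE to satisfy the differential inequality $\tfrac{1}{s}(2\chi'(s)+s\chi''(s))\ge -C'R^{-4}$; this inequality is precisely what controls the negative cross terms (Lemmas~\ref{lem: wsmwcoComp}--\ref{lem: PsiRlowBnd}).

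Second, your statement that the scaling constant $M_0^{1/2}\epsilon^{-1/3}$ is chosen ``to match the leading order behaviour of $\omega_{CY}$'' is not correct: the metrics $\omega_{CY}$ and $\omega_{co,0}$ have \emph{different} scaling behaviour near the curve (cf.~\eqref{eq: compare-cy-co}), and the constant is instead chosen large (coming from $C_R^2=R^{-3}$ with $R\sim\epsilon^{-1}$) so that the cone term dominates the error $\Psi_R\ge -CR\,\omega_{co,0}^2$ in the transition region. Only after this machinery is in place for $a=0$ does the extension to $a>0$ become the perturbation argument you describe, via Lemma~\ref{lem: wcoaUnif} (the expansion of $f_1$ at infinity) which gives the requisite uniform convergence of $\omega_{co,a}$ and its potential to those of $\omega_{co,0}$ on the gluing annulus.
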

 
 \begin{rk}
 Regarding notation, following the conventions of \cite{Chuan}, we will use $\omega_{co,a}$ to denote the Candelas-de la Ossa metric on the small resolution  of the conifold, while reserving $\omega_{co, t}$ for the Candelas-de la Ossa metric on the smoothing of the conifold.
 \end{rk}

 \subsection{Notation}
Before beginning the construction, we establish notation and conventions that will be
used throughout the paper.  Throughout the paper $T^{1,0}X$ will denote the holomorphic
tangent bundle to $X$, and  we will write the components of a hermitian
metric $H$ on $T^{1,0}X$ as $H=H_{\bar{k} j} \, dz^j
\otimes d \bar{z}^k$, and we will denote the inner product by
\[
\langle V,W \rangle_H = H_{\bar{k} j} V^j \overline{W^k}, \quad V=V^i
{\partial \over \partial z^i}, \ W= W^i {\partial \over \partial z^i}.
\]
The hermitian condition in this convention is $\overline{H_{\bar{k}
    j}} = H_{\bar{j} k}$. The inverse of $H$ will be denoted
$H^{p \bar{q}}$, so that matrix multiplication is $H^{j \bar{p}}
H_{\bar{p} k} = \delta^j{}_k$.  A hermitian metric $H$ induces metrics on
all the associated bundles in the usual fashion.

An endomorphism $A: T^{1,0}X\rightarrow T^{1,0}X$ has an adjoint $A^{\dagger}$ defined by
\[
\langle AV, W \rangle_{H} = \langle V, A^{\dagger}W \rangle_{H}.
\]
When the dependence on the metric is emphasized, we will write this as $A^{\dagger_{H}}$.  An endomorphism is $H$-self-adjoint when $A^{\dagger}=A$.  Note that, in this notation, the inner-product on endomorphisms is given by
\[
\langle A, B \rangle_{H}= {\rm Tr}(AB^{\dagger})
\]
The curvature $F_H$ of the Chern connection of the
metric $H$ will follow the convention
\[
(F_H)_{j \bar{k}}{}^p{}_q = - \partial_{\bar{k}} (H^{p
  \bar{r}} \partial_j H_{\bar{r} q}).
\]
From two hermitian metrics $\hat{H}$ and $H$ we can form the
relative endomorphism denoted $h=\hat{H}^{-1} H$, or in index notation denoted
\[
h^p{}_q = \hat{H}^{p \bar{r}} H_{\bar{r} q}.
\]
A formula which is the starting point for many computations is for the
difference of the curvature tensors of $\hat{H}$ and $H$.
\be \label{diff-chern-curv}
(F_H)_{j \bar{k}} - (F_{\hat{H}})_{j \bar{k}} = - \partial_{\bar{k}} (h^{-1}
\nabla^{\hat{H}}_j h).
\ee
Here the $p,q$ endomorphism indices are omitted for ease of notation,
and $\nabla^{\hat{H}}$ denotes the Chern connection of $\hat{H}$ acting on $h \in
\Gamma({\rm End} \, T^{1,0}X)$ by
\[
  \nabla_j^{\hat{H}} h = \partial_j h +
  [\hat{H}^{-1}\partial_j \hat{H}, h ].
\]
In this paper, $g$ will typically denote a balanced hermitian
metric with associated form $\omega=
\sqrt{-1} g_{\bar{k} j} \, dz^j \wedge d \bar{z}^k$. The contraction
operator $\Lambda_\omega$ acts on $F_H$ by
\[
(\sqrt{-1} \Lambda_\omega F_H)^p{}_q = g^{j \bar{k}} (F_H)_{j \bar{k}}{}^p{}_q.
\]
The curvature of $g$ will be denoted $(R_g)_{j \bar{k}}{}^p{}_q =
- \partial_{\bar{k}} (g^{p \bar{r}} \partial_j g_{\bar{r} q})$.  Since $g$ is not K\"ahler, it will have non-zero
torsion, which we denote by
\be \label{eq: torsion-def}
(T_g)^r{}_{ij} = (A_g)_i{}^r{}_j-(A_g)_j{}^r{}_i
\ee
where $(A_{g})_i{}^p{}_q = g^{p \bar{n}}\del_i g_{\bar{n} q}$ is the Chern connection of $g$. We will denote the $(n,n)$ form corresponding to $g$ by
\[
\dvol_g = {\omega^n \over n!}.
\]
We will denote the complex Laplacian acting on functions by $\Delta_g
f = g^{j \bar{k}} \partial_j \partial_{\bar{k}} f$. The balanced
condition $d \omega^{n-1}=0$ allows us to integrate by parts so that
\[
\int_X \psi \Delta_g \phi \, \dvol_g = \int_X  \phi \Delta_g \psi \, \dvol_g
\]
for any $\psi,\phi \in C^\infty(X)$. Lastly, we note that when obtaining estimates, we will use the convention where $C$ denotes a positive constant depending on known quantities which may vary line-by-line.

\section{Hermitian-Yang-Mills metrics on the central fiber} \label{sec: HYM-central}
In this section, we construct a Hermitian-Yang-Mills metric on the tangent bundle of the singular space $\underline{X}$ with respect to the Fu-Li-Yau balanced metric $\omega_0$. We will prove:

\begin{thm} \label{thm:H_0}
  There exists a hermitian metric $H_0$ on $T\underline{X}_{reg}$ satisfying
  \[
F_{H_0} \wedge \omega_0^2 =0
  \]
 where $\omega_0$ is the Fu-Li-Yau metric of Proposition \ref{prop:
  FLY2.6} and $F_{H_0}$ is the curvature of the Chern connection of $H_0$. For each $k \in \mathbb{Z}_{\geq 0}$, there is a constant $C_k>0$ such that the metric $H_0$ satisfies the estimates
  \be \label{eq:H_0-est}
|H_0|_{g_{0}} + |H_0^{-1}|_{g_{0}} \leq C_0, \quad |\nabla^k_{g_0} H_0|_{g_0} \leq C_k r^{-k}.
\ee
Here $r: \underline{X} \rightarrow \mathbb{R}_{\geq 0}$ is as in Lemma \ref{lem: trivGlob}.

\end{thm}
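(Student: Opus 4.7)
The plan is to build an initial hermitian metric $\hat H_0$ on $T\underline{X}_{reg}$ that is already HYM outside a fixed compact region, and then perturb to correct the remaining defect. The key observation is that $\omega_0$ is K\"ahler Ricci-flat in two regions: on $\underline X \setminus U(1)$ it equals $\omega_{CY}$, and on $U(\epsilon) \cap \underline X_{reg}$ it equals a positive constant multiple of the Candelas--de la Ossa cone metric $\omega_{co,0}$. Since a K\"ahler Ricci-flat metric automatically induces a HYM metric on its own holomorphic tangent bundle, the metrics $g_{CY}$ and $g_{co,0}$ satisfy $F\wedge\omega_0^2=0$ in the respective regions. I would take $\hat H_0$ to agree with $g_{CY}$ on $\underline X\setminus U(1)$ and with $g_{co,0}$ on $U(\epsilon)\cap\underline X_{reg}$, interpolating with a smooth cutoff in the fixed transition annulus $U(1)\setminus U(\epsilon)$. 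The HYM defect $\eta_0:=\sqrt{-1}\Lambda_{\omega_0}F_{\hat H_0}$ is then a bounded, $\hat H_0$-self-adjoint endomorphism supported entirely in $U(1)\setminus U(\epsilon)$; in particular $\eta_0$ vanishes identically near each singular point $p_i$.

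I would then seek $H_0=\hat H_0\,e^s$ for an $\hat H_0$-self-adjoint endomorphism $s$ of $T\underline X_{reg}$. By~\eqref{diff-chern-curv} the HYM equation reduces to the nonlinear elliptic PDE
\[
\sqrt{-1}\Lambda_{\omega_0}\dbar\bigl(e^{-s}\nabla^{\hat H_0}e^s\bigr)=\eta_0,
\]
whose linearization at $s=0$ is $\Delta_{\omega_0}s=\eta_0$ modulo lower-order terms. I would solve this equation via Donaldson's heat flow $H^{-1}\partial_t H=-\sqrt{-1}\Lambda_{\omega_0}F_H$ starting from $\hat H_0$. The analytic input is the slope stability of $T^{1,0}X$ with respect to the K\"ahler class $[\omega_{CY}^2]$ on $X$, which follows from the existence of the Ricci-flat metric and transfers to $T\underline X_{reg}$ via the contraction $\pi$ (using Proposition~\ref{prop: FLY2.6-resolution}(vi), namely $[\omega_a^2]=[\omega_{CY}^2]$). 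Because $\eta_0$ is bounded and compactly supported, the flow is initialized very close to a HYM metric and ought to converge.

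The principal obstacle is the non-compactness of $\underline X_{reg}$: it has a conical end at each $p_i$, so the classical Donaldson--Uhlenbeck--Yau / Li--Yau convergence theory on compact balanced manifolds does not apply directly. I would circumvent this by an exhaustion argument in the spirit of \cite{JacobWalpuski, JacobEarpWalpuski}: exhaust $\underline X_{reg}$ by smoothly bounded domains $\Omega_N=\{r\geq 1/N\}$, solve the Dirichlet problem for HYM on each $\Omega_N$ with boundary value $\hat H_0|_{\partial\Omega_N}$ via Donaldson's heat flow on a manifold with boundary, and pass to the limit. Uniform $C^0$ control $\|s_N\|_\infty\leq C$ would be obtained by combining a maximum-principle / Moser iteration estimate on the compact support of $\eta_0$ with a barrier argument on the cone end using the explicit HYM model $g_{co,0}$ (whose Chern curvature decays like $r^{-2}$). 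Standard interior elliptic regularity then allows extraction of a smooth subsequential limit $H_0$ solving $F_{H_0}\wedge\omega_0^2=0$ on all of $\underline X_{reg}$.

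Finally, the scale-invariant estimates~\eqref{eq:H_0-est} would follow from a rescaling argument exploiting the $\mathbb C^*$-symmetry of the conifold at $t=0$. On $U(\epsilon)\cap\underline X_{reg}$ the HYM equation and the initial metric $\hat H_0=g_{co,0}$ are invariant (up to homothety) under $S_\lambda$ of~\eqref{eq: scalingMapDef}, and the source $\eta_0$ vanishes identically. For $x$ with $r(x)=\rho\ll 1$, pulling $H_0$ back by $S_{\rho^{-1}}$ produces a hermitian metric on a fixed-size neighborhood of the link $\{r=1\}$ satisfying a uniformly elliptic HYM equation in the holomorphic cylindrical coordinates of Lemma~\ref{lem: cylCoords}; Schauder estimates then yield $C^k$ bounds independent of $\rho$, and transplanting back gives $|\nabla^k_{g_0} H_0|_{g_0}(x)\leq C_k r(x)^{-k}$. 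The $C^0$ bound $|H_0|_{g_0}+|H_0^{-1}|_{g_0}\leq C_0$ follows from the uniform $\|s\|_\infty$ bound together with the uniform equivalence of $\hat H_0$ and $g_0$.
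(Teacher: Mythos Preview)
Your approach is genuinely different from the paper's, and the core gap is in the $C^0$ estimate.

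The paper does \emph{not} work on $\underline{X}_{reg}$ directly. Instead it works on the compact small resolution $X$ with the degenerating family of balanced metrics $\omega_a$ from Proposition~\ref{prop: FLY2.6-resolution}. Since $[\omega_a^2]=[\omega_{CY}^2]$, the tangent bundle $T^{1,0}X$ is stable with respect to each $\omega_a$, so Li--Yau gives HYM metrics $H_a$ on the \emph{compact} manifold $X$. The entire difficulty is then to prove estimates on $h_a=g_a^{-1}H_a$ uniform in $a$, and the heart of this is the Uhlenbeck--Yau $C^0$ bound (Section~3.2): assuming $\Tr h_a\to\infty$, one rescales and extracts a weakly holomorphic projection $\pi$ on $X\setminus Z$, extends the associated sheaf across the codimension-two set $Z$ by Hartogs/Levi, and contradicts stability of $T^{1,0}X$ on the compact $X$. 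A Calabi-type gradient estimate and Schauder theory then give the higher estimates, and one passes to the limit $a\to 0$ on compact subsets of $\underline{X}_{reg}$.

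Your proposed $C^0$ argument cannot work as stated. The only differential inequality one has is $\Delta_{g_0}\Tr h_N\geq -|\eta_0|\,\Tr h_N$, and this alone does \emph{not} bound $\Tr h_N$: it is satisfied by arbitrarily large constants, and the maximum principle gives no upper bound for subsolutions on the compact region where $\eta_0\neq 0$. The barrier on the cone end only transfers the supremum to $\{r=\epsilon\}$; it does not produce a bound there. Indeed, if the bundle were unstable the Dirichlet problems on $\Omega_N$ would still be solvable (Donaldson) but $h_N$ would necessarily blow up as $N\to\infty$, so any valid $C^0$ argument \emph{must} invoke stability explicitly. You mention stability but never say how it enters your Dirichlet/barrier scheme. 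To use it you would be forced into the same Uhlenbeck--Yau destabilizing-sheaf argument the paper carries out---only now on a sequence of varying, incomplete domains rather than on a fixed compact manifold, which is strictly harder. Note also that the Jacob--Walpuski framework you cite is for \emph{complete} asymptotically cylindrical ends; here the conical end of $(\underline{X}_{reg},g_0)$ is incomplete (the tip is at finite distance), so that analogy is not on point.

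In short, the paper's choice to work on the compact resolution with degenerating metrics is not a technicality: it is exactly what makes the $C^0$ estimate accessible, because stability is formulated and exploited on a fixed compact space. Your rescaling argument for the higher-order estimates~\eqref{eq:H_0-est} is fine and matches the paper's Section~3.4, but it presupposes the $C^0$ bound.
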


We will produce $H_0$ by extracting a limit from a sequence $\{(\omega_a,H_a) \}$ of Hermitian-Yang-Mills metrics with respect to the
degenerating sequence of background metrics $\{\omega_a\}$ from
Proposition \ref{prop: FLY2.6-resolution}. A similar approach is taken
in \cite{Chuan}.

\smallskip
\par Let $X$ be a simply-connected, compact K\"ahler manifold of dimension
$n=3$ with trivial canonical bundle. By Yau's theorem \cite{Yau78}, the bundle
$T^{1,0} X$ has a Ricci-flat metric $\omega_{\rm CY}$, and therefore
$T^{1,0}X$ is polystable with respect to $[\omega_{\rm CY}]$. In fact,
$T^{1,0}X$ is stable because it cannot holomorphically split. As noted in \cite{Yau93} , the de Rham
decomposition theorem implies that if the tangent bundle to Calabi-Yau manifold splits holomorphically, then the
manifold itself splits holomorphically and metrically as a product. In dimension $n=3$, at least one factor in this decomposition must be $1$-dimensional, and hence a torus.  When $X$ is simply connected, this is impossible. 
\smallskip
Thus, $(X,[\omega_{CY}])$ satisfies the stability condition
\[
{1 \over {\rm rk}(F)} \int_X c_1(F) \wedge \omega_{\rm CY}^2 < 0
\]
for all torsion-free, coherent subsheaves $F \subseteq T^{1,0}X$ of rank 1 or 2. It follows from Proposition \ref{prop: FLY2.6-resolution} that the same inequality holds if we replace $\omega_{CY}$ by $\omega_{a}$ for any $a>0$.

Therefore $T^{1,0}X$ is
stable with respect to the balanced classes $[\omega_{a}^2] \in H^{2,2}(X,\mathbb{R})$. By the Li-Yau \cite{LY86} generalization to Gauduchon metrics of the
Donaldson-Uhlenbeck-Yau theorem \cite{Donaldson,UY}, there exists a family of metrics $H_a$ on $T^{1,0}X$ such that
\[
F_{H_a} \wedge \omega_a^2 = 0.
\]
Our goal is to obtain a limiting metric $H_0$ as $a \rightarrow 0$.

\subsection{Reference metrics}

To study the sequence $(g_a,H_a)$, we will use a sequence of reference metrics $\hat{H}_a$, given by
\[
\hat{H}_a = e^{\psi_a} g_a
\]
where $\psi_a$ satisfies
\be \label{eq:reftwist}
\Delta_{g_a} \psi_a = {1 \over 3} {\rm Tr} \, \sqrt{-1}
\Lambda_{\omega_a} F_{g_a}, \quad \int_X \psi_a \, d {\rm vol}_{g_a} = 0.
\ee
The solvability of (\ref{eq:reftwist}) follows from the balanced condition since ${\rm Tr} \,
F_{g_a}$ is exact.  The advantage of $\hat{H}_a$ is that these metrics now have the property that
\[
{\rm Tr} \, \Lambda_{\omega_a} F_{\hat{H}_a}  =0.
\]
This follows from
\be \label{conf-change-F}
\sqrt{-1} \Lambda_{\omega_a} F_{\hat{H}_a} =- (\Delta_{g_a}
\psi_a) \, I + \sqrt{-1} \Lambda_{\omega_a} F_{g_a}.
\ee
If we form the relative endomorphism $\hat{h}_a = \hat{H}_a^{-1} H_a$,
then (\ref{diff-chern-curv}) implies
\[
\Delta_{g_a} \log \det \hat{h}_a = {\rm Tr} \, \sqrt{-1} \Lambda_{\omega_a}
F_{H_a} - \Tr \sqrt{-1} \Lambda_{\omega_a} F_{\hat{H}_a} = 0.
\]
Therefore, $\det \hat{h}_a$ is constant, and we will choose a normalization
for $H_a$ such that
\[
\det \hat{h}_a \equiv 1.
\]
We prove a uniform $C^0$ estimate for the conformal factor.
\begin{lem} \label{lem: confFactC0}
The sequence $\{ \psi_a \}$ satisfies a uniform bound
  \[
\| \psi_a \|_{L^\infty(X)} \leq C.
  \]
\end{lem}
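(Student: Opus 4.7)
The plan is to apply Moser iteration to the Poisson equation $\Delta_{g_a}\psi_a = f_a$, where $f_a := \tfrac{1}{3}\Tr \sqrt{-1}\Lambda_{\omega_a} F_{g_a}$. The first step is to show that $f_a$ is supported in a fixed compact region and uniformly bounded there. On $X\setminus U(1)$ we have $g_a=g_{CY}$, and on $U(\epsilon)\cap X$ we have $g_a = M_0^{1/2}\epsilon^{-1/3}g_{co,a}$; both of these are K\"ahler Ricci-flat. Since $\Tr F_{g_a}=-\del\dbar \log \det g_a$ coincides with the Chern-Ricci form in the K\"ahler case, we obtain $f_a\equiv 0$ outside the gluing annulus $U(1)\setminus U(\epsilon)$. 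On this annulus, the scaling identity \eqref{eq: coa-scaling} together with the asymptotic cone structure of $\omega_{co,a=1}$ forces $\omega_{co,a}$ to converge smoothly to $\omega_{co,0}$ as $a\to 0$, since the annulus is bounded away from the exceptional curves. Consequently $g_a\to g_0$ smoothly on this compact annulus and $\|f_a\|_{L^\infty(X)}$ is uniformly bounded.

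Since $g_a$ is balanced, $\Delta_{g_a}$ is self-adjoint on $(X,d\vol_{g_a})$. Testing the equation against $|\psi_a|^{2p-2}\psi_a$ and integrating by parts yields the standard energy estimate
\[
\int_X \bigl|\nabla_{g_a}(|\psi_a|^{p-1}\psi_a)\bigr|_{g_a}^2\,d\vol_{g_a}\ \leq\ Cp^2\,\|f_a\|_{L^\infty}\int_X |\psi_a|^{2p-1}\,d\vol_{g_a}.
\]
Combined with a uniform Sobolev inequality on $(X,g_a)$ and the uniform volume bound from Proposition~\ref{prop: FLY2.6-resolution}(v), standard Moser iteration yields $\|\psi_a\|_{L^\infty}\leq C(\|\psi_a\|_{L^2}+\|f_a\|_{L^\infty})$. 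The $L^2$ bound on $\psi_a$ follows from the normalization $\int \psi_a\,d\vol_{g_a}=0$, a uniform Poincar\'e inequality, and the energy identity $\|\nabla_{g_a}\psi_a\|_{L^2}^2 = -\int \psi_a f_a \,d\vol_{g_a}\leq \|\psi_a\|_{L^2}\|f_a\|_{L^2}$.

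The main obstacle is therefore the uniform Sobolev/Poincar\'e inequality on the degenerating family $(X,g_a)$. The strategy is a covering argument adapted to the Fu--Li--Yau construction: on $X\setminus U(\epsilon/2)$ the metrics $g_a$ converge smoothly to $g_0$ and thus admit uniform local Sobolev estimates; inside each component of $U(\epsilon)$, the scaling identity $\omega_{co,a}=a^2 S_{a^{-1}}^*\omega_{co,a=1}$ together with the bounded geometry of $g_{co,a=1}$ on any compact piece of the resolved conifold provides the needed local estimate. The delicate point is the behaviour of the Sobolev constant under the collapse of the exceptional $\mathbb{P}^1$s, but this is controlled by the mean-zero normalization of $\psi_a$, which allows the $L^2$ term in the Sobolev inequality to be absorbed via Poincar\'e, giving the required uniform bound and thereby completing the proof.
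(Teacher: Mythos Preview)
Your proposal has a genuine gap at precisely the point you flag as ``delicate'': you rely on a \emph{global} uniform Sobolev inequality and a \emph{global} uniform Poincar\'e inequality on the degenerating family $(X,g_a)$, but you do not actually establish either. The covering sketch you give does not suffice: local Sobolev inequalities on $X\setminus U(\epsilon/2)$ and on the rescaled resolved conifold do not automatically patch into a global inequality with a uniform constant, and the sentence ``this is controlled by the mean-zero normalization'' is not an argument. Under collapse of the exceptional curves the global Sobolev constant is exactly the quantity one worries about, and invoking Poincar\'e to absorb an $L^2$ term presupposes the very uniform Poincar\'e inequality you have not proved. So as written, the Moser iteration does not close.

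The paper's proof bypasses all of this by exploiting a structural feature you observed but did not use: $f_a\equiv 0$ on $U(\epsilon)$, so $\psi_a$ is \emph{harmonic} in the entire region where the geometry degenerates. By the maximum principle, the extrema of $\psi_a$ over $U(\epsilon)$ are attained on $\partial U(\epsilon)$, so it suffices to control $\psi_a$ on the fixed compact set $K=X\setminus U(\epsilon)$, where $g_a\to g_0$ smoothly and uniformly. On $K$ one covers by finitely many coordinate balls (chosen independently of $a$), applies the \emph{local} Harnack inequality to $\varphi_a=\sup_X\psi_a-\psi_a\geq 0$ starting from the ball containing its minimum, and chains the Harnack estimates across overlapping balls to obtain a uniform oscillation bound $\sup_X\psi_a-\inf_X\psi_a\leq C$. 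The normalization $\int_X\psi_a\,d\vol_{g_a}=0$ then gives the $L^\infty$ bound. No global Sobolev or Poincar\'e inequality on the degenerating family is needed; only local elliptic estimates on a region of uniformly controlled geometry.
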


\begin{proof}
Let $U(\delta)=\{ r < \delta \}$. Since $g_a = R
g_{co,a}$ on $U(\epsilon)$, we have $\Lambda_{\omega_a} F_{g_a} = 0$
on $U(\epsilon)$ and
\[
\Delta_{g_a} \psi_a = 0 \ {\rm on} \ U(\epsilon).
\]
By the maximum principle,
\[
\sup_{U(\epsilon)} |\psi_a| \leq \sup_{\partial U(\epsilon)} |\psi_a|.
\]
Define
\[
\varphi_a(z) = \sup_X \psi_a - \psi_a(z).
\]
Let $p \in X$ be a point where $0=\varphi_{a}(p) = \inf_{X} \varphi_a$. By the
above estimate, we may assume $p \in X \backslash U(\epsilon)$. Fix a
finite open cover $\{ V_i\}$ of coordinate charts of $K = X \setminus
U(\epsilon)$ such that the eigenvalues of $(g_a)^{i \bar{j}}$ are uniformly bounded
above and below on each chart. Denote by $B_i$ coordinate balls each
compact contained in $V_i$ which still cover $K$.  Note that since the metrics $g_{a}$ converge 
smoothly and uniformly on $X\setminus U(\epsilon)$ to the metric $g_0$ the sets $B_i, V_i$ can
be chosen independent of $a$.

Suppose $p \in B_1$ so that $\inf_{B_1} \varphi = 0$. By the
Harnack inequality for elliptic PDE (e.g. \cite[Theorem 5.10]{Han-Lin}),
\[
\sup_{B_1} \varphi_a \leq C \bigg(  \inf_{B_1} \varphi_a +
\| \Delta_{g_a} \varphi_a \|_{L^\infty} \bigg) = C \| \Delta_{g_a} \varphi_a \|_{L^\infty}.
\]
From Proposition~\ref{prop: FLY2.6-resolution},  the function ${\rm Tr} \, \Lambda_{\omega_a} F_{g_a}$ is supported on $U(1) \backslash U(\epsilon)$ and is bounded uniformly, independent of $a$. Therefore
\[
\sup_X |\Delta_{g_a} \varphi_a| \leq C.
\]
It follows that $\sup_{B_1} \varphi_a \leq C$. Let $B_2$
be another coordinate ball with $B_1 \cap B_2 \neq \emptyset$. By the
Harnack inequality,
\[
\sup_{B_2} \varphi_a \leq C \bigg( \inf_{B_1 \cap B_2} \varphi_a + \|\Delta_{g_a} \varphi_a\|_{L^\infty} 
\bigg) \leq C \bigg( \sup_{B_1} \varphi_a +1
\bigg) \leq C.
\]
Continuing this process for each $B_i$, we conclude
\[
\sup_K \varphi_a \leq C,
\]
and hence $\sup_X \varphi_a \leq C$. This gives a bound on the oscillation
\[
\sup_X \psi_a - \inf_X \psi_a \leq C.
\]
Since $\int_X \psi_a \, d {\rm vol}_{g_a} = 0$, there is a point $q \in X$ where $\psi_a(q) =0$. Therefore
\[
\sup_X |\psi_a| \leq C
\]
where $C$ is independent of $a>0$.
\end{proof}
\medskip
\par  We record some corollaries of this estimate.

\begin{cor} \label{cor: referenceH}
Let $H_{a}, \Hat{H}_{a}, g_{a}, h_{a}, \psi_{a}$ be as above, for $0<a \ll 1$.  Then we have
\begin{itemize}
\item[$(i)$] Let $Z= \cup_{i} C_i$ be the union of all $(-1,-1)$ curves contracted during the conifold transition.  On compact sets $K\subset X\setminus Z$, the metrics $\hat{H}_{a}$ converge smoothly to a limiting  metric $\hat{H}_0 = e^{\psi_0}g_0$.
\item[$(ii)$] For all $0\leq a \ll 1$, the metrics $\hat{H}_{a}$ are uniformly equivalent to the background metrics $g_{a}$.  That is, there is a uniform constant $C$, independent of $a$ such that
\[
C^{-1}g_{a} \leq \hat{H}_{a} \leq C g_{a}.
\]
In particular,  since $\det \hat{H}_a^{-1} H_a = 1$, the endomorphism $h_{a} = g_{a}^{-1}H_{a} = e^\psi_{a}\hat{H}_{a}^{-1}H_{a}$ satisfies
\[
C^{-1} \leq \det h_{a} \leq C
\]
for a uniform constant $C$, independent of $0\leq a \ll 1$.

\item[$(iii)$] The Hermitian-Yang-Mills tensor is bounded along the sequence
\[
    \sup_X |\sqrt{-1} \Lambda_{\omega_a} F_{\hat{H}_a} |_{\hat{H}_a} \leq C.
 \]
The full curvature of the limiting $\hat{H}_0$ satisfies
$  | F_{\hat{H}_0} |_{\hat{H}_0} \leq C r^{-2}$  on $\underline{X}_{reg}$.
\end{itemize}
\end{cor}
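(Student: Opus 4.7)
The plan is to address the three claims in order of increasing difficulty, using the uniform $C^0$ estimate on $\psi_a$ from Lemma \ref{lem: confFactC0} as the key input throughout.

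Part (ii) is essentially immediate from the $C^0$ bound. Since $\hat{H}_a = e^{\psi_a} g_a$ with $\|\psi_a\|_{L^\infty} \leq C$, the equivalence $C^{-1} g_a \leq \hat{H}_a \leq C g_a$ follows. For the determinant, writing $h_a = e^{\psi_a} \hat{h}_a$ gives $\det h_a = e^{3\psi_a}$ (using the normalization $\det \hat{h}_a \equiv 1$), which is uniformly bounded above and below. For part (i), on a compact $K \Subset X \setminus Z$, Proposition \ref{prop: FLY2.6-resolution}(iv) gives smooth convergence $g_a \to g_0$, so the right-hand side of (\ref{eq:reftwist}) converges smoothly to $\tfrac{1}{3}\Tr \sqrt{-1}\Lambda_{\omega_0} F_{g_0}$. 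Combined with the uniform $C^0$ bound on $\psi_a$, interior Schauder estimates applied to the uniformly elliptic equation $\Delta_{g_a}\psi_a = \tfrac{1}{3}\Tr \sqrt{-1}\Lambda_{\omega_a} F_{g_a}$ yield $C^k$ bounds for $\psi_a$ on $K$ uniform in $a$. A diagonal Arzel\`a-Ascoli argument produces a subsequential limit $\psi_0 \in C^\infty_{\rm loc}(\underline{X}_{reg})$, and $\hat{H}_0 := e^{\psi_0} g_0$ is the desired limit metric.

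For part (iii), the pointwise bound on $\sqrt{-1}\Lambda_{\omega_a} F_{\hat{H}_a}$ follows directly from (\ref{conf-change-F}): the term $\Delta_{g_a}\psi_a$ is uniformly bounded since it equals $\tfrac{1}{3}\Tr \sqrt{-1}\Lambda_{\omega_a} F_{g_a}$, and the latter is uniformly controlled because $g_a$ is K\"ahler Ricci-flat on $U(\epsilon)$ (so $\Lambda F_{g_a}=0$ there) and agrees with the fixed $\omega_{CY}$ outside $U(1)$, while on the annular gluing region $U(1)\setminus U(\epsilon)$ the smooth convergence $g_a \to g_0$ gives uniform bounds. The main obstacle is the full curvature bound $|F_{\hat{H}_0}|_{\hat{H}_0} \leq C r^{-2}$ near the ODP singularities. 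Using $F_{\hat{H}_0} = F_{g_0} - (\del\dbar \psi_0) I$, the curvature of $g_0$ is controlled on $U(\epsilon)\cap \underline{X}_{reg}$ since $g_0$ is a constant multiple of the conical Calabi-Yau metric $g_{co,0}$, whose curvature satisfies $|{\rm Rm}(g_{co,0})|_{g_{co,0}} \leq Cr^{-2}$.

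The delicate piece is controlling $\del\dbar \psi_0$ with the correct $r^{-2}$ decay. Passing the equation $\Delta_{g_a}\psi_a=0$ on $U(\epsilon)$ to the limit shows that $\psi_0$ is $g_{co,0}$-harmonic there. Exploiting the homogeneity of the cone, the plan is to work in the holomorphic cylindrical coordinates $(w_1,w_2,w_3)$ of Lemma \ref{lem: cylCoords} centered at a point $\hat{z}$ with $r(\hat{z})=\hat{r}$: on the fixed-size ball $B_\rho$, the rescaled metric $\hat{r}^{-2} g_{co,0}$ is uniformly equivalent to the Euclidean metric with bounded derivatives of all orders. Since harmonicity is conformally invariant under the overall rescaling, $\psi_0$ solves a uniformly elliptic equation in $B_\rho$ with coefficients bounded in $C^k$ independently of $\hat{z}$. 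Interior Schauder estimates then yield $|\del\dbar \psi_0|_{g_{euc}}(\hat{z}) \leq C\|\psi_0\|_{L^\infty(B_\rho)} \leq C$, and converting back via the scaling relation $|\del\dbar \psi_0|_{g_{co,0}} \sim \hat{r}^{-2}|\del\dbar \psi_0|_{g_{euc}}$ gives $|\del\dbar \psi_0|_{g_{co,0}}(\hat{z}) \leq C\hat{r}^{-2}$. Combined with the curvature bound for $g_0$ and the uniform equivalence $\hat{H}_0 \sim g_0$ from part (ii), this produces the desired estimate $|F_{\hat{H}_0}|_{\hat{H}_0} \leq Cr^{-2}$.
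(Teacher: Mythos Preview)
Your proof is correct and follows essentially the same approach as the paper's: the $C^0$ bound on $\psi_a$ from Lemma~\ref{lem: confFactC0} immediately gives (ii), interior elliptic estimates on the Poisson equation~(\ref{eq:reftwist}) combined with smooth convergence $g_a\to g_0$ on compact sets give (i), and for (iii) the key point---that $\psi_0$ is $g_{co,0}$-harmonic on $U(\epsilon)$ and hence interior Schauder estimates in holomorphic cylindrical coordinates yield $|\del\dbar\psi_0|_{g_{co,0}}\leq Cr^{-2}$---is exactly what the paper does. One small remark: your phrase ``harmonicity is conformally invariant under the overall rescaling'' is slightly imprecise (conformal invariance of the Laplacian fails above real dimension two); what you are really using is that $\Delta_{\hat r^{-2}g_{co,0}}=\hat r^{2}\Delta_{g_{co,0}}$, so constant rescaling preserves harmonic functions and the rescaled operator has uniformly bounded coefficients by Lemma~\ref{lem: cylCoords}.
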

\begin{proof}
To prove convergence of $\hat{H}_a$ on a compact set $K$, we cover $K$
by finitely many coordinate charts and apply interior estimates for
the Laplace equation (\ref{eq:reftwist}) to $\psi_a$. On $K$, the
metrics $g_a$ converge uniformly to $g_0$, and hence after a
subsequence $\hat{H}_a = e^{\psi_a} g_a$ converges to a limiting
metric $\hat{H}_0$. The uniform bounds for $\hat{H}_a$ and $\det h_a$
follow from the $C^0$ estimate $\| \psi_a \|_{L^\infty} \leq C$.
\smallskip
\par On a neighborhood $U(\epsilon)$ containing the holomorphic
curves, we have $\Lambda_{\omega_a} F_{g_a}=0$ and $\Lambda_{\omega_a}
F_{\hat{H}_a}=0$. Outside of $U(\epsilon)$, the metrics
$(\hat{H}_a,\omega_a)$ are uniformly bounded, hence
$|\Lambda_{\omega_a} F_{\hat{H}_a}|_{\hat{H}_a} \leq C$.
  \smallskip
  \par The full curvature $F_{\hat{H}_0}$ does not vanish on
  $U(\epsilon)$, however in this neighborhood $|Rm_{g_0}|_{g_0} \leq C
  r^{-2}$. In $U(\epsilon)$ we have $\Delta_{g_0} \psi_0 = 0$, and
  estimates for the Laplacian in cylindrical coordinates imply
  \[
|\partial^2 \psi_0 |_{g_{euc}} \leq C
  \]
and hence $|\nabla^2_{g_0} \psi_0|_{g_0} \leq C r^{-2}$ by Lemma
\ref{lem: cylCoords}. It follows that $|F_{\hat{H}_0}|_{\hat{H}_0}
\leq C r^{-2}$ since $g_0$ and $\hat{H}_0$ are uniformly equivalent.

\end{proof}

\subsection{Uhlenbeck-Yau $C^0$ estimate}
In this section, we derive the following estimate:

\begin{prop} Along the sequence of endomorphisms $h_a = g_a^{-1} H_a$,
  we have the uniform $C^0$ estimate
\be \label{C0-est2}
C^{-1} I \leq h_a \leq C I
\ee
where $I$ denotes the identity endomorphism.
\end{prop}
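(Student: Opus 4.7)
The plan is to adapt the classical Uhlenbeck--Yau $C^{0}$ estimate to the HYM sequence $(H_{a},\omega_{a})$ with a careful accounting of constants as $a\to 0$. Since Corollary~\ref{cor: referenceH} gives $C^{-1}g_{a}\leq \hat H_{a}\leq C g_{a}$ uniformly and since $\det\hat h_{a}\equiv 1$ ties the size of $\hat h_{a}=\hat H_{a}^{-1}H_{a}$ to that of $\hat h_{a}^{-1}$, it suffices to prove a uniform bound
\[
\sup_{X}\bigl(\Tr \hat h_{a}+\Tr \hat h_{a}^{-1}\bigr)\leq C
\]
with $C$ independent of $a$; this is equivalent to~\eqref{C0-est2}.

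Starting from~\eqref{diff-chern-curv}, the standard Donaldson--Uhlenbeck--Yau computation (contracting with $\omega_{a}^{2}$ and using $\det\hat h_{a}=1$) yields the pointwise inequality
\[
\Delta_{g_{a}}\log\Tr \hat h_{a}\;\geq\;-\bigl|\sqrt{-1}\Lambda_{\omega_{a}}F_{\hat H_{a}}\bigr|_{\hat H_{a}}-\bigl|\sqrt{-1}\Lambda_{\omega_{a}}F_{H_{a}}\bigr|_{H_{a}},
\]
where the second term vanishes by the HYM equation for $H_{a}$ and the first is uniformly bounded by Corollary~\ref{cor: referenceH}(iii). The analogous inequality holds with $\hat h_{a}$ replaced by $\hat h_{a}^{-1}$. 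Moser iteration against a uniform Sobolev inequality for $(X,g_{a})$, combined with the uniform volume bound of Proposition~\ref{prop: FLY2.6-resolution}(v), then upgrades this to
\[
\sup_{X}\bigl|\log\Tr\hat h_{a}\bigr|\;\leq\; C\bigl(1+\bigl\|\log\Tr\hat h_{a}\bigr\|_{L^{1}(\dvol_{g_{a}})}\bigr),
\]
uniformly in $a$.

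The $L^{1}$ bound is obtained by contradiction, following Uhlenbeck--Yau \cite{UY} and Li--Yau \cite{LY86}. Assume along a subsequence that $\|\log\hat h_{a}\|_{L^{1}}\to\infty$, renormalize $s_{a}=\log\hat h_{a}/\|\log\hat h_{a}\|_{L^{2}}$, and use the differential inequality together with uniform Sobolev to extract a nonzero weak $L^{2}_{1}$ limit $s_{\infty}$ on $X$. The Uhlenbeck--Yau weak-regularity theorem applied to $s_\infty$ then produces a coherent subsheaf $\mathcal F\subsetneq T^{1,0}X$ with
\[
\frac{1}{\mathrm{rk}(\mathcal F)}\int_{X}c_{1}(\mathcal F)\wedge\omega_{a}^{2}\;\geq\;0.
\]
By Proposition~\ref{prop: FLY2.6-resolution}(vi) we have $[\omega_{a}^{2}]=[\omega_{CY}^{2}]$ for every $a$, so this contradicts the stability of $T^{1,0}X$ with respect to the K\"ahler class $[\omega_{CY}^{2}]$ established at the beginning of Section~\ref{sec: HYM-central}.

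The main technical obstacle is the uniform Sobolev inequality for the degenerating family $g_{a}$: the metrics $g_{a}$ collapse the curves $C_{i}$ as $a\to 0$, so this must be verified using the explicit structure of $g_{a}$, which equals $M_{0}^{1/2}\epsilon^{-1/3}\omega_{co,a}$ on $U(\epsilon)$, converges smoothly to $g_{0}$ on $X\setminus U(1)$, and is glued on a compact annular region of fixed size in between. Once uniformity of both the Sobolev constant and the stability slope class has been secured, everything else is a transcription of the classical Uhlenbeck--Yau argument to the balanced (rather than Gauduchon) setting, which is permitted because the balanced condition $d\omega_{a}^{2}=0$ already gives the integration-by-parts used in the computations above.
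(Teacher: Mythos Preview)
Your outline has a genuine gap at exactly the point you flag: the uniform Sobolev inequality for $(X,g_{a})$ as $a\to 0$. You assert it ``must be verified'' from the explicit structure of $g_{a}$, but you do not verify it, and the rest of your argument rests on it entirely (both the Moser step $\sup|\log\Tr\hat h_{a}|\leq C(1+\|\log\Tr\hat h_{a}\|_{L^{1}})$ and the extraction of a global $L^{2}_{1}$ weak limit $s_{\infty}$). Even if such a Sobolev bound could be salvaged, your weak-limit step is underspecified: the norms $L^{2}_{1}(X,g_{a})$ are varying with $a$ and the geometry degenerates along the curves $C_{i}$, so ``extract a nonzero weak $L^{2}_{1}$ limit $s_{\infty}$ on $X$'' and ``apply Uhlenbeck--Yau regularity to $s_{\infty}$'' need a fixed reference geometry to make sense, which you do not supply.

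The paper's proof is organized precisely to avoid any global uniform Sobolev inequality. The key observations you are missing are: (i) on $U(\epsilon)$ one has $\Lambda_{\omega_{a}}F_{\hat H_{a}}=0$, so $\Tr\tilde h_{a}$ is subharmonic there and the maximum principle pushes the $\sup$ onto the fixed compact set $\{r\geq\epsilon\}$, where the geometry is uniform in $a$ and ordinary local Harnack/Moser suffices; (ii) the $W^{1,2}$ bound on $\tilde h_{a}^{\sigma}$ comes from integrating the Uhlenbeck--Yau inequality against $d\mathrm{vol}_{g_{a}}$ (balanced condition plus the volume bound in Proposition~\ref{prop: FLY2.6-resolution}(v)), and is then transferred to a \emph{fixed} metric only on compact subsets of $X\setminus Z$ where $g_{a}\to g_{0}$ smoothly; (iii) the limiting projection $\pi$ is therefore defined only on $X\setminus Z$, and the destabilizing subsheaf is extended across the codimension-two set $Z$ by Levi/Hartogs; (iv) finally, the slope of this sheaf is computed with the singular pair $(\hat H_{0},\omega_{0})$ and shown to agree with $c_{1}(\mathcal E)\cdot[\omega_{CY}]^{2}$ by an explicit cutoff argument using the comparison~\eqref{eq: compare-cy-co}. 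Steps (iii)--(iv) are exactly what replace your appeal to $[\omega_{a}^{2}]=[\omega_{CY}^{2}]$: that cohomological identity alone is not enough, because the limit object lives on $X\setminus Z$ with respect to singular metrics, not on $(X,\omega_{a})$ for some fixed $a$.
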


We will prove
this by following the argument of Uhlenbeck-Yau \cite{UY}. Thanks to the
estimate $C^{-1} \leq \det h_a \leq C$, it suffices to show
\be \label{C0-est}
{\rm Tr} \, h_a \leq C.
\ee
Rather than $h_a = g_a^{-1} H_a$, we will work with the reference
metric $\hat{H}_a = e^{\psi_a} g_a$ from the previous section and
relative endomorphism $\hat{h}_a = \hat{H}_a^{-1} H_a $. The estimate $\| \psi_a \|_{L^\infty(X)} \leq C$ in Lemma~\ref{lem: confFactC0} shows that a bound ${\rm Tr} \, \hat{h}_a \leq C$ implies (\ref{C0-est}). 
\smallskip
\par To prove (\ref{C0-est}), suppose on the contrary that ${\rm Tr} \, \hat{h}_a \rightarrow \infty$ as $a \rightarrow 0$. Let
\[
\tilde{h}_a = {\hat{h}_a \over \sup_X {\rm Tr} \, \hat{h}_a}.
\]
The starting point in the proof of the $C^0$ estimate of Uhlenbeck-Yau is the following inequality (see \cite[equation (4.6)]{UY});
\begin{lem}\label{lem: ineq-K-0}
Fix $0<\sigma \leq 1$, and any two metrics $\hat{H},H$ on $T^{1,0}X \rightarrow X$.  Let $h= \hat{H}^{-1} H$, and let $g$ be a Hermitian metric on $X$.  Then we have
\be \label{ineq-K-0}
| h^{-\sigma/2} \hat{\nabla} h^\sigma|_{\hat{H},g}^2  \leq g^{j \bar{k}} \langle h^{-1} \hat{\nabla}_j h, \hat{\nabla}_k h^\sigma \rangle_{\hat{H}}
\ee
where $\hat{\nabla}$ is the Chern connection of $\hat{H}$.
\end{lem}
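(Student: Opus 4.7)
The plan is to reduce the inequality to a pointwise scalar estimate on eigenvalues via the spectral calculus. Fix $p \in X$. Since $H$ and $\hat{H}$ are both hermitian, the endomorphism $h = \hat{H}^{-1} H$ is self-adjoint and positive-definite with respect to $\hat{H}$, so I may choose an $\hat{H}$-orthonormal frame for $T^{1,0}_{p}X$ in which $h(p)$ is diagonal with positive eigenvalues $\lambda_{1}, \dots, \lambda_{r}$, and simultaneously coordinates with $g^{j\bar{k}}(p) = \delta^{jk}$. In such a frame the endomorphism inner product simplifies to $\langle A,B \rangle_{\hat{H}} = \sum_{a,b} A^{a}{}_{b}\, \overline{B^{a}{}_{b}}$, so that both sides of the desired inequality become sums of squared moduli of the entries $V_{j,a,b} := (\hat{\nabla}_{j} h)^{a}{}_{b}$.

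The second step is to invoke the Daleckii--Krein formula for the Fr\'echet derivative of $f(A) = A^{\sigma}$: in the diagonalizing frame at $p$,
\[
(\hat{\nabla}_{j} h^{\sigma})^{a}{}_{b} \;=\; M_{ab}\, V_{j,a,b}, \qquad M_{ab} := \frac{\lambda_{a}^{\sigma} - \lambda_{b}^{\sigma}}{\lambda_{a} - \lambda_{b}},
\]
with $M_{aa} = \sigma\, \lambda_{a}^{\sigma-1}$. Using this, together with the fact that $h^{-1}$ and $h^{-\sigma/2}$ are diagonal with entries $\lambda_{a}^{-1}$ and $\lambda_{a}^{-\sigma/2}$ at $p$, a direct computation expresses both sides as manifestly real, non-negative quadratic forms in $V_{j,a,b}$: the left-hand side becomes $\sum_{j,a,b} \lambda_{a}^{-\sigma} M_{ab}^{2}\, |V_{j,a,b}|^{2}$, while the right-hand side becomes $\sum_{j,a,b} \lambda_{a}^{-1} M_{ab}\, |V_{j,a,b}|^{2}$.

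Comparing coefficients, the lemma reduces to the pointwise scalar inequality
\[
\lambda_{a}^{-\sigma} M_{ab}^{2} \;\leq\; \lambda_{a}^{-1} M_{ab}, \qquad \text{i.e., } \qquad \frac{\lambda_{a}^{\sigma} - \lambda_{b}^{\sigma}}{\lambda_{a} - \lambda_{b}} \;\leq\; \lambda_{a}^{\sigma-1},
\]
for $\sigma \in (0,1]$ and positive $\lambda_{a}, \lambda_{b}$. This is elementary: if $\lambda_{a} > \lambda_{b}$ it rearranges to $\lambda_{a}^{\sigma-1} \leq \lambda_{b}^{\sigma-1}$, which holds because $\sigma-1 \leq 0$; if $\lambda_{a} < \lambda_{b}$ it follows from the mean value theorem applied to $t \mapsto t^{\sigma}$ combined with $\sigma \leq 1$ and the monotonicity of $t \mapsto t^{\sigma-1}$; the degenerate case $\lambda_{a} = \lambda_{b}$ is just $\sigma \leq 1$.

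The only point requiring care is bookkeeping in the inner product, specifically tracking which factor carries the adjoint in $\langle A, B\rangle_{\hat{H}} = \Tr(A B^{\dagger_{\hat{H}}})$; passing to the pointwise $\hat{H}$-orthonormal frame reduces the adjoint to the usual conjugate transpose and turns the whole computation into scalar algebra on eigenvalues. There is no substantive obstacle, as the statement is tensorial and the key structural input -- that $h$ is $\hat{H}$-self-adjoint with positive spectrum -- permits the simultaneous use of spectral calculus and pointwise orthonormalization.
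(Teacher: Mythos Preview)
Your proof is correct. The paper does not give its own argument for this lemma; it simply quotes the inequality as \cite[equation (4.6)]{UY} and proceeds to use it. Your approach---pointwise diagonalization of $h$ in an $\hat{H}$-orthonormal frame, the Daleckii--Krein divided-difference formula for $\hat{\nabla}h^{\sigma}$, and reduction to the scalar inequality $M_{ab}\leq \lambda_a^{\sigma-1}$---is exactly the standard route (and essentially how Uhlenbeck--Yau establish it). One small remark on a point you flagged yourself: there is no obstruction to simultaneously choosing base coordinates with $g^{j\bar{k}}(p)=\delta^{jk}$ and a fiber frame diagonalizing $h$, because the index $j$ on $\hat{\nabla}_j$ labels the direction of differentiation while the endomorphism indices $a,b$ live in an independently chosen frame of the fiber; even though the bundle happens to be $T^{1,0}X$, these two roles are decoupled at a point.
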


We rewrite~\eqref{ineq-K-0} using the identity
\be \label{partial-Tr-sigma}
\partial_j {\rm Tr} \, h^\sigma = \sigma \langle h^{-1} \hat{\nabla}_i h, h^\sigma \rangle_{\hat{H}},
\ee
which implies
\[
{1 \over \sigma} \Delta_{g} {\rm Tr} \, h^\sigma = g^{j \bar{k}} \partial_{\bar{k}} \langle h^{-1} \hat{\nabla}_j h, h^\sigma \rangle_{\hat{H}}.
\]
Therefore, ~(\ref{ineq-K-0}) is equivalent to
\bea \label{ineq-K}
| h^{-\sigma/2} \hat{\nabla} h^\sigma|_{\hat{H},g}^2 -{1 \over \sigma} \Delta_g {\rm Tr} \, h^\sigma  &\leq& g^{j \bar{k}} \langle h^{-1} \hat{\nabla}_j h, \hat{\nabla}_k h^\sigma \rangle_{\hat{H}} - g^{j \bar{k}} \partial_{\bar{k}} \langle h^{-1} \hat{\nabla}_j h, h^\sigma \rangle_{\hat{H}} \nonumber\\
&=&  - g^{j \bar{k}} \langle  \partial_{\bar{k}} (h^{-1} \hat{\nabla}_j h), h^\sigma \rangle_{\hat{H}}.
\eea
We will make use of inequality (\ref{ineq-K}) by relating the
right-hand side to the curvature tensor.  With the same notation as
Lemma~\ref{lem: ineq-K-0}, the  difference between curvatures of  the
Chern connections (\ref{diff-chern-curv}) defined by $H,\hat{H}$ shows that the key inequality (\ref{ineq-K}) can be written as
\[
| h^{-\sigma/2} \hat{\nabla} h^\sigma|_{\hat{H},g}^2 -{1 \over \sigma} \Delta_g {\rm Tr} \, h^\sigma  \leq \langle (\sqrt{-1} \Lambda_\omega F - \sqrt{-1} \Lambda_\omega \hat{F}), h^\sigma \rangle_{\hat{H}}.
\]
In our case, applying this inequality to the Hermitian-Yang-Mills metric $H_a$ and the reference metric $\hat{H}_a$, we obtain
\be \label{ineq-K-2}
| \tilde{h}_a^{-\sigma/2} \hat{\nabla} \tilde{h}_a^\sigma|^2_{\hat{H}_a,g_a} -{1 \over \sigma} \Delta_{\omega_a} {\rm Tr} \, \tilde{h}^\sigma_a  \leq - \langle \sqrt{-1} \Lambda_{\omega_a} F_{\hat{H}_a}, \tilde{h}_a^\sigma \rangle_{\hat{H}_a}.
\ee
Corollary~\ref{cor: referenceH} gives the bound $|\Lambda_{\omega_a} F_{\hat{H}_a}|_{\hat{H}_a} \leq C$
which together with and $0<\tilde{h}_a \leq I$ yields the estimate
\be \label{ineq-K-3}
| \tilde{h}_a^{-\sigma/2} \nabla \tilde{h}_a^\sigma|_{\hat{H}_{a}, g_a}^2 -{1 \over \sigma} \Delta_{g_a} {\rm Tr} \, \tilde{h}^\sigma_a  \leq C,
\ee
where $C$ is independent of $a,\sigma$. Integrating both sides using
the balanced condition gives
\be \label{nabla-sigma/2}
\int_X | \tilde{h}_a^{-\sigma/2} \hat{\nabla} \tilde{h}_a^\sigma|_{\hat{H}_a,g_a}^2 \, d
{\rm vol}_{g_a}  \leq C
\ee
by Proposition \ref{prop: FLY2.6-resolution} ($v$). Since $0 \leq \tilde{h}_a \leq I$, this implies
\be \label{eq: gaW12Est}
\int_X | \hat{\nabla} \tilde{h}_a^\sigma|^2_{\hat{H}_a,g_a} \, d
{\rm vol}_{g_a}   \leq C.
\ee
Let $K$ be a compact set which is the closure of an open set $K^{o}$
satisfying $K^{o} \cap Z = \emptyset$, where $Z = \cup_{i=1}^{k} C_i$ is the union of all $(-1,-1)$ curves being contracted. The metrics $(g_a, \hat{H}_a)$ are uniformly equivalent to $(g_1,\hat{H}_1)$ on $K$. Then
\be \label{W12-est}
\int_K | \hat{\nabla} \tilde{h}_a^\sigma|^2_{\hat{H}_1,g_1} \, d
{\rm vol}_{g_1}  \leq C_K,
\ee
where $\nabla$ is with respect to $g_1$. For each $0 < \sigma \leq 1$, we have weak convergence $
\tilde{h}_{a_k}^\sigma \rightharpoonup \tilde{h}_\infty^\sigma $ in $W^{1,2}(K)$ along a subsequence; here $W^{1,2}(K)$ denotes the Sobolev space defined by $(g_1,\hat{H}_1)$. By a diagonal argument, there is a subsequence $a_i \rightarrow 0$ satisfying
\[
\tilde{h}_{a_i}^\sigma \rightharpoonup \tilde{h}_\infty^\sigma
\]
in $W^{1,2}(K)$ for all $\sigma \in \{ 1 /n : n \in \mathbb{N}\}$. By semicontinuity of weak convergence, we have the estimate
\[
\int_K |\nabla h_\infty^\sigma|^2_{g_1} d {\rm vol}_{g_1} \leq \limsup_i \int_K | \nabla \tilde{h}_{a_i}^\sigma|^2_{g_1} \, \dvol_{g_1} \leq C_K.
\]
Let $\sigma_i = 1/i$, and define $\pi \in W^{1,2}(K)$ by 
\[
(I - h_\infty^{\sigma_i}) \rightharpoonup \pi
\]
in the weak limit $i \rightarrow \infty$ in $W^{1,2}(K)$. Exhausting $X \backslash Z$ with compact sets $K$, we obtain an endomorphism $\pi \in \Gamma(X\setminus Z, {\rm End} \, T^{1,0}X)$ with regularity $\pi \in W^{1,2}_{loc}(X\setminus Z)$. The definition of $\pi$ is such that it is the projection onto ${\rm Ker} \, h_\infty$, and it satisfies $\pi=\pi^{\dagger_{\hat{H}_0}}=\pi^2$ almost everywhere.
\smallskip
\par We need to verify
\begin{lem}
The projection $\pi$ is not trivial, in the sense that it is neither the identity nor the zero projection.
\end{lem}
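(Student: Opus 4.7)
The plan is to verify the two non-triviality conditions separately: $\pi \neq 0$ will follow from the degeneration of the determinant of $\tilde{h}_a$, while $\pi \neq I$ will follow from a maximum-principle argument that forces the supremum of $\Tr \tilde{h}_a$ away from the contracted curves.

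\textbf{$\pi \neq 0$.} Our normalization $\det \hat{h}_a \equiv 1$ gives
\[
\det \tilde{h}_a = \bigl(\sup_X \Tr \hat{h}_a\bigr)^{-3} \longrightarrow 0
\]
as $a \to 0$, since we have assumed $\sup_X \Tr \hat{h}_a \to \infty$ for contradiction. Combining the pointwise bound $0 \leq \tilde{h}_a \leq I$ (from positivity of $\tilde{h}_a$ and the normalization $\sup_X \Tr \tilde{h}_a = 1$) with the $W^{1,2}$ bound (\ref{eq: gaW12Est}), the Rellich--Kondrachov theorem yields strong convergence $\tilde{h}_a \to h_\infty$ in $L^p_{loc}(X\setminus Z)$ for every $p<\infty$. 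Since $\det$ is a polynomial in the entries, $\det h_\infty = 0$ almost everywhere on $X \setminus Z$. As $h_\infty$ is a non-negative self-adjoint endomorphism, its kernel is non-trivial almost everywhere, and the spectral theorem identifies $\lim_{i} (I - h_\infty^{\sigma_i})$ as the projection onto $\ker h_\infty$; hence $\pi \neq 0$.

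\textbf{$\pi \neq I$.} Here the task is to produce a uniform lower bound on $h_\infty$ that survives the limit. The crucial observation is that on the conical region $U(\epsilon)$, the metric $g_a = M_0^{1/2}\epsilon^{-1/3}\omega_{co,a}$ is K\"ahler Ricci-flat, so $\Lambda_{\omega_a} F_{g_a} = 0$ and, by (\ref{eq:reftwist}), $\Delta_{g_a}\psi_a = 0$ throughout $U(\epsilon)$. Using (\ref{conf-change-F}) we obtain $\Lambda_{\omega_a} F_{\hat{H}_a} = 0$ on $U(\epsilon)$ as a full endomorphism. Combined with $\Lambda_{\omega_a} F_{H_a} = 0$, the inequality (\ref{ineq-K-2}) with $\sigma = 1$ sharpens on $U(\epsilon)$ to
\[
\Delta_{g_a} \Tr \tilde{h}_a \geq \bigl| \tilde{h}_a^{-1/2}\, \hat{\nabla} \tilde{h}_a \bigr|^2_{\hat{H}_a,g_a} \geq 0,
\]
so $\Tr \tilde{h}_a$ is subharmonic on $U(\epsilon)$. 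By the maximum principle applied to $U(\epsilon/2)$, the supremum $\sup_X \Tr \tilde{h}_a = 1$ is attained at some point $p_a \in X \setminus U(\epsilon/2)$, a compact subset of $X \setminus Z$ on which the metrics $g_a$ converge smoothly to $g_0$. After extracting a subsequence $p_a \to p_\infty$, the global bound $\Delta_{g_a} \Tr \tilde{h}_a \geq -C$ from (\ref{ineq-K-3}) combined with the standard sub-mean-value inequality for Poisson subsolutions on a geodesic ball of fixed small radius around $p_\infty$ yields
\[
\int_{B_r^{g_a}(p_a)} \Tr \tilde{h}_a\, d\vol_{g_a} \geq (1 - C'r^2)\,\vol_{g_a}\bigl(B_r^{g_a}(p_a)\bigr).
\]
Passing to the limit $a\to 0$, the smooth convergence of metrics near $p_\infty$ together with $L^1_{loc}$ convergence of $\tilde{h}_a$ gives $\int_{B_r^{g_0}(p_\infty)} \Tr h_\infty\, d\vol_{g_0} > 0$, so $h_\infty \not\equiv 0$ and $\pi \neq I$.

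\textbf{Expected obstacle.} At first glance the chief difficulty would be to rule out concentration of the supremum of $\Tr \tilde{h}_a$ in an arbitrarily small neighborhood of the vanishing cycles $Z$, where the background metrics degenerate and uniform local estimates are unavailable. The subharmonicity of $\Tr \tilde{h}_a$ on the Ricci-flat conical region $U(\epsilon)$ circumvents this obstacle cleanly, pushing the supremum into the compact region $X \setminus U(\epsilon/2)$ of uniform geometry; the remaining sub-mean-value step is then routine.
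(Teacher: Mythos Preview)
Your proof is correct and follows essentially the same approach as the paper: both show $\pi\neq 0$ via the determinant degeneration $\det\tilde{h}_a\to 0$, and both show $\pi\neq I$ by using subharmonicity of $\Tr\tilde{h}_a$ on the Ricci-flat region $U(\epsilon)$ to push the maximum point into the region of uniform geometry, then extract an $L^1$ lower bound that passes to the limit. The only cosmetic difference is that the paper invokes Moser iteration (from $\Delta_{g_a}\Tr\tilde{h}_a - C\,\Tr\tilde{h}_a\geq 0$) to obtain $1\leq C\|\Tr\tilde{h}_a\|_{L^1}$, whereas you use the sub-mean-value inequality for Poisson subsolutions; both yield the same conclusion.
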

\begin{proof} We show that $h_\infty$ is not identically zero.  We will use repeatedly the following inequality, which is a consequence of (\ref{ineq-K-2}) with $\sigma=1$;
\be\label{eq: ineqDropGrad}
\Delta_{g_a} {\rm Tr} \, \tilde{h}_a \geq \langle \sqrt{-1}\Lambda_{\omega_a} F_{\hat{H}_a}, \tilde{h}_a \rangle_{\hat{H}_a}.
\ee

By its normalization, we have ${\rm Tr} \, \tilde{h}_a \leq 1$ and there exists $x_a \in X$ such that $({\rm Tr} \, \tilde{h}_a)(x_a) = 1$. In $U(\epsilon)$, we have $\Lambda_{\omega_a} F_{\hat{H}_a} = 0$ and hence
\[
\Delta_{g_a} {\rm Tr} \, \tilde{h}_a \geq 0, \quad {\rm in } \ U(\epsilon).
\]
In particular, by the maximum principle, $\sup_{U(\epsilon)} {\rm Tr} \, \tilde{h}_a \leq \sup_{\partial
  U(\epsilon)} {\rm Tr} \, \tilde{h}_a$.  Thus, we may assume $x_a \in \{
r \geq \epsilon \}$. 

The metrics $g_a$ are uniformly equivalent on $\{
r \geq \epsilon/2 \}$.  In particular, we can fix a uniform number
$0<\delta \ll 1$ such that there is a coordinate ball $B_{\delta}(x_a) \subseteq \{ r \geq \epsilon /2 \}$
and, in local coordinates on $B_{\delta}(x_a)$ there is a uniform constant $M$, independent of $a$, such that
the eigenvalues of $(g_{a})_{\bar{k}j}$ are bounded above by $M$ and below by $M^{-1}$.

From~\eqref{eq: ineqDropGrad} we obtain the estimate
\[
\Delta_{g_a} {\rm Tr} \, \tilde{h}_a  - C \, {\rm Tr} \, \tilde{h}_a \geq 0.
\]
Applying the Moser iteration (e.g. \cite[Theorem 4.1]{Han-Lin}) gives
\[
1 = \sup_{B_{\delta}(x_a)} {\rm Tr} \, \tilde{h}_a \leq C \| {\rm Tr} \, \tilde{h}_a \|_{L^1( \{ r \geq \epsilon/2 \}, \dvol_{g_{1}})}
\]
for a uniform constant $C$. Let $K = \{ r \geq \epsilon /2 \}$.  By~(\ref{W12-est}) and Rellich's theorem, we have $\tilde{h}_a \rightarrow h_\infty$ in $L^1(K, d\vol_{g_{1}})$ and
\[
\| {\rm Tr} \, h_\infty \|_{L^1(K, \dvol_{g_1})} \geq C^{-1},
\]
therefore $h_\infty$ is not identically zero.

Finally, note that since $\tilde{h}_a$ converges to $h_{\infty}$ pointwise almost everywhere on $K$, and $\det \tilde{h}_{a} = (\sup_{X}\Tr \hat{h}_{a})^{-3} \rightarrow 0$ we see that $h_{\infty}$ has a non-trivial kernel almost everywhere on $K$.  Hence $\pi \ne 0$.
\end{proof}

\medskip
\par We have thus constructed a nontrivial projection $\pi$ which projects onto the kernel of $h_\infty$. To obtain a holomorphic subbundle, we need a further holomorphic condition on $\pi$.  Following Uhlenbeck-Yau \cite{UY} we have,

\begin{lem}
The projection $\pi$ satisfies $(I-\pi) \bar{\partial} \pi = 0$ on $X
\setminus Z$.
  \end{lem}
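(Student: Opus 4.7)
The identity $(I-\pi)\bar\partial \pi = 0$ is the classical Uhlenbeck--Yau assertion that the weak limit projection is $L^2$-holomorphic. My plan is to follow \cite[\S 4]{UY} essentially verbatim, localizing to compact subsets $K\Subset X\setminus Z$ on which the degenerating background metrics $(g_a,\hat H_a)$ are uniformly comparable to $(g_0,\hat H_0)$ and converge smoothly.

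The first step is a uniform gradient bound. Integrating the pointwise inequality~(\ref{ineq-K-2}) against $\dvol_{g_a}$, the Laplacian term drops out by the balanced condition (Proposition~\ref{prop: FLY2.6-resolution}($v$)); Corollary~\ref{cor: referenceH}($iii$) together with $0\leq \tilde h_a\leq I$ then yields
\[
\int_X |\tilde h_a^{-\sigma/2}\hat\nabla \tilde h_a^\sigma|_{\hat H_a, g_a}^2 \,\dvol_{g_a} \leq C,
\]
with $C$ independent of $\sigma\in(0,1]$ and $a$. Fixing $\sigma_n=1/n$, this gives uniform $W^{1,2}(K)$ bounds on $\tilde h_a^{\sigma_n}$; by Rellich and a diagonal procedure one extracts a subsequence $a_k\to 0$ along which $\tilde h_{a_k}^{\sigma_n}\to q_n$ strongly in $L^2(K)$ and weakly in $W^{1,2}(K)$ for every $n$, and $(I-q_n)\rightharpoonup \pi$ weakly in $W^{1,2}(K)$ as $n\to\infty$ (since $q_n\to I-\pi$ pointwise a.e., as $\lambda^{1/n}\to 1$ for $\lambda>0$ and $\lambda^{1/n}=0$ for $\lambda=0$).

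The heart of the argument is the pointwise Uhlenbeck--Yau inequality. In a unitary eigenframe of $\tilde h_a$ with eigenvalues $\lambda_i$, the Daleckii--Krein formula expresses $|\tilde h_a^{-\sigma/2}\hat\nabla \tilde h_a^\sigma|^2$ as a sum of spectral weights $\lambda_i^{-\sigma}\left(\frac{\lambda_i^\sigma-\lambda_j^\sigma}{\lambda_i-\lambda_j}\right)^2$ times $|(\hat\nabla \tilde h_a)_{ij}|^2$. As $\sigma\to 0^+$ these weights concentrate on the off-diagonal block pairing $\ker \tilde h_a$ with its orthogonal complement. Combined with the uniform integral bound above, this forces precisely that mixed block of $\hat\nabla \tilde h_a$ to vanish in the joint limit. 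A standard lower-semicontinuity argument under weak $W^{1,2}$ convergence (Mazur's lemma applied to the corresponding convex integrand) then yields $(I-\pi)\hat\nabla\pi=0$ almost everywhere on $K$. Taking the $(0,1)$-component gives $(I-\pi)\bar\partial\pi=0$ on $K$, and exhausting $X\setminus Z$ by such $K$ gives the global statement.

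The main obstacle is the singular passage $\sigma\to 0$: for each fixed $\sigma$ we only control $\tilde h_a^\sigma$ weakly in $W^{1,2}$, so the two limits $\sigma\to 0$ and $a\to 0$ must be coupled via a careful diagonalization (as in \cite{UY}) so that the divergent spectral weights on the mixed block interact correctly with the weak convergence. The remaining ingredients---uniform metric equivalence on compacta in $X\setminus Z$ and the $L^\infty$ bound on $\Lambda_{\omega_a} F_{\hat H_a}$---are already at hand from Proposition~\ref{prop: FLY2.6-resolution} and Corollary~\ref{cor: referenceH}, so the extension from the K\"ahler setting to our degenerating non-K\"ahler background is essentially cosmetic.
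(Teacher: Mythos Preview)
Your outline follows the Uhlenbeck--Yau strategy correctly in spirit, but you have not supplied the device that actually makes the limiting argument go through, and the paragraph where you invoke Daleckii--Krein and ``careful diagonalization'' is precisely where the content is missing. For each fixed $a$, every eigenvalue of $\tilde h_a$ is strictly positive, so $\tilde h_a^\sigma\to I$ and $\hat\nabla\tilde h_a^\sigma\to 0$ as $\sigma\to 0$; there is no ``off-diagonal block pairing $\ker\tilde h_a$ with its complement'' to concentrate on. The kernel appears only in the limit $h_\infty$, so trying to couple $\sigma\to 0$ with $a\to 0$ directly is genuinely delicate, and you do not indicate how to do it beyond calling it the main obstacle.

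The paper (following \cite{UY}) avoids this entirely by introducing a \emph{second} parameter $s$ and the elementary operator inequality
\[
\tilde h^{-\sigma/2}\ \geq\ \frac{2s+\tfrac12\sigma}{s}\,(I-\tilde h^s),
\]
valid for $0<\tilde h\leq I$. Combined with the uniform bound $\int_X|\tilde h_a^{-\sigma/2}\hat\nabla\tilde h_a^\sigma|^2\leq C$ this gives
\[
\int_X |(I-\tilde h_a^s)\,\hat\nabla\tilde h_a^\sigma|^2\ \leq\ C\Big(\frac{s}{2s+\tfrac12\sigma}\Big)^2,
\]
with the right-hand side independent of $a$. Now the three limits decouple cleanly: first $a\to 0$ (weak $L^2$ convergence on $U_\delta$, using that $\tilde h_a^s\to h_\infty^s$ strongly and $\hat\nabla\tilde h_a^\sigma\rightharpoonup\hat\nabla h_\infty^\sigma$ weakly), then $s\to 0$ (so $(I-h_\infty^s)\to\pi$ and the right-hand side tends to $0$), then $\sigma\to 0$ (so $\hat\nabla h_\infty^\sigma\rightharpoonup -\hat\nabla\pi$). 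This yields $\int_{U_\delta}|\pi\hat\nabla(I-\pi)|^2=0$, which via $|(I-\pi)\bar\partial\pi|^2=|\pi\hat\nabla(I-\pi)|^2$ is the claim. The two-parameter trick and the displayed inequality are the missing idea; once you have them there is no diagonalization to perform.
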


\begin{proof}
Following \cite{UY}, rather than work with $(I-\pi) \bar{\partial} \pi$, we differentiate $(I-\pi)\pi = 0$ to obtain
\[
|(I-\pi) \bar{\partial} \pi|^2_{\hat{H}_0,g_0} = |\bar{\partial} (I-\pi) \pi|^2_{\hat{H}_0,g_0} .
\]
Taking the adjoint with respect to $\hat{H}_0$ and using $\pi^\dagger
= \pi$ and $(\hat{\nabla}_i s)^\dagger = \partial_{\bar{i}} s$ for
self-adjoint endomorphisms $s$, we obtain
\[
|(I-\pi) \bar{\partial} \pi|^2_{\hat{H}_0,g_0} = |\pi \hat{\nabla} (I-\pi)|^2_{\hat{H}_0,g_0}
\]
where $\hat{\nabla}$ is the covariant derivative with respect to
$\hat{H}_0$. We approximate the integral of the quantity on the right-hand side by
\[
\int_X | (I- \tilde{h}^s_a) \nabla_{\hat{H}_a} \tilde{h}^\sigma_a|^2_{\hat{H}_a,g_a} \dvol_{g_a}.
\]
The elementary inequality
\[
\tilde{h}^{-\sigma/2} \geq { 2 s + {1 \over 2} \sigma \over s} (I- \tilde{h}^s_a)
\]
and inequality (\ref{nabla-sigma/2}) implies
\bea
& \ & \int_X | (I- \tilde{h}^s_a) \nabla_{\hat{H}_a} \tilde{h}^\sigma_a|^2_{\hat{H}_a,g_a} d
{\rm vol}_{g_a} \nonumber\\
&\leq& \bigg( {s \over 2s + {1 \over 2} \sigma} \bigg)^2 \int_X |h^{-\sigma/2}_a \nabla_{\hat{H}_a} h^\sigma_a|^2_{\hat{H}_a,g_a} d
{\rm vol}_{g_a} \leq C \bigg( {s \over 2s + {1 \over 2} \sigma} \bigg)^2.
\eea
Let $U_\delta = \{ r > \delta \}$. Then since $\tilde{h}_a^s \rightarrow \tilde{h}^s_\infty$ in $L^2(U_\delta)$ by (\ref{W12-est}) and Rellich's theorem, and $\tilde{h}_a^s \rightharpoonup \tilde{h}^s_\infty$ weakly in $W^{1,2}(U_\delta)$, we have that
\[
(I- \tilde{h}^s_a) \nabla_{\hat{H}_a} \tilde{h}^\sigma_a \rightharpoonup (I- \tilde{h}^s_\infty) \hat{\nabla} \tilde{h}^\sigma_\infty \quad {\rm weakly} \ {\rm in} \ L^2(U_\delta).
\]
We let $a \rightarrow 0$ and use semi-continuity of weak convergence to obtain
\[
\int_{U_\delta} | (I- \tilde{h}^s_\infty) \hat{\nabla} \tilde{h}^\sigma_\infty|^2_{\hat{H}_0,g_0} d
{\rm vol}_{g_0} \leq C \bigg( {s \over 2s + {1 \over 2} \sigma} \bigg)^2.
\]
We now let $s \rightarrow 0$, which implies
\[
\int_{U_\delta} | \pi \hat{\nabla} \tilde{h}^\sigma_\infty|^2_{\hat{H}_0,g_0} d
{\rm vol}_{g_0} \leq 0,
\]
and then taking $\sigma \rightarrow 0$, we conclude
\[
\int_{U_\delta} |\pi \hat{\nabla} (I-\pi)|^2_{\hat{H}_0,g_0} d {\rm vol}_{g_0} = 0,
\]
using semi-continuity of weak convergence.
\end{proof}

\medskip
\par Altogether, we have produced an endomorphism $\pi \in \Gamma( {\rm End} \, T^{1,0}X\big|_{X \setminus Z})$ such that
\smallskip

\par $\bullet$ $\pi \in W^{1,2}_{loc}(X \setminus Z)$ with respect to
the norms $(\hat{H}_0,g_0)$. 
\par $\bullet$ $\pi = \pi^\dagger = \pi^2$, where $\dagger$ is with
respect to $\hat{H}_0$.
\par $\bullet$ $(I-\pi) \bar{\partial} \pi = 0$
\smallskip
\par We will need a more precise $L^2$ bound on $|\hat{\nabla} \pi|^2$.

\begin{lem} \label{lem:na-pi-L2} For any $\delta>0$, we can estimate
  \[
\int_{\{r > \delta \}} | \hat{\nabla} \pi |^2_{\hat{H}_0,g_0} \, d
{\rm vol}_{g_0} \leq \int_{\{ r > \delta \}} ({\rm Tr} \, \Lambda_{\omega_0} F_{\hat{H}_0} \pi) d
{\rm vol}_{g_0}  .
\]
\end{lem}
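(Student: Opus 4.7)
The plan is to follow the Uhlenbeck--Yau argument \cite{UY}, starting from the inequality (\ref{ineq-K-2}) applied to the Hermitian--Yang--Mills metric $H_a$. Since $\sqrt{-1}\Lambda_{\omega_a} F_{H_a}=0$, this reduces to
\[
\chi\,|\tilde h_a^{-\sigma/2}\hat\nabla \tilde h_a^\sigma|^2_{\hat H_a, g_a} - \frac{\chi}{\sigma}\Delta_{g_a}\Tr \tilde h_a^\sigma \leq -\chi\langle\sqrt{-1}\Lambda_{\omega_a} F_{\hat H_a},\tilde h_a^\sigma\rangle_{\hat H_a}
\]
after multiplying by a cutoff $\chi \in C^\infty(X)$ with $0 \le \chi \le 1$, $\chi \equiv 1$ on $\{r \geq \delta\}$, $\operatorname{supp}\chi \subset \{r \geq \delta/2\}$, and $|\Delta_{g_a}\chi| \leq C_\delta$ uniformly in $a$ (possible because the $g_a$ are uniformly equivalent on $\{r \geq \delta/2\}$ by Proposition \ref{prop: FLY2.6-resolution}). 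Integrating and using the balanced condition $d\omega_a^2=0$ to push the Laplacian onto $\chi$, together with the pointwise bound $\tilde h_a^{-\sigma/2}\geq I$ coming from $0 < \tilde h_a \leq I$, yields
\[
\int_X \chi\,|\hat\nabla \tilde h_a^\sigma|^2_{\hat H_a,g_a}\,d\vol_{g_a} \leq -\int_X \chi\langle\sqrt{-1}\Lambda_{\omega_a}F_{\hat H_a},\tilde h_a^\sigma\rangle\,d\vol_{g_a} + \frac{1}{\sigma}\int_X (\Delta_{g_a}\chi)\Tr \tilde h_a^\sigma\,d\vol_{g_a}.
\]

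I would then take the limit $a\to 0$. Using the uniform $W^{1,2}_{loc}(X\setminus Z)$ bound from (\ref{eq: gaW12Est}), weak convergence $\tilde h_a^\sigma\rightharpoonup \tilde h_\infty^\sigma$ along the diagonal subsequence already constructed, strong $L^p_{loc}$ convergence by Rellich, and the uniform $L^\infty$ bound on $\Lambda_{\omega_a}F_{\hat H_a}$ from Corollary \ref{cor: referenceH}, all three terms pass to their $\hat H_0,g_0$ analogues (the LHS by lower semicontinuity). Next I would take $\sigma\to 0$ along $\sigma_i = 1/i$. Since $(I-\tilde h_\infty^{\sigma_i}) \rightharpoonup \pi$ in $W^{1,2}_{loc}(X\setminus Z)$, lower semicontinuity gives
\[
\int \chi\,|\hat\nabla \pi|^2_{\hat H_0,g_0}\,d\vol_{g_0} \leq \liminf_{\sigma_i \to 0}\int \chi\,|\hat\nabla \tilde h_\infty^{\sigma_i}|^2\,d\vol_{g_0}.
\]
On the curvature term, bounded convergence $\tilde h_\infty^{\sigma_i}\to I-\pi$ combined with $\Tr\sqrt{-1}\Lambda_{\omega_0}F_{\hat H_0}=0$ (which passes to the limit from the defining property (\ref{eq:reftwist}) of $\hat H_a$) yields
\[
-\int \chi\langle\sqrt{-1}\Lambda_{\omega_0}F_{\hat H_0},\tilde h_\infty^{\sigma_i}\rangle\,d\vol_{g_0} \longrightarrow \int \chi\,\Tr(\sqrt{-1}\Lambda_{\omega_0}F_{\hat H_0}\,\pi)\,d\vol_{g_0}.
\]
Letting $\chi \nearrow \mathbf{1}_{\{r>\delta\}}$ (using monotone convergence on the LHS and dominated convergence on the curvature term) then produces the stated inequality.

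The main obstacle is the cutoff term $\sigma^{-1}\int(\Delta_{g_a}\chi)\Tr \tilde h_a^\sigma\,d\vol_{g_a}$, which a priori blows up as $\sigma \to 0$. Writing $\Tr \tilde h_a^\sigma = \Tr I - \Tr(I-\tilde h_a^\sigma)$ and using $\int\Delta_{g_a}\chi\,d\vol_{g_a}=0$ (balanced), this becomes $-\sigma^{-1}\int(\Delta_{g_a}\chi)\Tr(I-\tilde h_a^\sigma)\,d\vol_{g_a}$; the difficulty is that $\Tr(I-\tilde h_\infty^\sigma)\to \Tr\pi$ need not be $o(1)$ on $\operatorname{supp}\Delta\chi$. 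The fix is to take the limit $\chi \nearrow \mathbf{1}_{\{r>\delta\}}$ before taking $\sigma \to 0$ (which is possible once the $\sigma$-independent bound (\ref{nabla-sigma/2}) is in hand to give $W^{1,2}$-compactness on fixed compact sets of $X \setminus Z$, making the $\sigma^{-1}(\Delta\chi)$ contribution vanish as $\chi$ approaches the indicator); alternatively, one may exploit the identity $\partial_j \Tr \tilde h_a^\sigma = \sigma \langle \tilde h_a^{-1}\hat\nabla_j \tilde h_a,\tilde h_a^\sigma\rangle$ from (\ref{partial-Tr-sigma}) to integrate by parts once more and cancel the problematic $\sigma^{-1}$ factor, as in \cite{UY}.
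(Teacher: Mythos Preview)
Your overall strategy is right and you correctly locate the obstacle, but neither proposed fix actually closes the gap. In the first fix, sending $\chi \nearrow \mathbf{1}_{\{r>\delta\}}$ does not make the term $\sigma^{-1}\!\int (\Delta_{g}\chi)\,\Tr(I-\tilde h_\infty^\sigma)\,d\vol$ vanish: $\Delta_{g}\chi$ does not tend to zero, it concentrates as a surface measure on $\{r=\delta\}$, and $\Tr(I-\tilde h_\infty^\sigma)\to\Tr\pi$ is typically positive there. In the second fix, integrating by parts once and invoking (\ref{partial-Tr-sigma}) replaces the bad term by $\int \langle\nabla\chi,\,\langle \tilde h_a^{-1}\hat\nabla\tilde h_a,\tilde h_a^\sigma\rangle\rangle$, but $\tilde h_a^{-1}\hat\nabla\tilde h_a=\hat h_a^{-1}\hat\nabla\hat h_a$ is the difference of the Chern connections of $H_a$ and $\hat H_a$, for which you have no uniform bound at this stage of the argument (the $C^0$ and $C^1$ estimates on $H_a$ have not yet been established).

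The paper sidesteps the whole issue by exploiting a feature of the geometry you did not use: for $\delta<\epsilon$ one has $\Lambda_{\omega_a}F_{\hat H_a}=0$ on $\{r<\delta\}$, so by (\ref{ineq-K-2}) the function $\Tr\tilde h_a^\sigma$ is \emph{subharmonic} there. Hence, working directly on $U_\delta=\{r>\delta\}$ without any cutoff, the balanced condition gives
\[
-\int_{U_\delta}\frac{1}{\sigma}\Delta_{g_a}\Tr\tilde h_a^\sigma\,d\vol_{g_a}
=\int_{\{r<\delta\}}\frac{1}{\sigma}\Delta_{g_a}\Tr\tilde h_a^\sigma\,d\vol_{g_a}\;\ge\;0,
\]
so the Laplacian term has the \emph{right sign} and can simply be dropped, independently of $\sigma$. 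The paper then takes a diagonal limit $(\sigma_i,a_i)\to(0,0)$ and concludes by lower semicontinuity. This is the missing idea; once you see the sign, there is nothing to fix.
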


\begin{proof} We work on the set $U_\delta = \{ r > \delta \}$ where $I- \tilde{h}^{\sigma_i}_{a_i}$ converges weakly as $i \rightarrow \infty$ to $\pi$ in $W^{1,2}(U_\delta)$ and $\hat{H}_a$ converges to $\hat{H}_0$ in $C^\infty(U_\delta)$.
\bea
\int_{U_\delta} ({\rm Tr} \, \sqrt{-1} \Lambda_{\omega_0} F_{\hat{H}_0} \pi) d {\rm vol}_{g_0} &=& \int_{U_\delta} {\rm Tr} \, [ \sqrt{-1} \Lambda_{\omega_0} F_{\hat{H}_0}  (\pi-I)] d {\rm vol}_{g_0} \nonumber\\
&=& -\lim_{i \rightarrow \infty} \int_{U_\delta} {\rm Tr} \, ( \sqrt{-1} \Lambda_{\omega_i} F_{\hat{H}_i} \tilde{h}^{\sigma_i}_{i})  d {\rm vol}_{g_i}
\eea
In the first equality we used ${\rm Tr} \, \Lambda_{\omega_0} F_{\hat{H}_0} = 0$. Using the formula (\ref{diff-chern-curv}) for the difference between the curvature tensors $F_{\hat{H}}$ and $F_H$, we obtain
\[
\int_{U_\delta} ({\rm Tr} \, \Lambda_{\omega_0} F_{\hat{H}_0} \pi) d {\rm vol}_{g_0} =  -\lim_{i \rightarrow \infty} \int_{U_\delta} {\rm Tr} \, g^{j \bar{k}} \partial_{\bar{k}} (\tilde{h}_i^{-1} \hat{\nabla}_j \tilde{h}_i)  \tilde{h}^{\sigma_i}_{i}) d {\rm vol}_{g_i}.
\]
The inequality (\ref{ineq-K-0}) can be written as
\[
|h^{-\sigma/2} \hat{\nabla} h^\sigma|^2_{\hat{H},g} - {1 \over \sigma} \Delta_g {\rm Tr} \, h^\sigma \leq - g^{j \bar{k}} \langle \partial_{\bar{k}} (h^{-1} \hat{\nabla}_j h), h^\sigma \rangle_{\hat{H}}.
\]
Since $h^\dagger=h$ (with respect to $\hat{H}$) and $\langle u,v \rangle_{\hat{H}} = {\rm Tr} (u v^\dagger)$, we obtain
\[
\int_{U_\delta} ({\rm Tr} \, \Lambda_{\omega_0} F_{\hat{H}_0} \pi) d {\rm vol}_{g_0} \geq \lim_{i \rightarrow \infty}  \int_{U_\delta} |\tilde{h}_i^{-\sigma/2} \hat{\nabla} \tilde{h}^{\sigma_i}_i|^2_{\hat{H}_i,g_i} d {\rm vol}_{g_i} - \lim_{i \rightarrow \infty} \int_{U_\delta} {1 \over \sigma} \Delta_{\omega_i} {\rm Tr} \, \tilde{h}_i^\sigma \, d {\rm vol}_{g_i}.
\]
By the balanced condition, this is
\[
\int_{U_\delta} ({\rm Tr} \, \Lambda_{\omega_0} F_{\hat{H}_0} \pi) d {\rm vol}_{g_0} \geq \lim_{i \rightarrow \infty}  \int_{U_\delta} |\tilde{h}_i^{-\sigma/2} \hat{\nabla} \tilde{h}^{\sigma_i}_i|^2_{\hat{H}_i,g_i} d {\rm vol}_{g_i} + \lim_{i \rightarrow \infty} \int_{\{r < \delta \}} {1 \over \sigma} \Delta_{\omega_i} {\rm Tr} \, \tilde{h}_i^\sigma \, d {\rm vol}_{g_i}.
\]
Let $0< \delta < \epsilon$, where $\epsilon$ is the transition radius in the construction of $\omega_a$, so that $\Lambda_{\omega_a} F_{\hat{H}_a} = 0$ on $\{ r < \delta \}$. It follows from (\ref{ineq-K-2}) that
\[
{1 \over \sigma} \Delta_{\omega_i} {\rm Tr} \, \tilde{h}_i^\sigma \geq 0, \quad {\rm on} \ \{ r < \delta \}.
\]
Combining this with $|\hat{\nabla} \tilde{h}^\sigma|^2 \leq | \tilde{h}^{-\sigma/2} \hat{\nabla} \tilde{h}^\sigma|^2 $, we obtain
\[
\int_{U_\delta} ({\rm Tr} \, \Lambda_{\omega_0} F_{\hat{H}_0} \pi) d {\rm vol}_{g_0} \geq  \lim_{i \rightarrow \infty} \int_{U_\delta} | \hat{\nabla} \tilde{h}_i^{\sigma_i}|^2_{\hat{H}_i,g_i} d {\rm vol}_{g_i}.
\]
We conclude by semi-continuity of weak convergence. 
\end{proof}

\medskip
\par We now apply the work of Uhlenbeck-Yau \cite{UY} (see also \cite{Popo}) to conclude that, at least over $X\setminus Z$, the projection $\pi$ defines a coherent subsheaf $\mathcal{E} \subset T^{1,0}X|_{X\setminus Z}$, which is locally free outside of a complex codimension $2$ set.  Let $k>0$ be the generic rank of $\mathcal{E}$.  We can view $\mathcal{E}$ as defining a meromorphic map 
\[
\mu_{\mathcal{E}}: X\setminus Z\rightarrow {\rm Gr}(k, T^{1,0}X)
\]
to the Grassmann bundle of $k$-planes in $T^{1,0}X$.  Locally near a point
in $Z$ we can trivialize $T^{1,0}X$ and, by taking Pl\"ucker coordinates on
${\rm Gr}(k,T^{1,0}X)$, we view $\mu_{\mathcal{E}}$ as a collection of
meromorphic functions defined on the complement of $Z$.  On the
other hand, since $Z$ has complex codimension $2$, a classical result
of Levi \cite{Levi} (see also \cite[Chapter 2]{DemBook}) implies that $\mu$ extends over $Z$.  It follows
that $\mathcal{E}$ extends over $Z$ to a coherent sheaf (also denoted
by $\mathcal{E}$) by taking the direct image of the tautological
bundle over ${\rm Gr}(k, T^{1,0}X)$.  We have thus produced a coherent sheaf
$\mathcal{E}\subset T^{1,0}X$, locally free outside a codimension $2$ set
$Z'$.  We will show that this sheaf contradicts the stability of $T^{1,0}X$.

\smallskip
\par To contradict stability, we need to show that
\[
c_1(\mathcal{E}) \cdot [\omega_{\rm CY}]^2 \geq 0.
\]
The only reason this does not follow immediately from the standard argument is that the metrics $\hat{H}_{0}$ and $\omega_0$ are not smooth on $X$.  Thus we need to show that the singularities do not contribute.  Denote by $\hat{H}_0'$ the metric induced by $\hat{H}_0$ on the subbundle $\mathcal{E}|_{X \setminus Z'} \subset T^{1,0} X|_{X \setminus Z'}$. We begin by computing the slope defined by $\hat{H}_0'$ and $\omega_0$.  Let us introduce the notation
\[
c_1(\mathcal{E}, \hat{H}_0') \cdot \omega_0^2 = \int_{X \setminus Z'} {\rm Tr} \,
\sqrt{-1} F_{\hat{H}'_0} \wedge {\omega_0^2 \over 2}.
\]
The identity for the curvature of
the induced connection on a subbundle defined by a projection $\pi$ is
( see, e.g. \cite[equation (4.16)]{UY})
\[
{\rm Tr} \, \sqrt{-1} \Lambda_{\omega_0} F_{\hat{H}_0'} = {\rm Tr} \, \sqrt{-1} \Lambda_{\omega_0} F_{\hat{H}_0} \pi - | \hat{\nabla} \pi|^2_{\hat{H}_0,g_0}.
\]
Therefore
\[
c_1(\mathcal{E}, \hat{H}_0') \cdot \omega_0^2  = \int_{X \setminus Z'} \bigg[  ({\rm Tr} \,
\sqrt{-1} \Lambda_{\omega_0} F_{\hat{H}_0} \pi)  - | \hat{\nabla}
\pi|^2_{\hat{H}_0,g_0} \bigg] d {\rm vol}_{g_0}.
\]
By letting $\delta \rightarrow 0$ in Lemma \ref{lem:na-pi-L2}, we see
that
\[
c_1(\mathcal{E}, \hat{H}_0') \cdot \omega_0^2 \geq 0.
\]
We note that this quantity is finite. We can estimate the endomorphism $\Lambda_{\omega_0}
(F_{\hat{H}_0} \pi)$ by using that $|F_{\hat{H}_0}|_{\hat{H}_0,g_0} \leq C
r^{-2}$, thanks to Corollary \ref{cor: referenceH}, and $|\pi|_{\hat{H}_0} \leq C$ since
$\pi=\pi^{\dagger_{\hat{H}_0}}=\pi^2$. Therefore
\[
\bigg| \int_{X\setminus Z}  ({\rm Tr} \,
\sqrt{-1} \Lambda_{\omega_0} F_{\hat{H}_0} \pi) \, d {\rm vol}_{g_0} \bigg| \leq C
\int_{X \setminus Z} r^{-2} d {\rm vol}_{g_0} \leq C,
\]
using that, near $\{r=0\}$, the metric $g_0$ is a cone over a five-dimensional link. 
\smallskip
\par The next step is to show that $c_1(\mathcal{E}, \hat{H}_0') \cdot
\omega_0^2$ is equal to $c_1(\mathcal{E}) \cdot [\omega_{CY}]^2$.  Recall that $\mathcal{E}$ defines a line bundle $L=\det
\mathcal{E}$, and if $e^\varphi$ is a smooth metric on $L$ then $\beta = -\ddb \varphi$ is a
representative of $c_1(\mathcal{E})$.  For concreteness, we let
$\beta$ be the curvature form associated to the metric
$g_{CY}\big|_{\mathcal{E}}$. We write the difference as
\bea
& \ & c_1(\mathcal{E})\wedge[\omega_{\rm CY}]^2 - c_1(\mathcal{E},\hat{H}_0').\omega_0^2\nonumber\\
&=& \int_{X \setminus Z'} (\beta-\sqrt{-1} \Tr
F_{\hat{H}'_0})\wedge \omega_0^2 + \int_{X \setminus Z} \beta \wedge
(\omega_{\rm CY}^2-\omega_0^2) \nonumber\\
&:=& ({\rm I}) + ({\rm II}) .\nonumber
\eea
We will treat each term individually. Recall $Z'$ is a codimension 2
analytic set containing the singularities of $\mathcal{E}$ (which
contains $Z = \cup C_i$). Let $\eta_\delta$ be a cutoff
function such that $\eta_\delta \equiv 1$ on $\{
{\rm dist}_{g_{CY}}(Z',\cdot) > 2 \delta \}$ and $\eta_\delta \equiv 0$ on $\{
{\rm dist}_{g_{CY}}(Z',\cdot) < \delta \}$ with $|\ddb
\eta_\delta|_{g_{CY}} \leq C \delta^{-2}$.
\par $\bullet$ Term $({\rm I})$. Working near a point where $\mathcal{E}$ is locally free we have
\[
\beta-\sqrt{-1}\Tr F_{\hat{H}_0'} = -\ddb \log \left(\frac{\det g_{CY}\big|_{\mathcal{E}}}{\det \hat{H}_0\big|_{\mathcal{E}}}\right).
\]
Note that by the AM-GM inequality we have
\[
\left(\frac{\det g_{CY}\big|_{\mathcal{E}}}{\det \hat{H}_0\big|_{\mathcal{E}}}\right)^{\frac{1}{{\rm rk}(\mathcal{E})}} \leq \frac{1}{{\rm rk}(\mathcal{E})} \Tr\left(\hat{H}_0\big|_{\mathcal{E}}^{-1} g_{CY}\big|_{\mathcal{E}}\right) \leq \frac{1}{{\rm rk}(\mathcal{E})} \Tr\left(\hat{H}_0^{-1} g_{CY}\right)
\]
where in the second inequality we used that $\hat{H}_0^{-1} g_{CY}$ is positive definite.  Similarly we have
\[
\left(\frac{\det \hat{H}_0\big|_{\mathcal{E}}}{\det g_{CY}\big|_{\mathcal{E}}}\right)^{\frac{1}{{\rm rk}(\mathcal{E})}}  \leq \frac{1}{{\rm rk}(\mathcal{E})} \Tr\left(g_{CY}^{-1} \hat{H}_0\right).
\]
  On the other hand from ~\eqref{eq: compare-cy-co} we have
\[
{\rm dist}_{g_{CY}}(Z,\cdot)^{4/3} g_{CY} \leq g_{0} \leq {\rm dist}_{g_{CY}}(Z,\cdot)^{-2/3} g_{CY}
\]
since $r^3 = \| z \|^2 \sim {\rm dist}^2_{g_{CY}}(Z,\cdot)$ near the
singular points. Hence the same estimates hold for $\hat{H}_0$ and so we have
\[
\bigg|\log \left(\frac{\det g_{CY}\big|_{\mathcal{E}}}{\det \hat{H}_0\big|_{\mathcal{E}}}\right)\bigg| \leq -C\log{\rm dist}_{g_{CY}}(Z,\cdot) + C
\]
for a uniform constant $C$. Integrating by parts gives
\[
\int_{X\setminus Z'} \eta_{\delta}  (\beta-\sqrt{-1} \Tr
F_{\hat{H}'_0})\wedge \omega_0^2  =\int_{X\setminus Z'}\log \left(\frac{\det g_{CY}\big|_{\mathcal{E}}}{\det \hat{H}_0\big|_{\mathcal{E}}}\right) \ddb \eta_{\delta} \wedge \omega_0^2.
\]
From the definition of $\eta_{\delta}$ and the bound $\omega_0 \leq {\rm dist}_{g_{CY}}(Z,\cdot)^{-2/3} \omega_{CY}$ we get
\[
\bigg|\int_{X\setminus Z'} \eta_{\delta}  (\beta-\sqrt{-1} \Tr
F_{\hat{H}'_0})\wedge \omega_0^2 \bigg| \leq C \delta^{4-2-4/3}(-\log(\delta))  
\]
since ${\rm Vol}_{g_{CY}}(\{x \in X : \delta < {\rm
  dist}_{g_{CY}}(Z',x) < 2\delta\})\sim \delta^4$. It follows that
term $({\rm I})$ vanishes.

\smallskip
\par $\bullet$ Term $({\rm II})$. By Proposition \ref{prop: FLY2.6-resolution},
\[
({\rm II}) = \int_{U \backslash Z} \beta \wedge (\omega_{CY}^2-\omega_0^2)
\]
where $U$ is a tubular neighborhood of $Z$, which is a disjoint union of tubular neighborhoods of the $(-1,-1)$ rational curves.  Since $U$ retracts onto $Z$, which has complex dimension $1$ (and hence $H^{4}(U)=0$) we can write
\[
\omega_{CY}^2 = d\Phi
\]
for a smooth $3$-form $\Phi$.  On the other hand, by Proposition \ref{prop: FLY2.6-resolution} $\omega_{co}^2 = \ddb \Phi'$ where, $\Phi'$ is smooth in $U\setminus Z$ and near each $(-1,-1)$ curve $C_i$ there is a constant $\lambda_i>0$ such that we have
\[
 \Phi'= \lambda_i r^2 \ddb r^2 = \lambda_i r^2 \omega_0
 \]
where we recall that $r^2= \|z\|^{4/3}$.  Thus, we have
\[
\int_{X\setminus Z} \eta_{\delta} \beta\wedge( \omega_{CY}^2- \omega_{0}^2) = \int_{U}d\eta_{\delta} \wedge \beta \wedge \Phi - \int_{U\setminus Z} \ddb \eta_{\delta} \wedge \beta \wedge \Phi'.
\]
The first integral is easily seen to be of order $\delta^{3}$.  For the second integral, we use the bound $g_0 \leq {\rm dist}_{g_{CY}}(Z,\cdot)^{-2/3}g_{CY}$ together with the definition of $\eta_{\delta}$ and $r$ to conclude
\[
\big|\int_{U\setminus Z} \ddb \eta_{\delta} \wedge \beta \wedge \Phi'\big| \leq C\delta^{4-2+\frac{4}{3}-\frac{2}{3}}.
\]
It follows that term ${\rm (II)}$ vanishes, and hence
\[
c_1(\mathcal{E}, \hat{H}_0') \cdot
\omega_0^2 = c_1(\mathcal{E}) \cdot [\omega_{CY}]^2
\]
and hence $c_1(\mathcal{E}) \cdot [\omega_{\rm CY}]^2 \geq 0$, which
contradicts the stability of $T^{1,0}X$. We conclude that $\sup_X {\rm Tr} \,
\hat{h}_a \leq C$ as $a \rightarrow 0$, which proves the $C^0$
estimate (\ref{C0-est2}).

\subsection{Gradient estimate} In this section, we will show that, along
the sequence $(g_a,H_a)$, there holds an estimate of the form
\be \label{C1-est}
|\nabla_{g_a} H_a|_{g_a} \leq C r^{-1}.
\ee
We will use the ideas from Calabi's $C^3$ estimate \cite{CalabiC3}, as applied in
complex geometry by Yau \cite{Yau78} and further developed by Phong-Sesum-Sturm
\cite{PSS} (for other applications of this technique, see e.g. \cite{FPPZ,SherWein,Tosatti10}). We will work with the quantity
\[
S = |\nabla_{H_a} h_a h_a^{-1}|^2_{g_a,H_a},
\]
where $h_a = g^{-1}_a H_a$ and by the mixed norms we mean
\[
|\nabla h_a h_a^{-1}|^2_{g_a,H_a} = (g_a)^{j \bar{k}} (H_a)_{\bar{\beta} \alpha} (H_a)^{\mu \bar{\nu}} (\nabla_j h_a h_a^{-1})^\alpha{}_\mu \overline{(\nabla_k h_a h_a^{-1})^\beta{}_\nu}.
\]
The quantity $S$ can be understood as the difference of
connections by the formula
\be \label{diff-connections}
\nabla_H h h^{-1} = A_{H} - A_{g},
\ee
where
\[
A_{H} = H^{-1} \partial H, \quad A_{g} = g^{-1} \partial g.
\]
For ease of notation, in this section we omit the sequence subscript $a$. We will obtain the following estimate.

\begin{prop} \label{prop: gradest}
Let $(X,g)$ be a compact Hermitian complex manifold with smooth function
$r:X \rightarrow [0,\infty)$ satisfying $|\nabla r|_g \leq \Lambda$ for $\Lambda >0$. Let $H$ be a second Hermitian metric on $T^{1,0}X$
satisfying $g^{j \bar{k}} (F_H)_{j \bar{k}}{}^\alpha{}_\beta = 0$ and
$ C_0^{-1} g \leq H \leq C_0 g$. Let $\epsilon >0$. Suppose that on
$\{ r \leq \epsilon \}$, the metric $g$ is K\"ahler Ricci-flat, and satisfies
\[
|Rm_g|_g \leq C_1 r^{-2}.
 \]
Suppose on the set $ \{ r > \epsilon \}$, we have the estimate
\[
|T_g|+ |Rm_g| + |\nabla^g Rm_g| \leq \Lambda,
\]
where $T_g$ is the torsion of $g$ and $Rm_g$ is the curvature of $g$. Then
\be 
r^2 |\nabla_H h h^{-1}|^2_{g,H} \leq C(C_0, C_1, \Lambda,\epsilon)
\ee
where $h= g^{-1} H$ and $\nabla_H$ is the Chern connection of $H$.
\end{prop}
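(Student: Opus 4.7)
The plan is to apply the maximum principle to the scale-invariant weighted quantity $\phi = r^2 S$, where $S = |u|^2_{g,H}$ and $u = \nabla^H h\, h^{-1} = A_H - A_g$ is the difference of Chern connections of $H$ and $g$. The weight is natural because, under the $\mathbb{C}^*$-scaling on the Calabi-Yau cone, $S$ scales like $r^{-2}$, so $\phi$ is scale-invariant and the target estimate $\phi \leq C$ is the scale-invariant statement.

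The key analytic input is a Bochner-type inequality for $S$ following Calabi \cite{CalabiC3}, Yau \cite{Yau78}, Phong-Sesum-Sturm \cite{PSS}, and its non-K\"ahler refinements \cite{SherWein, FPPZ}. Differentiating (\ref{diff-chern-curv}) gives $F_H - F_g = -\bar\partial u$, and tracing with $g$ the HYM condition $g^{j\bar k} (F_H)_{j\bar k} = 0$ turns this into an elliptic equation for $u$ with bounded right-hand side. A Weitzenb\"ock computation, carefully tracking torsion contributions from the non-K\"ahler background, should yield an inequality of the schematic form
\[
\Delta_g S \,\geq\, c_1 |\nabla u|^2_{g,H} - c_2 \bigl(|R_g|_g + |\nabla^g T_g|_g + |T_g|^2_g\bigr) S - c_3 |T_g|_g |u|_{g,H} |\nabla u|_{g,H} - C_\sharp,
\]
where the HYM condition supplies the crucial cancellation preventing a bad cubic term $|u|^3$. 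On the inner region $\{r \leq \epsilon\}$, where $g$ is K\"ahler Ricci-flat (so $T_g = 0$ and $C_\sharp = 0$) and $|R_g|_g \leq C_1 r^{-2}$, this reduces cleanly to $\Delta_g S \geq c_1 |\nabla u|^2_{g,H} - c_2 r^{-2} S$. On the bulk $\{r > \epsilon/2\}$, the uniform bounds on torsion and curvature together with the $C^0$ estimate on $h$ give the usual Donaldson-type $C^1$ bound $S \leq C$ by a direct maximum principle, so $\phi \leq C$ there.

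To handle the inner region, compute $\Delta_g \phi = (\Delta_g r^2) S + 2\,\mathrm{Re}\langle \nabla r^2, \nabla S\rangle_g + r^2 \Delta_g S$ and use that on the Candelas-de la Ossa cone $\Delta_g r^2$ is a positive constant and $|\nabla r^2|^2_g \leq c\, r^2$. At an interior maximum $x_0$ of $\phi$ one has $\nabla S = -S r^{-2} \nabla r^2$; substituting the Bochner inequality and absorbing the cross term into $c_1 r^2 |\nabla u|^2$ via Cauchy-Schwarz yields an inequality of the form $0 \geq -C_{**}\, S(x_0) + c_1 r^2 |\nabla u|^2(x_0)$. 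The main obstacle is that the constant $C_{**}$ may be positive, so this direct argument does not close, which is the familiar difficulty in Calabi's third-order estimate. The standard resolution is to work with an auxiliary quantity such as $\phi + A\, \mathrm{Tr}\, h$: the function $\mathrm{Tr}\, h$ satisfies a sub-mean-value inequality $\Delta_g \mathrm{Tr}\, h \geq c_4 |u|^2_{g,H} - C$ coming from HYM, in the spirit of~(\ref{eq: ineqDropGrad}), which produces a compensating $+A c_4 S$ term that, for $A$ large enough, dominates $C_{**} S$ and closes the maximum principle. The resulting interior bound is then glued to the bulk bound via a cut-off at the transition region $\{r \sim \epsilon\}$, and the most delicate part of the argument is implementing this Calabi-type absorption while keeping the non-K\"ahler torsion error terms in the Bochner inequality under control.
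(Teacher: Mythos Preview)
Your proposal correctly identifies all the essential ingredients: the Calabi--Yau--Phong--Sesum--Sturm Bochner identity for $S = |\nabla^H h\, h^{-1}|^2_{g,H}$, the auxiliary function $\mathrm{Tr}\, h$ (with $\Delta_g \mathrm{Tr}\, h \geq c\, S - C$ coming from the HYM condition), and the dichotomy between the K\"ahler Ricci-flat region (where the torsion vanishes and the Bianchi identity kills the $\nabla R_g$ term, so the differential inequality reduces to $\Delta_g S \geq -C r^{-2} S + |\nabla u|^2$) and the bulk where the geometry is uniformly bounded. These are exactly what the paper uses.

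The implementation, however, has a gap. You propose to bound $S$ on $\{r > \epsilon/2\}$ by ``the usual Donaldson-type $C^1$ bound via a direct maximum principle'', and then separately treat the inner region with the weight $r^2$, gluing at the end. But a maximum principle on the open set $\{r > \epsilon/2\}$ requires boundary control on $\{r = \epsilon/2\}$, which you do not have; the global maximum of $S$ (or of any test function built from it) could well sit on this boundary. Moreover, your weight $r^2$ is problematic on the bulk: the hypotheses give only $|\nabla r|_g \leq \Lambda$, not $\Delta_g r$, so $\Delta_g(r^2 S)$ is uncontrolled on $\{r > \epsilon\}$.

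The paper resolves both issues by working with a \emph{single global} test function
\[
P \;=\; \zeta_\delta\, S \;+\; \frac{A}{\delta^2}\, \mathrm{Tr}\, h, \qquad \zeta_\delta(z) = \zeta\!\left(\tfrac{r(z)}{\delta}\right),
\]
where $\zeta$ is a fixed cutoff vanishing on $[0,1]$ and equal to $1$ on $[2,\infty)$, and $\delta \in (0,\epsilon)$ is a parameter. The maximum principle is applied once on all of $X$: if the maximum lies in $\{r < \delta\}$ then $\zeta_\delta = 0$ and $P \leq A\delta^{-2}\sup \mathrm{Tr}\, h$; if it lies at $x \in \{r > \delta\}$, the good term $A\delta^{-2} c\, S$ from $\Delta_g \mathrm{Tr}\, h$ dominates the $C\delta^{-2} S$ errors (including those from $\nabla\zeta_\delta$ and $\Delta_g\zeta_\delta$, whose support lies in $\{\delta < r < 2\delta\} \subset \{r < \epsilon\}$ where the cone geometry is explicit and $|\Delta_g r| \leq C r^{-1}$), yielding $S(x) \leq C$ and hence $P \leq C\delta^{-2}$. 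Either way one gets $\sup_{\{r \geq 2\delta\}} S \leq C\delta^{-2}$, and then \emph{varying} $\delta$ gives $S(x) \leq C r(x)^{-2}$. The cutoff-at-scale-$\delta$ device is the substitute for your weight $r^2$ and your ``gluing'' step, and it is precisely what makes the argument close.
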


\par The sequence $(g_a,H_a)$ satisfies the hypothesis of the
proposition on $X$ with a function $r$ which is an extension of $\| z \|^{2/3}$ from
$U(1/2)$ to all
of $X$ with $r^{-1}(0) = \cup C_i$, and the constants are uniform in
$a$. Indeed, the uniform bounds
\[
|Rm_{g_a}|_{g_a} \leq C r^{-2}, \quad |\nabla r|_{g_a} \leq C
\]
can be seen by a scaling argument. First, the bounds hold on $\{ r \geq
\epsilon \}$ since the geometry of $g_a$ is uniform there. Second, on $\{ r <
\epsilon \}$ these bounds hold when $a=1$, and to obtain uniform
bounds for all $a$ we work in coordinates $(u,v,z)$ used previously on
$\mathcal{O}_{\mathbb{P}^1}(-1)^{ \oplus 2}$ and use the scaling map
$S_{a^{-1}}(u,v,z)=(a^{-3/2}u,a^{-3/2}v,z)$. Since
$r^3=(1+|z|^2)(|u|^2+|v|^2)$, we have $S_{a^{-1}}^* r = a^{-1} r$, and we also
have $S_{a^{-1}}^*
\omega_{co,1} = a^{-2}\omega_{co,a}$, see, e.g. (\ref{eq: coa-scaling}). Pulling back $|Rm_{g_1}|_{g_1}
\leq C r^{-2}$ gives the uniform estimate in $a$, and similarly
for $|\nabla r|$.
\smallskip
\par Therefore, by proving Proposition \ref{prop: gradest}, we can conclude the
gradient estimate (\ref{C1-est}). Indeed
\[
  |\nabla_{g_a} H_a|_{g_a} =
|(\nabla_{g_a}-\nabla_{H_a}) H_a|_{g_a} = |(\nabla_{H_a} h_a
h_a^{-1}) H_a|_{g_a} \leq Cr^{-1},
\]
since the $C^0$ estimate (\ref{C0-est2}) is $C^{-1} g_a \leq H_a \leq C g_a$.

\subsubsection{Laplacian of $S$} The proof of Proposition
\ref{prop: gradest} will occupy the remainder of this section. To estimate $S = |\nabla h h^{-1}|^2_{g,H}$, we start by differentiating it once
\[
\nabla_{\bar{k}} S = \langle \nabla_{\bar{k}} (\nabla h h^{-1}),\nabla h h^{-1} \rangle_{g,H} + \langle \nabla h h^{-1}, \nabla_{k} (\nabla h h^{-1}) \rangle_{g,H}.
\]
The covariant derivative $\nabla_k$ here is the Chern connection of
$H$ on indices measured with $H$, and the Chern connection of $g$ on
indices measured with $g$. Concretely, we mean
\bea \label{mixed-cov-deriv}
\nabla_{k} (\nabla_j h h^{-1})^\alpha{}_\beta &=& \partial_k (\nabla_j h h^{-1})^\alpha{}_\beta -  (\nabla_r h h^{-1})^\alpha{}_\beta(A_{g})_k{}^r{}_j \nonumber\\
&& + (A_{H})_k{}^\alpha{}_\gamma (\nabla_j h h^{-1})^\gamma{}_\beta -  (\nabla_j h h^{-1})^\alpha{}_\gamma (A_{H})_k{}^\gamma{}_\beta. 
\eea
Differentiating $S$ twice gives
\[
\begin{aligned}
g^{j \bar{k}} \nabla_j \nabla_{\bar{k}} S &= |\nabla (\nabla h h^{-1})|^2_{g,H} +|\bar{\nabla} (\nabla h h^{-1})|^2_{g,H} \\
&\quad + g^{j \bar{k}} \langle \nabla_j \nabla_{\bar{k}} (\nabla h h^{-1}),\nabla h h^{-1} \rangle_{g,H}\\
&\quad +  \langle \nabla h h^{-1}, g^{k \bar{j}} \nabla_{\bar{j}} \nabla_{k} (\nabla h h^{-1}) \rangle_{g,H} 
\end{aligned}
\]
where
\[
|\nabla (\nabla h h^{-1})|^2_{g,H} = g^{j \bar{k}} H^{p \bar{q}} H^{\mu \bar{\nu}} H_{\bar{\beta} \alpha} \nabla_j (\nabla_p h h^{-1})^\alpha{}_\mu \overline{\nabla_k (\nabla_q h h^{-1})^\beta{}_\nu}.
\]
Our curvature conventions imply the commutator relations
\[
[\nabla_j,\nabla_{\bar{k}}] V^\alpha = F_{j \bar{k}}{}^\alpha{}_\gamma V^\gamma, \quad [\nabla_j, \nabla_{\bar{k}}] V_\alpha = -  V_\gamma F_{j \bar{k}}{}^\gamma{}_\alpha,
\]
which gives
\bea
\nabla_{\bar{j}} \nabla_k (\nabla_r h h^{-1})^\alpha{}_\beta &=&  \nabla_k \nabla_{\bar{j}} (\nabla_r h h^{-1})^\alpha{}_\beta +  (\nabla_s h h^{-1})^\alpha{}_\beta (R_g)_{k \bar{j}}{}^s{}_r \nonumber\\
&& - F_{k \bar{j}}{}^\alpha{}_\gamma (\nabla_r h h^{-1})^\gamma{}_\beta +  (\nabla_r h h^{-1})^\alpha{}_\gamma F_{k \bar{j}}{}^\gamma{}_\beta.
\eea
Since $\Lambda_{\omega} F_{H} = 0$ and we write $g^{jk}
(R_g)_{\bar{k}j}{}^s{}_r = (R_g)^s{}_r$, we have
\[
g^{k \bar{j}} \nabla_{\bar{j}} \nabla_k (\nabla_r h h^{-1})^p{}_q =
g^{j \bar{k}}  \nabla_j \nabla_{\bar{k}} (\nabla_r h h^{-1})^p{}_q +
(\nabla_s h h^{-1})^\alpha{}_\beta (R_g)^s{}_r.
\]
Therefore
\bea
&\ & \Delta_{g} S \nonumber\\
&=&  2 {\rm Re} \, \langle g^{j \bar{k}} \nabla_j \nabla_{\bar{k}}
(\nabla h h^{-1}),\nabla h h^{-1} \rangle + |\nabla (\nabla h h^{-1})|^2_{g,H}
+|\bar{\nabla} (\nabla h h^{-1})|^2_{g,H} \nonumber\\
&&+ g^{s \bar{r}}H_{\bar{\beta}\alpha}H^{\mu\bar{\nu}}(\nabla_s h h^{-1})^{\alpha}{}_\mu \overline{ (\nabla_p hh^{-1})^\beta{}_\nu (R_g)^p{}_r } . \nonumber
\eea

We relate the highest order terms of order $\nabla^3 h$ to the curvatures $R_g$, $F_H$ and their derivatives.

\begin{lem}
The following identity holds:
\bea
g^{j \bar{k}} \nabla_j \nabla_{\bar{k}} (\nabla_i h h^{-1})^\alpha{}_\beta &=& g^{j \bar{k}} \nabla^g_j (R_{g})_{i \bar{k}}{}^\alpha{}_\beta + g^{j \bar{k}} (\nabla_j h h^{-1})^\alpha{}_\gamma (R_g)_{i \bar{k}}{}^\gamma{}_\beta \nonumber\\
&&- g^{j \bar{k}} (R_g)_{i \bar{k}}{}^\alpha{}_\gamma (\nabla_j h h^{-1})^\gamma{}_\beta \nonumber\\
&&+ g^{j \bar{k}} \partial_{\bar{k}} (\nabla_r h h^{-1})^\alpha{}_\beta (T_g)^r{}_{ij}- g^{j \bar{k}} (R_g)_{r \bar{k}}{}^\alpha{}_\beta (T_g)^r{}_{ij},
\eea
where $\nabla^g R_g$ is the covariant derivative of the curvature tensor of $g$ with respect to the Chern connection of $g$ (in particular the connection of $H$ is not involved in this term).
\end{lem}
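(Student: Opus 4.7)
The plan is to expand the left-hand side using the fundamental identity $(\nabla_i h h^{-1}) = (A_H)_i - (A_g)_i$, which follows from $\nabla^H_i h = \partial_i h + [(A_H)_i, h]$ together with the direct computation $(\partial_i h)\, h^{-1} = -(A_g)_i + h\,(A_H)_i\, h^{-1}$. Because the Chern connections of both $g$ and $H$ have vanishing Christoffel symbols in antiholomorphic directions, the conventions $(F_H)_{i\bar k} = -\partial_{\bar k}(A_H)_i$ and $(R_g)_{i\bar k} = -\partial_{\bar k}(A_g)_i$ immediately yield
\[
\nabla_{\bar k}(\nabla_i h h^{-1})^\alpha{}_\beta = -(F_H)_{i\bar k}{}^\alpha{}_\beta + (R_g)_{i\bar k}{}^\alpha{}_\beta.
\]
Applying the mixed covariant derivative $\nabla_j$ from (\ref{mixed-cov-deriv}) and contracting with $g^{j\bar k}$ reduces the computation to analyzing $-g^{j\bar k}\nabla_j(F_H)_{i\bar k}$ and $g^{j\bar k}\nabla_j(R_g)_{i\bar k}$ separately.

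For the $R_g$ piece, the mixed derivative differs from the intrinsic $g$-Chern derivative $\nabla^g$ only on the endomorphism indices, where the difference equals the commutator with $(A_H - A_g)_j = (\nabla_j h h^{-1})$. Contracting with $g^{j\bar k}$ therefore produces exactly
\[
g^{j\bar k}\nabla^g_j(R_g)_{i\bar k}{}^\alpha{}_\beta + g^{j\bar k}(\nabla_j h h^{-1})^\alpha{}_\gamma (R_g)_{i\bar k}{}^\gamma{}_\beta - g^{j\bar k}(R_g)_{i\bar k}{}^\alpha{}_\gamma (\nabla_j h h^{-1})^\gamma{}_\beta,
\]
which accounts for the first three terms on the right-hand side.

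For the $F_H$ piece, I would invoke the second Bianchi identity for $F_H$. The intrinsic identity, using only the Chern connection of $H$ on the endomorphism indices, reads $\nabla^H_j(F_H)_{i\bar k} = \nabla^H_i(F_H)_{j\bar k}$, with no torsion correction because $F_H$ is of type $(1,1)$. Absorbing the $-A_g$ Christoffel symbols on the tangent indices introduces the torsion of $g$ from swapping $i,j$:
\[
\nabla_j(F_H)_{i\bar k} - \nabla_i(F_H)_{j\bar k} = (T_g)^r{}_{ij}(F_H)_{r\bar k}.
\]
Contracting with $g^{j\bar k}$ and commuting the metric past $\nabla_i$ produces the term $\nabla_i\bigl(g^{j\bar k}(F_H)_{j\bar k}\bigr)$, which vanishes by the Hermitian-Yang-Mills hypothesis $g^{j\bar k}(F_H)_{j\bar k}{}^\alpha{}_\beta = 0$. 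One is left with $g^{j\bar k}\nabla_j(F_H)_{i\bar k} = g^{j\bar k}(T_g)^r{}_{ij}(F_H)_{r\bar k}$, and substituting $(F_H)_{r\bar k} = -\partial_{\bar k}(\nabla_r h h^{-1}) + (R_g)_{r\bar k}$ from the first display rewrites this as the last two terms in the claimed identity (with $F_g$ understood to mean $R_g$).

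The main obstacle is purely bookkeeping: tracking which Christoffel symbols act on which index of the mixed connection, verifying Bianchi in its Hermitian (torsion-containing) form, and consolidating signs. No analytic input beyond the pointwise HYM equation is required.
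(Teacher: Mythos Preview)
Your proposal is correct and follows essentially the same route as the paper: both arguments start from the identity $\partial_{\bar k}(\nabla_i h\,h^{-1}) = (R_g)_{i\bar k} - (F_H)_{i\bar k}$, use the second Bianchi identity for $F_H$ (with the torsion correction from the $g$-connection on the tangent index) together with $\Lambda_\omega F_H = 0$ to dispose of the $F_H$ term, and then convert the mixed derivative on $R_g$ to $\nabla^g$ via the difference of connections $A_H - A_g = \nabla h\,h^{-1}$. The only cosmetic differences are the order in which the two pieces are handled and that the paper spells out the proof of the Bianchi identity in components, whereas you cite it; your remark that $F_g$ in the statement should be read as $R_g$ is also correct.
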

\begin{proof}
By (\ref{diff-connections}),
\be \label{diff-connections2}
\partial_{\bar{k}} (\nabla_i h h^{-1}) = (R_{g})_{i \bar{k}} - (F_{H})_{i \bar{k}}.
\ee
Therefore
\be \label{S-id1}
g^{j \bar{k}} \nabla_j \nabla_{\bar{k}} (\nabla_i h h^{-1})^\alpha{}_\beta = g^{j \bar{k}} \nabla_j (R_{g})_{i \bar{k}}{}^\alpha{}_\beta - g^{j \bar{k}} \nabla_j (F_{H})_{i \bar{k}}{}^\alpha{}_\beta.
\ee
Recall that our notation (\ref{mixed-cov-deriv}) is such that $\nabla$ acts by the Chern connection of $H$ on $\alpha,\beta, \gamma$ indices and acts by the Chern connection of $g$ on $i,j,k$ indices. The Bianchi identity is
\be \label{2nd-bianchi}
\nabla_j (F_{H})_{i \bar{k}}{}^\alpha{}_\beta =  \nabla_i (F_{H})_{j \bar{k}}{}^\alpha{}_\beta + (F_H)_{r \bar{k}}{}^\alpha{}_\beta (T_g)^r{}_{ij}
\ee
where $T_g$ is the torsion (\ref{eq: torsion-def}) of the metric $g$.
Contracting (\ref{2nd-bianchi}) and using $\Lambda_\omega F_H = 0$, we obtain
\[
g^{j \bar{k}} \nabla_j (F_H)_{i \bar{k}}{}^\alpha{}_\beta = g^{j \bar{k}} (F_H)_{r \bar{k}}{}^\alpha{}_\beta (T_g)^r{}_{ij}.
\]
Substituting this into (\ref{S-id1}) gives
\be \label{S-id2} 
g^{j \bar{k}} \nabla_j \nabla_{\bar{k}} (\nabla_i h h^{-1})^\alpha{}_\beta = g^{j \bar{k}} \nabla_j (R_{g})_{i \bar{k}}{}^\alpha{}_\beta - g^{j \bar{k}} (F_H)_{r \bar{k}}{}^\alpha{}_\beta (T_g)^r{}_{ij}.
\ee
Our notation (\ref{mixed-cov-deriv}) means that $\nabla_j (R_g)_{i \bar{k}}{}^\alpha{}_\beta$ involves connection terms $H^{-1}\partial H$ on the $\alpha,\beta$ indices. We will now convert the Chern connection of $H$ into the Chern connection of $g$ via
\[
\begin{aligned}
\nabla_j (R_g)_{i \bar{k}}{}^\alpha{}_\beta &= \nabla^g_j (R_g)_{i \bar{k}}{}^\alpha{}_\beta + [(A_H)_j{}^\alpha{}_\gamma-(A_g)_j{}^\alpha{}_\gamma] (R_g)_{i \bar{k}}{}^\gamma{}_\beta\\
&\quad - (R_g)_{i \bar{k}}{}^\alpha{}_\gamma [(A_H)_j{}^\gamma{}_\beta-(A_g)_j{}^\gamma{}_\beta].
\end{aligned}
\]
By (\ref{diff-connections}), equation (\ref{S-id2}) becomes
\bea
g^{j \bar{k}} \nabla_j \nabla_{\bar{k}} (\nabla_i h h^{-1})^\alpha{}_\beta &=& g^{j \bar{k}} \nabla^g_j (R_{g})_{i \bar{k}}{}^\alpha{}_\beta + g^{j \bar{k}} (\nabla_j h h^{-1})^\alpha{}_\gamma (R_g)_{i \bar{k}}{}^\gamma{}_\beta \nonumber\\
&&- g^{j \bar{k}} (R_g)_{i \bar{k}}{}^\alpha{}_\gamma (\nabla_j h h^{-1})^\gamma{}_\beta - g^{j \bar{k}} (F_H)_{r \bar{k}}{}^\alpha{}_\beta (T_g)^r{}_{ij}.
\eea
Using (\ref{diff-connections2}), we obtain the statement in the lemma.
\end{proof}

\medskip
\par Altogether, the Laplacian of $S$ is
\bea \label{Laplace-S}
\Delta_g S &=& |\nabla (\nabla h h^{-1})|^2_{g,H} +|\bar{\nabla}
(\nabla h h^{-1})|^2_{g,H} \nonumber\\
&&+ 2 {\rm Re} \, \bigg[ ({\rm I}) + ({\rm II}) + ({\rm IIIa}) + ({\rm IIIb}) \bigg]
\eea
where
\bea
({\rm I}) &=&  g^{i \bar{\ell}} H_{\bar{\mu} \alpha} H^{\beta \bar{\nu}} g^{j \bar{k}} \partial_{\bar{k}} (\nabla_r h h^{-1})^\alpha{}_\beta (T_g)^r{}_{ij} \overline{(\nabla_\ell h h^{-1})^\mu{}_\nu}, \nonumber\\
({\rm II}) &=& g^{i \bar{\ell}} H_{\bar{\mu} \alpha} H^{\beta \bar{\nu}} [g^{j \bar{k}} \nabla^g_j (R_{g})_{i \bar{k}}{}^\alpha{}_\beta- g^{j \bar{k}} (R_g)_{r \bar{k}}{}^\alpha{}_\beta (T_g)^r{}_{ij}]\overline{(\nabla_\ell h h^{-1})^\mu{}_\nu}  \nonumber\\
({\rm IIIa}) &=& g^{i \bar{\ell}} H_{\bar{\mu} \alpha} H^{\beta
  \bar{\nu}} [g^{j \bar{k}} (\nabla_j h h^{-1})^\alpha{}_\gamma
(R_g)_{i \bar{k}}{}^\gamma{}_\beta]\overline{(\nabla_\ell h h^{-1})^\mu{}_\nu} \nonumber\\
&& - g^{j \bar{k}} (R_g)_{i\bar{k}}{}^\alpha{}_\gamma (\nabla_j h h^{-1})^\gamma{}_\beta\overline{(\nabla_\ell h h^{-1})^\mu{}_\nu} \nonumber\\
({\rm IIIb}) &=& g^{i \bar{\ell}}H_{\bar{\mu}\alpha}H^{\beta\bar{\nu}}(\nabla_i h h^{-1})^{\alpha}{}_\beta (R_g)_{\bar{\ell}}{}^{\bar{p}} \overline{ (\nabla_p hh^{-1})^\mu{}_\nu }\nonumber
\eea
Recall $\epsilon>0$ divides the manifold into two regions $\{ r \leq
\epsilon \}$ and $\{ r > \epsilon \}$.
\medskip
\par $\bullet$ On $\{r > \epsilon \}$, the geometry $(X,g)$ is uniformly
bounded by a constant $\Lambda$. The $C^0$ estimate $C_0^{-1} g \leq H \leq C_0 g$ allows us to
use norms with respect to $g$ or $H$ interchangeably up to the cost
of constant. Hence on $\{ r > \epsilon \}$ we have
\bea
& \ & \bigg| ({\rm I}) + ({\rm
  II}) + ({\rm IIIa}) + ({\rm IIIb}) \bigg| \nonumber\\
&\leq&  C_{\Lambda,C_0} \bigg[ |\bar{\nabla}
(\nabla h h^{-1})|_{g,H} |\nabla h h^{-1}|_{g,H} +  |\nabla h h^{-1}|_{g,H} + |\nabla h h^{-1}|^2_{g,H} \bigg] \nonumber
\eea
and
\bea
& \ & \bigg| ({\rm I}) + ({\rm
  II}) + ({\rm IIIa}) + ({\rm IIIb}) \bigg| \nonumber\\
&\leq& {1 \over 4} (|\nabla (\nabla h h^{-1})|^2_{g,H} +|\bar{\nabla}
(\nabla h h^{-1})|^2_{g,H}) + C (S+1) 
\eea
where $C$ depends on $C_0$ and $\Lambda$.
\medskip
\par $\bullet$ On $\{ r < \epsilon \}$, the metric $g$ is K\"ahler
Ricci-flat. Term $( {\rm I})$ vanishes since the torsion $T_g=0$. Term
$({\rm II})$ vanishes by the Bianchi identity
\[
g^{j \bar{k}} \nabla_j R_{i \bar{k}}{}^\alpha{}_\beta = g^{j \bar{k}}
\nabla_i R_{j \bar{k}}{}^\alpha{}_\beta = 0
 \]
combined with the Ricci-flat condition. Term $({\rm IIIb})$ also
vanishes and we are left with
\[
\Delta_g S = |\nabla (\nabla h h^{-1})|^2_{g,H} +|\bar{\nabla}
(\nabla h h^{-1})|^2_{g,H} + 2 {\rm Re} \, ({\rm IIIa}) .
 \]
 By uniform equivalence of the metrics $g$ and $H$, we may estimate
 this as
 \[
|({\rm IIIa})| \leq C(C_0) |Rm_g|_g S.
 \]
 By the estimate $|Rm_g| \leq C_1 r^{-2}$, we see that on the
 entirety of $X$ we can estimate
 \bea \label{delta-S-est}
& \ & \bigg| ({\rm I}) + ({\rm
  II}) + ({\rm IIIa}) + ({\rm IIIb}) \bigg| \nonumber\\
&\leq& {1 \over 4} (|\nabla (\nabla h h^{-1})|^2_{g,H} +|\bar{\nabla}
(\nabla h h^{-1})|^2_{g,H}) + C r^{-2} (S+1) 
\eea
 where $C$ depends on $C_0$, $C_1$, $\Lambda$, $\epsilon$.
 
\subsubsection{Test function}
To construct a test function to control $S$, we will use ${\rm Tr} \,
h$. Contracting the formula (\ref{diff-chern-curv}) for the difference
of curvature tensors and using $\Lambda_\omega F_H=0$, we see that
\[
\sqrt{-1} \Lambda_\omega Rm_g = -g^{j \bar{k}} \partial_{\bar{k}} ( h
\nabla^H_j h^{-1}) = g^{j \bar{k}} \partial_{\bar{k}} (\nabla^H_j h h^{-1}).
\]
Therefore
\[
\Tr (\sqrt{-1} \Lambda_\omega Rm_g) h = \Delta_g \Tr h - g^{j \bar{k}}
\Tr \nabla_j h h^{-1} \nabla_{\bar{k}} h .
\]
We note that since $\Lambda_{\omega} Rm_g=0$ in $\{ r \leq \epsilon \}$, we have the
bound
\[
|\Lambda_\omega Rm_g| \leq C(\Lambda).
\]
Let $\delta>0$. Let $\zeta(s): [0,\infty) \rightarrow [0,1]$ be a cutoff function satisfying $\zeta(s) \equiv 1$ when $s \geq 2$ and $\zeta(s) \equiv 0$ when $s \leq 1$, and $|\zeta'|^2 \leq 9 \zeta$. We will use the test function
\[
P(z) = \zeta_\delta(z) S(z) + {A \over \delta^2} \, {\rm Tr} \, h(z), \quad \zeta_\delta(z) = \zeta \left( {r(z) \over \delta} \right),
\]
for $A(\Lambda,C_0,C_1) \gg 1$ to be chosen later. Then
\[
\Delta_g P = \zeta_\delta \Delta_g S + S \Delta_g
\zeta_\delta + 2 {\rm Re} \, g^{j \bar{k}} \partial_j \zeta_\delta
\partial_{\bar{k}} S + A \delta^{-2} \Delta_g {\rm Tr} \, h.
\]
By (\ref{Laplace-S}),
\bea
\Delta_g P &=&  \zeta_\delta (|\nabla (\nabla h h^{-1})|^2_{g,H}
+|\bar{\nabla} (\nabla h h^{-1})|^2_{g,H}) + A \delta^{-2} g^{j \bar{k}} {\rm Tr} \ \nabla_j h h^{-1} \nabla_{\bar{k}} h \nonumber\\
&&+  2 \zeta_\delta {\rm Re} \, \bigg[ ({\rm I}) + ({\rm II}) + ({\rm III}) \bigg] + ({\rm IV}) + ({\rm V})+ ({\rm VI})
\eea
where
\bea
({\rm IV}) &=& S \Delta_g \zeta_\delta \nonumber\\
({\rm V}) &=& 2 {\rm Re} \, g^{j \bar{k}} \partial_j \zeta_\delta [ \langle \nabla_{\bar{k}} (\nabla h h^{-1}), \nabla h h^{-1} \rangle + \langle \nabla h h^{-1}, \nabla_k (\nabla h h^{-1}) \rangle] \nonumber\\
({\rm VI}) &=& A \delta^{-2} \Tr (\sqrt{-1} \Lambda Rm_g) h \nonumber
\eea
We want to show that $P$ is bounded by $C(C_0,C_1,\Lambda) \delta^{-2}$. If $P$ attains a maximum
on $\{ r \leq \delta \}$, then $\zeta_\delta =0$ and $P$ is bounded
by $A \delta^{-2} \sup_X {\rm Tr} \, h$. Suppose $P$ attains a maximum at a
point $x \in \{ r > \delta \}$. Our good term will be
\be
A \delta^{-2} g^{j \bar{k}} {\rm Tr} \ \nabla_j h h^{-1}
\nabla_{\bar{k}} h \geq {A \over \delta^2 C(C_0)} | \nabla h h^{-1}|^2_{g,H} = {A \over C \delta^2} S
\ee
using $C_0^{-1} g \leq H \leq C_0 g$ and $h^\dagger = h$. For the term $({\rm V})$,
\[
|({\rm V})| \geq - C S^{1/2} |\nabla \zeta_\delta|_g (|\nabla (\nabla h h^{-1})|_{g,H} + |\overline{\nabla} (\nabla h h^{-1})|_{g,H}). 
\]
We have
\[
|\nabla \zeta_\delta|_g = \delta^{-1} |\zeta'| |\nabla r|_g \leq C(\Lambda) \delta^{-1} | \zeta'|. 
\]
Since $|\zeta'| \leq 3 |\zeta|^{1/2}$, we have
\[
|({\rm V})| \geq - C \delta^{-1} S^{1/2}  |\zeta_\delta|^{1/2} (|\nabla (\nabla h h^{-1})| + |\overline{\nabla} (\nabla h h^{-1})|)
\]
Using $2ab \leq a^2 + b^2$, we can estimate
\[
|({\rm V})| \geq - {1 \over 4} |\zeta_\delta| (|\nabla (\nabla h
h^{-1})|^2 + |\overline{\nabla} (\nabla h h^{-1})|^2) - C \delta^{-2} S.
\]
In (\ref{delta-S-est}), we showed the estimate
\[
2 \zeta_\delta {\rm Re} \, \bigg[ ({\rm I}) + ({\rm II}) + ({\rm III})\bigg] \geq - {1 \over 4} |\zeta_\delta| (|\nabla (\nabla h
h^{-1})|^2 + |\overline{\nabla} (\nabla h h^{-1})|^2) - C |\zeta_\delta|r^{-2}S.
\]
We are working in the region where $r^{-2} < \delta^{-2}$, hence we
obtain at $x$ the inequality
\bea
0 &\geq& \Delta P(x) \nonumber\\
&\geq& {\zeta_\delta \over 2} (|\nabla (\nabla h h^{-1})|^2_{g,H}
+|\bar{\nabla} (\nabla h h^{-1})|^2_{g,H}) + {A \over C} \delta^{-2}S - C\delta^{-2} S -CA \delta^{-2}. \nonumber
\eea
For $A \gg 1$ depending on $C_0,C_1,\Lambda$, we obtain
\[
S(x) \leq C.
\]
It follows that $P(x)$ is bounded by $C \delta^{-2}$. From the bound of $P$, we obtain a bound for $S$ on $\{ r \geq 2 \delta \}$ since $\zeta \equiv 1$ there. 
\[
\sup_{ \{ r \geq 2 \delta \}} S \leq C \delta^{-2}.
\]
It follows that for any point $x \in X$, we have the estimate
\[
S(x) \leq C r(x)^{-2}.
\]
This completes the proof of Proposition
\ref{prop: gradest}.

\subsection{Higher estimates}

By combining the $C^0$ and $C^1$ estimates along the degenerating
sequence $(g_a,H_a)$, we can apply regularity theory of elliptic
equations to obtain higher order estimates and obtain a limit as $a
\rightarrow 0$.
\bigskip
\par
 {\it Proof of Theorem \ref{thm:H_0}:} To extract a limit from $(g_a,H_a)$, we fix $\delta>0$
and work on the set $U_\delta = \{ r > \delta \}$. Since $g_a
\rightarrow g_0$ smoothly uniformly on $U_\delta$, we can cover
$U_\delta$ by finitely many charts where the matrices representing the metrics $g_a$ satisfy
\[
\Lambda^{-1}_\delta \delta_{kj} \leq (g_a)_{\bar{k} j} \leq \Lambda_\delta \delta_{kj}, \quad \| (g_a)_{\bar{k} j} \|_{C^k} \leq \Lambda_{\delta,k}.
\]
The $C^0$ estimate (\ref{C0-est2}) for $H_a$ implies that on this cover of $U_\delta$, the local matrices satisfy
\[
C^{-1} \delta_{kj} \leq (H_a)_{\bar{k} j} \leq C \delta_{kj}.
\]
The $C^1$ estimate (\ref{C1-est})  gives a uniform bound 
\[
\| \partial (H_a)_{\bar{\mu} \nu} \|_{L^\infty(U_\delta)} \leq C
\]
uniform in $a$ on the local matrices $(H_a)_{\bar{\mu} \nu}$. The equation $\Lambda_{\omega_a} F_{H_a} = 0$ is given in local charts as
\[
(g_a)^{j \bar{k}} \partial_j \partial_{\bar{k}} (H_a)_{\bar{\mu} \nu} = (g_a)^{j \bar{k}} \partial_{\bar{k}} (H_a)_{\bar{\mu} \gamma} (H_a)^{\gamma \bar{\alpha}} \partial_j (H_a)_{\bar{\alpha} \nu}.
\]
The right-hand side is bounded in $L^\infty$. By the local $C^{1,\alpha}$ estimate for elliptic PDE, the local matrices $(H_a)_{\bar{\mu} \nu}$ satisfy
\[
\| (H_a)_{\bar{\mu} \nu} \|_{C^{1,\alpha}(U_\delta)} \leq C.
\]
By the local Schauder estimates, we can take a smooth limit of the sequence $(g_a,H_a)$ on $U_\delta$ as $a \rightarrow 0$. The limiting metric $H_0$ satisfies
\[
\Lambda_{\omega_0} F_{H_0} = 0
\]
on $U_\delta$. We can now let $\delta \rightarrow 0$ to obtain a
limiting metric $H_0$ on $\underline{X}_{reg}$. The $C^0$ and $C^1$
estimates imply
\be \label{eq:c0+c1}
|H_0|_{g_0} + |H_0^{-1}|_{g_0} + r |\nabla_{g_0} H_0|_{g_0} \leq C.
\ee
To obtain higher estimates, we work near the singularities of
$\underline{X}$, which can be identified with a neighborhood of $V_0$
with $g_0=g_{co,0}$. In holomorphic cylindrical coordinates (see Lemma
\ref{lem: cylCoords}), we have $r^2 g^{-1}_{co,0} = O(I)$ (notation
$O(I)$ is used for a matrix uniformly equivalent to the identity) and the equation
\[
r^2 (g_{co,0})^{j \bar{k}} \partial_j \partial_{\bar{k}}
(H_0)_{\bar{\mu} \nu} = r^2 (g_{co,0})^{j \bar{k}} \partial_{\bar{k}} (H_0)_{\bar{\mu} \gamma} (H_0)^{\gamma \bar{\alpha}} \partial_j (H_0)_{\bar{\alpha} \nu}.
\]
Estimate (\ref{eq:c0+c1}) in these coordinates is $H=r^2O(I)$ and
$\partial H = r^2 O(1)$. Therefore this local equation is of the form
\[
a^{i \bar{j}} \partial_i \partial_{\bar{j}} H_0 = f, \quad f= r^2 O(1), \ a^{i
  \bar{j}} = O(I).
\]
Local $C^{1,\alpha}$ estimates for elliptic PDE imply
\[
\| (H_0)_{\bar{\mu} \nu} \|_{C^{1,\alpha}(B_{1/2},g_{euc})} \leq C(\|
H_{\bar{\mu} \nu} \|_{L^\infty(B_1, g_{euc})}+\| f
\|_{L^\infty(B_1)}) \leq C r^2.
\]
Local Schauder estimates then imply $\| (H_0)_{\bar{\mu} \nu}
\|_{C^{k,\alpha}(B_{1/2},g_{euc})} \leq C_k r^2$. Converting these local estimates to norms using $g_{co,0}$ gives estimates of the form
\[
  |\nabla^k_{g_{co,0}} H_0|_{g_{co,0}}  \leq C_k r^{-k}
\]
for each $k \in \mathbb{Z}_{\geq 0}$. This completes the estimate. $\qed$

\section{Quantitative convergence to the tangent cone} \label{sec: quant-decay}

In this section we show that the estimates for the
Hermitian-Yang-Mills metric  $H_0$ constructed in Theorem \ref{thm:H_0} can be improved to obtain the decay of $H_0$ towards the Candelas-de la Ossa metric.  This will be an essential ingredient in the perturbation argument later in the paper.  The main goal of this section is to prove

\begin{thm}\label{thm: decayToTangent}
Let $(V_0,g_{co,0})$ denote the conifold equipped with Candelas-de la Ossa Ricci-flat K\"ahler cone metric, and let $0\in V_0$ denote the tip of the cone.  Suppose $H$ is a Hermitian-Yang-Mills metric on $T^{1,0}V_0$ over $B_1(0)\setminus\{0\}$.  Assume that there is a constant $C>0$ so that $H$ satisfies
\[
C^{-1}g_{co,0} < H<Cg_{co,0}.
\]
Then there are constants $c_0>0, \lambda \in (0,1)$, and for each $k\in \mathbb{Z}_{\geq 0}$ a constant $C_{k}>0$, such that the following estimate holds
\[
|\nabla_{g_{co,0}}^k\left( H-c_0g_{co,0}\right)|_{g_{co,0}} \leq C_k r^{\lambda -k},
\]
where, as usual, $r(x)= dg_{co,0}(x,0)$ is the distance to $0\in V_0$ with respect to $g_{co,0}$.
\end{thm}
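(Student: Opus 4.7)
The plan is to reduce the problem to a decay estimate for the relative endomorphism $h = g_{co,0}^{-1} H$ and then combine a scale-invariant Liouville-type theorem with a three-annulus argument. The starting observation is that $g_{co,0}$ is itself Hermitian-Yang-Mills on $T^{1,0}V_0$: since $g_{co,0}$ is K\"ahler Ricci-flat, $\sqrt{-1}\Lambda_{\omega_{co,0}} R_{g_{co,0}} = \mathrm{Ric}(g_{co,0}) = 0$, and hence so is $c\, g_{co,0}$ for any constant $c>0$. Using formula \eqref{diff-chern-curv} for the difference of Chern curvatures, the assumption $\Lambda_\omega F_H = 0$ becomes the elliptic system
\[
g_{co,0}^{j\bar{k}}\, \p_{\bar{k}}\bigl(h^{-1}\nabla^{g_{co,0}}_j h\bigr) = 0
\]
for the $g_{co,0}$-self-adjoint positive endomorphism $h$, subject to the hypothesis $C^{-1}I \leq h \leq C\, I$. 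The gradient and higher estimates developed for Theorem \ref{thm:H_0}, read off at the local scale $r$ using the holomorphic cylindrical coordinates of Lemma \ref{lem: cylCoords}, furnish the scale-invariant bounds $|\nabla^k_{g_{co,0}} h|_{g_{co,0}} \leq C_k r^{-k}$. Thus the task is to locate $c_0>0$ and a rate $\lambda>0$ such that $h - c_0 I$ decays like $r^\lambda$.

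The constant $c_0$ will be extracted by a blow-down. Let $S_\mu:V_0\to V_0$ denote the action generated by the holomorphic Reeb vector field $\xi_{\mathbb{C}}$, so $S_\mu^*g_{co,0} = \mu^2 g_{co,0}$, and define the rescaled endomorphisms $h_\mu := S_\mu^* h$ on $\{r < \mu^{-1}\}$. The equation above, the $C^0$ bound and the scale-invariant derivative bounds are all preserved under this rescaling, so Arzel\`a-Ascoli extracts a smooth subsequential limit $h_\infty$ on all of $V_0\setminus\{0\}$, which is again $g_{co,0}$-self-adjoint, uniformly equivalent to the identity, HYM with respect to $g_{co,0}$, and $S_\mu$-invariant in the limit as $\mu \to 0$. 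The Liouville step is to show that $h_\infty = c_0 I$ for some constant $c_0>0$. Because $g_{co,0}$ is K\"ahler Ricci-flat, a standard Bochner computation applied to $\log h_\infty$, combined with integration by parts over the compact Sasaki-Einstein link (here using that $h_\infty$ is a cone, so all integrals reduce to the link $L = \{r=1\}$), gives $\nabla^{g_{co,0}} h_\infty = 0$; then irreducibility of the holonomy representation of $g_{co,0}$ — a consequence of the stability of $T^{1,0}V_0$ restricted to $L$, which in turn is inherited from the global stability established in Section \ref{sec: HYM-central} — forces $h_\infty$ to be a scalar multiple of the identity by Schur's lemma. This identifies $c_0$ and yields $h_\mu \to c_0 I$ smoothly as $\mu \to 0$.

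To promote this qualitative convergence to the polynomial rate, I would linearize around $c_0 g_{co,0}$ and carry out a three-annulus/contraction argument on the dyadic shells $A_k := \{2^{-k-1} \leq r \leq 2^{-k}\}$. Writing $h = c_0 e^s$ for a trace-adjusted self-adjoint $s$, the equation becomes
\[
\mathcal{L}\, s = Q(s,\nabla s), \qquad \mathcal{L} = g_{co,0}^{j\bar{k}} \nabla^{g_{co,0}}_j \nabla^{g_{co,0}}_{\bar{k}} + \text{zeroth order},
\]
with $Q$ quadratic in $s$ and $\nabla s$. On the cone, separation of variables $s = r^\alpha u(\theta)$ turns $\mathcal{L}$ into a discrete eigenvalue problem on $L$ for a self-adjoint operator $\mathcal{L}_L$, producing a set of indicial roots $\{\alpha_j\}$. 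By the Liouville step above, no nonzero bounded solution of $\mathcal{L}s = 0$ has homogeneity in $(-\infty,0]$ other than the constant multiples of $I$ already accounted for by $c_0$; hence there is a spectral gap and a smallest strictly positive indicial root $\lambda>0$. Standard three-annulus theory then yields a decay alternative of the form $\|s\|_{L^2(A_{k+1})} \leq 2^{-\lambda}\|s\|_{L^2(A_k)} + O(\|s\|_{L^\infty(A_k)}^2)$, and since the nonlinear remainder is supercritical once $\|s\|_{L^\infty}$ is small, iteration gives $\|s\|_{L^2(A_k)} \lesssim 2^{-\lambda' k}$ for any $\lambda' <\lambda$. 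Moser iteration converts this into the pointwise estimate $|s|(x) \leq C r(x)^\lambda$ (relabelling $\lambda' \mapsto \lambda$), and the scale-invariant Schauder estimates for $\mathcal{L}$ on unit-size balls in the cylindrical coordinates upgrade this to the higher-order statement $|\nabla^k_{g_{co,0}}(H - c_0 g_{co,0})|_{g_{co,0}} \leq C_k r^{\lambda-k}$.

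The principal obstacle is the Liouville/indicial-gap step: verifying that parallel transport on $T^{1,0}V_0$ with respect to $g_{co,0}$ is irreducible and extracting a quantitative spectral gap $\lambda>0$ for $\mathcal{L}_L$. The subtlety is that the link is a real five-manifold carrying a transverse Sasaki-Einstein structure, and the relevant endomorphism bundle must be decomposed according to this transverse geometry; one then needs to rule out slowly-decaying homogeneous modes, which is precisely where the global stability from Section \ref{sec: HYM-central} and Schur's lemma enter. Once this gap is in hand, the remaining analytic ingredients — three-annulus decay, Moser iteration, scaled Schauder — are standard.
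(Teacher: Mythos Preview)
Your overall strategy—linearize about $c_0 g_{co,0}$, locate a spectral gap in the indicial operator on the link, and run a three-annulus iteration—is a legitimate route, and is in the same spirit as the paper's, but the execution differs in two places and one of them is a real gap.

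The paper does \emph{not} pass through a blow-down or a Liouville theorem. Instead it proves directly a Poincar\'e inequality on each level set $\{r=\rho\}$ for trace-free self-adjoint endomorphisms (Lemma~\ref{lem: Poincare}), then uses the subsolution identity $\Delta|s|^2 \ge A^{-1}|\nabla s|^2$ to show that the scale-invariant energy $\mu(\tau)=\int_{B_\tau} r^{2-2n}|\nabla s|^2$ satisfies a contraction $\mu(\tau)\le \tfrac{C}{C+1}\mu(2\tau)$ (Lemma~\ref{lem: decayInt}); iterating gives $\mu(\tau)\le C\tau^{2\alpha}$, and elliptic regularity on dyadic annuli converts this into the pointwise rate. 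The determinant part is handled separately by separation of variables for the scalar Laplacian (Lemma~\ref{lem: determinantDecay}). This avoids any compactness argument and produces the constant $c_0$ and the rate simultaneously.

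The genuine gap in your argument is the assertion that the subsequential blow-down limit $h_\infty$ is $S_\mu$-invariant. Arzel\`a--Ascoli gives only a subsequential limit along some $\mu_j\to 0$; scale-invariance of that limit would follow from \emph{uniqueness} of the tangent cone, which is essentially what you are trying to prove. Without scale-invariance your Bochner/integration-by-parts step on the link does not go through, and you have not supplied a Liouville theorem for a general bounded HYM relative endomorphism on the full punctured cone. The paper's integral iteration sidesteps this circularity entirely.

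A second, smaller issue: the irreducibility/spectral-gap input you need is \emph{not} inherited from the global stability argument of Section~\ref{sec: HYM-central}. That section concerns $T^{1,0}X$ on the compact resolution; what is required here is that the transverse bundle $E$ on $\mathbb{P}^1\times\mathbb{P}^1$ (with $\pi^*E=T^{1,0}V_0$) has no nontrivial holomorphic endomorphisms. The paper proves this in Lemma~\ref{lem: Poincare} by a direct slope computation showing $E$ is stable (indecomposable because $\mu(E)=4/3$ is not integral). Once you have that, your indicial-root argument and the paper's Poincar\'e inequality are equivalent formulations of the same spectral fact, and either would feed into a valid decay argument—but you still need to close the gap above, most cleanly by replacing the blow-down step with the paper's direct energy iteration.
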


The proof of Theorem~\ref{thm: decayToTangent} follows closely the work of Jacob-S\'a Earp-Walpuski \cite{JacobEarpWalpuski} who studied related quantitative convergence results in the case of punctured balls in $\mathbb{C}^n$.  Related results for stationary Yang-Mills connections were obtained by Yang using a \L ojasiewicz inequality \cite{YangYM}.  Chen-Sun \cite{ChenSun1, ChenSun2, ChenSun3, ChenSun4} obtained a general characterization tangent cones of Hermitian-Yang-Mills connections on reflexive sheaves on the ball in $\mathbb{C}^n$ without estimates for the convergence rate.  For our applications, the polynomial decay rate, as well as the convergence at the level of metrics (rather than connections) obtained in Theorem~\ref{thm: decayToTangent} is crucial.

The first step towards establishing Theorem~\ref{thm: decayToTangent} is to prove the following Poincar\'e inequality.

\begin{lem}\label{lem: Poincare}
Let $(V_0,g_{co,0})$ be the conifold equipped with the Candelas-de la Ossa metric.  There is a uniform constant $C>0$ with the following property: for any $\rho \in (0, 1]$ and any $s \in C^{\infty}(\{r=\rho\}, \sqrt{-1}\mathfrak{su}(T^{1,0}V_0, g_{co,0}))$ we have
\[
\int_{\{r=\rho\}}|s|_{g_{co,0}}^2 dS(\rho)_{g_{co,0}} \leq  C\rho^2 \int_{\{r=\rho\}} |\nabla_{g_{co,0}} s|_{g_{co,0}}^2 dS(\rho)_{g_{co,0}}
\]
where $dS(\rho)_{g_{co,0}}$ denotes the surface measure on $\{r=\rho\}$ induced by $g_{co,0}$.
\end{lem}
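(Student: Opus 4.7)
The plan is to exploit the cone structure of $(V_0, g_{co,0})$ to reduce the inequality at radius $\rho$ to a single $\rho$-independent Poincar\'e-type inequality on the compact link $L = \{r=1\}$, and then establish that inequality by ruling out parallel sections.

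The first step is a scaling reduction. The holomorphic homothety $S_\rho(z) = \rho^{3/2} z$ on $V_0$ satisfies $r \circ S_\rho = \rho \cdot r$ and $S_\rho^* g_{co,0} = \rho^2 g_{co,0}$, so it restricts to a diffeomorphism $L \to \{r=\rho\}$ and multiplies the induced surface measure by a factor proportional to $\rho^5$. Since $s$ is a section of the endomorphism bundle, the pointwise norm $|s|_{g_{co,0}}$ is scale-invariant, while the covariant derivative $|\nabla s|^2_{g_{co,0}}$ carries weight $-2$ in $\rho$. Substituting $\tilde s = S_\rho^* s$, a straightforward change of variables shows that the claim is equivalent to the $\rho$-independent Poincar\'e inequality
\[
\int_L |\tilde s|_{g_{co,0}}^2 \, dS_L \leq C \int_L |\nabla \tilde s|_{g_{co,0}}^2 \, dS_L
\]
for all smooth sections $\tilde s$ of the real vector bundle $E := \sqrt{-1}\mathfrak{su}(T^{1,0}V_0, g_{co,0})\big|_L$.

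The second step is to prove this inequality on the compact manifold $L$. The restriction of the Chern connection of $(T^{1,0}V_0, g_{co,0})$ endows $E$ with a metric connection $\nabla$, and the rough Laplacian $\nabla^* \nabla$ on $\Gamma(L, E)$ is a self-adjoint, non-negative elliptic operator with discrete spectrum. By standard spectral theory, the Poincar\'e inequality holds with some $C > 0$ if and only if $\nabla^* \nabla$ has trivial kernel, i.e., if $E$ admits no nontrivial globally parallel section over $L$. To rule out parallel sections, one uses that the $\mathbb{C}^*$-scaling action on $V_0$ preserves the Chern connection (rescaling a Hermitian metric by a positive constant does not alter its connection form $g^{-1}\partial g$), so any parallel section on $L$ extends canonically by scale invariance, and one then invokes the irreducibility of the $SU(3)$-holonomy of the Calabi-Yau cone $(V_0, g_{co,0})$ to force such extensions to vanish.

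The main technical obstacle is the rigidity argument ruling out parallel sections: one has to combine parallelism on each level set $\{r = \rho\}$ with the radial structure to obtain a parallel endomorphism of $T^{1,0}V_0$ on the entire Calabi-Yau cone, at which point irreducibility of the holonomy representation provides the contradiction. Alternatively, on the explicit link $L \simeq T^{1,1} = (SU(2) \times SU(2))/U(1)$, one can decompose sections of $E$ via harmonic analysis on the homogeneous space and verify directly that the trivial representation does not occur, giving the desired Poincar\'e constant and completing the proof.
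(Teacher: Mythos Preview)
Your reduction to a $\rho$-independent Poincar\'e inequality on the compact link $L$, and the observation that this is equivalent to the absence of nonzero parallel sections of $\sqrt{-1}\mathfrak{su}(T^{1,0}V_0,g_{co,0})\big|_L$, matches the paper exactly. The divergence is in how you rule out parallel sections.

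The paper does \emph{not} appeal to cone holonomy. Instead it uses the regular Sasaki structure: the Reeb orbits fibre $V_0\setminus\{0\}$ over $\mathbb{P}^1\times\mathbb{P}^1$, and $T^{1,0}V_0=\pi^*E$ for a rank-$3$ holomorphic bundle $E$ sitting in the extension $0\to\mathcal{O}\to E\to T^{1,0}(\mathbb{P}^1\times\mathbb{P}^1)\to 0$ with extension class $\tfrac{1}{3}c_1(\mathbb{P}^1\times\mathbb{P}^1)$. A parallel trace-free Hermitian endomorphism on $L$ descends to a global holomorphic endomorphism of $E$, and the paper then shows $E$ is stable (hence simple) by a clean integrality trick: $\mu(E)=4/3$ with respect to $\omega=p_1^*\omega_{FS}+p_2^*\omega_{FS}$, so any splitting $E=E_1\oplus E_2$ would force $c_1(E_1)\cup[\omega]=\tfrac{4}{3}\,{\rm rk}(E_1)\in\mathbb{Z}$, impossible for rank $1$ or $2$.

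Your holonomy route is viable in principle, but as written it is a plan rather than a proof. Two nontrivial claims are asserted without justification: (i) that a section parallel along $L$ extends, via scaling, to a section parallel on all of $V_0\setminus\{0\}$ (this requires checking $\nabla_{\partial_r}\tilde s=0$, which does follow from the Euler identity $\nabla_Y(r\partial_r)=Y$ on a metric cone, but you do not supply this); and (ii) that the holonomy of $(V_0,g_{co,0})$ is irreducible $SU(3)$ (true, but it needs an argument that the cone does not split as a Riemannian product, e.g.\ via the topology of the link). Your alternative suggestion of harmonic analysis on $T^{1,1}$ is likewise not carried out. So the gap is not conceptual but executional: you have identified a correct strategy, different from the paper's algebraic one, but left its two load-bearing steps as exercises. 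The paper's argument has the advantage of being fully self-contained and of yielding, as a byproduct, the stability of the transverse bundle $E$.
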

\begin{proof}
The result follows from standard elliptic theory and scaling provided we can show that there are no parallel sections of $\sqrt{-1}\mathfrak{su}(T^{1,0}V_0, g_{co,0})$ on the link of the cone $\{r=1\}$.

To begin, recall from Section~\ref{sec: geoConifold} that $V_0$ can be identified with the complement of the zero section in $\iota^*\mathcal{O}_{\mathbb{P}^3}(-1)$ where
\[
\iota:\mathbb{P}^1\times\mathbb{P}^1 \rightarrow \mathbb{P}^3
\]
is the Segre embedding.  In particular, there is a projection $\pi: V_0\setminus\{0\}\rightarrow \mathbb{P}^1\times \mathbb{P}^1$ whose fibers are orbits of the holomorphic Reeb vector field.  Let $E\rightarrow \mathbb{P}^1\times \mathbb{P}^1$ be the holomorphic vector bundle generated by the invariant sections of $T^{1,0}V_0$, so that $T^{1,0}V_0 = \pi^*E$.  We can describe $E$ explicitly; if $\mathcal{L} \subset T^{1,0}V_0$ denotes the trivial line bundle generated by the non-vanishing holomorphic Reeb field, then we have an exact sequence
\[
0 \rightarrow \mathcal{L} \rightarrow T^{1,0}V_0 \rightarrow \pi^*T^{1,0}(\mathbb{P}^1\times \mathbb{P}^1)\rightarrow 0.
\]
Note that the $\mathcal{L}$-valued $(1,0)$ form on $V_0$ given by $\xi \otimes \del \log r$ is precisely the orthogonal projection $T^{1,0}V_0\rightarrow \mathcal{L}$ given by the Calabi-Yau metric $g_{co,0}$.  Therefore the second fundamental form of $\mathcal{L} \subset T^{1,0}V_0$ is represented by 
\[
\xi \otimes \ddb\log r =\xi \otimes \pi^*\omega_{KE}
\]
where $\omega_{KE}$ is the K\"ahler-Einstein metric on $\mathbb{P}^1\times \mathbb{P}^1$ satisfying
\[
\Ric(\omega_{KE}) = 3\omega_{KE}.
\]
Since $\mathcal{L}$ is trivial, we can view $E$ as the bundle corresponding to $\frac{1}{3}c_1(\mathbb{P}^1\times \mathbb{P}^1)$ under the isomorphisms
\[
{\rm Ext}^1(T^{1,0}(\mathbb{P}^1\times \mathbb{P}^1), \mathcal{O}_{\mathbb{P}^1\times \mathbb{P}^1}) \cong H^{1}(T^{1,0}(\mathbb{P}^1\times \mathbb{P}^1)^{\vee}) \cong H^{1,1}(\mathbb{P}^1\times\mathbb{P}^1, \mathbb{C})
\]
Thus $E$ sits in an exact sequence
\begin{equation}\label{eq: ExtSeq}
0 \rightarrow \mathcal{O}_{\mathbb{P}^1\times \mathbb{P}^1} \rightarrow E \rightarrow T^{1,0}(\mathbb{P}^1\times\mathbb{P}^1)\rightarrow 0.
\end{equation}
Furthermore, since $g_{co,0}$ is Calabi-Yau on the cone, one can easily show that $E$ admits a natural Hermitian-Yang-Mills metric with respect to the K\"ahler class $c_1(p_1^*\mathcal{O}_{\mathbb{P}^1}(1)\otimes p_2^*\mathcal{O}_{\mathbb{P}^1}(1))$, see e.g. \cite{Tian1} for related discussion.

It is easy to show, by direct calculation, that any parallel section
of $s \in C^{\infty}(\{r=\rho\}, \sqrt{-1}\mathfrak{su}(T^{1,0} V_0, g_{co,0}))$ descends to a trace-free, global holomorphic section $s_0\in H^{0}(\mathbb{P}^1\times \mathbb{P}^1,{\rm End}(E))$.  Thus, it suffices to show that the only global holomorphic endomorphisms of $E$ are multiples of the identity map.  This will follow from the usual result for stable vector bundles provided we can show that $E$ is indecomposable \cite{SiuBook}. Let $H_{E}$ denote the Hermitian-Yang-Mills connection on $E$ and let $\omega= p_1^*\omega_{FS}+p_2^*\omega_{FS}$ where $\omega_{FS}$ denotes the Fubini-Study metric on $\mathbb{P}^1$.  By the exact sequence~\eqref{eq: ExtSeq}, the slope of $E$ is given by
\[
\mu(E) = \frac{c_1(E) \cup [\omega]}{{\rm rk}(E)} = \frac{2\int_{\mathbb{P}^1\times\mathbb{P}^1}\omega^2}{3} = \frac{4}{3}
\]
Now suppose that $E$ can be holomorphically decomposed as $E= E_1\oplus E_2$ where $ {\rm rk}(E_1)=1,2$.  A standard computation shows that the decomposition $E= E_1\oplus E_2$ is orthogonal with respect to the Hermitian-Yang-Mills metric and the restriction $H_{E}\big|_{E_1}$ is Hermitian-Yang-Mills with slope $\mu(E_1)=\mu(E)$ \cite{SiuBook}.  Thus we have
\[
c_1(E_1)\cup[\omega] = \frac{4}{3}{\rm rk}(E_1).
\]
However, since $c_1(E_1), [\omega] \in H^{2}(\mathbb{P}^1\times\mathbb{P}^1, \mathbb{Z})$ this implies that $\frac{4}{3}{\rm rk}(E_1) \in \mathbb{Z}$, which is impossible since ${\rm rk}(E_1)=1,2$. Therefore $E$ is indecomposable and hence stable.  The result follows.
\end{proof}

In the remainder of this section we will show that Lemma~\ref{lem: Poincare} implies Theorem~\ref{thm: decayToTangent}.  Much of the argument is based on the following well-known formula:  if $H, \hat{H}$ are Hermitian metrics on a holomorphic vector bundle $E$, then the positive definite,  hermitian (with respect to either $H,\hat{H}$) endomorphism $h= \hat{H}^{-1}H$ satisfies
\begin{equation}\label{eq: keyCurve}
F_{j\bar{k}} - \hat{F}_{j\bar{k}}= -\del_{\bar{k}}(h^{-1} \hat{\nabla}_{j} h)
\end{equation}
where $F$ (resp. $\hat{F}$) denotes the curvature of the Chern connection $\nabla$ (resp. $\hat{\nabla}$) defined with respect to $H$ (resp. $\hat{H}$).  In our case we will take $\hat{H} =g$ to be the Calabi-Yau cone metric on $V_0$.  We begin with the following lemma, which shows that, at least at the level of the determinant, the metric $H$ decays towards $g_{co,0}$.

\begin{lem}\label{lem: determinantDecay}
Let $(V_0,g)$ be a Calabi-Yau cone of real dimension $n>2$. Suppose $H$ is a Hermitian-Yang-Mills metric on $T^{1,0}V_0 \rightarrow B_{1}(0)\setminus\{0\}$ with slope $0$.  Suppose there is a constant $C>0$ such that the $h= g^{-1}H$ satisfies $C^{-1}Id<h<CId$.   Then, there are constants $C_k, C_*, \gamma>0$, with $\gamma$ depending only on $(V_0,g_{co,0})$ so that, for each $k\in \mathbb{N}$ we have
\[
\bigg|\nabla_g^k\left(\log\left(\det h\right) -C_*\right)\bigg|_g \leq C_kr^{\gamma-k}
\]
\end{lem}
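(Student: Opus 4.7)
The plan is to reduce the claim to the observation that $u := \log \det h$ is a \emph{bounded harmonic function} on the punctured Calabi-Yau cone, and then extract polynomial decay from the separation-of-variables structure on the Sasaki-Einstein link.

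First I would derive the harmonic equation. Apply the trace in the endomorphism indices to the curvature difference identity \eqref{eq: keyCurve} with reference metric $g$ and endomorphism $h = g^{-1} H$. Using $\mathrm{Tr}(h^{-1}\nabla_j^g h) = \partial_j \log \det h$, this gives
\[
\mathrm{Tr}(F_g - F_H)_{j\bar k} = \partial_j \partial_{\bar k} \log \det h.
\]
Contracting with $g^{j\bar k}$, the left-hand side vanishes: $\mathrm{Tr}\,F_g$ is the Ricci form of $g$, which is zero because $g$ is Calabi-Yau, while $g^{j\bar k}\mathrm{Tr}(F_H)_{j\bar k}=0$ since $H$ is HYM with slope zero. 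Hence $\Delta_g u = 0$ on $B_1(0)\setminus\{0\}$, and $C^{-1}I < h < CI$ yields $\|u\|_{L^\infty} \leq (\mathrm{rk}\,T^{1,0}V_0)\log C$.

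Next I would extract polynomial decay to a constant. Writing $g = \tfrac32(dr^2 + r^2 g_L)$ with $L$ compact Sasaki-Einstein of real dimension $2n-1$, the Laplacian decomposes as
\[
\Delta_g = \tfrac{2}{3}\bigl[\partial_r^2 + (2n-1)r^{-1}\partial_r + r^{-2}\Delta_L\bigr].
\]
Expanding $u$ in an $L^2(L)$-basis of eigenfunctions $\{\phi_i\}$ of $\Delta_L$ with eigenvalues $0 = \mu_0 < \mu_1 \leq \mu_2 \leq \cdots$, the radial profile of each mode satisfies $c_i'' + (2n-1)c_i'/r - \mu_i c_i /r^2 = 0$, whose solutions are $r^{\alpha_\pm(\mu_i)}$ with $\alpha_\pm(\mu_i) = -(n-1) \pm \sqrt{(n-1)^2 + \mu_i}$. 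Boundedness of $u$ as $r \to 0$ discards every $\alpha_-$ branch (each $\leq -2(n-1)$), leaving
\[
u(r,x) = C_* + \sum_{i \geq 1} a_i\, r^{\alpha_+(\mu_i)}\phi_i(x),
\]
where $C_* := a_0$ is the zero mode. Setting $\gamma := \alpha_+(\mu_1) > 0$, this yields $\|u(\rho, \cdot) - C_*\|_{L^2(L)} \leq C\rho^\gamma$, which interior elliptic estimates (see below) upgrade to the pointwise bound $\sup_{\{r=\rho\}}|u - C_*| \leq C\rho^\gamma$.

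Finally, for higher derivatives, I would exploit the fact that the Calabi-Yau cone metric is $2$-homogeneous under the scaling $(r, x) \mapsto (\lambda r, x)$. On each annular shell $A_\rho = \{\rho/2 < r < 2\rho\}$, $u - C_*$ is harmonic; rescaling to the unit annulus and applying interior Schauder estimates for the harmonic equation $\Delta_g(u - C_*) = 0$ gives
\[
\sup_{\{r=\rho\}}|\nabla_g^k(u - C_*)|_g \leq C_k \rho^{-k} \sup_{A_{2\rho}}|u - C_*| \leq C_k \rho^{\gamma - k}.
\]
The substantive step is the spectral argument in Step 2, and the real content is the existence of a positive gap $\mu_1 > 0$. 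This is guaranteed by compactness of the link (with an explicit Lichnerowicz lower bound available from the positive Ricci curvature of $g_L$, though no sharp value is needed). An essentially equivalent Fourier-free route, closer in spirit to Lemma~\ref{lem: Poincare}, is to combine the scalar Poincar\'e inequality on $\{r=\rho\}$ with the harmonic equation to derive a three-annulus differential inequality for the spherical energy $E(\rho) := \rho^{-(2n-1)}\int_{\{r=\rho\}}(u - \bar u(\rho))^2\,dS$, which integrates directly to $E(\rho) \leq C\rho^{2\gamma}$.
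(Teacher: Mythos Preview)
Your proposal is correct and follows essentially the same route as the paper: reduce $u=\log\det h$ to a bounded harmonic function on the punctured cone, then use separation of variables on the link to isolate the constant mode and obtain polynomial decay, with higher derivatives following from scaling and interior estimates. The only noteworthy difference is in how the negative-exponent modes are eliminated: you argue directly that each Fourier coefficient $c_i(r)=\langle u(r,\cdot),\phi_i\rangle$ satisfies the radial ODE and is bounded (hence $b_i=0$), whereas the paper instead builds the candidate $u_+=\sum c_\lambda r^{a_+(\lambda)}\phi_\lambda$ from the boundary data on $\{r=1\}$ and then shows $\hat u:=\log\det h - u_+$ vanishes via a Caccioppoli-type cutoff argument ($\int\eta_\epsilon^2|\nabla\hat u|^2\to 0$ as $\epsilon\to 0$ since $n>2$). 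Both are standard and equivalent; yours is arguably more direct, while the paper's packaging makes the role of the dimensional hypothesis $n>2$ slightly more visible. One cosmetic point: the lemma is stated with $n$ the \emph{real} dimension, so the link has dimension $n-1$ and the exponents are $\alpha_\pm=\tfrac12\bigl(-(n-2)\pm\sqrt{(n-2)^2+4\mu}\bigr)$; your formulas correspond to taking $n$ complex, which doesn't affect the argument but is worth aligning with the statement.
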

\begin{proof}
Since $g$ is Ricci flat, and $H$ is Hermitian-Yang-Mills with slope $0$, it follows from~\eqref{eq: keyCurve} that
\[
g^{j\bar{k}}\del_{j}\de_{\bar{k}} \log \det h  = - g^{j\bar{k}}F_{j\bar{k}} =0.
\]
On the other hand, since $\log \det h$ is bounded the result follows from separation of variables.  To see this recall that if $\phi_{\lambda}$ is a function on the link $L:= \del B_1(0) \subset V_0$ satisfying
\[
\Delta_{g_{L}} \phi_{\lambda} + \lambda \phi_{\lambda} =0
\]
with $\lambda \geq 0$, then we can produce harmonic functions $u_{\lambda}^{\pm}$ on $V_0$ given by
\[
u_{\lambda}^{\pm}= r^{a(\lambda)_{\pm}}\phi_{\lambda}
\]
where $2a(\lambda)_{\pm} = -(n-2) \pm \sqrt{(n-2)^2+4\lambda}$ (since $n>2$). Fixing an orthonormal basis of eigenfunctions $\{\phi_{\lambda}\}_{\lambda \in {\rm Spec}(\Delta_{g_{L}})}$, standard elliptic theory says we can write
\[
\log\det h\big|_{L} = \sum_{\lambda\in{\rm Spec}(\Delta_{g_{L}})} c_{\lambda} \phi_{\lambda}
\]
for constant $c_{\lambda}$.  Now, we claim that since $\log\det h$ is bounded we have
\[
\log \det h = \sum_{\lambda \in {\rm Spec}(\Delta_{g_{L}})}c_{\lambda}u_{\lambda}^{+} := u.
\]
This follows from the Caccioppoli inequality;  let $\eta$ be a
standard, smooth cut-off function on $\mathbb{R}$ with $\eta(x) =0$ for $x \leq 1$, and $\eta(x)=1$ for $x \geq 2$.  Consider $\eta_{\epsilon}(y) = \eta(\epsilon^{-1}d_{g}(y,0))$.  Since $\hat{u} := \log \det h -u$ is harmonic, bounded and vanishes on the link $L$ we have
\[
0 =  -\int_{B_{1}} \eta_{\epsilon}^2 \hat{u}\Delta_{g}\hat{u}\dvol_g = \int_{B_{1}} \langle \nabla(\eta_{\epsilon}^2 \hat{u}), \nabla \hat{u} \rangle \dvol_{g} \\
\]
Applying Cauchy-Schwarz we obtain
\[
\int_{B_{1}} \eta_{\epsilon}^2 |\nabla \hat{u}|^2 \leq C \int_{B_{2\epsilon}\setminus B_{\epsilon}} |\nabla \eta_{\epsilon}|^2 \hat{u}^2.
\]
Now we have $|\nabla \eta_{\epsilon}|^2 \leq C\epsilon^{-2}$, while $|B_{2\epsilon}\setminus B_{\epsilon}| \leq C\epsilon^{n}$ and so, since $n>2$ we can take the limit as $\epsilon \rightarrow 0$ to obtain $|\nabla \hat{u}|\equiv 0$ and the claim follows.

Now the result for $k=0$ follows from the fact that the only harmonic function on the link $\{r=1\}$ is a constant.  Combining this with standard estimates for harmonic functions and scaling, we obtain the result for $k\geq 1$.

\end{proof}

Next we prove that the relative endomorphism $h= g_{co,0}^{-1}H$ is $W^{1,2}$ on the cone.

\begin{lem}\label{lem: LocalSubsolution}
With $h= g_{co,0}^{-1}H$ as above, we have $|\nabla_{g_{co,0}} h|_{g_{co,0}} \in L^{2}(B_1(0), g_{co,0})$.
\end{lem}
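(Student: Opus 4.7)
The strategy follows the Uhlenbeck-Yau energy framework already used in Section~\ref{sec: HYM-central}, applied locally on the cone. Take the reference metric to be $g_{co,0}$ itself, so that $h=g_{co,0}^{-1}H$ is the relative endomorphism in question. Because $g_{co,0}$ is K\"ahler Ricci-flat, its Chern curvature on $T^{1,0}V_0$ satisfies $\sqrt{-1}\Lambda_{\omega_{co,0}}F_{g_{co,0}}=\mathrm{Ric}(g_{co,0})=0$. Combined with the HYM hypothesis $\sqrt{-1}\Lambda_{\omega_{co,0}}F_H=0$, the Uhlenbeck-Yau inequality (\ref{ineq-K-2}) with $\sigma=1$ collapses to the pointwise subharmonic-type bound
\[
\Delta_{g_{co,0}}\,{\rm Tr}(h) \ \geq\ |h^{-1/2}\nabla h|^2_{g_{co,0}} \ \geq\ c\,|\nabla h|^2_{g_{co,0}}
\]
on $B_1(0)\setminus\{0\}$, where the last step uses the hypothesis $C^{-1}I\leq h\leq CI$.

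To promote this into an integral bound I will use a Caccioppoli-type cutoff argument. Choose a smooth radial cutoff $\eta_\epsilon$ equal to $1$ on $B_{3/4}(0)\setminus B_{2\epsilon}(0)$, vanishing on $B_\epsilon(0)$ and outside $B_{7/8}(0)$, and satisfying $|\nabla\eta_\epsilon|^2_{g_{co,0}}\leq C\epsilon^{-2}$ on the inner transition annulus $B_{2\epsilon}\setminus B_\epsilon$ and $|\nabla\eta_\epsilon|_{g_{co,0}}\leq C$ on the outer transition annulus. Multiplying the pointwise inequality by $\eta_\epsilon^2$ and integrating against $d\mathrm{vol}_{g_{co,0}}$, I integrate by parts (valid since $g_{co,0}$ is K\"ahler, hence balanced, and $\eta_\epsilon^2$ has compact support in $V_0\setminus\{0\}$) to move one derivative onto the cutoff:
\[
c\int_{B_1}\eta_\epsilon^2|\nabla h|^2\,d\mathrm{vol} \ \leq\ \int_{B_1}\eta_\epsilon^2\,\Delta_{g_{co,0}}{\rm Tr}(h)\,d\mathrm{vol} \ =\ -2\int_{B_1}\eta_\epsilon\,\langle \nabla\eta_\epsilon,\nabla\,{\rm Tr}(h)\rangle\,d\mathrm{vol}.
\]
Applying Cauchy-Schwarz with a small parameter, together with $|\nabla\,{\rm Tr}(h)|\leq C|\nabla h|$, the $|\nabla h|^2$ term on the right can be absorbed into the left, yielding
\[
\int_{B_1}\eta_\epsilon^2|\nabla h|^2\,d\mathrm{vol}_{g_{co,0}} \ \leq\ C\int_{B_1}|\nabla\eta_\epsilon|^2_{g_{co,0}}\,d\mathrm{vol}_{g_{co,0}}.
\]

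The outer transition annulus contributes a bounded constant independent of $\epsilon$. For the inner annulus, the Calabi-Yau cone volume satisfies $\mathrm{vol}_{g_{co,0}}(B_{2\epsilon}\setminus B_\epsilon)\leq C\epsilon^{6}$ since $V_0$ has real dimension $6$, so this contribution is at most $C\epsilon^{-2}\cdot\epsilon^{6}=C\epsilon^{4}\to 0$. Letting $\epsilon\to 0$ and invoking monotone convergence gives $\int_{B_{3/4}}|\nabla h|^2\,d\mathrm{vol}_{g_{co,0}}<\infty$. Since $H$ is smooth on the closed annulus $\overline{B_1}\setminus B_{3/4}$ with uniform bounds (by standard elliptic regularity for the HYM equation away from $0$, or directly from a gradient estimate in the spirit of Proposition~\ref{prop: gradest}), we conclude $|\nabla h|_{g_{co,0}}\in L^2(B_1(0),g_{co,0})$. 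The only nontrivial point in the plan is verifying that the volume of small balls on the cone beats the $\epsilon^{-2}$ blow-up of $|\nabla\eta_\epsilon|^2$, which reduces to the fact that the singular set is a single point in a space of real dimension greater than $2$.
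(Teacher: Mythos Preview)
Your proof is correct and follows essentially the same approach as the paper: both exploit the pointwise inequality $\Delta_{g_{co,0}}\Tr h \geq c|\nabla h|^2$ (which follows from $\Lambda_{\omega_{co,0}}F_H=\Lambda_{\omega_{co,0}}F_{g_{co,0}}=0$ and the two-sided bound on $h$) together with a cutoff argument near the apex, using that the cone has real dimension $6>2$ so that $\epsilon^{-2}\mathrm{Vol}(B_{2\epsilon}\setminus B_\epsilon)\to 0$. The only cosmetic difference is that the paper integrates by parts twice, landing the full Laplacian on the cutoff and then invoking $|\Tr h|\leq C$, whereas you integrate by parts once and absorb via Cauchy--Schwarz; both are standard and lead to the same conclusion.
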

\begin{proof}
To ease notation let us denote $g=g_{co,0}$ and $\nabla=\nabla_{g_{co,0}}$.  To prove the $L^2$ bound, observe that~\eqref{eq: keyCurve} implies
\[
g^{j\bar{k}}\del_{j}\del_{\bar{k}} \Tr h = g^{j\bar{k}} \Tr(\del_{\bar{k}}h h^{-1}\nabla_{j} h).
\]
On the other hand, since $h$ is hermitian and bounded we have
\[
C^{-1}|\nabla h|^2_{g} \leq g^{j\bar{k}} \Tr(\del_{\bar{k}}h h^{-1}\nabla_{j} h) \leq C |\nabla h|^2_{g}.
\]
This implies an $L^2$ estimate for $|\nabla h|_{g}$.  To see this, let $\eta(x)$ be a standard cut-off function such that $\eta(x) \equiv 1$ for $x\in[0,1]$, $\eta(x) \equiv 0$ for $x\in [2,\infty)$ and $|\eta'| + |\eta''| <10$.  For $y>0$ we set $
\eta_{y} = \eta\left(y^{-1}r\right)$. Since $\Tr h$ is uniformly bounded we have
\[
\begin{aligned}
\int_{B_1} g^{j\bar{k}} \Tr(\del_{\bar{k}}h h^{-1}\nabla_{j} h)\dvol_{g} & \leq \lim_{\epsilon \rightarrow 0} \int_{B_2} (1-\eta_{\epsilon})^2(\eta_{1})^2g^{j\bar{k}} \Tr(\del_{\bar{k}}h h^{-1}\nabla_{j} h) \dvol_{g} \\
&=\lim_{\epsilon \rightarrow 0} \int_{B_2} (1-\eta_{\epsilon})^2\eta_1^2 (\Delta_g \Tr h)\dvol_{g}\\
&=\lim_{\epsilon \rightarrow 0}  \int_{B_2} \left(\Delta_g(1-\eta_{\epsilon})^2\eta_1^2\right)  \Tr h \dvol_{g}\\
& \leq  C\lim_{\epsilon \rightarrow 0} \epsilon^{-2}{\rm Vol}_{g}(B_{2\epsilon}\setminus B_{\epsilon}) + C
\end{aligned}
\]
and the result follows since $\epsilon^{-2}{\rm Vol}_{g}(B_{2\epsilon}\setminus B_{\epsilon})  \leq C\epsilon^4$. 
\end{proof}

The next step is to establish some decay for the endomorphism $h= g_{co,0}^{-1}H$.

\begin{lem}\label{lem: decayInt}
Define a hermitian endomorphism $s$ by
\[
e^{s} =\left(\frac{\det{g_{co,0}}}{\det{H}}\right)^{\frac{1}{3}} \cdot g_{co,0}^{-1}H.
\]
Then are constants $C>0, \alpha \in (0,1)$ depending on $(V_0,g_{co,0})$ and $|s|_{L^{\infty}(B_1(0))}$ such that
\[
\mu(\tau) := \int_{B_{\tau}} r^{2-2n}|\nabla_{g_{co,0}}
s|^2_{g_{co,0}} \, \dvol_{g_{co,0}} \leq C\tau^{2\alpha}
\] 
for all $\tau \leq 1$.
\end{lem}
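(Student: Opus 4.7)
\emph{Approach.} The weight $r^{2-2n}$ is chosen so that $\mu(\tau)$ scales homogeneously of degree $2\alpha$ under the Reeb rescaling generated by $\xi_{\mathbb{C}}$ when $s$ is $\alpha$-homogeneous, making the sought estimate a frequency-type bound. We adapt Almgren's monotonicity, following Jacob--S\'a~Earp--Walpuski~\cite{JacobEarpWalpuski}, combined with the spectral gap from Lemma~\ref{lem: Poincare}. Write $g := g_{co,0}$ and $\nabla := \nabla_g$ throughout.

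\emph{Step 1 (Equation for $s$).} Setting $h := g^{-1}H$, identity~\eqref{diff-chern-curv} together with the HYM condition $\Lambda_\omega F_H = 0$ and the Ricci-flatness of $g$ gives $g^{j\bar k}\partial_{\bar k}(h^{-1}\nabla_j h) = 0$. Writing $h = \rho\, e^{s}$ with $\rho = (\det H/\det g)^{1/3}$, so that Lemma~\ref{lem: determinantDecay} furnishes $|\nabla^k\log\rho|_g \leq C_k r^{\gamma-k}$, expansion produces a quasilinear elliptic equation for the trace-free endomorphism $s$,
\[
\Delta_g s = Q(s,\nabla s) + E, \qquad |Q|_g \leq C\,|\nabla s|_g^2,\qquad |E|_g \leq C r^{\gamma - 2},
\]
with analogous higher-derivative bounds on $E$.

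\emph{Step 2 (Pohozaev and frequency inequality).} Since $\dvol_g = dr\wedge dS(r)$ on the cone, $\mu'(\tau) = \tau^{2-2n}\int_{\{r=\tau\}}|\nabla s|^2_g\,dS(\tau)$. Multiplying the equation of Step~1 by $\tfrac{1}{2} r^{2-2n}s$ and integrating by parts over $B_\tau\setminus B_\epsilon$, using $\nabla r^{2-2n} = (2-2n)r^{1-2n}\partial_r$ on the cone and sending $\epsilon\to 0$ (justified by the $L^\infty$-bound on $s$ and the $W^{1,2}$-bound from Lemma~\ref{lem: LocalSubsolution}), one arrives at
\[
\mu(\tau) = \tau^{2-2n}\!\!\int_{\{r=\tau\}}\!\langle s,\partial_r s\rangle_g\,dS(\tau) + (2n-2)\!\!\int_{B_\tau}\! r^{1-2n}\langle s,\partial_r s\rangle_g\,\dvol_g + \mathcal{R}(\tau),
\]
where the remainder satisfies $|\mathcal{R}(\tau)| \leq C\|s\|_{L^\infty(B_\tau)}\mu(\tau) + C\tau^{\gamma}$. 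Because $\Tr s = 0$, Lemma~\ref{lem: Poincare} yields $\int_{\{r=\rho\}}|s|_g^2\,dS(\rho) \leq C_P\rho^2\int_{\{r=\rho\}}|\nabla s|_g^2\,dS(\rho)$ on each slice, and Cauchy--Schwarz $2|\langle s,\partial_r s\rangle_g|\leq \delta\rho^2|\partial_r s|_g^2 + \delta^{-1}\rho^{-2}|s|_g^2$ applied to the two main terms yields a differential inequality
\[
\mu(\tau) \leq \frac{\tau}{2\alpha_0}\mu'(\tau) + C\|s\|_{L^\infty(B_\tau)}\mu(\tau) + C\tau^{\gamma},
\]
for an explicit $\alpha_0 \in (0,1)$ depending only on $C_P$. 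Once $\|s\|_{L^\infty(B_{\tau_0})}$ is small enough to absorb the middle term into the left-hand side, a Gr\"onwall integration yields $\mu(\tau) \leq C\tau^{2\alpha}$ for any $\alpha < \min(\alpha_0,\gamma/2)$, which is the claim.

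\emph{Main obstacle.} The last step requires $\|s\|_{L^\infty(B_{\tau_0})}\ll 1$, which is strictly stronger than the hypothesis of uniform boundedness. To bridge the gap we observe that, by Step~1 and the uniform equivalence $C^{-1}g \leq H \leq Cg$, the scalar $|s|_g^2$ satisfies a differential inequality of the form $\Delta_g |s|_g^2 + C|s|_g^2 \geq -C r^{2\gamma-4}$; using the $W^{1,2}$ control of Lemma~\ref{lem: LocalSubsolution} together with the higher-order estimates of Theorem~\ref{thm:H_0}, a Moser iteration on the cone-adapted dyadic annuli $\{2^{-i-1}\tau_0 < r < 2^{-i}\tau_0\}$ produces $\|s\|_{L^\infty(B_{\tau_0})} \to 0$ as $\tau_0 \to 0$. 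This makes the absorption of the quasilinear term effective on a sufficiently small ball and completes the argument.
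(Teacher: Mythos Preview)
Your approach has a genuine gap at exactly the point you flag as the ``main obstacle,'' and the proposed fix is circular.

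In Step~2 you arrive at
\[
\mu(\tau) \leq \frac{\tau}{2\alpha_0}\mu'(\tau) + C\|s\|_{L^\infty(B_\tau)}\mu(\tau) + C\tau^{\gamma},
\]
and need $\|s\|_{L^\infty(B_{\tau_0})}\ll 1$ to absorb the middle term. Your Moser argument for this does not close: on a dyadic annulus $A_\tau=\{\tau/2<r<2\tau\}$, Moser iteration gives $\sup_{A_\tau'}|s|^2 \leq C\tau^{-2n}\int_{A_\tau}|s|^2$, and slicewise Poincar\'e gives $\int_{A_\tau}|s|^2 \leq C\tau^2\int_{A_\tau}|\nabla s|^2$. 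Hence $\sup_{A_\tau'}|s|^2 \leq C\tau^{2-2n}\int_{A_\tau}|\nabla s|^2$. Lemma~\ref{lem: LocalSubsolution} only gives $\int_{A_\tau}|\nabla s|^2\to 0$ with no rate, while $\tau^{2-2n}\to\infty$ (here $2n=6$), so no decay of $\sup|s|$ follows. In fact, $\|s\|_{L^\infty(B_\tau)}\to 0$ is essentially equivalent (via Poincar\'e and elliptic regularity) to the conclusion $\mu(\tau)\to 0$ polynomially that you are trying to prove.

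The paper avoids this trap by never linearizing the equation for $s$. Instead it computes $\Delta_g|s|^2$ directly and uses the Uhlenbeck--Yau inequality \cite[Lemma~2.1]{UY} to obtain the clean differential inequality
\[
\Delta_g|s|^2 \geq A^{-1}|\nabla s|^2,
\]
where $A$ depends only on $\|s\|_{L^\infty}$ \emph{as a constant}, with no term requiring absorption. Integrating this against $r^{2-2n}$ (which is $\Delta_g$-harmonic on the cone) with cutoffs, the boundary terms land on annuli $\{\tau<r<2\tau\}$ and $\{\epsilon<r<2\epsilon\}$; Poincar\'e (Lemma~\ref{lem: Poincare}) converts those $\int r^{-2n}|s|^2$ terms into $\mu(2\tau)-\mu(\tau)$ and $\mu(2\epsilon)$ respectively. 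Sending $\epsilon\to 0$ via Lemma~\ref{lem: LocalSubsolution} yields the recursion $\mu(\tau)\leq \tfrac{C}{C+1}\mu(2\tau)$ directly, and iteration gives $\mu(\tau)\leq C\tau^{2\alpha}$. The key point is that working with $|s|^2$ and the Uhlenbeck--Yau inequality absorbs the quadratic nonlinearity $Q(s,\nabla s)$ \emph{into} the good gradient term rather than treating it as an error, so no smallness of $s$ is ever needed --- only boundedness.
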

\begin{proof}
Again, we denote $g=g_{co,0}$ and $\nabla=\nabla_{g_{co,0}}$ to ease notation.  It is not hard to check that
\[
\langle e^{-s}\nabla_je^{s}, s \rangle_g =  \langle \nabla_j s, s\rangle_g.
\]
Therefore, from~\eqref{eq: keyCurve} we have
\[
\begin{aligned}
g^{j\bar{k}}\del_{j}\del_{\bar{k}}|s|^2_{g} &= g^{j\bar{k}}\del_{\bar{k}}(\langle \nabla_js, s \rangle_{g} + g^{j\bar{k}}\langle s, \del_{\bar{j}} s \rangle_{g})\\
&= g^{j\bar{k}}\del_{\bar{k}}(\langle e^{-s}\nabla_je^s, s \rangle_{g} + g^{j\bar{k}}\langle s, e^{-s}\del_{\bar{j}} e^s \rangle_{g})\\
&=g^{j\bar{k}}\langle e^{-s}\nabla_je^s, \nabla_{k}s \rangle_{g} + g^{j\bar{k}}\langle \del_{\bar{k}} s, e^{-s}\del_{\bar{j}} e^s \rangle_{g}.
\end{aligned}
\]
Note that the bound $C^{-1}g < H < Cg$ implies that $|s|<C$.  Thanks to \cite[Lemma 2.1]{UY} there is a uniform constant $A>0$ depending only on $C$ so that
\[
g^{j\bar{k}}\langle e^{-s}\nabla_je^s, \nabla_{k}s \rangle_{g} + g^{j\bar{k}}\langle \del_{\bar{k}} s, e^{-s}\del_{\bar{j}} e^s \rangle_{g} \geq  A^{-1}|\nabla s|^2.
\]
In summation, we have show that
\begin{equation}\label{eq: subsolution}
\Delta|s|^2_{g} \geq A^{-1}|\nabla s|^2_{g}.
\end{equation}

We now show that $\mu(\tau)<C$ for some constant $C$ independent of $\tau$.  Let $\eta_y(r)$ be the cut-off function from Lemma~\ref{lem: determinantDecay}. First note that, for any $\epsilon>0$, integration by parts
 using~\eqref{eq: subsolution}, $\Delta_g r^{2-2n}=0$ and $|\Delta_g
 \eta_\tau| \leq C \tau^{-2}$, together with the bound for $|s|$ yields
\begin{equation}\label{eq: musEstimate}
\begin{aligned}
\int_{B_{\tau}\setminus B_{2\epsilon}}r^{2-2n}|\nabla s|^2_{g} \dvol_g &\leq A\int_{B_{2\tau}\setminus B_{\epsilon}} (1-\eta_{\epsilon}) \eta_{\tau} r^{2-2n}\Delta |s|^2_{g} \dvol_g\\
&\leq A'\left(\epsilon^{-2n}\int_{B_{2\epsilon}\setminus B_{\epsilon}}|s|^2 dV_g + \int_{B_{2\tau}\setminus B_{\tau}} r^{-2n} |s|^2 \dvol_g\right)
\end{aligned}
\end{equation}
which is bounded thanks to the $L^{\infty}$ bound for $s$.  To improve the estimate we decompose the second integral appearing on the right of~\eqref{eq: musEstimate} as
\[
\int_{B_{2\tau}\setminus B_{\tau}}|s|^2 \dvol_g \leq\left( \int_{\tau}^{2\tau} dr \cdot \int_{\del B_{r}(0)} |s|^2 dS_g(r) \right)
\]
where $dS_g(r)$ denotes the surface measure on $\del B_{r}$.  Since $s$ is trace free we can apply the Poincar\'e inequality in Lemma~\ref{lem: Poincare} to get
\[
\int_{\del B_{r}} |s|^2dS_g(r) \leq Cr^2 \int_{\del B_{r}(0)} |\nabla^{T} s|^2 dS_g(r) \leq Cr^2 \int_{\del B_{r}} |\nabla s|^2 dS_{g}(r)
\]
where we wrote $\nabla^T$ for the covariant derivative tangent to $\del B_{r}$.  Thus, we have
\[
\int_{B_{2\tau}\setminus B_{\tau}}r^{-2n} |s|^2 \dvol_{g} \leq C\int_{B_{2\tau}\setminus B_{\tau}} r^{2-2n} |\nabla s|^2 \dvol_{g}= C(\mu(2\tau)-\mu(\tau))
\]
Arguing similarly for the first term yields
\[
 \int_{B_{2\epsilon}\setminus B_{\epsilon}} \epsilon^{-2n} |s|^2 \dvol_g \leq C\mu(2\epsilon)
\]
All together this implies
\[
\mu(\tau) \leq \frac{C}{C+1}\left(\mu(2\tau) + \mu(2\epsilon)\right).
\]
Since this estimate holds for all $\epsilon>0$ and, thanks to Lemma~\ref{lem: LocalSubsolution}, $\mu(\epsilon)\rightarrow 0$ by the dominated convergence theorem, we conclude 
\[
\mu(\tau) \leq \frac{C}{C+1}\mu(2\tau).
\]
The lemma follows by a standard iteration argument.
\end{proof}

Theorem~\ref{thm: decayToTangent} will  now follow from
Lemma~\ref{lem: decayInt} together with the regularity theory for the
Hermitian-Yang-Mills equation, which we recall below.  The regularity
theory is originally due to Bando-Siu \cite[Proposition 1]{BandoSiu},
but we refer the reader to the paper of Jacob-Walpuski \cite[Theorem
C.1]{JacobWalpuski} for the precise statement which implies the one
below.

\begin{prop}\label{prop: intRegHYM}
Let $(Y,g, J)$ be a K\"ahler manifold of dimension $n$ with bounded geometry, and let $E\rightarrow Y$ be a holomorphic vector bundle.  If $H_0, H$ are hermitian metrics on $E$, $H$ is Hermitian-Yang-Mills and $s:= \log(H_0^{-1}H) \in C^{\infty}(Y, \sqrt{-1}\mathfrak{su}(E, H_0))$, then, for all $k\in\mathbb{N}$ and $p\in(1,\infty)$ there is a function $f_{k,p}(y) >0$ depending only on $k,p$ and the geometry of $(Y,g)$ such that $f_{k,p}(0)=0$ and
\[
r^{k+2-\frac{2n}{p}} \|\nabla_{H_0}^{k+2} s\|_{L^{p}(B_r(x))} \leq f_{k,p}\left(\|s\|_{L^{\infty}(B_{2r}(x))} + \sum_{i=0}^{k} r^{2+i}\|\nabla^i_{H_0}F_{H_0}\|_{ L^{\infty}(B_{2r}(x))}\right).
\]
\end{prop}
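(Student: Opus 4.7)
The statement is a standard scale-invariant interior estimate for the Hermitian-Yang-Mills equation, due in essence to Bando-Siu and refined by Jacob-Walpuski. The natural plan is to combine the Uhlenbeck-Yau subsolution inequality with a Moser-type iteration and standard elliptic bootstrap, done on a fixed unit ball after rescaling. First I would rescale $x \mapsto rx$ and $g \mapsto r^{-2}g$, which preserves the bounded geometry hypothesis and reduces the statement to proving the estimate on the unit ball, with $\|\nabla^{i}_{H_0} F_{H_0}\|_{L^\infty}$ appearing multiplied by $r^{i+2}$, exactly matching the scaling on the left-hand side.

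Next I would express HYM as a quasilinear elliptic PDE for $s$. Setting $h = H_0^{-1}H = e^s$, the identity
\[
F_H - F_{H_0} = \dbar(h^{-1}\nabla^{H_0} h)
\]
contracted with $\omega$ yields a Laplace-type equation $\Delta_g s = N(s, \nabla s) + \Lambda_\omega F_{H_0}$, where $N$ is smooth in its arguments, bilinear in $\nabla s$, with coefficients analytic in $s$ via the exponential $e^s$. Pairing with $s$ and invoking \cite[Lemma 2.1]{UY} gives the pointwise subsolution inequality
\[
\Delta_g |s|^2 \geq A^{-1}|\nabla s|^2 - C|s|\, |\Lambda_\omega F_{H_0}|,
\]
with constants $A, C$ depending continuously on $\|s\|_{L^\infty}$. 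A cutoff-function integration by parts, using the $L^\infty$ bound on $s$ and $F_{H_0}$, then controls $\|\nabla s\|_{L^2(B_{1/2})}$ in terms of $\|s\|_{L^\infty(B_1)}$ and $\|F_{H_0}\|_{L^\infty(B_1)}$.

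With $L^2$ control on $\nabla s$ in hand, the plan is to bootstrap in two stages. Moser iteration applied to the scalar differential inequality satisfied by $|\nabla s|^2$ upgrades the $L^2$ bound to an $L^\infty$ bound on a slightly smaller ball. Once both $s$ and $\nabla s$ are in $L^\infty$, the right-hand side $N(s,\nabla s) + \Lambda_\omega F_{H_0}$ is in $L^\infty$ with norm depending continuously on the data, so classical interior $L^p$ Calder\'on-Zygmund estimates give $\nabla^2 s \in L^p$. Iterating by differentiating the PDE and applying Schauder estimates, each new derivative of $s$ is controlled in terms of $\|s\|_{L^\infty(B_1)}$ together with one additional $L^\infty$-norm of a derivative of $F_{H_0}$, yielding the claimed form of $f_{k,p}$. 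The vanishing $f_{k,p}(0) = 0$ comes for free: when $s \equiv 0$ and $F_{H_0} \equiv 0$, the PDE degenerates to $\Delta_g s = 0$ which, combined with $\|s\|_{L^\infty} = 0$, forces $\nabla s \equiv 0$, so continuous dependence of the Schauder constants on the data gives $f_{k,p}(t) \to 0$ as $t \to 0$.

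The main obstacle is managing the nonlinearity in the bootstrap: the coefficients of the PDE depend on $s$ through $e^s$, so one must ensure that at each step the constants depend only on $\|s\|_{L^\infty}$ (and on successively higher $L^\infty$ norms of $F_{H_0}$), not on higher norms of $s$. This is precisely why the Moser-type $L^\infty$ estimate on $\nabla s$ must precede the use of linear elliptic regularity; after that first nonlinear step the argument becomes effectively linear, and the continuous dependence of constants on the $L^\infty$ data in standard Calder\'on-Zygmund and Schauder theory gives the stated function $f_{k,p}$.
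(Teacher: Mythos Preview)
The paper does not actually prove this proposition; it simply records it as a known regularity result, attributing it to Bando--Siu \cite[Proposition~1]{BandoSiu} and referring the reader to Jacob--Walpuski \cite[Theorem~C.1]{JacobWalpuski} for the precise statement. Your sketch is a reasonable outline of that standard argument and is consistent with the Bando--Siu strategy the paper cites.
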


We can now prove Theorem~\ref{thm: decayToTangent}.

\begin{proof}[Proof of Theorem~\ref{thm: decayToTangent}]
Let $H$ be as in the statement of the theorem, and set $g=g_{co,0}$.  Throughout the proof $C$ will denote a constant which can change from line to line, but depends only on $(V_0, g)$ and the positive upper and lower bounds for $g^{-1}H$. Define $s$ by
\[
e^{s} =\left(\frac{\det{g_{co,0}}}{\det{H}}\right)^{\frac{1}{3}} \cdot g_{co,0}^{-1}H.
\]
so that $s \in C^{\infty}(V_0, \sqrt{-1}\mathfrak{su}(T^{1,0}V_0, g))$.

Fix $0<R\ll 1$ and consider the annulus $B_{4R}\setminus B_{R/4}$.  Let $m_{R}: V_0\rightarrow V_0$ be the map $m_R(p) = R^{-1}\cdot p$ where $\cdot$ denotes the natural scaling action on the cone.  
Let $\hat{s} = m_{R}^*s$.  From the scale invariance of $\mu(\tau)$ we have
\[
\int_{B_{4}\setminus B_{\frac{1}{4}}} |\nabla\hat{s}|_g^2 \dvol_g \leq CR^{2\alpha}
\]
while the Poincar\'{e} inequality proved in Lemma~\ref{lem: Poincare} implies
\begin{equation}\label{eq: firstDecay}
\int_{B_4\setminus B_{\frac{1}{4}}} |\hat{s}|^2 \leq C\int_{B_{4}\setminus B_{\frac{1}{4}}} |\nabla \hat{s}|^2 \leq CR^{2\alpha}.
\end{equation}
Note that the $L^{\infty}$ bound for $\hat{s}$ together with the interior estimates, Proposition~\ref{prop: intRegHYM}, yield
\[
\|\nabla^{k+2} \hat{s}\|_{L^{p}(B_4\setminus B_{1/4})} \leq C_{k,p}
\]
and hence, by the Sobolev imbedding theorem we get 
\begin{equation}\label{eq: ckbndhats}
\|\hat{s}\|_{C^{k}(B_{3}\setminus B_{1/2})} \leq C_k.
\end{equation}

\smallskip
\par To improve this bound to a decay estimate we appeal to the Hermitian-Yang-Mills equation.  From~\eqref{eq: keyCurve}, we have 
\begin{equation}\label{eq: ellipticRegForm}
g^{j\bar{k}}\nabla_{\bar{k}}\nabla_{j} \hat{s} + B(\nabla \hat{s}\otimes \nabla \hat{s}) = 0
\end{equation}
where $B(\cdot)$ is linear with coefficients depending on $\hat{s}$, but not on any of its derivatives.  The result now follows from standard elliptic regularity and bootstrapping.  By elliptic regularity we have
\[
\|\hat{s}\|_{W^{2,2}(B_{2.5}\setminus B_{3/4}) } \leq C\left( \|\Delta \hat{s}\|_{L^{2}(B_{3}\setminus B_{1/2})} + \|\hat{s}\|_{L^2(B_{3}\setminus B_{1/2})}\right).
\]
On the other hand, from~\eqref{eq: ellipticRegForm} we have
\[
\|\Delta \hat{s}\|_{L^{2}(B_{3}\setminus B_{1/2})} \leq C\left(\int_{B_3\setminus B_{1/2}} |\nabla \hat{s}|^4\right)^{\frac{1}{2}} \leq C\left(\int_{B_{3}\setminus B_{1/2}} |\nabla \hat{s}|^2\right)^{\frac{1}{2}}
\]
where, in the second inequality, we used~\eqref{eq: ckbndhats} with $k=2$.  From~\eqref{eq: firstDecay} we conclude that
\[
\| \hat{s}\|_{W^{2,2}(B_{2.5}\setminus B_{3/4})} \leq CR^{\alpha},
\]
for a uniform constant $C>0$.  By differentiating~\eqref{eq: ellipticRegForm} a straightforward boot-strapping argument yields
\[
\| \hat{s}\|_{W^{k,2}(B_{2}\setminus B_{1})} \leq C_kR^{\alpha}
\]
for uniform constants $C_k>0$.  All together we obtain
\[
|\hat{s}|_{C^{k}(B_2\setminus B_1)} \leq C_k R^{\alpha}
\]
from the Sobolev imbedding theorem.  Rescaling yields
\be \label{eq: local-s-Decay}
|s|_{C^{k}(B_{2R}\setminus B_R)} \leq C_k R^{\alpha-k}.
\ee
Finally, Theorem \ref{thm: decayToTangent} follows from this estimate together with Lemma~\ref{lem: determinantDecay}.
\end{proof}

\begin{rk}
In the setting where we apply Theorem~\ref{thm: decayToTangent}, where the metric $H_0$ obtained as a limit of
$(H_a,g_a)$, we can apply Theorem \ref{thm:H_0} to obtain (\ref{eq: ckbndhats}) bypassing the Bando-Siu regularity theorem.
\end{rk}

\section{Approximate Hermitian-Yang-Mills metrics} \label{sec: approx-hym}

In the previous section, we started from a Calabi-Yau metric $\omega_{\rm CY}$ on a simply connected K\"ahler Calabi-Yau threefold $(X,\Omega)$ and constructed a pair of hermitian metrics $(g_0,H_0)$ on a singular space $\underline{X}$ obtained by contracting $(-1,-1)$ curves $C_i$. These metrics satisfy $d \omega_0^2=0$ and $\Lambda_{\omega_0} F_{H_0} = 0$. Let $\{ p_i \}$ denote the nodal points of $X_0$. There are constants $c_i,R_i,\lambda>0$ such that we have the following local description near the nodes:
\smallskip
\par \noindent $\bullet$ The Fu-Li-Yau construction gives
\[
g_0 = R_i g_{co,0} , \quad {\rm near} \ {\rm node} \ p_i.
\]
$\bullet$ By (\ref{eq: local-s-Decay}) and Lemma \ref{lem: determinantDecay}, for $k \in \mathbb{Z}_{\geq 0}$ there exists $M_k>1$ such that
\be \label{H_0-local}
| \nabla_{g_{co,0}}^k (H_0 - c_i  g_{co,0}) |_{g_{co,0}} \leq M_k
r^{\lambda-k}, \quad {\rm near} \ {\rm node} \ p_i.
\ee
 For ease of notation, in this section we will work at a single node point with scale constants $c_1=R_1=1$. The metric $H_0$ on $\underline{X}$ has conical singularities which we will desingularize by gluing in the asymptotically conical metric $g_{co,t}$ on $V_t$. For other work in geometry using this technique, see e.g. \cite{Chan,Joyce,Kari,Pacini}. The glued metric $H_t$ will approximately solve the Hermitian-Yang-Mills equation on the smoothings $X_t$ for $t$ sufficiently small.
\smallskip
\par  Recall that under the assumption of Theorem \ref{thm: Friedman} there is a smoothing $\mu: \mathcal{X} \rightarrow
\Delta$ with $\mu^{-1}(t)=X_t$, $\mu^{-1}(0)=\underline{X}$, and the family $\mathcal{X}$ is locally
described by $\{ (z,t) : z \in V_t\}$ near the nodes, with $V_t= \{
\sum_{i=1}^4 z_i^2=t \} \subset \mathbb{C}^4$. We will show there exists $\gamma,\epsilon \in (0,1)$ and $C>1$ such that
the approximate solution $H_t$ satisfies
\[
\| \Lambda_{\omega_t} F_{H_t} \|_{C^{0,a}_{\beta-2}} \leq C |t|^\gamma
\]
for all $0<|t| \leq \epsilon$, and for suitably defined weighted H\"older spaces with $\beta \in [-2,0]$, and $0<a<1$; see~Section \ref{section:holder} below for a precise definition. We
recall that we denote by
$\omega_t$ the Hermitian metric from Proposition \ref{prop: FLYalmost}
constructed by Fu-Li-Yau, which satisfies $\omega_t=\omega_{co,t}$
near the nodes and converges back to the balanced metric $\omega_0$ on
compact sets as $t \rightarrow 0$.

\subsection{Definition of the approximate solution}

To construct a Hermitian metric $H_t$ on $X_t$ which approximately
solves the Hermitian-Yang-Mills equation, we will glue $g_{co,t}$ to a deformation of the singular metric $H_0$ on the annulus region
\[
 \{ |t|^\alpha \leq \|z \|^2 \leq 2 |t|^\alpha \} \subset V_t.
\]
Here $0<\alpha<1$, and specifically we will take
$\alpha=(1+\lambda/3)^{-1}$ where $\lambda>0$ is the rate in
(\ref{H_0-local}). Let
\[
\chi(z)=\zeta(|t|^{-\alpha} \|z\|^2)
\]
be a cutoff function on this annulus region, i.e. the function
$\zeta:[0,\infty) \rightarrow [0,1]$ satisfies $\zeta \equiv 1$ on
$[0,1]$ and $\zeta \equiv 0$ on $[2,\infty)$. Our glued Hermitian
metric on $X_t$ is
\be \label{H_t-defn}
H_t = \chi g_{co, t} + (1-\chi) K_t,
\ee
where
\[
K_t = [(\Phi_t^{-1})^* H_0]^{1,1}
\]
is the $J_t$-invariant part of the pullback $(\Phi_t^{-1})^*
H_0$. Explicitly, we define $(A^{1,1})_{\alpha \beta} = {1 \over 2}
(A_{\alpha \beta} + J^\mu{}_\alpha A_{\mu \nu} J^\nu{}_\beta)$ for a
symmetric 2-tensor $A$. Recall $\Phi_t$ is defined in Lemma \ref{lem: trivGlob}, and note
that $K_t$ is defined on $X_t \backslash \{ \|z\|^2 = |t| \}$ and so
$H_t$ is defined on all of $X_t$.
\smallskip
\par We will need estimates on the glued metric $H_t$ which are uniform
in $t$.

\begin{lem} \label{lem: H_t-est}
There exists $\epsilon>0$ such that for any $k \in \mathbb{Z}_{\geq 0}$ there exists $C_k>1$ such that for all $0 < |t| < \epsilon$ we have
\be \label{g_t-unif-H_t}
C_0^{-1} g_{t} \leq H_t \leq C_0 g_{t}, \quad | \nabla_{g_{t}}^k H_t|_{g_{t}} \leq C_k r^{-k}.
 \ee
\end{lem}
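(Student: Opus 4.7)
The plan is to partition $X_t$ into three regions dictated by the cutoff $\chi$ and the local geometry near the node: the inner region $\mathcal{A} = \{ \|z\|^2 \leq |t|^\alpha \}$ where $\chi \equiv 1$, the transition annulus $\mathcal{B} = \{ |t|^\alpha \leq \|z\|^2 \leq 2|t|^\alpha \}$ where the convex combination occurs, and the outer region $\mathcal{C} = \{\|z\|^2 \geq 2|t|^\alpha \}$ where $\chi \equiv 0$ and $H_t = K_t$. In $\mathcal{A}$, the Fu-Li-Yau construction (Proposition~\ref{prop: FLYalmost}$(i)$) gives $g_t = M_0^{1/2}\epsilon^{-1/3} g_{co,t}$, so $H_t = g_{co,t}$ is a constant multiple of $g_t$; hence $\nabla^{k}_{g_{t}} H_t = 0$ and both assertions of~\eqref{g_t-unif-H_t} are trivial in $\mathcal{A}$.

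On the far part of $\mathcal{C}$ that lies outside $U(\epsilon)$, the convergence statement in Proposition~\ref{prop: FLYalmost}$(iii)$ gives smooth uniform convergence of $\Phi_t^* g_t$ to $g_0$ on compact subsets of $\underline{X}_{reg}$, so Theorem~\ref{thm:H_0} directly transfers the estimates for $H_0$ to $K_t = [(\Phi_t^{-1})^* H_0]^{1,1}$ relative to $g_t$. On the portion of $\mathcal{C}$ inside $U(\epsilon)$, I would use the transplantation Lemma~\ref{lem:transplant} together with the scale-invariant estimates $|\nabla^k_{g_{co,0}} H_0|_{g_{co,0}} \leq C_k r^{-k}$ from Theorem~\ref{thm:H_0} to deduce $|\nabla^k_{g_{co,t}} K_t|_{g_{co,t}} \leq C_k r^{-k}$; since $g_t$ is a fixed multiple of $g_{co,t}$ throughout $U(\epsilon)$, this yields the desired bound. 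The $(1,1)$-projection is an algebraic, pointwise operation and preserves these bounds.

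The delicate work is in the transition annulus $\mathcal{B}$. There I write $H_t - g_{co,t} = (1-\chi)(K_t - g_{co,t})$ and split
\[
K_t - g_{co,t} = \bigl[(\Phi_t^{-1})^*(H_0 - g_{co,0})\bigr]^{1,1} + \bigl[(\Phi_t^{-1})^* g_{co,0} - g_{co,t}\bigr]^{1,1}.
\]
By the quantitative decay result Theorem~\ref{thm: decayToTangent} (applied via the local form~\eqref{H_0-local} with $c=1$) and Lemma~\ref{lem:transplant}, the first piece satisfies $|\nabla^m_{g_{co,t}} \cdot|_{g_{co,t}} \leq C_m r^{\lambda - m}$, while~\eqref{eq:decayGco0Pull} bounds the second piece by $C_m |t| r^{-3-m}$. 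In $\mathcal{B}$, one has $r \sim |t|^{\alpha/3}$; the defining relation $\alpha(1+\lambda/3) = 1$ (equivalently $\alpha\lambda/3 = 1-\alpha$) makes the two error estimates balance, each of order $|t|^{1-\alpha} \sim r^\lambda$. In particular, $|K_t - g_{co,t}|_{g_{co,t}} \to 0$ in $\mathcal{B}$, which gives the uniform equivalence $C_0^{-1} g_t \leq H_t \leq C_0 g_t$ there. This balance is the main obstacle of the argument and the reason for the precise choice of $\alpha$.

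For the derivative estimates in $\mathcal{B}$, I would use the Leibniz rule on $H_t - g_{co,t} = (1-\chi)(K_t - g_{co,t})$ together with the bound $|\nabla^j_{g_{co,t}} \chi|_{g_{co,t}} \leq C_j r^{-j}$, which follows from Lemma~\ref{lem: cylCoords} after noting that $\chi = \zeta(|t|^{-\alpha}\|z\|^2)$ has cylindrical-coordinate derivatives of order one in $\mathcal{B}$. Combining with the derivative estimate $|\nabla^{k-j}_{g_{co,t}}(K_t - g_{co,t})|_{g_{co,t}} \leq C_{k-j} r^{\lambda - (k-j)}$ from the previous paragraph, every term in the Leibniz expansion is bounded by $C_k r^{\lambda - k}$, which is dominated by the required $r^{-k}$. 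Patching the three regions together (using that the estimates in $\mathcal{A}$ and the inner part of $\mathcal{C}$ match those in $\mathcal{B}$ on the overlap, and that the outer part of $\mathcal{C}$ is handled by the smooth convergence $\Phi_t^* g_t \to g_0$) yields the lemma.
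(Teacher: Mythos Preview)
Your proof is correct and follows the same region-by-region structure as the paper's argument. The one notable difference is in the transition annulus $\mathcal{B}$: you invoke the quantitative decay~\eqref{H_0-local} and the balance relation $\alpha\lambda/3 = 1-\alpha$ to show $|K_t - g_{co,t}|_{g_{co,t}} = O(r^\lambda) \to 0$, whereas the paper only uses the cruder bounds from Theorem~\ref{thm:H_0}. Transplanting $C^{-1} g_{co,0} \leq H_0 \leq C g_{co,0}$ via Lemma~\ref{lem:transplant} already gives $C^{-1} g_{co,t} \leq K_t \leq C g_{co,t}$ throughout $\{|t|^\alpha \leq \|z\|^2 \leq 1\}$, and since $H_t$ is a convex combination of $g_{co,t}$ and $K_t$ the uniform equivalence follows immediately; similarly, $|\nabla^k_{g_{co,t}} K_t|_{g_{co,t}} \leq C_k r^{-k}$ combined with $|\nabla^j \chi|_{g_{co,t}} \leq C_j r^{-j}$ suffices for the derivative bounds. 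So the precise choice of $\alpha$ and the decay rate $\lambda$ play no role in this lemma---they are needed only later, in Lemma~\ref{H_t-transition}, to make $\Lambda_{\omega_t} F_{H_t}$ small. Your argument works, but your identification of the balance as ``the main obstacle'' here overstates its role for this particular statement.
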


\begin{proof} We work region-by-region.
\smallskip
\par $\bullet$ Region $\{ \| z \|^2 \leq |t|^\alpha \}$. Here $H_t=g_t=g_{co,t}$ so the estimates are trivial.
\smallskip
\par $\bullet$ Region $\{ |t|^\alpha \leq \|z \|^2 \leq 1
\}$. Here $g_t=g_{co,t}$ and $\| z \|^2 \gg |t|$ for all $t$ small
enough. The estimate (\ref{eq:H_0-est}) reads $|H_0|_{g_{co,0}}\leq C$ and $|H_0^{-1}|_{g_{co,0}} \leq
C$, and so pulling back by by $\Phi_t^{-1}$ and using Lemma
\ref{lem:transplant} gives 
\[
C^{-1} g_{co,t} \leq (\Phi_t^{-1})^* H_0 \leq C g_{co,t}.
\]
Since $|[(\Phi_t^{-1})^* H_0]^{1,1}|_{g_{co,t}} \leq  |(\Phi_t^{-1})^*
H_0|_{g_{co,t}}$ and similarly for $H_0^{-1}$, this proves that
$C^{-1} g_{co,t} \leq H_t \leq C g_{co,t}$. Next, pulling back $|\nabla^k_{g_{co,0}} H_0|
\leq C_k r^{-k}$ by Lemma
\ref{lem:transplant} gives
\[
|\nabla^k_{g_{co,t}} K_t|_{g_{co,t}} \leq C r^{-k}.
\]
This proves (\ref{g_t-unif-H_t}) in the region $\{ 2 |t|^\alpha
\leq \| z \|^2 \leq 1 \}$ where $H_t= K_t=[(\Phi_t^{-1})^*H_0]^{1,1}$. In the
transition region $\{|t|^{\alpha} \leq \|z\|^2 \leq 2\|t\|^\alpha\}$ we have,
\[
\nabla H_t = (\nabla \chi) (g_{co,t} - K_t) + (1-\chi) \nabla K_t
\]
and
\[
|\nabla_{g_t} H_t|_{g_t} \leq C |\nabla \chi|_{g_t} + C |\nabla_{g_t} K_t|_{g_t}.
\]
We estimate
\be \label{nabla-chi}
|\nabla \chi|_{g_t} \leq C |t|^{-\alpha} |\nabla r^3|_{g_t} \leq C
|t|^{-\alpha} r^2 \leq C r^{-1}.
\ee
Here we used $|\nabla r|_{g_t} \leq C$ and $r^3=\| z \|^2 \leq 2 |t|^\alpha$ in the
transition region. Thus
\be 
| \nabla_{g_{co,t}} H_t|_{g_{co,t}} \leq C r^{-1}
 \ee
and the higher order estimates are similar. 
\smallskip
\par $\bullet$ Region $\{1 \leq r \}$. Here $H_t=[(\Phi_t^{-1})^*
H_0]^{1,1}$ and the metrics $\Phi_t^*g_t$
converge smoothly uniformly to $g_0$ as $t \rightarrow 0$ by Lemma \ref{prop:
  FLYalmost}. The estimates for $H_t$ follow from pulling back the estimates for
$H_0$ obtained in Theorem~\ref{thm:H_0}.
\end{proof}

\subsection{Weighted H\"older spaces} \label{section:holder}
In the upcoming analysis we will work in weighted H\"older spaces on $X_t$ using the weight
function $r$, the metric $g_t$ which is equal to the model metric
$g_{co,t}$ near the nodes, and the glued metric $H_t$. We will use the connection $\nabla_{H_t}$
when differentiating. For endomorphisms $h \in \Gamma({\rm
  End} \, T^{1,0} X_t) $, we use the norm 
\[
\| h \|_{C^k_\beta(g_t,H_t)} = \sum_{i=0}^k \sup_{X_t} |r^{-\beta+i} \nabla_{H_t}^i h|_{g_{t}}.
\]
For $\Phi \in \Gamma( (T X_t)^p \otimes (T^* X_t)^q)$, we define the semi-norm
\[
[\Phi]_{C^{0,a}_\beta} = \sup_{x \neq y} \bigg[
\min(r(x),r(y))^{-\beta} \, {|\Phi(x)-\Phi(y)|_{g_t} \over d(x,y)^a} \bigg]
\]
where the sup is taken over points $x,y$ with distance less than the
injectivity radius and $\Phi(x)-\Phi(y)$ is understood by $\nabla_{g_t}$-parallel
transport along the minimal $g_t$ geodesic connecting $x$ and $y$. The weighted H\"older norms are then
\[
\| h \|_{C^{k,a}_\beta(g_t,H_t)} = \| h \|_{C^k_\beta(g_t,H_t)} + [ \nabla_{H_t}^k h]_{C^{0,a}_{\beta-k-a}}.
\]
This definition is well adapted to work on annuli $U_{\hat{r}}= \{ (1/2)
\hat{r} \leq r(z) \leq 2 \hat{r} \}$ at a given scale
$\hat{r}>0$. The norm over $U_{\hat{r}}$ is equivalent to
\be \label{weighted-alt-norm}
\| h \|_{C^{0,a}_\beta(U_{\hat{r}})} = \hat{r}^{-\beta} \bigg[ \sup_{U_{\hat{r}}} \| h
\|_{C^0(\hat{r}^{-2} g_t)} + \sup_{x,y \in U_{\hat{r}}}
{|h(x)-h(y)|_{\hat{r}^{-2} g_t} \over d_{\hat{r}^{-2}
    g_t}(x,y)^a} \bigg] ,
\ee
where norms on the endomorphism $h \in \Gamma({\rm
  End} \, T^{1,0} X_t)$ are now all with respect to the rescaled metric
$\hat{r}^{-2} g_t$. We will often estimate global H\"older norms by
estimating them on local annuli $U_{\hat{r}}$.

\begin{lem} \label{holder-local2global}
Let $\beta \leq 0$ and $h \in \Gamma({\rm End} \, T^{1,0}X_t)$. Suppose there is a uniform bound on the local estimates $\| h \|_{C_\beta^{0,a}(U_{\hat{r}})}
\leq K$ for all $\hat{r}>0$, where $U_{\hat{r}} = \{ (1/2) \hat{r}
\leq r \leq 2 \hat{r} \} \subset X_t$. Then $\| h \|_{C_\beta^{0,a}(X_t)}
\leq CK$ for a constant $C>1$ which is independent of $t$ .
  \end{lem}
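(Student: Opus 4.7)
My plan is a routine patching argument whose content lies in combining the scale-invariant definition of the weighted H\"older norm with the quasi-cylindrical geometry of $(X_t,g_t)$ provided by Lemma \ref{lem: cylCoords}. First I would unpack the hypothesis. For endomorphisms $h \in \Gamma(\mathrm{End}\,T^{1,0}X_t)$ the pointwise norm $|h|_{g_t}$ is invariant under conformal rescaling, while $d_{\hat{r}^{-2}g_t} = \hat{r}^{-1}d_{g_t}$, so the equivalent local norm (\ref{weighted-alt-norm}) translates $\|h\|_{C^{0,a}_\beta(U_{\hat{r}})}\leq K$ into
\[
\sup_{x \in U_{\hat{r}}}|h(x)|_{g_t} \leq C_1 K \hat{r}^{\beta}, \qquad \sup_{x,y \in U_{\hat{r}},\, x\neq y}\frac{|h(x)-h(y)|_{g_t}}{d_{g_t}(x,y)^a} \leq C_1 K \hat{r}^{\beta-a},
\]
with $C_1$ independent of $\hat{r}$ and $t$. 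Choosing $\hat{r}=r(x)$ in the first bound immediately yields $\|h\|_{C^0_\beta(X_t)}\leq C_1 K$.

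For the H\"older seminorm I need one geometric ingredient: the $g_t$-injectivity radius at $p$ is bounded below by $c\,r(p)$, uniformly in $t$. This is a direct consequence of Lemma \ref{lem: cylCoords}, which provides holomorphic coordinates on a ball of uniform size $\rho$ in the rescaled metric $r(p)^{-2}g_t$, inside of which that metric is uniformly equivalent to the Euclidean one. From this I extract a universal $c_0 \in (0,\tfrac{1}{2}]$ with the property that $d_{g_t}(x,y)\leq c_0\,r(x)$ forces $r(y) \in [r(x)/2,2r(x)]$, so that both points lie in the single annulus $U_{\hat{r}}$ with $\hat{r}=r(x)$.

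I would then bound the H\"older seminorm by splitting any pair $x\neq y$ with $d_{g_t}(x,y)$ smaller than the injectivity radius into two cases. If $d_{g_t}(x,y) \leq c_0 r(x)$, both points lie in $U_{r(x)}$ and $\min(r(x),r(y))$ is comparable to $r(x)$, so the local H\"older bound gives
\[
\min(r(x),r(y))^{-(\beta-a)}\,\frac{|h(x)-h(y)|_{g_t}}{d_{g_t}(x,y)^a} \leq C K.
\]
If instead $d_{g_t}(x,y) > c_0 r(x)$, I assume without loss of generality that $r(x)\leq r(y)$; then $\min=r(x)$ and, since $\beta\leq 0$, $r(y)^\beta \leq r(x)^\beta$, so the $C^0$ bound yields $|h(x)-h(y)|_{g_t}\leq 2 C_1 K r(x)^\beta$ and
\[
\min(r(x),r(y))^{-(\beta-a)}\,\frac{|h(x)-h(y)|_{g_t}}{d_{g_t}(x,y)^a} \leq r(x)^{a-\beta}\cdot\frac{2C_1 K r(x)^\beta}{(c_0 r(x))^a} = \frac{2C_1 K}{c_0^a}.
\]
Combining the two cases gives $[h]_{C^{0,a}_{\beta-a}(X_t)}\leq CK$, which together with the $C^0$ bound completes the proof. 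The only genuinely non-trivial step is the uniform-in-$t$ lower bound on the injectivity radius at scale $r$, which is exactly what Lemma \ref{lem: cylCoords} supplies; everything else is careful bookkeeping with the scale-invariant definitions, and the hypothesis $\beta\leq 0$ enters only to compare $r(y)^\beta$ with $r(x)^\beta$ in the second case.
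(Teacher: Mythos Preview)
Your argument is correct and is essentially the same as the paper's: both obtain the $C^0$ bound trivially, then for the H\"older seminorm split into a ``same annulus'' case and a ``far apart'' case where the $C^0$ bound and $\beta\le 0$ suffice. The only organizational difference is that the paper phrases the dichotomy in terms of whether $r(y)\in(\tfrac12 r(x),2r(x))$ and, when it fails, proves $d_{g_t}(x,y)\ge C^{-1}r(x)$ by a scaling argument on $V_t$ (handling the region $r\ge\epsilon$ separately via uniform geometry), whereas you phrase it in terms of whether $d_{g_t}(x,y)\le c_0 r(x)$ and deduce the annulus containment from the uniform bound on $|\nabla r|_{g_t}$ implicit in Lemma~\ref{lem: cylCoords}; these are contrapositives of one another and your version avoids the extra case split.
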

  \begin{proof}
The local bounds imply $\| h \|_{C^0_\beta(X_t)} \leq K$, so we need to estimate the global H\"older semi-norm. Let $x,y \in
X_t$ and suppose $r(x) \leq r(y)$. If $y$ lies in the set $U = \{
(1/2) r(x) < r < 2 r(x) \}$, the estimate is assumed. If $r(x) \geq
\epsilon>0$, the geometry is uniform in $t$ and the estimate holds for
$C(\epsilon)$. In the remaining
case, we assume $x,y \in V_t$ with $2r(x) < r(y)$ and $g_t=g_{co,t}$ and we claim that
\[
  d_{g_t}(x,y) \geq C^{-1} r(x)
\]
for $C>1$ independent of
$t$. Indeed, this inequality holds at $t=1$ for a constant $C>1$, and
the uniform bound in $t$ follows from the bound when
$t=1$ by scaling $S(z)=t^{-1/2} z$ with $S^* g_{co,1} = |t|^{-2/3}
g_{co,t}$ and $S^* r = |t|^{-1/3} r$.  Therefore
\[
r(x)^{-\beta+a} {|h(x)-h(y)| \over d_{g_t}(x,y)^a}
\]
is bounded by $C \| h \|_{C^0_\beta(X_t)}$ since $\beta \leq 0$.
\end{proof}
\medskip
\par We end this discussion with the following remark: if
\[
  r^{-\beta}|h|_{g_t}+r^{-\beta+1} |\nabla_{g_t} h|_{g_t} \leq K,
\]
then for $0<a<1$ we can estimate $\| h \|_{C^{0,a}_\beta(X_t)} \leq CK$ where $C$ is independent of $t$. This can be seen for example from expression (\ref{weighted-alt-norm}), since $\hat{r}^{-2} g_t$ is uniformly (in $t$) equivalent to the Euclidean metric in holomorphic cylindrical coordinates (Lemma \ref{lem: cylCoords}). Also, the difference of connections satisfies the bound $r|A_{g_t}-A_{H_t}|_{g_t} \leq C$ by Lemma \ref{lem: H_t-est}, so to estimate $\| h \|_{C^{0,a}_\beta(X_t)}$ we could equivalently estimate $r^{-\beta+1} |\nabla_{H_t} h|_{H_t}$ instead of $r^{-\beta+1} |\nabla_{g_t} h|_{g_t}$.

\subsection{Smallness of the approximate solution}

\par The main objective of this section is to show that the glued
metric $H_t$ has small Hermitian-Yang-Mills tensor.

\begin{prop} \label{prop-small-hym} Let $H_t$ be the glued metric as
  in (\ref{H_t-defn}). There exists $C>0$ and $\epsilon>0$ such that
  for any $0<a<1$, and any $t \in \mathbb{C}^*$ with $|t| \leq \epsilon$ we have
\be
\| \Lambda_{\omega_t} F_{H_t} \|_{C^{0,a}_{-2}} \leq C
 |t|^{\alpha \lambda \over 3},
\ee
where $\lambda>0$ is the rate in~\eqref{H_0-local} (see Theorem~\ref{thm: decayToTangent}), and $\alpha= (1+\frac{\lambda}{3})^{-1}$.
\end{prop}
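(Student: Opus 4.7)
The strategy is to partition $X_t$ into regions reflecting the definition~\eqref{H_t-defn} of $H_t$, to prove a scale-invariant bound $\hat r^2 |\Lambda_{\omega_t} F_{H_t}|_{g_t} \leq C |t|^{\alpha\lambda/3}$ at each scale $\hat r$ on the corresponding annulus $U_{\hat r} = \{\hat r/2 \leq r \leq 2\hat r\}$, and to aggregate using Lemma~\ref{holder-local2global} together with the alternative local norm~\eqref{weighted-alt-norm}. Recall that $\alpha = 3/(3+\lambda)$, which yields the key identity $|t|^{\alpha\lambda/3} = |t|^{1-\alpha}$. Two regions are easy. On the interior $\{\|z\|^2 \leq |t|^\alpha\}$, the metric $H_t = g_{co,t}$ is K\"ahler Ricci-flat, so $\Lambda_{\omega_t} F_{H_t} \equiv 0$. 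On the bulk $\{r \geq r_0\}$ for fixed small $r_0 > 0$, Proposition~\ref{prop: FLYalmost} together with the smooth convergence $\Phi_t \to \mathrm{id}$ on compact subsets of $\underline X \setminus \{p_1,\dots,p_k\}$ gives $\|\Lambda_{\omega_t} F_{H_t}\|_{C^{0,a}(U_{\hat r})} = O(|t|)$ (using $\Lambda_{\omega_0} F_{H_0} = 0$), which comfortably dominates the required $|t|^{\alpha\lambda/3}$.

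The main work lies on the conifold annular region $\{2|t|^\alpha \leq \|z\|^2 \leq r_0^3\}$, where $H_t = K_t = [(\Phi_t^{-1})^* H_0]^{1,1}$. Here $\omega_t = R_i \omega_{co,t}$ and the analogous identity $\omega_0 = R_i \omega_{co,0}$ on the conifold piece of $\underline X$ reduce $\Lambda_{\omega_0} F_{H_0} = 0$ to $\Lambda_{\omega_{co,0}} F_{H_0} = 0$ (with $R_i = 1$ by the normalization of this section). I will transport this vanishing to $U_{\hat r} \subset V_t$ via $\Phi_t$, for $|t|^{\alpha/3} \leq \hat r \leq r_0$, identifying three error sources each of pointwise size $O(|t| r^{-5})$: first, the metric discrepancy $\Phi_t^* \omega_{co,t} - \omega_{co,0} = O(|t| r^{-3})$ from Corollary~\ref{cor: decayGcotPull}, which via the matrix identity $\omega_1^{-1} - \omega_2^{-1} = \omega_1^{-1}(\omega_2 - \omega_1)\omega_2^{-1}$ changes the contracted tensor by $O(|t| r^{-3}) \cdot |F_{H_0}|_{g_{co,0}} = O(|t| r^{-5})$, where the bound $|F_{H_0}|_{g_{co,0}} \leq C r^{-2}$ follows from Theorem~\ref{thm:H_0}; second, the antiholomorphic defect $|\bar\partial \Phi_t|_{g_{co,0}} = O(|t| r^{-3})$ (computed directly from $\Phi_t(z) = z + t\bar z/(2\|z\|^2)$ and rescaled to $g_{co,0}$), which controls both the $(1,1)$-projection error $K_t - (\Phi_t^{-1})^* H_0$ and the non-holomorphic corrections to the Chern curvature under pullback; third, the corresponding second-derivative contributions, transferred from $V_0$ back to $V_t$ via Lemma~\ref{lem:transplant}. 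Summing yields $|\Lambda_{\omega_t} F_{K_t}|_{g_t} \leq C |t| r^{-5}$ on $U_{\hat r}$, so $\hat r^2 |\Lambda_{\omega_t} F_{K_t}|_{g_t} \leq C |t| \hat r^{-3} \leq C |t|^{1-\alpha} = C |t|^{\alpha\lambda/3}$ using $\hat r \geq |t|^{\alpha/3}$.

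On the gluing annulus $\{|t|^\alpha \leq \|z\|^2 \leq 2|t|^\alpha\}$ (scale $\hat r \sim |t|^{\alpha/3}$) the bound is obtained directly. Writing $h = g_{co,t}^{-1} H_t = I + (1-\chi)[g_{co,t}^{-1}(K_t - g_{co,t})]$ and combining the estimate $|K_t - g_{co,t}|_{g_{co,t}} \leq C(\hat r^\lambda + |t| \hat r^{-3}) \leq C |t|^{\alpha\lambda/3}$ (from~\eqref{H_0-local} with $c_i = 1$, Corollary~\ref{cor: decayGcotPull}, and the $(1,1)$-projection bound) with the cutoff estimates $|\nabla^k \chi|_{g_t} \leq C \hat r^{-k}$ from~\eqref{nabla-chi} and its higher-order analogues, formula~\eqref{diff-chern-curv} expresses $\Lambda_{\omega_t}(F_{H_t} - F_{g_{co,t}})$ schematically as two covariant derivatives of the cutoff relative endomorphism, which is $O(\hat r^{-2} |t|^{\alpha\lambda/3})$. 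Since $\Lambda_{\omega_t} F_{g_{co,t}} = 0$, multiplying by $\hat r^2$ yields the desired estimate on this annulus.

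The H\"older semi-norm contribution at each scale is controlled by repeating these arguments with one additional covariant derivative and interpolating, using the uniform geometry of $\hat r^{-2} g_t$ from Lemmas~\ref{lem: cylCoords} and~\ref{lem: H_t-est}; Lemma~\ref{holder-local2global} then produces the global bound. The main obstacle is the bookkeeping on the conifold annular region: the three distinct error mechanisms (metric deformation, $(1,1)$-projection, non-holomorphicity of $\Phi_t$) all contribute at the same critical order $|t| r^{-5}$, and one must verify that the exponent $\alpha = 3/(3+\lambda)$ precisely balances the tangent-cone decay rate $\lambda$ of $H_0$ against the $|t| r^{-3}$ decay rate of $\Phi_t^* g_{co,t} - g_{co,0}$ at the gluing scale $\hat r \sim |t|^{\alpha/3}$, so that no lower-order cancellation is required and all three contributions in fact combine cleanly.
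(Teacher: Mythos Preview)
Your proposal is correct and follows the paper's overall strategy: the same four-region decomposition, the same transition-region argument via the relative endomorphism $h_t = g_{co,t}^{-1}H_t$ and~\eqref{diff-chern-curv}, and the same outer-region bound $O(|t|)$ from smooth convergence. The one genuine methodological difference is in the intermediate annulus $\{2|t|^\alpha \leq \|z\|^2 \leq 2\}$. You argue by transporting the identity $\Lambda_{\omega_{co,0}}F_{H_0}=0$ through $\Phi_t^{-1}$ and explicitly bounding three error mechanisms (metric deformation, $(1,1)$-projection, non-holomorphicity of $\Phi_t$), each of size $O(|t|r^{-5})$. The paper instead works in the cylindrical coordinates of Lemma~\ref{lem: cylCoords}, shows that $\partial_t\big[[(\Phi_t^{-1})^*g_{co,0}]^{j\bar k}(F_{K_t})_{j\bar k}\big] = \hat r^{-5}O(1)$ by tracking how the Jacobian of $\Phi_t$ varies in $t$, and integrates from $t=0$ (where the expression vanishes) to obtain the same $|t|\hat r^{-5}$ bound. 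The $t$-derivative approach has the advantage of packaging all three of your error sources into a single coordinate computation, so one never has to disentangle the non-holomorphic curvature pullback from the $(1,1)$-projection; your direct enumeration is conceptually more transparent about \emph{where} the errors originate but requires more care in the bundle identifications, since $(\Phi_t^{-1})^*F_{H_0}$ lives in $\mathrm{End}((\Phi_t^{-1})^*T^{1,0}V_0)$ rather than $\mathrm{End}(T^{1,0}V_t)$.
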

The approximate solution will be estimated in four regions.
\smallskip
\par $\bullet$ Region $\{ \| z \|^2 \leq |t|^\alpha \}$. Here $H_t = g_{co,t}$ and
$\Lambda_{\omega_t} F_{H_t} = 0$.
\smallskip
\par $\bullet$ Region $\{ {1 \over 4}|t|^\alpha \leq \| z \|^2 \leq 4
|t|^\alpha \}$. This contains the transition region, and we will show that here
\[
\| \Lambda_{\omega_t} F_{H_t} \|_{C^{0,a}_{-2}} \leq C |t|^{(\alpha \lambda)/3}
\]
in Lemma \ref{H_t-transition} below. 
\smallskip
\par $\bullet$ Region $\{ 2 |t|^\alpha \leq \| z \|^2 \leq 2 \}$. In
this region, $H_t = [(\Phi_t^{-1})^* H_0]^{1,1}$ and
we need to control the Hermitian-Yang-Mills tensor of $[(\Phi^{-1}_t)^*H_0]^{1,1}$. We will estimate
\[
\| \Lambda_{\omega_t} F_{H_t} \|_{C^{0,a}_{-2}} \leq C |t|^{1-\alpha}
\]
in this region in Lemma \ref{curv-K-est} below. 
\smallskip
\par $\bullet$ Region $\{ r \geq 1 \}$. In this region the geometry is
smoothly varying, and so since $\Lambda_{\omega_0} F_{H_0}=0$ then
\[
\| \Lambda_{\omega_t} F_{H_t} \|_{C^{0,a}_{-2}} \leq C |t|.
\]
\par By Lemma \ref{holder-local2global}, it suffices to check the H\"older estimate on these local pieces to obtain the global estimate. We start by estimating the Hermitian-Yang-Mills tensor in the transition region.

\begin{lem} \label{H_t-transition}
With notation as in Proposition~\ref{prop-small-hym}, the estimate $\| \Lambda_{\omega_t} F_{H_t} \|_{C^{0,a}_{-2}(U)} \leq C
 |t|^{\alpha \lambda \over 3}$  holds in the region $U= \{ {1 \over 4}
 |t|^\alpha \leq
\| z \|^2 \leq 4 |t|^\alpha \}$.
\end{lem}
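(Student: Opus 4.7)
\smallskip
\noindent\textbf{Proof proposal for Lemma \ref{H_t-transition}.}
Throughout $U$ we have $g_t = g_{co,t}$ and $r^3 = \|z\|^2 \sim |t|^\alpha$, hence $r \sim |t|^{\alpha/3}$. Since $g_{co,t}$ is K\"ahler Ricci-flat we have $\Lambda_{\omega_t} F_{g_{co,t}} = 0$, so by the curvature difference formula \eqref{diff-chern-curv} applied to $h = g_{co,t}^{-1} H_t$,
\[
\sqrt{-1}\Lambda_{\omega_t} F_{H_t} = -g_{co,t}^{j\bar{k}} \partial_{\bar{k}}\bigl( h^{-1} \nabla^{g_{co,t}}_j h\bigr).
\]
Thus the entire task reduces to bounding $h - I$ and its derivatives in $U$ by $C|t|^{\alpha\lambda/3} r^{-k}$.

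\smallskip
The plan is to first show that $K_t$ is close to $g_{co,t}$. I would combine the intrinsic decay of $H_0$ towards $g_{co,0}$ given by \eqref{H_0-local} with the decay of $(\Phi_t^{-1})^* g_{co,0}$ towards $g_{co,t}$ given by \eqref{eq:decayGco0Pull}. By Lemma \ref{lem:transplant}, \eqref{H_0-local} pulled back gives
\[
|\nabla^k_{g_{co,t}} (\Phi_t^{-1})^*(H_0 - g_{co,0})|_{g_{co,t}} \leq C_k r^{\lambda-k},
\]
while \eqref{eq:decayGco0Pull} yields
\[
|\nabla^k_{g_{co,t}} ((\Phi_t^{-1})^* g_{co,0} - g_{co,t})|_{g_{co,t}} \leq C_k |t| r^{-3-k}.
\]
In $U$, the first estimate is $\lesssim |t|^{\alpha\lambda/3} r^{-k}$ and the second is $\lesssim |t|^{1-\alpha} r^{-k}$. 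The choice $\alpha = 3/(3+\lambda) = (1+\lambda/3)^{-1}$ is precisely what makes $\alpha\lambda/3 = 1-\alpha$, so both errors are of the same order. Taking the $(1,1)$-part commutes with $\nabla^{g_{co,t}}$ (since $J$ is parallel) and preserves the bound; hence
\[
|\nabla^k_{g_{co,t}} (K_t - g_{co,t})|_{g_{co,t}} \leq C_k |t|^{\alpha\lambda/3} r^{-k}.
\]

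\smallskip
Next I would control the cutoff. Using holomorphic cylindrical coordinates (Lemma \ref{lem: cylCoords}) one has $|\nabla^k_{g_{co,t}} \|z\|^2|_{g_{co,t}} \leq C_k r^{2-k}$, and so for $\chi = \zeta(|t|^{-\alpha}\|z\|^2)$ the chain rule plus the identity $|t|^{-\alpha} \sim r^{-3}$ on $U$ gives
\[
|\nabla^k_{g_{co,t}} \chi|_{g_{co,t}} \leq C_k r^{-k}, \qquad k \geq 0.
\]
Writing $h = I + (1-\chi)(g_{co,t}^{-1} K_t - I)$ and combining the two bounds above via the Leibniz rule produces
\[
|\nabla^k_{g_{co,t}} (h - I)|_{g_{co,t}} \leq C_k |t|^{\alpha\lambda/3} r^{-k}, \qquad k = 0,1,2,\dots
\]
Since Lemma \ref{lem: H_t-est} guarantees $h,h^{-1}$ are uniformly bounded, this feeds back into the Chern-curvature identity above to yield
\[
|\Lambda_{\omega_t} F_{H_t}|_{g_{co,t}} \leq C |t|^{\alpha\lambda/3} r^{-2}, \qquad |\nabla_{g_{co,t}}\Lambda_{\omega_t} F_{H_t}|_{g_{co,t}} \leq C |t|^{\alpha\lambda/3} r^{-3},
\]
which, upon interpolating via the rescaled-annulus characterization \eqref{weighted-alt-norm}, gives the weighted H\"older bound on $U$.

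\smallskip
The only real obstacle is the bookkeeping: keeping track that the two independent sources of error (the $r^\lambda$ rate for $H_0 - g_{co,0}$, and the $|t| r^{-3}$ rate for $(\Phi_t^{-1})^*g_{co,0} - g_{co,t}$) balance precisely when $\alpha = 3/(3+\lambda)$, and verifying that the cutoff derivatives $|\nabla^k \chi|$ do not exceed $r^{-k}$ after converting $|t|^{-\alpha}$ into $r^{-3}$. Everything else is routine differentiation of \eqref{diff-chern-curv} together with the Calabi-Yau vanishing $\Lambda_{\omega_t} F_{g_{co,t}} = 0$.
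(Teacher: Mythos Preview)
Your argument is correct and follows essentially the same route as the paper: write $h=g_{co,t}^{-1}H_t=I+(1-\chi)\mathcal{E}$, bound $\mathcal{E}$ by combining the $r^{\lambda}$ decay of $H_0-g_{co,0}$ (via Lemma~\ref{lem:transplant}) with the $|t|r^{-3}$ decay of $(\Phi_t^{-1})^*g_{co,0}-g_{co,t}$, balance the two by choosing $\alpha\lambda/3=1-\alpha$, and feed the resulting $|\nabla^k(h-I)|\le C_k|t|^{\alpha\lambda/3}r^{-k}$ into \eqref{diff-chern-curv}. One small slip: since $\|z\|^2=r^3$, the correct bound is $|\nabla^k_{g_{co,t}}\|z\|^2|_{g_{co,t}}\le C_k r^{3-k}$ (not $r^{2-k}$), which is in fact what is needed to get $|\nabla^k\chi|\le C_k r^{-k}$ after multiplying by $|t|^{-\alpha}\sim r^{-3}$.
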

\begin{proof} If we decompose
\[
H_0 = g_{co,0} + E_0,
\]
then the glued metric is
\[
H_t = g_{co,t} + (1-\chi) \bigg[ [(\Phi^{-1}_t)^*E_0]^{1,1} +  [(\Phi^{-1}_t)^*
g_{co,0}]^{1,1} - g_{co,t} \bigg].
\]
Since $\Lambda_{\omega_t} F_{g_{co,t}} = 0$, the formula (\ref{diff-chern-curv}) for the
difference of curvature tensors gives
\be \label{F_H-t}
\sqrt{-1} \Lambda_{\omega_t} F_{H_t} = - (g_{co,t})^{j \bar{k}}
\partial_{\bar{k}} (h_t^{-1} (\nabla_{g_{co,t}})_j h_t)
\ee
where
\bea \label{h_t-defn}
h_t &=& I + (1-\chi) \bigg[ g_{co,t}^{-1}[(\Phi^{-1}_t)^*E_0]^{1,1} +  g_{co,t}^{-1} ( [(\Phi^{-1}_t)^*
g_{co,0}]^{1,1} - g_{co,t}) \bigg] \nonumber\\
&:=& I + (1-\chi) \E.
\eea
During this proof, we simply write $\nabla = \nabla_{g_{co,t}}$. We claim that
\be \label{transition-claim}
C^{-1} I \leq h_t \leq C I, \quad |\nabla^k h_t|_{g_{co,t}} \leq C
r^{-k} |t|^{\lambda \alpha/3}.
\ee
Assuming this, (\ref{F_H-t}) and $|t|^{\lambda
  \alpha/3}<1$ imply
\[
|\Lambda_{\omega_t} F_{H_t}|_{g_{co,t}} \leq |h^{-1} \nabla h|^2_{g_{co,t}} +
C|h^{-1}\nabla^2 h_t|_{g_{co,t}} \leq C r^{-2} |t|^{\lambda
  \alpha/3}.
\]
Similarly
\[
|\nabla \Lambda_{\omega_t} F_{H_t}|_{g_{co,t}} \leq C r^{-3} |t|^{\lambda
  \alpha/3}
\]
and this proves the estimate $|\Lambda_{\omega_t} F_{H_t}|_{C^a_{-2}(U)}
\leq C |t|^{\lambda \alpha/3}$.
\smallskip
\par We now prove the claim (\ref{transition-claim}). Estimate
(\ref{H_0-local}) implies $|\nabla^k E_0|_{g_{co,0}} \leq C_k
r^{\lambda-k}$, which by Lemma \ref{lem:transplant} and (\ref{eq:decayGco0Pull}) yields
\[
|\nabla^k (\Phi_t^{-1})^* E_0|_{g_{co,t}} +  | \nabla^k [(\Phi^{-1}_t)^*
g_{co,0} - g_{co,t}]|_{g_{co,t}} \leq C r^{-k} (r^\lambda + |t| r^{-3}) .
\]
Since $r^3 \sim |t|^\alpha$, this implies
\[
|\nabla^k \E|_{g_{co,t}} \leq C r^{-k} (|t|^{\lambda \alpha /3} + |t|^{1-\alpha}).
\]
We choose $\alpha$ such that $\lambda \alpha /3 = 1 - \alpha$, so that
$|\E|\leq C |t|^{\lambda \alpha /3}$ and
\[
|h_t-I|_{g_{co,t}} \leq C |t|^{\lambda \alpha /3} \ll 1
\]
which implies $C^{-1} I \leq h_t \leq C I$. Taking a
derivative gives
\[
\nabla h_t = - \E \nabla \chi + (1-\chi) \nabla \E.
\]
Since $|\E| \leq C |t|^{\lambda \alpha/3}$, $|\nabla \E| \leq C r^{-1} |t|^{\lambda \alpha/3}$ and
$|\nabla \chi| \leq C r^{-1}$ (e.g. (\ref{nabla-chi})), we obtain
\[
|\nabla h_t| \leq C r^{-1} |t|^{\lambda \alpha/3}.
\]
The higher order estimates in the claim (\ref{transition-claim}) are similar.
\end{proof}
\medskip
\par We now consider the next region past the transition zone.

\begin{lem} \label{curv-K-est}
Let $F_{K_t}$ be the curvature of $K_t = [(\Phi_t^{-1})^* H_0]^{1,1}$. Then on
$D = \{ |t|^{\alpha} \leq \| z \|^2 \leq 2 \}$, we can estimate
\be \label{curv-K-2}
\| \Lambda_{\omega_t} F_{K_t} \|_{C^{0,a}_{-2}(D)} \leq C |t|^{1-\alpha}.
\ee
\end{lem}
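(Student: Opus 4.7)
The plan is to first obtain derivative estimates on $K_t - g_{co,t}$ on $D$, and then to exploit the Hermitian-Yang-Mills equation for $H_0$ on the conifold by transferring it to $V_t$ through the diffeomorphism $\Phi_t$. The essential difficulty is that a naive termwise bound on $\Lambda_{\omega_t} F_{K_t}$ would produce a term of order $r^{\lambda-2}$, coming from the deviation $H_0 - g_{co,0} = O(r^\lambda)$, which does not decay in $t$ and is therefore not controlled near $r \sim 1$. The missing cancellation will have to come from the Hermitian-Yang-Mills condition $\Lambda_{\omega_{co,0}} F_{H_0} = 0$, tracked carefully through the non-holomorphic gluing map $\Phi_t$.

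Combining (\ref{H_0-local}) with Corollary~\ref{cor: decayGcotPull} and the transplantation Lemma~\ref{lem:transplant}, I would first establish on $D$ that
\[
\tilde H_t := (\Phi_t^{-1})^* H_0 = g_{co,t} + A_t + \tilde E_t,
\]
with $|\nabla^k_{g_{co,t}} A_t|_{g_{co,t}} \leq C_k |t| r^{-3-k}$ and $|\nabla^k_{g_{co,t}} \tilde E_t|_{g_{co,t}} \leq C_k r^{\lambda-k}$. Since $\Phi_t$ is almost holomorphic, with $(d\Phi_t)^{0,1} = O(|t|r^{-3})$ and analogous bounds on higher derivatives, the $(2,0)+(0,2)_{J_t}$-part of $\tilde H_t$, which equals $\tilde H_t - K_t$, satisfies $|\nabla^k_{g_{co,t}}(K_t - \tilde H_t)|_{g_{co,t}} \leq C_k |t| r^{-3-k}$. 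Using the curvature difference formula~(\ref{diff-chern-curv}) with reference metric $g_{co,t}$ and the Ricci-flatness $\Lambda_{\omega_t} F_{g_{co,t}} = 0$, one has
\[
\sqrt{-1}\,\Lambda_{\omega_t} F_{K_t} = -g_{co,t}^{j \bar k}\,\partial^{J_t}_{\bar k}\bigl(h^{-1}\nabla_j^{g_{co,t}} h\bigr), \qquad h = g_{co,t}^{-1} K_t.
\]

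The central step is to compare this to the analogous expression on $V_0$ for $h_0 = g_{co,0}^{-1} H_0$, which vanishes identically by the Hermitian-Yang-Mills equation for $H_0$ (combined with the Ricci-flatness of $g_{co,0}$). Pulling this vanishing expression back by $\Phi_t^{-1}$ produces an identity on $V_t$ of the same form, but with $\Phi_{t,*}J_0$, $(\Phi_t^{-1})^*g_{co,0}$, and $\tilde H_t$ in place of $J_t$, $g_{co,t}$, and $K_t$ respectively. Subtracting, the discrepancy has three independent sources, each of size $O(|t|r^{-3})$: (i) the difference $g_{co,t} - (\Phi_t^{-1})^*g_{co,0}$ in the contraction, controlled by Corollary~\ref{cor: decayGcotPull}; (ii) the difference $J_t - \Phi_{t,*}J_0$, which is small because $\Phi_t$ is almost holomorphic; and (iii) the $J_t$-$(1,1)$ projection distinguishing $K_t$ from $\tilde H_t$. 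Each such discrepancy is acted on by two weighted derivatives together with an inverse-metric contraction, yielding a total contribution of order $|t|r^{-5}$. Since $r^3 \geq |t|^{\alpha}$ on $D$, this gives $|t|r^{-5} = (|t|r^{-3})\cdot r^{-2} \leq |t|^{1-\alpha} r^{-2}$, hence the pointwise bound $|\Lambda_{\omega_t} F_{K_t}|_{g_{co,t}} \leq C |t|^{1-\alpha} r^{-2}$. The weighted H\"older estimate~(\ref{curv-K-2}) then follows by repeating the same analysis with one additional derivative and rescaling on the annuli $U_{\hat r}$ in the holomorphic cylindrical coordinates of Lemma~\ref{lem: cylCoords}, where the metric $\hat r^{-2} g_{co,t}$ is uniformly equivalent to the Euclidean metric.

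The hard part will be carefully bookkeeping the three error sources in the pullback comparison, especially the non-holomorphicity of $\Phi_t$, which makes the pullback of $\bar\partial$ fail to commute with the $J_t$-$(1,1)$ projection in a controlled way. The fortunate structural feature that makes the argument succeed is that all three error sources have precisely the same order $|t|/r^3$, which combines with the $r^{-2}$ from the inverse metric to yield a uniform bound $|t|^{1-\alpha} r^{-2}$ throughout $D$, independently of whether $r$ is close to $|t|^{\alpha/3}$ or of order $1$.
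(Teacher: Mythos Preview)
Your proposal is correct and rests on the same essential mechanism as the paper: the naive $r^{\lambda-2}$ contribution is cancelled by the Hermitian--Yang--Mills equation $\Lambda_{\omega_{co,0}}F_{H_0}=0$ on the cone, and every remaining discrepancy is of order $|t|\,r^{-5}$, giving the weighted bound $|t|^{1-\alpha}$.

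The paper organizes the same computation differently. Rather than pulling back the vanishing identity and bookkeeping your three error sources $(\mathrm{i})$--$(\mathrm{iii})$, it works in fixed holomorphic cylindrical coordinates $w^i=z_i/\|\hat z\|$ on the family (valid for all $t$, including $t=0$) and shows directly that
\[
\frac{\partial}{\partial t}\Bigl[\,[(\Phi_t^{-1})^*g_{co,0}]^{j\bar k}(F_{K_t})_{j\bar k}\,\Bigr]=\hat r^{-5}\,O(1),
\]
by computing $\partial_t(K_t)_{\bar k j}$, $\partial_t[(\Phi_t^{-1})^*g_{co,0}]_{\bar k j}$, and the Jacobian variation of $\Phi_t$ in these coordinates (each is $\hat r^2\lambda^{-2}\,O(1)=\hat r^{-1}\,O(1)$). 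Integrating from $t=0$, where the bracketed expression vanishes, and then swapping $(\Phi_t^{-1})^*g_{co,0}$ for $g_{co,t}$ via Corollary~\ref{cor: decayGcotPull}, gives the same $|t|\,\hat r^{-5}$ bound. The practical advantage of the paper's route is that in the fixed $w$-coordinates all objects are just matrices, so the comparison between the almost-complex structures $J_t$ and $\Phi_{t,*}J_0$---which you correctly flag as the delicate part of your approach---never has to be made explicit. Your direct-subtraction argument is the integrated form of the paper's infinitesimal one; it is more invariant in spirit but requires the careful $(1,1)$-projection bookkeeping you mention, whereas the paper's coordinate-based $t$-derivative absorbs all of that into elementary matrix calculus.
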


\begin{proof}
  Let $(\hat{z}_1,\hat{z}_2,\hat{z}_3,\hat{z}_4,t)$ be a point in
$\mathcal{X} = \{ (z,t) : \sum_{i=1}^4 z_i^2=t \}$ with $\| \hat{z} \|^2 \geq
|t|^\alpha$, and suppose without loss of generality that $\hat{z}_4
\neq 0$. Let $\lambda = \| \hat{z} \|$ and $\hat{r}=\lambda^{2/3}$. We
take local coordinates on $U_{\hat{r}} = \{{1 \over 2} \lambda \leq \|
z \|
\leq 2 \lambda \} \subset X_t$ given by $w^i = {1 \over \lambda} z_i$.
These coordinates land in $
\bigg\{ {1 \over 4} \leq |w|\leq 4 \bigg\} \subset \mathbb{C}^3$ where
$|w|=|(w^1,w^2,w^3)|$ is the Euclidean norm on $\mathbb{C}^3$. The formula for the curvature on $V_t$ in
coordinates is
\[
(F_{K_t})_{j \bar{k}} = -K_t^{-1} \partial_j \partial_{\bar{k}} K_t +
K_t^{-1} \partial_j K_t K_t^{-1} \partial_{\bar{k}} K_t.
\]
We showed in Lemma \ref{lem: cylCoords} that in $\{w ^i \}$ coordinates, we have $g_t = \hat{r}^2
O(I)$, and by Lemma \ref{lem: H_t-est} the metric $K_t =
\hat{r}^2O(I)$. Here we write $O(I)$ for a matrix
which is positive-define with positivity and derivative bounds
independent of $t,\lambda$. Therefore
\be \label{eq:F_K_t-est}
|F_{K_t}|_{g_t}=
\hat{r}^{-2}O(1),
\ee
where $O(1)$ denotes a function
with smooth bounds independent of $t,\lambda$.
\smallskip
\par $\bullet$ We claim:
\be \label{variation-pullback-conehym}
 {\partial \over \partial t} \bigg[ [ (\Phi_t^{-1})^* g_{co,0}]^{j \bar{k}}
(F_{K_t})_{j \bar{k}} \bigg] = \hat{r}^{-5} O(1).
\ee
Here $[ (\Phi_t^{-1})^* g_{co,0}]^{j \bar{k}}$ are the local matrix
entries of the inverse of $(\Phi_t^{-1})^* g_{co,0}$ (not the raised
indices with respect to $g_t$). Assuming this for now, we complete the proof of the lemma. Since at
$t=0$ we have $[
(\Phi_t^{-1})^* g_{co,0}]^{j \bar{k}} (F_{K_t})_{j \bar{k}}= 0$, this implies
\[
[(\Phi_t^{-1})^* g_{co,0}]^{j \bar{k}} (F_{K_t})_{j \bar{k}}= t \hat{r}^{-5} O(1).
\]
We can then write
\[
\sqrt{-1} \Lambda_{\omega_t} F_{K_t} = [ (\Phi_t^{-1})^* g_{co,0}]^{j \bar{k}}
(F_{K_t})_{j \bar{k}} + \bigg[ g_{co,t}^{j \bar{k}} - [
(\Phi_t^{-1})^* g_{co,0}]^{j \bar{k}} \bigg] (F_{K_t})_{j \bar{k}},
\]
and by (\ref{eq:decayGco0Pull}) and (\ref{eq:F_K_t-est}) we have
\[
\sqrt{-1} \Lambda_{\omega_t} F_{K_t} = t \hat{r}^{-5} O(1)
\]
in $\{ w^i \}$ coordinates. Therefore
\[
|\nabla (\sqrt{-1} \Lambda_{\omega_t} F_{K_t})|_{g_t} = |t| \hat{r}^{-6}
O(1).
\]
Then for any $0<a<1$,
\[
\| \Lambda_{\omega_t} F_{K_t} \|_{C^{0,a}_{-2}(U_{\hat{r}})} \leq C |t|
\hat{r}^{-3} \leq C |t|^{1-\alpha}
\]
using that $|t|^\alpha \leq \lambda^2 \leq 1$ and $\lambda^2 =
\hat{r}^3$. By Lemma \ref{holder-local2global}, this gives the H\"older
estimate on all of $D=\{ |t|^\alpha \leq \| z \|^2 \leq 1 \}$.
\smallskip
\par $\bullet$ We now prove the claimed (\ref{variation-pullback-conehym}). We start with the variation of $K_t$. The metric $H_0$ is defined on
$V_0 = \{ \sum_i x_i^2 = 0 \}$ and here we use coordinates
$(x^1,x^2,x^3)$ given by $x^i={1
  \over \| \hat{x} \|} x_i$ where $\hat{x} \in V_0$ is the point such
that $\Phi_t(\hat{x})=\hat{z}$.
The map $\Phi_t$ (defined in (\ref{eq: mapPhit})) appears in coordinates $\{x^i\}$ and $\{w^i\}$ as
\be 
\Phi_t^i(x) = \bigg( x^i + {t \over 2 \|\hat{x} \|^2} {\bar{x}^i \over
  \sum_{i=1}^3 |x^i|^2 + |\sum_{i=1}^3 (x^i)^2|} \bigg) {\| \hat{x} \|
  \over \lambda}.
\ee
Recall that $\| x \|^2 \leq \| \Phi_t(x) \|^2 \leq 2 \| x \|^2$, and
so $\| \hat{x} \| \sim \lambda$ and
coordinates $z^i$ are in the range $\{ {1
  \over 4} \leq |z| \leq 4 \} \subset \mathbb{C}^3$. We may assume the coordinates $x^i$ on $V_0$ are in the range $\{ {1
  \over 2} \leq |x| \leq 2 \} \subset \mathbb{C}^3$. Abusing notation,
we simply write $w^i = w^i \circ \Phi_t(x)$. The change of coordinates is of
the form
\[
{\partial w^i \over \partial x^j} = {\| \hat{x} \| \over \lambda} \delta^i{}_j + {t \over \lambda^2} O(1)
\]
and hence
\be
{\partial \over \partial t} {\partial w^i \over \partial x^j} = \lambda^{-2} O(1),
\quad {1 \over 4} \delta^i{}_j \leq {\partial w^i \over \partial x^j} \leq 4 \delta^i{}_j.
\ee
Differentiating the inverse Jacobian then also gives
\[
{\partial \over \partial t} {\partial x^i \over \partial w^j} =
\lambda^{-2} O(1).
\]
We now compute the variation of $K_t$ in these
coordinates. In components $K_t = (K_t)_{\bar{k} j} \, dw^j \otimes
d \bar{w}^k$, we have
\[
{\partial \over \partial t} (K_t)_{\bar{k} j}(w) = {\partial \over
  \partial t} \bigg[ \overline{{\partial x^p \over \partial w^k}}
(H_0)_{\bar{p} q}(x(w)) {\partial x^q \over \partial w^j} \bigg].
\]
We note
\[
{\partial \over \partial t} \bigg[ (H_0)(x(w)) \bigg] =
{\partial H_0 \over \partial x^i}(x(w)) {\partial x^i \over \partial w^p}
{\partial w^p \over \partial t} = O(\lambda^{-2})  {\partial H_0
  \over \partial x^i} .
\]
Recall that in these coordinates, we have that $H_0 = r(\hat{x})^2
O(I)$, and we noted earlier that $r(\hat{x}) \sim
r(\hat{z})=\hat{r}$. Putting everything together, we have
\be \label{variation-K}
{\partial \over \partial t} (K_t)_{\bar{k} j}(w) =
\hat{r}^2 \lambda^{-2} O(1).
\ee
Since $K_t = \hat{r}^2
O(I)$ in these coordinates, it
follows that
\[
{\partial \over \partial t} (F_{K_t})_{j \bar{k}} = \lambda^{-2} O(1).
\]
Thus
\bea
& \ & {\partial \over \partial t} \bigg[ [(\Phi_t^{-1})^* g_{co,0}]^{j
  \bar{k}} (F_{K_t})_{j \bar{k}
  }\bigg] \nonumber\\
&=& \bigg[ {\partial \over \partial t} [(\Phi_t^{-1})^* g_{co,0}]^{j
  \bar{k}} (F_{K_t})_{j \bar{k}
  }\bigg] + \bigg[ [(\Phi_t^{-1})^* g_{co,0}]^{j \bar{k}} {\partial
    \over \partial t} (F_{K_t})_{j \bar{k}
  }\bigg] \nonumber\\
&=& \bigg[ {\partial \over \partial t} [(\Phi_t^{-1})^* g_{co,0}]^{j
  \bar{k}} (F_{K_t})_{j \bar{k}
  }\bigg] + \lambda^{-2} \hat{r}^{-2} O(1) .
\eea
Here we used (\ref{eq:decayGco0Pull}) and $g^{-1}_{co,t} = \hat{r}^{-2}O(I)$ in these coordinates. The same
computation as (\ref{variation-K}) gives
\[
{\partial \over \partial t} ( (\Phi_t^{-1})^* g_{co,0})_{\bar{k} j}(w)
= \hat{r}^2
\lambda^{-2} O(1)
\]
and therefore
\[
{\partial \over \partial t} \bigg[ [(\Phi_t^{-1})^* g_{co,0}]^{j
  \bar{k}} (F_{K_t})_{j \bar{k}
  }\bigg]  = \lambda^{-2} \hat{r}^{-2}O(1).
\]
Since $\lambda^2 = \hat{r}^3$, this completes the proof of (\ref{variation-pullback-conehym}).
\end{proof}

\section{Perturbation} \label{sec: perturbation}
At this stage in the construction, we have a pair of metrics $(g_t,H_t)$ on the smoothing $X_t$ such that both of these metrics agree with a scaling of $g_{co,t}$ near the vanishing cycles $\{ \| z\|^2 = t\}$. The metric $\omega_t$ is not balanced on all of $X_t$ and the metric $H_t$ is not Hermitian-Yang-Mills with respect to $\omega_t$ away from the vanishing cycles, but by the construction they are close to solving these equations. In this section we will perturb $(g_t,H_t)$ to a pair $(g_{{\rm FLY},t},\check{H}_t)$ solving the Hermitian-Yang-Mills equation. We will prove:

\begin{thm}\label{thm: mainTheorem}
 There exists $\epsilon>0$ such that for all $0<|t|<\epsilon$, there
 exists on $X_t$ a pair of hermitian metrics $(g_{{\rm FLY},t},\check{H}_t)$ solving
  \[
    d \omega_{{\rm FLY},t}^2 = 0, \quad F_{\check{H}_t} \wedge
    \omega_{{\rm FLY},t}^2=0.
  \]
Near the vanishing cycles, these metrics have the following local
description. There exists $\lambda,c_i,d_i>0$ such that for any $k \in
\mathbb{Z}_{\geq 0}$, there exists $C_k>0$ such that for all $|t|<\epsilon$
\be \label{eq:fly-local}
| \nabla^k_{g_{co,t}} (g_{{\rm FLY},t}-c_i g_{co,t}) |_{g_{co,t}} \leq C_k |t|^{2/3} r^{-k}
\ee
and
\be\label{eq: HS-local-decay}
| \nabla^k_{g_{co,t}} (\check{H}_t-d_i g_{co,t}) |_{g_{co,t}} \leq C_k |t|^\lambda r^{-k}
\ee
in the region
\[
\mathcal{R}_{\lambda} = \{ |t| \leq \|z\|^2 \leq |t|^{\frac{3}{3+\lambda}}\}.
\]
  \end{thm}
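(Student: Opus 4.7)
The existence of $g_{{\rm FLY},t}$ with estimate (\ref{eq:fly-local}) is established in \cite{FLY} by perturbing the approximate balanced metric $g_t$ of Proposition \ref{prop: FLYalmost} via a fourth-order linear equation, the $|t|^{2/3}$ rate near the nodes coming from their weighted estimates on the scale of $g_{co,t}$. What remains is to construct $\check{H}_t$ and to establish (\ref{eq: HS-local-decay}). The strategy is to write $\check{H}_t = H_t \exp(s_t)$ for an $H_t$-self-adjoint endomorphism $s_t \in \Gamma(\sqrt{-1}\mathfrak{u}(T^{1,0}X_t, H_t))$ and to solve
\[
\mathcal{F}_t(s) := \sqrt{-1}\Lambda_{\omega_{{\rm FLY},t}} F_{H_t \exp(s)} = 0
\]
by a contraction mapping argument based at $s = 0$, using the smallness of $\mathcal{F}_t(0)$ provided by Proposition \ref{prop-small-hym} together with the fact that $g_{{\rm FLY},t}$ is $|t|^{2/3}$-close to $g_t$.

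The linearization $\mathcal{L}_t = D\mathcal{F}_t(0)$ is a second-order elliptic operator on $\Gamma(\sqrt{-1}\mathfrak{u}(T^{1,0}X_t, H_t))$ whose principal part is the complex Laplacian associated to the Chern connection of $H_t$. We work in the weighted H\"older spaces $C^{k,a}_\beta$ of Section \ref{section:holder}, with weight $\beta \in (-2,0)$. The central analytic step is a uniform invertibility estimate
\[
\|s\|_{C^{2,a}_\beta(g_{{\rm FLY},t},H_t)} \leq C \, \|\mathcal{L}_t s\|_{C^{0,a}_{\beta-2}(g_{{\rm FLY},t},H_t)},
\]
with $C$ independent of $t$ for $0<|t|<\epsilon$. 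The trace part decouples and is handled by Hodge theory together with the balanced condition on $\omega_{{\rm FLY},t}$. For trace-free $s$ the bound is proved by contradiction: a sequence of normalized counterexamples, rescaled at their point of concentration, would converge to a nontrivial bounded solution of the limiting linear equation in one of three asymptotic regimes. A blow-up at a node yields a bounded harmonic trace-free endomorphism on the Calabi-Yau cone $(V_0, g_{co,0})$, which by separation of variables and the Poincar\'e inequality of Lemma \ref{lem: Poincare} must descend to a global holomorphic endomorphism of the stable bundle $E \to \PP^1 \times \PP^1$, hence must vanish. A limit away from the nodes combined with the Uhlenbeck-Yau machinery of Section \ref{sec: HYM-central} would produce a nontrivial destabilizing subsheaf of $T^{1,0}X$, contradicting the stability established there. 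An intermediate scale is ruled out similarly.

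Granted the uniform invertibility, Proposition \ref{prop-small-hym} combined with (\ref{eq:fly-local}) gives $\|\mathcal{F}_t(0)\|_{C^{0,a}_{-2}} \leq C|t|^{\mu_0}$ for some $\mu_0 > 0$, while the nonlinear remainder $\mathcal{Q}_t(s) = \mathcal{F}_t(s) - \mathcal{L}_t s$ is quadratic with $\|\mathcal{Q}_t(s)\|_{C^{0,a}_{-2}} \leq C\|s\|_{C^{2,a}_0}^2$. The contraction mapping theorem then yields a unique small solution $s_t$ with $\|s_t\|_{C^{2,a}_0} \leq C|t|^{\mu_0}$. For the local estimate (\ref{eq: HS-local-decay}), note that in the inner part $\{\|z\|^2 \leq |t|^\alpha\}$ of $\mathcal{R}_\lambda$ we have $H_t = d_i g_{co,t}$ exactly, so $\check{H}_t - d_i g_{co,t} = d_i g_{co,t}(e^{s_t} - I)$, while on the complement $\{|t|^\alpha \leq \|z\|^2\} \cap \mathcal{R}_\lambda$ the difference $H_t - d_i g_{co,t}$ already satisfies the desired bound by Theorem \ref{thm: decayToTangent}, Lemma \ref{lem:transplant} and Corollary \ref{cor: decayGcotPull}. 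Applying scale-invariant Schauder estimates to the equation $\mathcal{L}_t s_t = -\mathcal{F}_t(0) - \mathcal{Q}_t(s_t)$ on annuli $\{\|z\| \sim \rho\}$ in the holomorphic cylindrical coordinates of Lemma \ref{lem: cylCoords} upgrades the $C^{2,a}_0$ bound to $|\nabla^k_{g_{co,t}} s_t|_{g_{co,t}} \leq C_k |t|^{\mu_0} r^{-k}$. Choosing $\lambda$ to be the minimum of $\mu_0$ and the exponents produced by Theorem \ref{thm: decayToTangent}, and shrinking $\mathcal{R}_\lambda$ accordingly, yields (\ref{eq: HS-local-decay}).

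The main obstacle is the uniform invertibility of $\mathcal{L}_t$ as $t \to 0$. The underlying geometry degenerates, developing conical singularities, and one must rule out bubbling profiles at every relevant scale. This requires marrying the local analysis on the Calabi-Yau cone (Lemma \ref{lem: Poincare}) with the global stability arguments of Section \ref{sec: HYM-central}, and carefully choosing the weight $\beta$ so that the linear theory is Fredholm uniformly in $t$ and all limiting bounded solutions are forced to vanish.
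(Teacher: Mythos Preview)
Your overall architecture matches the paper's: contraction mapping for $\mathcal{F}_t(s)=0$ in weighted H\"older spaces $C^{2,a}_\beta$, with the crux being a uniform invertibility estimate for $\mathcal{L}_t$ proved by blow-up/contradiction, and estimate (\ref{eq:fly-local}) coming from the Fu--Li--Yau perturbation (redone in the paper as Lemma~\ref{lem:gfly-gt}). Where you diverge from the paper, and where your sketch is not quite right, is in how the blow-up limits are ruled out.

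For the limit on the central fiber, the paper does \emph{not} appeal to the Uhlenbeck--Yau destabilizing-subsheaf machinery of Section~\ref{sec: HYM-central}; that machinery was designed for a sequence of genuine HYM metrics, not for solutions of the linearized equation, and would not apply here. Instead, the limiting endomorphism $h_0$ on $\underline{X}_{\rm reg}$ satisfies $g_0^{j\bar k}\partial_{\bar k}\nabla_j^{H_0} h_0=0$ with $|h_0|\leq Cr^\beta$; an integration by parts using the balanced condition and $\beta>-2$ gives $\bar\partial h_0=0$, Hartogs extends it across the exceptional curves, and stability of $T^{1,0}X$ forces $h_0=cI$ (a stable bundle has only scalar holomorphic endomorphisms). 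The constant $c$ is then killed by the constraint $\int_{X_t}(\Tr h_i)\,d{\rm vol}_{g_{{\rm FLY},t}}=0$ defining the space $W_t$ in which the paper works, which is the precise form of your ``trace part decouples''. For the bubbling at the nodes, the paper does not use Lemma~\ref{lem: Poincare} or separation of variables for endomorphisms; it passes to the scalar $u_i=|h_i|^2_{H_{t_i}}$, which is approximately subharmonic, rescales, and in the limit obtains a nonnegative subharmonic function with growth $u_\infty\leq Cr^{2\beta}$ on either the smoothing $(V_\kappa,g_{co,\kappa})$ or the cone $(V_0,g_{co,0})$, then applies the elementary Liouville result Lemma~\ref{lem-subharmonic}. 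The scalar/maximum-principle approach is what makes the argument uniform across the cone and the smoothings $V_\kappa$; your Poincar\'e-on-the-link route would need a separate argument on $V_\kappa$, where there is no cone structure. Finally, on $\mathcal{R}_\lambda$ one has $\|z\|^2\leq |t|^\alpha$ throughout, so $H_t=d_i g_{co,t}$ there exactly; your inner/outer split is unnecessary.
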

  
  \begin{rk}
  The decay estimates~\eqref{eq:fly-local} and~\eqref{eq: HS-local-decay} imply that, at an appropriate scale, the Hermitian-Yang-Mills metrics $H_t$ converge smoothly to a multiple of the CO metric $g_{co,0}$ as $|t|\rightarrow 0$.
  \end{rk}

\subsection{The $\theta$-perturbed Fu-Li-Yau metric}
We recall the construction of Fu-Li-Yau \cite{FLY} which
perturbs $\omega_t$ to a balanced metric $\omega_{{\rm FLY},t}$. The Fu-Li-Yau balanced metric is obtained via the ansatz
\be \label{fly-ansatz}
\omega_{{\rm FLY},t}^2 = \omega_t^2 + \theta_t + \bar{\theta}_t.
\ee
The $(2,2)$ form $\theta_t$ is constructed to satisfy
\[
\partial \theta_t = 0, \quad \bar{\partial} \theta_t = - \bar{\partial} \omega_t^2.
\]
More specifically, the correction
$\theta_t$ is of the form
\[
\theta_t = \partial
\bar{\partial}^\dagger \partial^\dagger \gamma_t
\]
where adjoints are
with respect to $g_t$ and $\gamma_t \in \Lambda^{2,3}(X_t)$ satisfies
$\partial \gamma_t=0$. Estimates
for $\gamma_t$ and $\theta_t$ were obtained by Fu-Li-Yau
\cite{FLY}. We will use the versions stated in \cite{Chuan}.
\cite[Proposition 3.6]{Chuan} states that
\[
|\theta_t|^2_{g_t} \leq C \| z \|^{-2/3} |t|,
\]
which using $\|z\|^2 \geq |t|$ implies
\be \label{FLY-estimate}
|\theta_t|_{g_t} \leq C |t|^{(2/3)}.
\ee
The proof of \cite[Proposition 3.8]{Chuan} uses
\be \label{gamma-L2}
\int_{X_t} |\gamma_t|^2_{g_t} \leq C |t|^2.
\ee
We will need the following higher estimate on $\nabla \theta$. 
\begin{lem} \label{lem-theta-grad-est}
\[
|\nabla^k \theta_t|_{g_t} \leq C_k |t|^{2/3} r^{-k}.
\]
\end{lem}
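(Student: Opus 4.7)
The strategy is to upgrade the pointwise bound $|\theta_t|_{g_t}\leq C|t|^{2/3}$ of~\eqref{FLY-estimate} to derivative bounds via interior elliptic regularity, carried out in the holomorphic cylindrical coordinates of Lemma~\ref{lem: cylCoords}. Fix $p\in X_t$, set $\hat r = r(p)$, and work in a chart around $p$ in which the rescaled metric $\hat r^{-2}g_t$ is uniformly equivalent to the Euclidean metric with uniformly bounded derivatives, independent of $\hat r$ and $t$. In this rescaled picture the $C^0$ input from~\eqref{FLY-estimate} becomes a uniform $L^\infty$ bound of size $|t|^{2/3}$ on the coordinate components of $\theta_t$ on a unit ball.

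The next step is to identify an elliptic equation satisfied by $\theta_t$. The defining properties $\partial\theta_t=0$ and $\bar\partial\theta_t=-\bar\partial\omega_t^2$, together with the gauge $\theta_t=\partial\bar\partial^\dagger\partial^\dagger\gamma_t$ (with $\partial\gamma_t=0$), force $\theta_t$ to lie in a specific orthogonal complement inside the space of $(2,2)$-forms, which (after applying $\bar\partial^\dagger$ to the $\bar\partial$-equation and combining with $\partial^\dagger\partial\theta_t=0$) yields an elliptic system of the schematic form $\Delta_{g_t}\theta_t = F$, where $F$ is built from at most two derivatives of $\omega_t^2$. By Proposition~\ref{prop: FLYalmost} we have $|\nabla^j F|_{g_t}\leq C_j|t|$, so in the rescaled coordinates $F$ has uniform $C^k$ bounds of order $|t|$, which is stronger than what is needed.

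Interior Schauder estimates, applied to this bounded-coefficient elliptic system on the unit ball, then yield uniform $C^k$ bounds on the rescaled $\theta_t$ of size $C_k(|t|^{2/3}+|t|)\leq C_k|t|^{2/3}$. Undoing the rescaling by $\hat r$ converts this into
\[
|\nabla^k\theta_t|_{g_t}(p)\leq C_k|t|^{2/3}\hat r^{-k},
\]
which is the desired bound since $p$ was arbitrary.

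The main technical obstacle is pinning down a genuinely elliptic equation for $\theta_t$ alone: the pair of first-order equations $(\partial\theta_t,\bar\partial\theta_t)$ is underdetermined, so one must invoke the Fu-Li-Yau gauge $\theta_t=\partial\bar\partial^\dagger\partial^\dagger\gamma_t$ to kill the harmonic ambiguity and obtain ellipticity. An alternative is to apply elliptic regularity directly to the fourth-order equation satisfied by $\gamma_t$ and then take three derivatives; this route, however, would start from the $L^2$ bound~\eqref{gamma-L2} of size $|t|$, which is of a different (weaker) order than $|t|^{2/3}$ and therefore would still require feeding in the sharper pointwise bound on $\theta_t$ as input to reach the claimed rate. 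The first route is more direct and is the one I would carry out.
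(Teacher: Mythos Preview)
Your overall scheme---upgrade the $C^0$ bound on $\theta_t$ to $C^k$ bounds via interior elliptic regularity in rescaled cylindrical coordinates---is the right endgame, and it is exactly what the paper does \emph{near the vanishing cycles}. But there is a real gap in your derivation of an elliptic equation for $\theta_t$. From $\partial\theta_t=0$ you get $\partial^\dagger\partial\theta_t=0$ trivially, and from $\bar\partial\theta_t=-\bar\partial\omega_t^2$ you get $\bar\partial^\dagger\bar\partial\theta_t=-\bar\partial^\dagger\bar\partial\omega_t^2$, but these two pieces do \emph{not} add up to the Hodge Laplacian: you are missing $\partial\partial^\dagger\theta_t$ and $\bar\partial\bar\partial^\dagger\theta_t$. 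The gauge $\theta_t=\partial\bar\partial^\dagger\partial^\dagger\gamma_t$ does not, in a general Hermitian background, force $\partial^\dagger\theta_t=0$ or $\bar\partial^\dagger\theta_t=0$, because $\partial$ and $\bar\partial^\dagger$ need not commute. So your ``elliptic system $\Delta_{g_t}\theta_t=F$'' is not actually established.

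The paper closes this gap by splitting the manifold in two. On $U(\epsilon)\cap X_t$, the metric $g_t$ equals a multiple of $g_{co,t}$ and is \emph{K\"ahler}; there $\bar\partial\omega_t^2=0$ so $\bar\partial\theta_t=0$, and the K\"ahler identity $[\partial,\bar\partial^\dagger]=0$ gives $\bar\partial^\dagger\theta_t=\bar\partial^\dagger\partial\bar\partial^\dagger\partial^\dagger\gamma_t=\partial\bar\partial^\dagger\bar\partial^\dagger\partial^\dagger\gamma_t=0$. Hence $\Delta_{\bar\partial}\theta_t=0$ genuinely holds there, and your rescaling argument in cylindrical coordinates goes through verbatim. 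Away from the vanishing cycles, the paper does \emph{not} try to write an elliptic equation for $\theta_t$; instead it uses your ``alternative route'' through the fourth-order elliptic equation $E_t\gamma_t=\bar\partial\omega_t^2$ for $\gamma_t$. Here the geometry is uniform in $t$, so standard elliptic estimates plus the $L^2$ bound $\|\gamma_t\|_{L^2}\leq C|t|$ and the source bound $\|\bar\partial\omega_t^2\|_{C^k}\leq C_k|t|$ give $\|\gamma_t\|_{C^4(K)}\leq C|t|$, hence $|\nabla^k\theta_t|\leq C_k|t|$ on $K$. (Note that $|t|\leq|t|^{2/3}$ for $|t|\leq1$, so this is the \emph{stronger} bound, not weaker as you suggest; the point is simply that this argument is confined to compact sets where the geometry is uniform.)
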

\begin{proof} This is similar to \cite[Proposition 3.7]{Chuan}. We
first show the estimate on a compact set $K$ which does not intersect the
vanishing cycles. The operator $E_t$ given by
\[
E_t = \partial \bar{\partial} \bar{\partial}^\dagger \partial^\dagger
+ \partial^\dagger \bar{\partial} \bar{\partial}^\dagger \partial +
\partial^\dagger \partial,
\]
where $\dagger$ is with respect to $g_t$, is a 4th order elliptic
operator. The form $\gamma_t$ satisfies
\[
E_t(\gamma) = \bar{\partial} \omega_t^2.
\]
In fact, it is obtained in \cite{FLY} by solving this equation. On
$K$, the geometry is uniform in $t$, hence by elliptic estimates we
have
\[
\| \gamma \|_{C^4(K)} \leq C( \| \gamma\|_{L^2(K)} + \| \bar{\partial}
\omega\|_{W^{k,p}(K)})
\]
for some $k,p>1$. As noted in Lemma \ref{prop: FLYalmost}, the construction of $\omega_t$ is
such that
\be \label{Theta-13-est}
|\bar{\partial} \omega_t^2|_{C^k(X_t,g_t)} \leq C_k |t|.
\ee
By (\ref{Theta-13-est}) and (\ref{gamma-L2}), we have
$\| \gamma \|_{C^4(K)} \leq C|t|$ and hence $\| \nabla \theta_t
\|_{L^\infty(K)} \leq C|t|$. Similarly, $|\nabla^k
\theta_t\|_{L^\infty(K)} \leq C_k |t|$.
\smallskip
\par We now prove the estimate stated in the lemma on a set $U(\epsilon) \cap X_t$ containing
the vanishing cycles and assume $g_t=R_0 g_{co,t}$. Here $\bar{\partial}
\theta =\bar{\partial} \omega_{co,t}^2= 0$, and we also have
\[
\bar{\partial}^\dagger \theta = \bar{\partial}^\dagger \partial
\bar{\partial}^\dagger \partial^\dagger \gamma_t = 0
\]
since $\partial$ and $\bar{\partial}^\dagger$ commute because
$g_t$ is K\"ahler on this set. Therefore
\[
\Delta_{\bar{\partial}} \theta_t = 0.
\]
Working in holomorphic cylindrical coordinates (see Lemma \ref{lem: cylCoords}), we can verify that the coefficients
of the equation $r^2 \Delta_{\bar{\partial}} \theta_t = 0$ are
uniformly bounded in $C^\alpha$. Indeed, by the Bochner-Kodaira formula,
\[
\Delta_{\bar{\partial}} \theta = - g_t^{i \bar{j}}
\partial_i \partial_{\bar{j}} \theta +
 \Gamma * \partial \theta +  \partial \Gamma * \theta + \Gamma * \Gamma * \theta+ Rm_{g_t} * \theta,
\]
and the uniform boundedness of the coefficients of $r^2
\Delta_{\bar{\partial}}$ follows from Lemma \ref{lem: cylCoords}. By the Schauder estimates in this coordinate chart, we obtain
\[
\sup_{B_{1/2}}| \partial \theta_t |_{g_{euc}} \leq C \sup_{B_1} | \theta_t |_{g_{euc}}.
\]
Using $g_{co,t} = r^2 O(I)$ in these coordinates, we obtain
\[
| \nabla \theta_t |_{g_t} \leq C r^{-1} \sup_{X_t} | \theta_t |_{g_t}.
\]
Since $| \theta_t |_{g_t} \leq C |t|^{2/3}$, we obtain the lemma for
$k=1$ and higher $k \geq 1$ are similar.
\end{proof}

\medskip
\par We now note some general facts on $(2,2)$ forms constructed via
the ansatz $\tilde{\omega}^2 = \omega^2 + \theta + \bar{\theta}$.

\begin{lem} \label{lem-squareroot} 
  On a complex manifold of dimension $n$, the equation
  \[
    \omega^{n-1} = \Psi>0
  \]
  has solution
\[
g_{\bar{k} j} = (\det \, \Psi)^{1/(n-1)} (\Psi^{-1})_{\bar{k} j},
\]
where $\omega = \sqrt{-1} g_{\bar{k} j} \, dz^j \wedge d \bar{z}^k$
and $\Psi$ is written as
\[
\sum_{k,j} c_{kj} \Psi^{k \bar{j}}
dz^1 \wedge d \bar{z}^1 \wedge \cdots \wedge \widehat{dz^k} \wedge d
\bar{z}^k \wedge \cdots \wedge dz^j \wedge \widehat{d \bar{z}^j} \wedge
\cdots \wedge dz^n \wedge d \bar{z}^n
\]
with $c_{kj} = (\sqrt{-1})^{n-1} (n-1)! sgn(k,j)$.
\end{lem}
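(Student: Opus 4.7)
The statement is a purely algebraic fact relating $(1,1)$-forms to $(n-1,n-1)$-forms on a complex manifold, with no analysis involved. The plan is a direct linear-algebra computation: expand $\omega^{n-1}$ in the specified basis of $(n-1,n-1)$-forms, identify it with $\Psi$ termwise, and invert the resulting matrix relation.

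To begin, writing $\omega = \sqrt{-1}\, g_{\bar{k}j}\, dz^j \wedge d\bar{z}^k$, I would expand $\omega^{n-1}/(n-1)!$ in the basis of $(n-1,n-1)$-forms obtained by deleting one factor $dz^k$ and one factor $d\bar{z}^j$ from $\bigwedge_{i=1}^n dz^i \wedge d\bar{z}^i$. The combinatorics is the standard cofactor expansion of $\det(g_{\bar{k}j})$---the same identity that underlies $\omega^n/n! = \det(g)(\sqrt{-1})^n \bigwedge_i dz^i \wedge d\bar{z}^i$---and it identifies the coefficient of the $(k,j)$-basis element as $(\sqrt{-1})^{n-1}\,\mathrm{sgn}(k,j)\,(\det g)\,g^{j\bar{k}}$, where $g^{j\bar{k}}$ denotes the entries of the inverse matrix. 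The normalization constant $c_{kj} = (\sqrt{-1})^{n-1}(n-1)!\,\mathrm{sgn}(k,j)$ appearing in the statement is chosen precisely so that the equation $\omega^{n-1} = \Psi$ becomes the clean hermitian-matrix identity
\[
\Psi^{k\bar{j}} = (\det g)\, g^{j\bar{k}}.
\]

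Next, I would invert this relation. Taking determinants of both sides gives $\det \Psi = (\det g)^n (\det g)^{-1} = (\det g)^{n-1}$, so $\det g = (\det \Psi)^{1/(n-1)}$. Substituting back yields $g^{j\bar{k}} = (\det \Psi)^{-1/(n-1)}\Psi^{k\bar{j}}$, and inverting this hermitian matrix produces the claimed formula $g_{\bar{k}j} = (\det \Psi)^{1/(n-1)}(\Psi^{-1})_{\bar{k}j}$. The resulting $g$ is positive definite: under the chosen normalization, the pointwise positivity of the $(n-1,n-1)$-form $\Psi$ translates (via the Michelsohn-type identification between strongly positive $(n-1,n-1)$-forms and positive hermitian matrices) into positive definiteness of $\Psi^{k\bar{j}}$, and hence of its inverse.

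The main obstacle is purely bookkeeping: tracking the signs $\mathrm{sgn}(k,j)$, the factor $(n-1)!$, and the powers of $\sqrt{-1}$ in the expansion of $\omega^{n-1}$. Since the lemma absorbs all of these into the definition of $c_{kj}$, the computation reduces to the classical cofactor identity $(\det g)\,g^{j\bar{k}} = (-1)^{j+k} M_{kj}$, where $M_{kj}$ is the minor of $(g_{\bar{k}j})$ obtained by deleting row $k$ and column $j$. There are no analytic difficulties whatsoever; everything is finite-dimensional linear algebra applied pointwise.
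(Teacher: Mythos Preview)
Your proposal is correct and follows essentially the same approach as the paper's proof, which simply states that direct computation of $\omega^{n-1}$ gives $(\det g)\, g^{j\bar k} = \Psi^{j\bar k}$ and then takes determinants to solve for $g$. You have supplied the combinatorial details (cofactor expansion, tracking of $c_{kj}$) that the paper omits.
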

\begin{proof} See e.g. \cite{Michelsohn} or \cite{PPZ2}. Direct computation of
$\omega^{n-1}$ gives
$(\det g) g^{j \bar{k}} = \Psi^{j \bar{k}}$ and the result follows
from taking the determinant of both sides and solving for $g$.
\end{proof}

\begin{lem} \label{lem-diff-squareroot} Let $\nabla$ be the Chern connection with respect to a Hermitian
  metric $\omega$. Let $\eta = \sqrt{-1} \eta_{\bar{k} j} \, dz^j \wedge d
  \bar{z}^k$ be a positive $(1,1)$ form solving $\eta^2 = \omega^2 + \theta + \bar{\theta}$,
  where
  \[
\theta = {1 \over 4} \theta_{s \bar{r} j \bar{k}} \, dz^s \wedge d
\bar{z}^r \wedge dz^j \wedge d \bar{z}^k.
  \]
  Then
\[
\nabla_i\eta_{\bar{k}
  j} = -{1 \over 2} \eta^{s \bar{r}} (\nabla_i \theta _{s \bar{r} j \bar{k}}
+ \nabla_i \bar{\theta} _{s \bar{r} j \bar{k}}) + {1 \over 8} \bigg[ \eta^{p \bar{q}} \eta^{s \bar{r}}(\nabla_i
\theta_{s \bar{r} j \bar{k}} + \nabla_i \bar{\theta}_{s \bar{r} p \bar{q}}) \bigg] \eta_{\bar{k} j} .
\]
  
  \end{lem}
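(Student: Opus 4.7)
The strategy is to differentiate the defining identity $\eta^2 = \omega^2 + \theta + \bar{\theta}$ directly, and then carry out a purely algebraic inversion of the resulting linear relation between $\nabla_i\eta$ and $\nabla_i(\eta^2)$. Since $\nabla$ is the Chern connection of $\omega$, we have $\nabla\omega=0$ and hence $\nabla_i\omega^2=0$, so differentiating both sides of the defining relation yields the tensorial identity
\[
\nabla_i(\eta^2)_{s\bar{r}j\bar{k}} = \nabla_i \theta_{s\bar{r}j\bar{k}} + \nabla_i \bar{\theta}_{s\bar{r}j\bar{k}}.
\]
The entire content of the lemma is therefore to express $\phi_{\bar{k}j}:=\nabla_i \eta_{\bar{k}j}$ in terms of the left-hand side at each point.

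First I would expand $\eta\wedge\eta$ in coordinates to identify the antisymmetrized components. With the convention $\theta=\tfrac{1}{4}\theta_{s\bar rj\bar k}\,dz^s\wedge d\bar z^r\wedge dz^j\wedge d\bar z^k$, one computes $(\eta^2)_{s\bar{r}j\bar{k}} = 2(\eta_{\bar{k}s}\eta_{\bar{r}j}-\eta_{\bar{r}s}\eta_{\bar{k}j})$. Applying the Leibniz rule gives
\[
\nabla_i(\eta^2)_{s\bar{r}j\bar{k}} = 2\bigl(\phi_{\bar{k}s}\eta_{\bar{r}j}+\eta_{\bar{k}s}\phi_{\bar{r}j}-\phi_{\bar{r}s}\eta_{\bar{k}j}-\eta_{\bar{r}s}\phi_{\bar{k}j}\bigr),
\]
which is the linear equation to be solved for $\phi_{\bar{k}j}$.

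Next I would invert this relation by two successive contractions with $\eta^{-1}$. Contracting with $\eta^{s\bar{r}}$ and using $\eta^{s\bar{r}}\eta_{\bar{r}j}=\delta^{s}{}_j$, $\eta^{s\bar{r}}\eta_{\bar{k}s}=\delta^{\bar{r}}{}_{\bar{k}}$ and $\eta^{s\bar{r}}\eta_{\bar{r}s}=n$ produces
\[
\eta^{s\bar{r}}\nabla_i(\eta^2)_{s\bar{r}j\bar{k}} = 2\bigl[(2-n)\phi_{\bar{k}j}-(\mathrm{tr}_\eta \phi)\,\eta_{\bar{k}j}\bigr],
\]
which at $n=3$ becomes $-2\phi_{\bar{k}j}-2(\mathrm{tr}_\eta\phi)\eta_{\bar{k}j}$. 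Contracting once more with $\eta^{j\bar{k}}$ isolates the trace, giving $(n+1)\mathrm{tr}_\eta\phi = -\tfrac{1}{2}\eta^{j\bar{k}}\eta^{s\bar{r}}\nabla_i(\eta^2)_{s\bar{r}j\bar{k}}$, so that $\mathrm{tr}_\eta\phi=-\tfrac{1}{8}\eta^{p\bar{q}}\eta^{s\bar{r}}\nabla_i(\eta^2)_{s\bar{r}p\bar{q}}$ in dimension three. Substituting the trace back into the single-contraction identity, and replacing $\nabla_i(\eta^2)$ by $\nabla_i\theta+\nabla_i\bar{\theta}$ everywhere, yields the formula claimed.

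There is no analytical obstacle: the entire argument is a pointwise linear-algebra computation. The only care needed is bookkeeping of antisymmetrization factors in the definition of $(\eta^2)_{s\bar{r}j\bar{k}}$ and ensuring the $\eta^{p\bar{q}}$ in the double-contraction term of the final formula contracts against matching $p,\bar{q}$ indices of both $\nabla_i\theta$ and $\nabla_i\bar{\theta}$, which is what the derivation above naturally produces.
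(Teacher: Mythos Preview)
Your approach is essentially the same as the paper's: write the component identity $(\eta^2)_{s\bar r j\bar k}=2(\eta_{\bar k s}\eta_{\bar r j}-\eta_{\bar r s}\eta_{\bar k j})$, differentiate (using $\nabla\omega=0$), contract once with $\eta^{s\bar r}$, contract again with $\eta^{j\bar k}$ to isolate the trace, and substitute back. One small slip: from your single-contraction line $2[(2-n)\phi_{\bar k j}-(\mathrm{tr}_\eta\phi)\eta_{\bar k j}]$, a further contraction with $\eta^{j\bar k}$ yields $4(1-n)\,\mathrm{tr}_\eta\phi$, i.e.\ $(n-1)\,\mathrm{tr}_\eta\phi=-\tfrac14\eta^{p\bar q}\eta^{s\bar r}\nabla_i(\eta^2)_{s\bar r p\bar q}$ rather than the $(n+1)$ and $-\tfrac12$ you wrote; at $n=3$ both versions give the same $-\tfrac18$ coefficient, so the final formula is unaffected.
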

\begin{proof}
 A similar computation can be found in \cite{PPZ5}. In components, the equation $\eta^2 = \omega^2 + \theta + \bar{\theta}$ is
  \[
- 2 \eta_{\bar{r} s} \eta_{\bar{k} j} + 2 \eta_{\bar{r} j} \eta_{\bar{k} s} =
(\omega^2 + \theta + \bar{\theta})_{s \bar{r} j \bar{k}}.
  \]
  Differentiating this equation leads to
 \[
- \nabla_i\eta_{\bar{r} s} \eta_{\bar{k} j}  - \eta_{\bar{r} s} \nabla_i\eta_{\bar{k}
  j} + \nabla_i \eta_{\bar{r} j} \eta_{\bar{k} s} + \eta_{\bar{r} j}
\nabla_i \eta_{\bar{k} s} = {1 \over 2} (\nabla_i \theta _{s \bar{r} j \bar{k}}
+ \nabla_i \bar{\theta} _{s \bar{r} j \bar{k}})
 \]
Contracting by $\eta^{s \bar{r}}$ gives
\[
- (\eta^{s \bar{r}} \nabla_i\eta_{\bar{r} s}) \eta_{\bar{k} j}  -  \nabla_i\eta_{\bar{k}
  j} = {1 \over 2} \eta^{s \bar{r}} (\nabla_i \theta _{s \bar{r} j \bar{k}}
+ \nabla_i \bar{\theta} _{s \bar{r} j \bar{k}})
\]
Contracting again by $\eta^{j \bar{k}}$ gives
\[
-4 \eta^{s \bar{r}} \nabla_i \eta_{\bar{r} s} = {1 \over 2} \eta^{j \bar{k}}\eta^{s \bar{r}} (\nabla_i \theta _{s \bar{r} j \bar{k}}
+ \nabla_i \bar{\theta} _{s \bar{r} j \bar{k}}).
\]
Combining the previous two identities proves the lemma.
\end{proof}
\medskip
\par Using what we have obtained so far in this subsection, we can
derive the main estimate of this subsection which shows that the
difference between $g_t^{-1}$ and $(g_{{\rm FLY},t})^{-1}$ is small.

\begin{lem} \label{lem:gfly-gt}
There exists $C_k>0$ and $\epsilon>0$ with the following property. For
all $0<|t| <\epsilon$, the $\theta$-perturbed Fu-Li-Yau metric
$g_{{\rm FLY},t}$ satisfies the estimates:
  \[
2^{-1} g_t \leq g_{{\rm FLY},t} \leq 2 g_t,
  \]
  and
 \[
| \nabla^k_{g_t}( g _{{\rm FLY},t} - g_t) |_{g_t} \leq C_k |t|^{2/3} r^{-k},
  \]
  for $k \in \mathbb{Z}_{\geq 0}$. Furthermore, we have
  \[
\| \omega^{-1}_{{\rm FLY},t} - \omega^{-1}_t
\|_{C^{0,\alpha}_0} \leq C_0 |t|^{2/3}.
  \]
 
\end{lem}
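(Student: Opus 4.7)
\medskip

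\noindent\textbf{Proof proposal for Lemma~\ref{lem:gfly-gt}.} The idea is to feed the pointwise and derivative estimates on $\theta_t$ coming from \eqref{FLY-estimate} and Lemma~\ref{lem-theta-grad-est} into the explicit algebraic formulas of Lemma~\ref{lem-squareroot} and Lemma~\ref{lem-diff-squareroot}. Since $n=3$, Lemma~\ref{lem-squareroot} gives the pointwise identity
\[
(g_{{\rm FLY},t})_{\bar{k}j} = (\det \Psi_t)^{1/2} (\Psi_t^{-1})_{\bar{k}j}, \qquad \Psi_t = \omega_t^2 + \theta_t + \bar{\theta}_t,
\]
with the analogous formula for $g_t$ taking $\Psi = \omega_t^2$. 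To compare these two, I would pass to an orthonormal frame for $g_t$ at a point $x \in X_t$ and write $\Psi_t = \Psi_t^{g_t\text{-norm}}$ as $I + E_t$, where $E_t$ is the $g_t$-normalized representation of $\theta_t + \bar{\theta}_t$ acting on $(2,2)$-forms. Thanks to \eqref{FLY-estimate} we have $|E_t| \leq C|t|^{2/3}$, so for $|t|$ small enough $\Psi_t$ is a $g_t$-small perturbation of the $\omega_t^2$-valued tensor, and $I + E_t$ is invertible with norm controlled uniformly. Expanding $(\det(I+E_t))^{1/2}(I+E_t)^{-1} = I + B(E_t)$ where $B$ is a smooth function of $E_t$ vanishing linearly at the origin gives
\[
|g_{{\rm FLY},t} - g_t|_{g_t}(x) \leq C|E_t|(x) \leq C|t|^{2/3},
\]
from which the two-sided bound $2^{-1} g_t \leq g_{{\rm FLY},t} \leq 2 g_t$ follows immediately for $|t| \leq \epsilon$ sufficiently small.

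For the derivative bounds I would use Lemma~\ref{lem-diff-squareroot} applied to $\eta = \omega_{{\rm FLY},t}$, which expresses $\nabla_i (g_{{\rm FLY},t})_{\bar{k}j}$ as a $g_{{\rm FLY},t}$-algebraic contraction of $\nabla_i \theta_t$ and $\nabla_i \bar{\theta}_t$. Invoking the two-sided bound just obtained, together with Lemma~\ref{lem-theta-grad-est} with $k=1$, yields
\[
|\nabla_{g_t}(g_{{\rm FLY},t} - g_t)|_{g_t} \leq C|\nabla_{g_t} \theta_t|_{g_t} \leq C|t|^{2/3} r^{-1}.
\]
For higher $k \geq 2$, I would differentiate the identity in Lemma~\ref{lem-diff-squareroot} inductively: each derivative of the right-hand side either hits $\nabla^j \theta_t$ for $j \leq k$, whose norm is controlled by $C|t|^{2/3} r^{-j}$, or hits $g_{{\rm FLY},t}^{-1}$, in which case the derivative is already controlled at lower order. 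A straightforward induction using the product rule then gives $|\nabla^k_{g_t}(g_{{\rm FLY},t} - g_t)|_{g_t} \leq C_k |t|^{2/3} r^{-k}$. I expect the only mildly delicate bookkeeping to be showing that the covariant derivatives $\nabla^k_{g_{{\rm FLY},t}}$ and $\nabla^k_{g_t}$ can be interchanged with errors controlled by the same bound; this is immediate once one knows that the difference $A_{g_{{\rm FLY},t}} - A_{g_t}$ of Chern connections is bounded by $C|t|^{2/3} r^{-1}$, which follows from $k=1$.

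Finally, the weighted H\"older estimate $\|\omega_{{\rm FLY},t}^{-1} - \omega_t^{-1}\|_{C^{0,\alpha}_0} \leq C_0 |t|^{2/3}$ follows by applying the remark at the end of Section~\ref{section:holder}: the $C^0$ bound and the $C^1$ bound for $g_{{\rm FLY},t}^{-1} - g_t^{-1}$, which one derives from the matrix identity $g_{{\rm FLY},t}^{-1} - g_t^{-1} = g_t^{-1}(g_t - g_{{\rm FLY},t}) g_{{\rm FLY},t}^{-1}$ together with the $k=0,1$ estimates already obtained, imply the desired $C^{0,\alpha}_0$ bound. The main step throughout is the $C^1$ bound; after that, the higher-order derivative and H\"older statements follow by routine bootstrapping in the cylindrical coordinates of Lemma~\ref{lem: cylCoords}.
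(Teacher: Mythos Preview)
Your proposal is correct and follows essentially the same route as the paper: feed the $\theta_t$ estimates into Lemmas~\ref{lem-squareroot} and~\ref{lem-diff-squareroot}, then bootstrap. The only cosmetic difference is that for the $k=0$ bound the paper writes $\omega_{{\rm FLY},t}-\omega_t=\int_0^1\frac{d}{ds}\eta_s\,ds$ with $\eta_s^2=\omega_t^2+s(\theta_t+\bar\theta_t)$ and applies the variation formula, whereas you Taylor-expand $(\det(I+E_t))^{1/2}(I+E_t)^{-1}$ directly; the remaining steps (gradient via Lemma~\ref{lem-diff-squareroot}, induction for higher $k$, and the matrix identity for the H\"older bound on the inverse) match.
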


\begin{proof}
If $|\theta_t+\bar{\theta}_t|_{g_t}
\leq {1 \over 100}$, then by Lemma \ref{lem-squareroot} we have
\[
| g_{{\rm FLY},t}
|_{g_t} \leq 2, \quad | g_{{\rm FLY},t}^{-1}
|_{g_t} \leq 2.
\]
Next, we write the difference of metrics as
\[
\omega_{{\rm FLY},t} - \omega_t = \int_0^1 {d \over ds} \eta_{s} \, ds
\]
where $\eta_{s}$ solves $\eta_{s}^2 = \omega_t^2 + s
(\theta_t + \bar{\theta}_t)$. By the variation formula in Lemma
\ref{lem-diff-squareroot}, we have
\[
{d \over ds} (\eta_s) _{\bar{k} j}= - {1 \over 2} \eta_s^{s
  \bar{r}}(\theta+\bar{\theta})_{s \bar{r} j \bar{k}} + {1 \over 8}[
\eta_s^{p \bar{q}} \eta_s^{s \bar{r}} (\theta+\bar{\theta})_{s \bar{r} p
  \bar{q}} ]\, (\eta_s)_{\bar{k} j}.
\]
The same argument as above shows that $| \eta_s |_{g_t}
\leq C$ and $| \eta_s^{-1} |_{g_t} \leq C$ for $s \in [0,1]$. Therefore
\[
| g_{{\rm FLY},t} - g_t |_{g_t} \leq C |\theta_t |_{g_t}
\leq C|t|^{2/3}
\]
by (\ref{FLY-estimate}). Next, by Lemma \ref{lem-diff-squareroot} and
Lemma \ref{lem-theta-grad-est}, we have
\[
|\nabla_{g_t} (g_{{\rm FLY},t})|_{g_t} \leq C |\nabla_{g_t} \theta|_{g_t} \leq C
|t|^{2/3} r^{-1}.
\]
Higher order estimates for $|\nabla^k_{g_t} (g_{{\rm FLY},t})|_{g_t}$ are
similar.
\smallskip
\par It remains to estimate the difference, which can be done by:
\bea
\| \omega^{-1}_{{\rm FLY},t} - \omega^{-1}_t
\|_{C^{0,\alpha}_0} &=& \| \omega_{{\rm FLY},t}^{-1}
(\omega_{{\rm FLY},t} - \omega_t)
\omega_t^{-1}\|_{C^{0,\alpha}_0} \nonumber\\
&\leq& \| \omega_{{\rm FLY},t}^{-1} \|_{C^{0,\alpha}_0}
\| \omega_{{\rm FLY},t} - \omega_t\| _{C^{0,\alpha}_0} 
\| \omega_t^{-1}\|_{C^{0,\alpha}_0}.
\eea
Since $|\nabla \omega^{-1}_{{\rm
    FLY},t}|_{g_t} \leq C |t|^{2/3} r^{-1} $, we have
\[
\| \omega_{{\rm FLY},t}^{-1} \|_{C^{0,\alpha}_{0}} \leq C (1+r|\nabla \omega^{-1}_{{\rm
    FLY},t}|_{g_t}) \leq C.
\]
The estimate
\[
  | \omega_{{\rm FLY},t} - \omega_t|+r| \nabla_{g_t} (\omega_{{\rm FLY},t} - \omega_t)|
  \leq C|t|^{2/3}
\]
implies
\[
\| \omega_{{\rm FLY},t} - \omega_t \|_{C_0^{0,\alpha}} \leq C
|t|^{2/3} 
\]
which proves the lemma.
\end{proof}

\subsection{Uniform weighted H\"older estimates}

Let $g_t$ be the metric constructed by Fu-Li-Yau which satisfies
$g_t=g_{co,t}$ near the vanishing cycles (for ease of notation, in this section we assume that the constant $M_0^{1/2} \epsilon^{-1/3}$ in Proposition \ref{prop: FLYalmost} is equal to 1). Let $H_t$ be the metric on $X_t$ constructed in the previous section,
i.e. the glued approximate solution to
the Hermitian-Yang-Mills equation. We will use a linear operator $L_t$ which
acts on endomorphisms $h \in \Gamma({\rm End} \, T^{1,0} X_t)$ by
\[
L_{t} h = (g_t)^{j \bar{k}}  \partial_{\bar{k}} \nabla^{H_t}_j h + {1
  \over 2} [ \sqrt{-1} \Lambda_{\omega_t} F_{H_t}, h].
\]
The motivation for this operator is that it is close to the
linearization of the Hermitian-Yang-Mills equation $\sqrt{-1}
\Lambda_{\omega_{{\rm FLY},t}} F_H=0$ at the approximate solution
$H_t$, the difference being the use of $g_t$ instead of $g_{{\rm FLY},t}$. We start by proving uniform Schauder estimates independent of $t$ on
the deformations $X_t$. 

\begin{prop} \label{unif-schauder}
Let $\beta \leq 0$. There exists $C>1$ and $a \in (0,1)$ such that for all $t \in
\mathbb{C}^*$ and $h \in \Gamma({\rm
  End} \, T^{1,0} X_t) $, we can estimate
\[
\| h \|_{C^{2,a}_\beta(X_t)} \leq C (\| h \|_{C^0_\beta(X_t)} + \| L_{t} h
\|_{C^{0,a}_{\beta-2}(X_t)}  )
\]
where the weighted H\"older norms are defined in \S
\ref{section:holder} using $(g_t,H_t)$.
\end{prop}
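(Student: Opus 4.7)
The plan is to reduce the global weighted Schauder estimate to a family of uniform local Schauder estimates on dyadic annuli $U_{\hat r} = \{(1/2)\hat r \le r \le 2\hat r\}$, by exploiting the scaling structure of the weighted norm together with the uniform bounds for $(g_t,H_t)$ established in Lemma~\ref{lem: H_t-est}. The point is that the weighted norms are defined precisely so that after rescaling $g_t \rightsquigarrow \hat r^{-2}g_t$ on $U_{\hat r}$, one recovers ordinary H\"older norms on a unit-sized domain where the geometry is uniformly bounded.

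First I would fix $\delta_0>0$ so that on $\{r \ge \delta_0\}$ the metrics $g_t$ and $H_t$ converge smoothly (uniformly in $t$) to the smooth limiting pair $(g_0,H_0)$ on $\underline{X}_{\rm reg}$, so that standard interior Schauder theory applied to $L_t$ gives $\|h\|_{C^{2,a}(\{r \ge \delta_0\})} \le C(\|h\|_{C^0(\{r \ge 2\delta_0\})} + \|L_t h\|_{C^{0,a}(\{r \ge 2\delta_0\})})$ with a $t$-independent constant. This handles the region bounded away from the vanishing cycles.

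Next, on the region $\{r < 2\delta_0\}$ where $g_t = g_{co,t}$, I would cover by annular patches $U_{\hat r}$ with $\hat r \in (0, 2\delta_0)$. On each $U_{\hat r}$, use the holomorphic cylindrical coordinates from Lemma~\ref{lem: cylCoords}: in these coordinates $\hat r^{-2} g_{co,t} = O(I)$ with bounded $C^k$ norms, and by Lemma~\ref{lem: H_t-est} the rescaled metric $\hat r^{-2}H_t$ satisfies the same type of uniform bounds. Consequently, the operator
\[
\hat r^2 L_t h = (\hat r^{-2} g_t)^{j \bar k}\partial_{\bar k}\nabla^{H_t}_j h + \tfrac12 \hat r^2[\sqrt{-1}\Lambda_{\omega_t}F_{H_t}, h]
\]
is, in these coordinates, an elliptic operator on a unit-sized domain whose coefficients and connection one-forms are bounded in $C^{0,a}$ independently of $t$ and $\hat r$ (the zeroth-order coefficient is bounded using $|F_{H_t}|_{g_t} \le Cr^{-2}$). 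The standard interior Schauder estimate then gives
\[
\|h\|_{C^{2,a}(U_{\hat r},\hat r^{-2}g_t)} \le C\bigl(\|h\|_{C^0(U_{\hat r}^*,\hat r^{-2}g_t)} + \hat r^2\|L_t h\|_{C^{0,a}(U_{\hat r}^*,\hat r^{-2}g_t)}\bigr),
\]
on a slightly enlarged annulus $U_{\hat r}^*$, with $C$ uniform. Multiplying through by $\hat r^{-\beta}$ and using the equivalence (\ref{weighted-alt-norm}) of the weighted norm on $U_{\hat r}$ with the rescaled ordinary norm converts this into the desired inequality $\|h\|_{C^{2,a}_\beta(U_{\hat r})} \le C(\|h\|_{C^0_\beta(U_{\hat r}^*)} + \|L_t h\|_{C^{0,a}_{\beta-2}(U_{\hat r}^*)})$.

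Finally, I would assemble these local estimates into a global one via Lemma~\ref{holder-local2global}: taking the supremum over $\hat r \in (0,2\delta_0)$ of the local weighted bounds, combined with the fixed estimate on $\{r \ge \delta_0\}$, controls $\|h\|_{C^{2,a}_\beta(X_t)}$ by $\|h\|_{C^0_\beta(X_t)} + \|L_t h\|_{C^{0,a}_{\beta-2}(X_t)}$ uniformly in $t$. The use of $\beta \le 0$ is precisely what makes the local-to-global passage of the H\"older semi-norm work, exactly as in Lemma~\ref{holder-local2global}: cross-scale H\"older differences between points at very different values of $r$ are controlled by the $C^0_\beta$ part. The main potential obstacle is verifying that the rescaled coefficients (in particular the Chern connection one-form of $H_t$ and the zeroth-order curvature term) really do admit $t$-independent $C^{0,a}$ bounds on each unit annulus; this is exactly what is delivered by the quantitative estimates $|\nabla^k_{g_t}H_t|_{g_t}\le C_k r^{-k}$ of Lemma~\ref{lem: H_t-est} together with the bounded-geometry coordinates of Lemma~\ref{lem: cylCoords}, so in fact there is no obstruction.
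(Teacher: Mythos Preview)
Your proposal is correct and follows essentially the same approach as the paper: reduce to local Schauder estimates on annuli $U_{\hat r}$ using the holomorphic cylindrical coordinates of Lemma~\ref{lem: cylCoords} and the uniform bounds of Lemma~\ref{lem: H_t-est} so that $\hat r^2 L_t$ has uniformly bounded coefficients, then pass from local to global via Lemma~\ref{holder-local2global}. The paper splits at $r=1$ rather than an unspecified $\delta_0$, but this is cosmetic.
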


\begin{proof}
  On $X_t \cap \{ r > 1 \}$, the geometry is uniform in $t$
and the
estimate holds by the usual Schauder
estimates. Let $\hat{x} \in X_t \cap \{ r(x) \leq 1 \}$. We denote
the scale of this point by the
constant $\hat{r} :=
r(\hat{x})$. We will work in
holomorphic cylindrical coordinates $\{ w^i \}$ given in Lemma
\ref{lem: cylCoords} on the set
\[
  U_{\hat{r}} =\{ (1/4) \hat{r} < r < 4 \hat{r} \}.
 \]
By Lemma \ref{lem: cylCoords} and Lemma \ref{lem: H_t-est}, the operator $\hat{r}^2 L_t$ is
uniformly elliptic with uniform derivative estimates in
coordinates $\{w^i\}$. The standard Schauder
estimates applied to each matrix entry $\hat{r}^{-\beta} h^i{}_j$ imply
\[
\| \hat{r}^{-\beta} h \|_{C^{2,\alpha}_{g_{\rm euc}}(B_{1/2})} \leq C (\|
\hat{r}^{-\beta} h
\|_{C^{0}_{g_{\rm euc}}(B_{1})} + \|\hat{r}^2 L_t (\hat{r}^{-\beta}
h) \|_{C^{\alpha}_{g_{\rm euc}}(B_{1})})
\]
with usual (non-scaled) norms
\[
\| u \|_{C^k_{g_{\rm euc}}(B_1)} = \sum_{i=1}^k \sup_{B_1} |D^i u| , \quad
[u]_{C^\alpha_{g_{\rm euc}}(B_1)} = \sup_{x \neq y} {|u(x)-u(y)| \over |x-y|^\alpha}.
\]  
As observed in (\ref{weighted-alt-norm}), since $\hat{r}^{-2} g_t$
is uniformly and smoothly equivalent to $g_{\rm euc}$ in
coordinates $\{ w^i \}$, the weighted H\"older norms are equivalent to these local
Euclidean norms, and we have
\[
\| h \|_{C^{2,a}_\beta(U_{\hat{r}})} \leq C ( \| h \|_{C^0_\beta(U_{\hat{r}})} + \| L_t
h \|_{C^a_{\beta-2}(U_{\hat{r}})} ).
\]
The norm $\| \cdot \|_{C^{2,a}_\beta(U_{\hat{r}})}$ involves
connection terms from $\nabla^{H_t}$, but these are bounded in
coordinates $\{w^i \}$ by Lemma \ref{g_t-unif-H_t}. By Lemma \ref{holder-local2global}, these local estimates on sets
$U_{\hat{r}}$ imply the global bound.
\end{proof}

\medskip
\par The next step is to improve this estimate for endomorphisms
orthogonal to the identity. For a related argument used in a gluing
construction of K\"ahler-Einstein
metrics on nodal surfaces, see \cite{Spotti}.

\begin{prop} \label{prop-invert-L}
Let $\beta \in (-2,0)$. There exists $C>1$ and $\alpha \in (0,1)$ with
the following property. Let $t \in
\mathbb{C}$ with $0<|t|<1$ be arbitrary. We can estimate
\[
\| h \|_{C^{2,\alpha}_\beta(X_t)} \leq C \| L_{t} h \|_{C^\alpha_{\beta-2}(X_t)} ,
\]
for all $h \in \Gamma({\rm  End} \, T^{1,0} X_t)$ satisfying $h^{\dagger_{H_t}}=h$ and $\int_{X_t} ({\rm Tr} \, h) \, d
{\rm vol}_{g_{{\rm FLY},t}} =0$.
\end{prop}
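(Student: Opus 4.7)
The plan is to prove this by a contradiction/blow-up argument that upgrades the Schauder bound of Proposition~\ref{unif-schauder} to an invertibility estimate. Suppose the claim fails: then there exist $t_n \in \mathbb{C}^*$ with $|t_n| \to 0$ (the regime with $|t_n|$ bounded below reduces to standard elliptic theory on a fixed smooth complex threefold, since the geometry of $X_{t}$ is then uniform) and self-adjoint $h_n$ with $\int_{X_{t_n}} \Tr h_n \,\dvol_{g_{{\rm FLY},t_n}}=0$, $\|h_n\|_{C^0_\beta(X_{t_n})}=1$, and $\|L_{t_n} h_n\|_{C^{0,\alpha}_{\beta-2}(X_{t_n})}\to 0$. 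Proposition~\ref{unif-schauder} then yields uniform $C^{2,\alpha}_\beta$ bounds, so the $h_n$ are precompact in $C^{2,\alpha/2}_\beta$ on every region of uniform geometry. Select points $x_n \in X_{t_n}$ with $r(x_n)^{-\beta}|h_n(x_n)|_{H_{t_n}}\ge \tfrac12$ and split into three cases according to the scale $\lambda_n := r(x_n)$ relative to $|t_n|^{1/3}$.

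In the bulk case $\lambda_n \to r_\infty > 0$, the pairs $(g_{t_n},H_{t_n})$ converge smoothly to $(g_0, H_0)$ on compact subsets of $\underline{X}_{reg}$ by Proposition~\ref{prop: FLYalmost} and Theorem~\ref{thm:H_0}. The limit $h_\infty$ is a non-zero self-adjoint endomorphism with $|h_\infty|_{g_0} \leq Cr^\beta$ satisfying $L_0 h_\infty = \Delta^{H_0}_{g_0} h_\infty = 0$ on $\underline{X}_{reg}$. Because $\beta > -2$, the weight $r^\beta$ is square-integrable near the nodes, so a cutoff-integration by parts against $h_\infty$ gives $\int_{\underline{X}_{reg}} |\nabla^{H_0} h_\infty|^2\,\dvol_{g_0}=0$. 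Thus $h_\infty$ is $H_0$-parallel; by the irreducibility/stability argument already used in Section~\ref{sec: HYM-central} this forces $h_\infty$ to be a scalar multiple of the identity, which the integral trace-free condition (passed to the limit via the finite volume of $\underline{X}_{reg}$) kills, contradicting $h_\infty \neq 0$.

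In the intermediate case $\lambda_n \to 0$ with $\lambda_n/|t_n|^{1/3}\to\infty$, we pull back by $\Phi_{t_n}^{-1}$ and rescale by the $\mathbb{C}^*$-action $S_{\lambda_n^{-1}}$ of~(\ref{eq: scalingMapDef}), so that the metric $\lambda_n^{-2}g_{t_n}$ converges (by Corollary~\ref{cor: decayGcotPull} and~(\ref{eq: gcotPullBack})) to $g_{co,0}$ on annuli of $V_0$, and the pulled-back, rescaled $H_{t_n}$ converges to a multiple of $g_{co,0}$ by Theorem~\ref{thm: decayToTangent}. The rescaled $\tilde h_n(y) := \lambda_n^{-\beta} h_n(\Phi_{t_n}\circ S_{\lambda_n}(y))$ then converge to a non-zero, $g_{co,0}$-self-adjoint $h_\infty$ on $V_0\setminus\{0\}$ with $|h_\infty|\le r^\beta$ and $\Delta_{g_{co,0}} h_\infty = 0$. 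In the remaining core case $\lambda_n \lesssim |t_n|^{1/3}$ we instead rescale by $S_{t_n^{-1/3}}$, which sends $V_{t_n}$ to $V_1$ with $g_{t_n}\mapsto g_{co,1}$, and obtain a similar limit $h_\infty$ on the asymptotically conical smoothing $V_1$.

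Both limiting scenarios are ruled out by indicial-root analysis on the Calabi--Yau cone. Decomposing $h_\infty = \tfrac13(\Tr h_\infty)\,\mathrm{Id} + h_\infty^0$ into its trace and trace-free parts, the scalar function $\Tr h_\infty$ is $g_{co,0}$-harmonic and by the explicit spectrum on a Calabi--Yau cone of complex dimension three has indicial roots $0$ and $-4$, so no non-zero solution satisfies $|{\rm Tr}\, h_\infty|\le Cr^\beta$ for $\beta\in(-2,0)$. The Poincar\'e inequality of Lemma~\ref{lem: Poincare} applied to $h_\infty^0$ (together with separation of variables along the Reeb flow and the bundle-theoretic stability of $E$ established there) similarly produces a positive spectral gap on the link, forcing the indicial roots of the traceless sector to lie outside $(-2,0)$, and hence $h_\infty^0 \equiv 0$. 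The asymptotically conical case on $V_1$ reduces to the conical case at infinity by the same argument combined with unique continuation in the interior. The main obstacle in executing this plan is precisely this spectral verification, namely checking that the weight interval $(-2,0)$ is non-critical for the Laplacian acting on self-adjoint endomorphisms of $T^{1,0}V_0$; once this is in hand, all three cases lead to a contradiction, completing the proof.
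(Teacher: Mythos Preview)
Your contradiction/blow-up framework and the bulk case are correct and match the paper's proof closely (including the integration by parts to deduce $\overline{\partial} h_\infty=0$, extension across the nodes, and the stability of $T^{1,0}X$ to force $h_\infty=cI$, killed by the trace condition).

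The substantive difference is how you handle the neck and core limits. You propose an indicial-root analysis for the endomorphism Laplacian on $V_0$ (and on $V_1$), and you correctly flag the spectral verification for the traceless sector as the main gap. Lemma~\ref{lem: Poincare} only gives that zero is not in the link spectrum; it does not by itself locate the indicial roots outside $(-2,0)$, and carrying this out would require separating variables for $g^{j\bar k}\partial_{\bar k}\nabla_j$ acting on sections of $\pi^*\mathrm{End}(E)$, which is considerably more work than you indicate.

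The paper sidesteps this entirely by passing to the \emph{scalar} function $u=|h|^2_{H}$. From the identity
\[
\Delta_{g}\,|h|^2_{H}=2\,\mathrm{Re}\,\langle Lh,h\rangle_{H}+2\,|\overline{\nabla}h|^2_{H,g}
\]
one gets $\Delta u\ge -C\varepsilon\, r^{2\beta-2}$ along the sequence, hence after rescaling a nonnegative subharmonic limit $u_\infty$ with $u_\infty\le Cr^{2\beta}$. In the core case (limit on a smooth $V_\kappa$) the maximum principle on $\{r\le R\}$ gives $u_\infty\le CR^{2\beta}\to 0$; in the neck case (limit on $V_0$) a standard scalar Liouville lemma on cones (Lemma~\ref{lem-subharmonic}) forces $u_\infty\equiv 0$ precisely when $-2\beta\in(0,n-2)=(0,4)$, i.e.\ $\beta\in(-2,0)$. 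No bundle-theoretic spectral input on the link is needed at all. This is both simpler and complete, and it is the missing ingredient in your plan.
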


\begin{proof} Suppose there exists a sequence of $t_i \rightarrow 0$ such that
\[
\| h_i \|_{C^{2,\alpha}_\beta} \geq M_i \| L_{{t_i}} h_i \|_{C^\alpha_{\beta-2}} 
\]
with $M_i \rightarrow \infty$ and $h_i$ defined on $X_{t_i}$. Replacing $h_i$ with $h_i / \| h_i
\|_{C^{2,\alpha}_\beta}$, we have a sequence with
\[
\| h_i \|_{C^{2,\alpha}_\beta(X_{t_i})} = 1, \quad \| L_{{t_i}} h_i
\|_{C^\alpha_{\beta-2}(X_{t_i})} \rightarrow 0.
\]
Let $K \subset \underline{X}$ be a compact set on the central fiber disjoint from the singular
points. For all $t$ small enough, we have a sequence $\Phi_{t_i}^*h_i$ of tensors defined on $K$. Since $C^{-1} r(x) \leq r(\Phi_t(x)) \leq C r(x)$, we have a uniform bound on
\[
  \|
  \Phi_{t_i}^*h_i \|_{C^{2,\alpha}_\beta(K,\Phi_{t_i}^* g_i, \Phi_{t_i}^* H_i)} \leq C.
\]
By Lemma \ref{prop: FLY2.6}, $\Phi_{t_i}^* g_{t_i} \rightarrow g_0$
smoothly uniformly on compact sets, and the definition of $H_t$
implies that $\Phi_{t_i}^* H_{t_i} \rightarrow H_0$ smoothly uniformly on compact
sets. We can thus extract a limiting tensor $h_0 \in C^{2,\alpha/2}_{loc}(\underline{X})$ which satisfies the growth estimates
\[
|h_0|_{g_0} \leq C r^\beta, \quad |\nabla_{H_0} h_0|_{g_0} \leq C r^{\beta-1}.
\]
By the construction of $\Phi_t$ in Lemma \ref{lem: trivGlob}, the limit $h_0$ preserves $T^{1,0} \underline{X}$ and is an endomorphism of this bundle which satisfies the identities
\be \label{eq:blowuplimitX_0}
(g_{0})^{j \bar{k}} \partial_{\bar{k}} \nabla_j^{H_0} h_0 = 0, \quad
h_0^\dagger = h_0
\ee
where $\dagger$ is with respect to $H_0$. We now show that
\be \label{int-zero}
\int_{\underline{X}} ({\rm Tr} \, h_0) \, d {\rm vol}_{g_0} =0.
\ee
For this, we let $\delta>0$, so that $(\Phi_{t_i})^*h_{t_i}, (\Phi_{t_i})^*g_{t_i}$ converge uniformly
on $\{ r > \delta \}$. By Lemma
\ref{lem:gfly-gt}, we also have that $(\Phi_{t_i})^* g_{{\rm FLY},t_i}
\rightarrow g_0$ uniformly in the $C^0$ norm on $\{ r > \delta \}$. Therefore
\[
\int_{\underline{X} \cap \{r > \delta \}} ({\rm Tr} \, h_0) \, d {\rm
  vol}_{g_0} = \lim_i\int_{X_{t_i} \cap \{r > \delta \}} ({\rm Tr} \, h_i) \, d {\rm
  vol}_{g_{{\rm FLY},t_i}}.
\]
Since $\int_{X_{t_i}} {\rm Tr} \, h_i = 0$, then
\be \label{eq:int-zero-step}
\int_{\underline{X}} ({\rm Tr} \, h_0) \, d {\rm vol}_{g_0} = -\lim_{\delta
  \rightarrow 0} \lim_i \int_{X_{t_i} \cap \{ r < \delta \}} ({\rm Tr} \, h_i) \, d {\rm
  vol}_{g_{{\rm FLY},t_i}} .
\ee
From $d {\rm
  vol}_{g_{{\rm FLY},t}}  \leq C d {\rm vol}_{g_t}$ (Lemma
\ref{lem:gfly-gt}) and $|h_i| \leq r^\beta$, we have
\bea
\bigg| \int_{X_{t_i} \cap \{ r < \delta \}} ({\rm Tr} \, h_i) \, d {\rm
  vol}_{g_{{\rm FLY},t_i}} \bigg| &\leq& C \delta^\beta \int_{X_{t_i} \cap \{ r <
  \delta \}} d {\rm vol}_{g_{t_i}} \nonumber\\
&=& C \delta^\beta \int_{X_{1} \cap \{ r <
  \delta |t_i|^{-1/3}\}} S_{t^{1/3}}^* d {\rm vol}_{g_{t_i}} \nonumber\\
&=& C \delta^\beta \int_{X_{1} \cap \{ r <
  \delta |t_i|^{-1/3}\}} |t_i|^2 d {\rm vol}_{g_{co,1}} \nonumber
\eea
where $S_{t^{1/3}}: V_1 \rightarrow V_t$, $S_{t^{1/3}}(z)=t^{1/2}z$ is
the scaling action (\ref{eq: scalingMapDef}) which satisfies $S_{t^{1/3}}^* g_{co,t}
= |t|^{2/3} g_{co,1}$.  We have ${\rm Vol}_{g_{co,1}}(\{ r < R \}) =
O(R^6)$ since $g_{co,1}$ is asymptotically conical, and so
\[
\bigg| \int_{X_{t_i} \cap \{ r < \delta \}} ({\rm Tr} \, h_i) \, d {\rm
  vol}_{g_{{\rm FLY},t_i}} \bigg| \leq C \delta^{6+\beta}
\]
which together with (\ref{eq:int-zero-step}) proves (\ref{int-zero}).
\smallskip
\par Next, (\ref{eq:blowuplimitX_0}) implies that the identity
\[
\Delta_{g_0} |h_0|^2_{H_0} = 2 |\overline{\nabla} h_0|^2_{H_0,g_0}
\]
holds pointwise away from the nodes. Let $\eta_\delta$ be a cutoff
function such that $\eta_\delta \equiv 0$ on $\{ r < {\delta \over 2}
\}$, $\eta_\delta \equiv 1$ on $\{ r > \delta \}$ and $|\Delta_g
\eta_\delta| \leq C \delta^{-2}$. Then
\[
2 \int_{U_\delta} |\overline{\nabla}
h_0|^2_{H_0,g_0} \, d {\rm vol}_{g_0} \leq \int_{\underline{X}}
\eta_\delta \Delta_{g_0} |h_0|^2_{H_0} \, d {\rm vol}_{g_0}.
\]
Recall that $g_0$ is balanced on $\underline{X}$ and so we can
integrate by parts.
\[
2 \int_{U_\delta} |\overline{\nabla}
h_0|^2_{H_0,g_0} \, d {\rm vol}_{g_0} \leq C \delta^{-2} \int_{
  \{{\delta \over 2} < r < \delta \}}  |h_0|^2_{H_0}  d {\rm vol}_{g_{co,0}}.
\]
Since $h_0 \in C^1_\beta$ and $d {\rm vol}_{g_{co,0}}=r^5 dr \dvol_{g_L}$, then
\[
2 \int_{U_\delta} |\overline{\nabla}
h_0|^2_{H_0,g_0} \, d {\rm vol}_{g_0} \leq C \delta^{-2+2 \beta + 6}
\]
and we conclude that
\[
\limsup_{\delta \rightarrow 0}  \int_{U_\delta} |\overline{\nabla}
h_0|^2_{H_0} \, d {\rm vol}_{g_0} = 0
\]
for $\beta \in (-2,0)$. Therefore $\overline{\partial} h_0 = 0$ on
$\underline{X}$, and $h_0$ is a holomorphic endomorphism. By Hartog's theorem,
$h_0$ extends across the holomorphic curves to the small resolution
$X$. Since $T^{1,0} X$ is stable with respect to $\omega_{\rm CY}$, it
must be the case that $h_0 = c \, I$ is a multiple of the
identity. We showed in (\ref{int-zero}) that the integral of ${\rm Tr}
\, h_0$ is zero, so we conclude
that
\[
  h_0 \equiv 0.
\]
The goal now is to obtain a contradiction to this by using that $\|
h_i \|_{C^{2,\alpha}_\beta} = 1$ along the sequence $t_i \rightarrow
0$. The uniform Schauder estimates in Proposition \ref{unif-schauder} imply
\[
1 \leq C (\| h_i \|_{C^0_\beta} + M_i^{-1}),
\]
and hence
\[
|r^{-\beta} h_i| (z_i) \geq C^{-1}
\]
for a sequence $z_i \in X_{t_i}$. 
\medskip
\par $\bullet$ Case 1: Suppose $\liminf r(z_i)>0$. It then follows
that after taking a subsequence, we have $z_i
\rightarrow z_0 \in \underline{X}$ with $r(z_0)> 0$. Then
\[
|h_0(z_0)| \geq C^{-1} r(z_0)^\beta > 0
\]
which contradicts $h_0 \equiv 0$.
\medskip
\par $\bullet$ Case 2: Suppose $r(z_i) \rightarrow 0$. In this
case, we can assume that all points $z_i$ are in the region of $X_t$
which can be identified with a subset of $V_t = \{
\sum z_i^2 = t \}$ and where $g_t = g_{co,t}$. Define the function
$u_i: V_{t_i} \cap \{ \| z \|^2 \leq 1 \}
 \rightarrow \mathbb{R}$ given by
\[
u_i = |h_i|^2_{H_{t_i}}.
\]
The sequence $\{ u_i \}$ satisfies the uniform growth estimate
\be
\| u_i \|_{C^{2,\alpha}_{2\beta}(g_t)} \leq C,
\ee
which written in full is
\[
|u|+r^{-1}|\nabla
u|_{g_{co,t}}+r^{-2}|\nabla^2_{g_{co,t}} u|_{g_{co,t}} +
[\nabla^2_{g_{co,t}} u]_{C^{0,a}_{\beta-2-a}} \leq C r^{2\beta}.
\]
This definition of $\| u \|_{C^{2,\alpha}_{2\beta}(g_t)} $ is slightly different than the weighted H\"older norms
used previously for $h_t$, since we use $\nabla$ with respect to $g_t$
(rather than $H_t$). These estimates for $u_i$ follow from $\| h_i
\|_{C^{2,\alpha}_\beta(g_t,H_t)} \leq 1$ and (\ref{g_t-unif-H_t}), which allow
us to uniformly convert norms in $H_t$ to norms in $g_t$. 
\smallskip
\par Direct computation gives the identity
\[
\Delta_{g_i} u_i = 2 {\rm Re} \,
\langle L_i h_i, h_i \rangle_{H_i}  + 2 |\overline{\nabla} h_i|^2_{H_i,g_i}.
\]
We are assuming $\| L_{t_i} h_i \|_{C^\alpha_{\beta-2}} \leq
\epsilon_i$ with $\epsilon_i \rightarrow 0$, hence
\be \label{Delta-u-epsilon}
\Delta_{g_i} u_i \geq - C \epsilon_i r^{2\beta-2}.
\ee
We will rescale the functions $u_i$ to take a limit. For ease of notation, we write $\lambda_i =
r(z_i)$. We will use the scaling map $S_{\lambda_i} : V_{t_i
  \lambda_i^{-3}} \rightarrow V_{t_i}$ from (\ref{eq: scalingMapDef})
given by $S(x)= \lambda_i^{3/2} x$.
We rescale and pullback $u_i$ via $S_{\lambda_i}$ to obtain a function 
\[
\tilde{u}_i : V_{t_i \lambda_i^{-3}} \cap \{ \|x\|^2 \leq
\lambda_i^{-3} \} \rightarrow \mathbb{R}
\]
defined by
\[
\tilde{u}_i(x) = \lambda_i^{-2 \beta} u(\lambda_i^{3/2} x).
\]
The rescaling is setup so that the estimates for $u_i$ imply estimates
for $\tilde{u}_i$. For example, we have
\be \label{Delta-u-epsilon2}
\Delta_{g_{t_i \lambda_i^{-3}}} \tilde{u}_i \geq - C \epsilon_i  r^{2\beta-2}.
\ee
Indeed, pulling back the Laplacian gives
\[
S_{\lambda_i}^* (\Delta_{g_{t_i}} u_i) = \lambda_i^{2 \beta} \Delta_{S^*
  g_{t_i}} \tilde{u}_i = \lambda_i^{2 \beta} \lambda_i^{-2} \Delta_{g_{t_i \lambda^{-3}}} \tilde{u}_i
\]
by using the rescaling relation
$S^*_\lambda g_t = \lambda^2 g_{t \lambda^{-3}}$. Using
$r(\lambda^{3/2}x) = \lambda r(x)$, we obtain
(\ref{Delta-u-epsilon2}) from (\ref{Delta-u-epsilon}). Similar computations show that
\be \label{est-tilde-u}
|\tilde{u}_i|+r^{-1}|\nabla
\tilde{u}_i|_{g_{co,t_i
    \lambda_i^{-3}}}+r^{-2}|\nabla^2\tilde{u}_i|_{g_{co,t_i \lambda_i^{-3}}} +
[\nabla^2 \tilde{u}_i]_{C^{0,a}_{\beta-2-a}} \leq C r^{2\beta}.
\ee
Recall that the points $z_i$ satisfy $|h_i|(z_i) \geq C^{-1}
\lambda_i^\beta$. Then the points $x_i = \lambda_i^{-3/2} z_i$ satisfy
\[
\tilde{u}_i(x_i) \geq C^{-1}.
\]
The sequence $\{ t_i \lambda_i^{-3} \}$ lies in $[0,1]$, since
$\| z \|^2 \geq |t|$ implies $r^3(z_i) \geq |t_i|$. After taking a
subsequence, we have convergence $t_i \lambda_i^{-3} \rightarrow
\kappa$ for $\kappa \in [0,1]$.
\medskip
\par $\bullet$ Case 2a: $t_i \lambda_i^{-3} \rightarrow \kappa>0$. The points $x_i =
\lambda_i^{-3/2} z_i$ satisfy $\| x_i \|=1$. We may assume
\[
x_i \rightarrow x_\infty \in V_\kappa, \quad \| x_\infty \|=1.
\]
We can take a limit of $\{ \tilde{u}_i \}$ on compact sets and obtain a $C^{2,\alpha}_{loc}$ limiting function
$u_\infty \geq 0$ on $V_\kappa$ satisfying $u_\infty \leq C r^{2 \beta}$
and $\Delta_{g_\kappa} u_\infty \geq 0$. By the maximum principle,
\[
  \sup_{r \leq R} u_\infty  \leq \sup_{r =R}  u_\infty \leq R^{2 \beta}.\]
Letting $R \rightarrow \infty$, we obtain that $u_\infty
\equiv 0$ since $\beta <0$. This contradicts $u_\infty (x_\infty) > 0$.
\medskip
\par $\bullet$ Case 2b: $t_i \lambda_i^{-3} \rightarrow 0$. In this
case, the points $x_i=\lambda_i^{-3/2} z_i$ converge after a subsequence
to
\[
x_i \rightarrow x_\infty \in V_0, \quad \|x_\infty \|=1.
\]
Let $v_i: \{  |t_i| \lambda_i^{-3} < \| x\|^2 < {1 \over 2} \lambda_i^{-3} \} \cap V_0
\rightarrow \mathbb{R}$ with $v_i = \Phi_{t_i \lambda_i^{-3}}^* \tilde{u}_i$ be the
corresponding sequence of functions on the cone $V_0$ (recall $\Phi_t$ is defined in (\ref{eq: mapPhit})). Since $ \| x \|^2 \leq
 \| \Phi(x) \|^2 \leq 2 \| x \|^2$, we have the growth estimate
\[
v_i \leq C r^{2 \beta}.
\]
By pulling-back (\ref{est-tilde-u}), on compact sets $K$ we have the estimate
\[
\| v_i
\|_{C^{2,\alpha}(K,\Phi^*g_{t_i \lambda_i^{-3}})} \leq
C(K), \quad
\Delta_{\Phi^*g_{t_i \lambda_i^{-3}}} v_i \geq -\epsilon_i C(K) .
\]
Corollary \ref{cor: decayGcotPull} implies $\Phi^*g_{t_i \lambda_i^{-3}}
\rightarrow g_{co,0}$ uniformly on $K$. Taking a limit of $\{ v_i \}$ on compact sets produces a
$C^{2,\alpha}_{loc}$ limiting function $v_\infty
\geq 0$
on the cone $V_0$ satisfying
\[
\Delta_{g_{co,0}}
v_\infty \geq 0, \quad v_\infty \leq C r^{2\beta}
\]
for $\beta \in
(-2,0)$. Lemma \ref{lem-subharmonic} below implies that $v_\infty \equiv 0$ which contradicts
$v_\infty(x_\infty)>0$.
\end{proof}

\begin{lem} \label{lem-subharmonic}
Let $V_0$ be a Riemannian cone of dimension $n>2$ with metric $g = d
r^2 + r^2 g_L$. Let $u$ be a $C^2$ function satisfying
$\Delta_{g} u \geq 0$ and $u \geq 0$. Suppose there exists
 $M>0$ such that $u \leq M
r^{- \delta}$ where $\delta \in (0,n-2)$. Then $u \equiv 0$.
\end{lem}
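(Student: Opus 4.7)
The plan is to prove the lemma by a maximum principle argument with a radial harmonic barrier. First, on the cone $(V_0, dr^2 + r^2 g_L)$ the Laplacian of a radial function is $\Delta f(r) = f''(r) + \frac{n-1}{r} f'(r)$, which gives $\Delta r^{\alpha} = \alpha(\alpha + n - 2) r^{\alpha-2}$ for any exponent $\alpha$. In particular, $r^{2-n}$ is harmonic on $V_0 \setminus \{0\}$, and since $n > 2$ the hypothesis $\delta \in (0, n-2)$ places the rate $r^{-\delta}$ strictly between the two harmonic radial functions $1$ and $r^{2-n}$, both at the tip and at infinity.

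Fix a point $x_0 \in V_0$ with $r_0 := r(x_0) > 0$. For parameters $\epsilon, C > 0$ to be sent to zero, define the radial barrier
\[
v(x) := \epsilon + C\, r(x)^{2-n},
\]
which is positive and harmonic, and choose the inner and outer radii
\[
\rho := (C/M)^{1/(n-2-\delta)}, \qquad R := (M/\epsilon)^{1/\delta}.
\]
These choices are engineered precisely so that, using the hypothesis $u \leq M r^{-\delta}$, one verifies
\[
v\big|_{r = \rho} \;\geq\; C \rho^{2-n} \;=\; M \rho^{-\delta} \;\geq\; u, \qquad v\big|_{r = R} \;\geq\; \epsilon \;=\; M R^{-\delta} \;\geq\; u.
\]
For $C$ and $\epsilon$ sufficiently small we have $\rho < r_0 < R$, so the annulus $A := \{\rho \leq r \leq R\}$ is compact and $u - v$ is subharmonic on $A$ with $u - v \leq 0$ on $\partial A$. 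The maximum principle therefore gives $u \leq v$ throughout $A$, and in particular
\[
u(x_0) \;\leq\; \epsilon + C\, r_0^{2-n}.
\]
Letting first $C \to 0$ (so that $\rho \to 0$ while $r_0$ stays fixed) and then $\epsilon \to 0$ forces $u(x_0) \leq 0$. Combined with $u \geq 0$ and the arbitrariness of $x_0$, this yields $u \equiv 0$ on $V_0 \setminus \{0\}$, and hence on all of $V_0$ by continuity.

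There is no real obstacle in this argument; its structural content is that both endpoints of the range $\delta \in (0, n-2)$ are used in an essential way. The condition $\delta > 0$ ensures that the tail of $u$ on large spheres is eventually subsumed by the constant $\epsilon$ at the outer boundary $\{r = R\}$, while the strict inequality $\delta < n - 2$ ensures that near the tip the hypothesized bound $M r^{-\delta}$ on $u$ is dominated by the harmonic function $C r^{2-n}$ with an arbitrarily small coefficient $C$. Failure of either inequality would break the corresponding half of the barrier estimate, as one would expect from the standard dichotomy between removable and genuine singularities for subharmonic functions.
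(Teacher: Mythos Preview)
Your proof is correct and follows the same core strategy as the paper: compare $u$ on an annulus $\{\rho \leq r \leq R\}$ against a barrier built from the radial harmonic function $r^{2-n}$, apply the maximum principle, and send $\rho \to 0$, $R \to \infty$.

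The one genuine difference is at the outer boundary. The paper first solves the Dirichlet problem on $B_R$ via an eigenfunction expansion on the link to produce a harmonic function $h_R$ with $h_R|_{\partial B_R} = u$, and then uses the comparison function $u - h_R - 2M\epsilon^{n-2-\delta} r^{2-n}$; the bound $|h_R| \leq M R^{-\delta}$ is what survives after sending $\epsilon \to 0$. You bypass this entirely by observing that the constant $\epsilon$ already dominates $u$ on the sphere $\{r = R\}$ once $R = (M/\epsilon)^{1/\delta}$, so the barrier $\epsilon + C r^{2-n}$ suffices with no appeal to Dirichlet extensions or spectral theory on the link. This is a clean simplification; the paper's route is not wrong, just heavier than necessary for this particular lemma. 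Your closing paragraph correctly identifies the respective roles of the two strict inequalities $0 < \delta$ and $\delta < n-2$.
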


\begin{proof} This is a standard PDE result  (e.g. \cite{Han-Lin}), but we give the proof for completeness. Recall that the real Laplacian is
\[
\Delta_{g} u = \partial_r \partial_r u + (n-1) r^{-1}
\partial_r u + r^{-2} \Delta_{g_L} u.
\]
Let $B_R(0) = \{ x \in V_0 : r(x) < R \}$. We start by noting that for any $\varphi \in
C^2(\partial B_R(0),\mathbb{R})$, there exists $h \in C^2(B_R(0),\mathbb{R})$ such that
$h|_{\partial B_R(0)} = \varphi$ and
\[
\Delta_{g} h = 0, \quad \sup_{B_R(0)} |h| \leq \sup_{\partial B_R(0)} |\varphi|.
\]
To obtain such a harmonic function $h$, we start by expanding
$\varphi|_{\partial B_R(0)} =
\sum_{\lambda \in {\rm Spec}(\Delta_{g_L})} c_\lambda \psi_\lambda$,
where $\psi_\lambda$ are an $L^2$ orthogonal basis of eigenfunctions of
$\Delta_{g_L}$ on the link $L=\{ r = 1\}$, with eigenvalue
convention $\Delta_L \psi_\lambda = - \lambda \psi_\lambda$. We then let
\[
h = \sum_{\lambda \in {\rm Spec}(\Delta_{g_L})} c_\lambda \bigg({r \over
  R} \bigg)^{a(\lambda)} \psi_\lambda
\]
where $a(\lambda) = {1 \over 2}( -(n-2)+ \sqrt{(n-2)^2+4 \lambda})
>0$. Direct computation gives $\Delta_g h = 0$, and by the maximum
principle
\[
\sup_{B_R \backslash B_\epsilon} |h| \leq \max \{ \sup_{\partial B_R}
|\varphi|, \sup_{\partial B_\epsilon} |h| \}.
\]
for any $\epsilon>0$. As $\epsilon \rightarrow 0$, we see that $\sup_{\partial
  B_\epsilon} |h|$ selects the $\lambda=0$ mode $c_0 \psi_0$. This is
a constant equal to ${1 \over {\rm Vol}(B_R)} \int_{\partial B_R}
\varphi$, which is bounded by $ \sup_{\partial B_R}
|\varphi|$.
\smallskip
\par We now prove that the subharmonic function $u$ given in the lemma
is constant. Let $R>1$. Let $h_R$ be the harmonic function mentioned
above with $h_R|_{\partial B_R} = u$. Then
\be \label{h-R-delta}
|h_R| \leq \sup_{\partial B_R} |u| \leq M R^{-\delta}.
\ee
Let $0<\epsilon<1$, and consider
\[ v = u - h_R - 2 M \epsilon^{n-2-\delta} r^{-(n-2)}  \]
defined on $B_R \backslash B_\epsilon$. Since $\Delta_g r^{-(n-2)}=0$
we have that
$\Delta_{g} v \geq 0$, and
\[
v|_{\partial B_\epsilon} \leq |u| + |h_R| - 2M \epsilon^{-\delta} \leq M
\epsilon^{-\delta} + MR^{-\delta} - 2M \epsilon^{-\delta} \leq 0.
\]
We also have
\[
v|_{\partial B_R} = - {2M \epsilon^{n-2-\delta} \over R^{n-2}} <0.
\]
By the maximum principle, $v \leq 0$ in $B_R \backslash
B_\epsilon$. We now fix $x \in B_R \backslash B_\epsilon$. Then $v(x)
\leq 0$ implies
\[
u(x) \leq h_R(x) + 2M {\epsilon^{n-2-\delta} \over r^{n-2}(x)}.
\]
This holds true for all $0<\epsilon<1$, hence taking $\epsilon
\rightarrow 0$ we obtain
\[
u(x) \leq h_R(x).
\]
By (\ref{h-R-delta}), we conclude
\[
u(x) \leq M R^{-\delta}.
\]
We now take $R \rightarrow \infty$ to conclude $u \leq 0$. Since $u
\geq 0$ by assumption, we obtain that $u \equiv 0$.
\end{proof}

\subsection{Inverting the linearized operator}
Let
  \be \label{eq:W_t}
W_t= \bigg\{ u \in \Gamma({\rm End} \, T^{1,0}X_t) \ : \ u^\dagger
= u , \ \int_{X_t} ({\rm Tr} \, u) d {\rm vol}_{g_{{\rm FLY},t}} =0 \bigg\}
\ee
where $\dagger$ is the adjoint with respect to $H_t$.  When linearizing the equation
$\Lambda_{\omega_{{\rm FLY},t}}
F_{H}=0$ at the approximate solution $H_t$ we obtain an
operator $\LL$ that acts on endomorphisms by
\[
\LL u = -(g_{{\rm FLY},t})^{j \bar{k}} \partial_{\bar{k}}
\nabla_j^{H_t} u - {1 \over 2} [ \sqrt{-1} \Lambda_{\omega_{{\rm FLY},t}} F_{H_t},u].
\]
This operator $\LL$ involves $g_{{\rm FLY},t}$ rather than $g_t$, so it is
a perturbation of the operator $L$ in Proposition
\ref{prop-invert-L}.
\smallskip
\par We note that $\LL: W_t \rightarrow W_t$. Indeed, since $u^\dagger =
u$, $(\sqrt{-1} \Lambda F)^\dagger = \sqrt{-1} \Lambda F$ and $(g^{j \bar{k}} \nabla_j \nabla_{\bar{k}} u)^\dagger = g^{j \bar{k}}
\nabla_{\bar{k}} \nabla_j u$, we have
\[
(\LL u)^\dagger = -(g_{{\rm FLY},t})^{j \bar{k}} \nabla_j^{H_t} \partial_{\bar{k}}
u +{1 \over 2} [ \sqrt{-1} \Lambda_{\omega_{{\rm FLY},t}} F_{H_t},u].
\]
The commutator identity for $[\nabla_j,\nabla_{\bar{k}}]$ now shows that $(\LL
u)^\dagger = \LL u$. Next, since $\omega_{{\rm FLY},t}$ is balanced, we do
have
\[
\int_{X_t} (\Delta_{g_{{\rm FLY},t}} {\rm
  Tr} \, u) \, (\omega_{{\rm FLY},t})^3 = 0.
\]
This verifies that $\LL: W_t \rightarrow W_t$ preserves the subspace
$W_t$.
\smallskip
\par Having obtained the estimate $\| h \| \leq C \| L h \|$ from
Proposition \ref{prop-invert-L}, we can invert $\LL$ with a bound on the inverse.

\begin{lem} \label{lem-invert-L}
 Let $\LL: C^{2,\alpha}_\beta(W_t) \rightarrow
 C^{0,\alpha}_{\beta-2}(W_t)$ for $\alpha \in (0,1)$ and $\beta \in (-2,0)$. There exists $|t_0|>0$ such that for all $0<|t| \leq |t_0|$, then $\LL$ is invertible and the operator norm
\be \label{inverse-est}
\| \LL^{-1} \| \leq C
\ee
is bounded independent of $t$.
\end{lem}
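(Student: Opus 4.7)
The plan is to treat $\LL$ as a small perturbation of $-L_t$ and invoke the uniform a priori estimate of Proposition~\ref{prop-invert-L}. Writing $\LL = -L_t + E_t$, the perturbation has the explicit form
\[
E_t u = (g_t^{j\bar k} - g_{{\rm FLY},t}^{j\bar k})\del_{\bar k}\nabla_j^{H_t} u - \tfrac{1}{2}\bigl[\sqrt{-1}(\Lambda_{\omega_{{\rm FLY},t}} - \Lambda_{\omega_t})F_{H_t},\, u\bigr].
\]
Lemma~\ref{lem:gfly-gt} supplies the decisive smallness $\|g_t^{-1} - g_{{\rm FLY},t}^{-1}\|_{C^{0,\alpha}_{0}} \leq C|t|^{2/3}$, while the scale-invariant derivative bounds $|\nabla^k_{g_t} H_t|_{g_t} \leq C_k r^{-k}$ from Lemma~\ref{lem: H_t-est} yield the curvature control $\|F_{H_t}\|_{C^{0,\alpha}_{-2}} \leq C$. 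Multiplying these in the weighted H\"older norms produces
\[
\|E_t u\|_{C^{0,\alpha}_{\beta-2}} \leq C|t|^{2/3}\,\|u\|_{C^{2,\alpha}_{\beta}}.
\]

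Combined with Proposition~\ref{prop-invert-L} applied to $u \in W_t$, this gives
\[
\|u\|_{C^{2,\alpha}_{\beta}} \leq C\|L_t u\|_{C^{0,\alpha}_{\beta-2}} \leq C\|\LL u\|_{C^{0,\alpha}_{\beta-2}} + C|t|^{2/3}\|u\|_{C^{2,\alpha}_{\beta}},
\]
so choosing $|t_0|$ small enough that $C|t_0|^{2/3} \leq \tfrac12$ lets us absorb the last term and deduce the uniform a priori estimate
\[
\|u\|_{C^{2,\alpha}_{\beta}} \leq 2C\|\LL u\|_{C^{0,\alpha}_{\beta-2}}, \qquad u \in W_t,\ 0 < |t| \leq |t_0|.
\]
This already delivers injectivity of $\LL|_{W_t}$ and the desired uniform bound on $\LL^{-1}$ once invertibility is known.

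For surjectivity I would use compactness of $X_t$ for fixed $t \neq 0$: the operator $\LL$ is then a genuine elliptic operator of order two on sections of $\mathrm{End}\,T^{1,0}X_t$, hence the induced map $C^{2,\alpha}_{\beta}(W_t) \to C^{0,\alpha}_{\beta-2}(W_t)$ is Fredholm. To pin the index at zero I would verify that $\LL$ is formally self-adjoint on $W_t$ with respect to the pairing $(u,v) \mapsto \int_{X_t}\Tr(uv)\,\dvol_{g_{{\rm FLY},t}}$: two integrations by parts on the leading term are legitimate because $d\omega_{{\rm FLY},t}^2 = 0$, while the commutator term is symmetric in $(u,v)$ by the cyclic property of the trace together with the fact that $\sqrt{-1}\Lambda_{\omega_{{\rm FLY},t}}F_{H_t}$ is self-adjoint. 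Self-adjointness forces the Fredholm index to vanish, and the injectivity just established promotes $\LL|_{W_t}$ to a bijection, yielding~\eqref{inverse-est}. I expect the main technical obstacle to be the product estimate on $E_t$ uniformly across the annular gluing region $\{|t|^\alpha \leq \|z\|^2 \leq 2|t|^\alpha\}$, where $H_t$ interpolates between $g_{co,t}$ and $(\Phi_t^{-1})^*H_0$; here one must carefully invoke the coordinate description of the weighted H\"older norms from Lemma~\ref{lem: cylCoords} to combine the smallness from Lemma~\ref{lem:gfly-gt} with the singular curvature bound from Lemma~\ref{lem: H_t-est}.
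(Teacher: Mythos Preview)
Your strategy matches the paper's: derive the uniform a~priori estimate by treating $\LL$ as a perturbation of $-L_t$ via Lemma~\ref{lem:gfly-gt} and Proposition~\ref{prop-invert-L}, then appeal to Fredholm theory on the compact manifold $X_t$ to upgrade injectivity to bijectivity. The product estimate on $E_t$ you flag as a potential obstacle is in fact routine: the weighted H\"older norms are designed so that $\|fg\|_{C^{0,\alpha}_{\gamma_1+\gamma_2}} \leq C\|f\|_{C^{0,\alpha}_{\gamma_1}}\|g\|_{C^{0,\alpha}_{\gamma_2}}$ holds globally, and Lemmas~\ref{lem:gfly-gt} and~\ref{lem: H_t-est} already provide the needed inputs uniformly across the gluing annulus.

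There is, however, a genuine slip in your index-zero argument. The commutator term is \emph{not} symmetric in $(u,v)$ under the trace pairing: for $A=\sqrt{-1}\Lambda_{\omega_{{\rm FLY},t}}F_{H_t}$ one has
\[
\Tr\bigl([A,u]\,v\bigr)=\Tr\bigl(A\,[u,v]\bigr)=-\Tr\bigl([A,v]\,u\bigr),
\]
so $u\mapsto[A,u]$ is skew with respect to $\int\Tr(uv)\,d\mathrm{vol}$, and $\LL$ is not formally self-adjoint for this pairing. The paper sidesteps this by observing only that the leading part $\Delta=(g_{{\rm FLY},t})^{j\bar k}\partial_{\bar k}\nabla_j^{H_t}$ is $L^2$ self-adjoint (this is where the balanced condition enters) and hence has index zero; since $\LL$ differs from $-\Delta$ by a zeroth-order operator, its Fredholm index is also zero. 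With this correction your argument goes through unchanged.
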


\begin{proof} We start by discussing the uniform estimate. Let $L$ be
the operator that was estimated in Proposition \ref{prop-invert-L}. For $u \in
C^{2,\alpha}_\beta(W_t)$, we have
\bea
\|  u \|_{C^{2,\alpha}_\beta} 
&\leq& C \| L u
\|_{C^{0,\alpha}_{\beta-2}} \nonumber\\ 
&\leq&C \| ((g_{\rm FLY})^{j \bar{k}} - g_t^{j \bar{k}})
\partial_{\bar{k}} \nabla_j^{H_t} u \|_{C^{0,\alpha}_{\beta-2}} 
\nonumber\\
&&+ C \|
g_{\rm FLY}^{-1} - g_t^{-1} \|_{C^{0,\alpha}_0} \|
F_{H_t}\|_{C^{0,\alpha}_{-2}} \| u \|_{C^{0,\alpha}_{\beta}} +C \| \LL u \|_{C^{0,\alpha}_{\beta-2}} \nonumber\\
&\leq& C |t|^{2/3} \|  u \|_{C^{2,\alpha}_\beta} + C |t|^{2/3} \|
F_{H_t} \|_{C^{0,\alpha}_{-2}} \| u \|_{C^{0,\alpha}_{\beta}}  +C \|
\LL u \|_{C^{0,\alpha}_{\beta-2}}  \nonumber
\eea
by Lemma \ref{lem:gfly-gt}. By Lemma \ref{g_t-unif-H_t}, we have $r^2 |F_{H_t}|_{g_t} + r^3 |\nabla_{g_t} F_{H_t}|_{g_t} \leq C $ and hence
\be \label{eq: F_H_t-bound}
\| F_{H_t} \|_{C^{0,\alpha}_{-2}} \leq C
\ee
uniformly in $t$. We
conclude that for $t$
small enough, then
\be \label{inverse-est0}
\|  u \|_{C^{2,\alpha}_\beta} \leq C \| \LL u
\|_{C^{0,\alpha}_{\beta-2}},  \quad u \in W_t.
\ee
The proof of the lemma now follows from standard elliptic PDE theory on
the smooth compact manifold $X_t$. We will use the space of
sections denoted by
\[
\HH = \bigg\{ u \in \Gamma({\rm End} \, T^{1,0}X_t) \ : \ u^\dagger
= u \bigg\}
\]
with $L^2$ inner product
\[
\langle s,h \rangle_{L^2} = \int_X \langle s,h \rangle_{H_t} \, d {\rm vol}_{g_{{\rm FLY},t}}
, \quad s,h \in \HH.
\]
Then we can orthogonally decompose $\HH = W \oplus \mathbb{C} I $. We consider $P: C^{2,\alpha}(\HH) \rightarrow C^{0,\alpha}(\HH)$ with
$Pu = \LL u$. From
(\ref{inverse-est0}), we see that $\mathbb{C} I = \ker
P$. Furthermore, by the balanced condition of $\omega_{{\rm FLY},t}$
we see that
\[
{\rm im} \, P \, \subseteq W.
\]
Therefore $(\ker P^\dagger)^\perp \subseteq W$ and so $\mathbb{C}
I \subseteq \ker P^\dagger$. We will show $\mathbb{C} I = \ker P^\dagger$.
\smallskip
\par An integration by parts argument using the balanced property
shows that the operator $\Delta = (g_{{\rm FLY},t})^{j \bar{k}} \partial_{\bar{k}}
\nabla_j^{H_t}$ is $L^2$ self-adjoint and so has degree zero. Thus $P$ also has degree
zero. Therefore $\dim \ker P^\dagger = 1$ and  $\mathbb{C}
 I  = \ker P^\dagger$. It follows that ${\rm im} \, P =
W$.  Hence $\LL: W \rightarrow W$ is invertible, and the bound on the
inverse in weighted norms is (\ref{inverse-est0}).
\end{proof}

\subsection{Fixed point theorem}
In this section, we perturb the glued metric $H_t$ to a solution of the Hermitian-Yang-Mills equation $\sqrt{-1} \Lambda_{\omega_{{\rm FLY},t}} F_H = 0$ on the smoothing $X_t$ via a fixed point theorem. The general approach of constructing an approximate solution and deforming it to a true solution to solve equations in differential geometry goes back to Taubes \cite{Taubes} and is now widely used; in this section we will follow the notation used in \cite{SzeBook}.

\smallskip
\par Our space of deformations will be the space $W_t$ defined in
(\ref{eq:W_t}). We introduce the operator
\[
\F: W_t \rightarrow W_t
\]
given
\be \label{nonlinearF-defn}
\F(u) = e^{u/2} (\sqrt{-1} \Lambda_{\omega_{{\rm
      FLY},t}}  F_u) e^{-u/2}, 
\ee
where $F_u$ is the curvature of the metric $H_{t,u} = H_t
e^u$. 

\smallskip
\par We note that $\F: W_t \rightarrow W_t$. Indeed, for $u \in W_t$, we have $\int {\rm Tr} \, \F(u) = 0$ since
\[
\int_{X_t} c_1(T^{1,0}X_t) \wedge \omega_{{\rm FLY},t}^2 = 0
\]
as $X_t$ has trivial canonical bundle. The adjoint action of $e^{u/2}$ in
(\ref{nonlinearF-defn}) is added to ensure that $\F(u)^\dagger =
\F(u)$. Indeed, $\sqrt{-1}
\Lambda F_{H_{t,u}}$ is self-adjoint with respect to the metric $H_{t,u}$. In coordinates where $H_t = I$ and $e^u$ is
diagonal, it is straight-forward to check that 
\[
(e^{u/2} (\sqrt{-1} \Lambda_{\omega_{{\rm
      FLY},t}}  F_u) e^{-u/2})^\dagger = e^{u/2} (\sqrt{-1} \Lambda_{\omega_{{\rm
      FLY},t}}  F_u) e^{-u/2}
\]
where $\dagger$ is the adjoint with respect to $H_t$.
\smallskip
\par A well-known computation gives the linearization of $F_u$.
\[
\delta (F_u) = \bar{\partial} \partial_{H_{t,u}} (e^{-u} \delta e^u).
\]
The linearization of $\F$ is then
\bea \label{linearized-eqn}
(\delta \F)|_u (\delta u) &=& - e^{u/2} [ (g_{{\rm FLY},t})^{j \bar{k}}
\partial_{\bar{k}} \nabla^{H_{t,u}}_j (e^{-u} \delta e^u)] e^{-u/2} \nonumber\\
&&+ (\delta e^{u/2}) (\sqrt{-1} \Lambda_{\omega_{{\rm
      FLY},t}} F_u) e^{-u/2} \nonumber\\
&&+ ( e^{u/2}) (\sqrt{-1} \Lambda_{\omega_{{\rm
      FLY},t}} F_u) \delta e^{-u/2}. \nonumber
\eea
Let $\LL$ be the linearization $(\delta F)|_0$ at $u=0$. That is, $\LL: W_t \rightarrow W_t$
with
\[
  \LL w = -(g_{FLY,t})^{j \bar{k}} \partial_{\bar{k}} \nabla_j^{H_t} w - {1
    \over 2} [\sqrt{-1} \Lambda_{\omega_{FLY,t}} F_{H_t},w].
\]
We previously inverted $\LL$ and gave a bound on the inverse $\LL^{-1}$ uniform
in $t$. We can write
\[
\F(u) = \F(0) + \LL(u) + \Q(u)
\]
where by definition
\be \label{Q-defn}
\Q(u) = \F(u) - \F(0) - \LL(u).
\ee

We define
\[
\mathcal{N} : C^{2,\alpha}_\beta(W) \rightarrow  C^{2,\alpha}_\beta(W)
\]
given by
\[
\mathcal{N}(u) = \LL^{-1}(-\F(0) - \Q(u)).
\]
To solve $\F(u)= 0$, it is equivalent to find a fixed point
\[\mathcal{N}(u)=u.\] 
\begin{prop} \label{prop: contract}
Let $a \in (0,1)$. There exists $\epsilon>0$ and $\beta \in (-1,0)$ with the following property. Let 
\[
\mathcal{U}_t = \{ u \in  C^{2,a}_\beta(W(X_t)) : \| u \|_{C^{2,a}_\beta}
<  |t|^{(2/3)|\beta|} \}.
\]
Then for all $0<|t|<\epsilon$, the mapping $\mathcal{N}$ preserves $\mathcal{U}_t$
and satisfies
\be \label{contract-map}
\| \mathcal{N}(u) - \mathcal{N}(v) \|_{C^{2,a}_\beta} \leq {1 \over 2} \| u - v \|_{C^{2,a}_\beta}
\ee
for all $u,v \in \mathcal{U}_t$.
\end{prop}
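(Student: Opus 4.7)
The plan is to verify the hypotheses of the Banach contraction mapping theorem for $\mathcal{N}$ on the ball $\mathcal{U}_t$. Since $\mathcal{N}(u) = \LL^{-1}(-\F(0)-\Q(u))$ and Lemma \ref{lem-invert-L} gives a uniform bound $\|\LL^{-1}\|_{C^{0,a}_{\beta-2}\to C^{2,a}_\beta}\leq C$, I only need two ingredients: a small-$|t|$ bound on the approximate error $\F(0) = \sqrt{-1}\Lambda_{\omega_{\rm FLY,t}}F_{H_t}$, and a quadratic bound on the nonlinear remainder $\Q(u) = \F(u)-\F(0)-\LL u$. The parameter $\beta$ will be chosen negative but close to $0$, and in particular so that the power $|t|^{(2/3)|\beta|}$ defining $\mathcal{U}_t$ is larger than the rate of decay of $\F(0)$.

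First I would control $\F(0)$. Proposition \ref{prop-small-hym} gives $\|\sqrt{-1}\Lambda_{\omega_t}F_{H_t}\|_{C^{0,a}_{-2}}\leq C|t|^{\alpha\lambda/3}$, and Lemma \ref{lem:gfly-gt} yields $\|\omega_{\rm FLY,t}^{-1}-\omega_t^{-1}\|_{C^{0,a}_0}\leq C|t|^{2/3}$; combined with the uniform bound $\|F_{H_t}\|_{C^{0,a}_{-2}}\leq C$ from Lemma \ref{lem: H_t-est} and equation (\ref{eq: F_H_t-bound}), this gives $\|\F(0)\|_{C^{0,a}_{-2}}\leq C|t|^{\gamma}$ with $\gamma = \min(\alpha\lambda/3,2/3)$. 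The passage from weight $-2$ to weight $\beta-2$ is benign because $\F(0)$ is supported away from the singular locus in the bulk region (where $r$ is bounded above and below) and in the transition annulus $\{\|z\|^2\sim |t|^\alpha\}$ (where $r\sim |t|^{\alpha/3}$); on the latter the extra factor $r^{-\beta}=|t|^{-\alpha\beta/3}$ even improves the exponent since $\beta<0$. Thus $\|\F(0)\|_{C^{0,a}_{\beta-2}}\leq C|t|^\gamma$ for all $\beta$ in a small interval $(-\beta_0,0)$.

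Next I would estimate $\Q$ via the integral Taylor remainder
\[
\Q(u) = \int_0^1 (1-s)\,(\delta^2\F)_{su}(u,u)\,ds.
\]
The second variation $\delta^2\F$ is computed directly from (\ref{diff-chern-curv}) applied to $h_{t,u}=e^u$ together with the adjoint action $e^{u/2}(\cdot)e^{-u/2}$; schematically it is a sum of algebraic expressions in $u$, $\nabla_{H_t}u$, $\nabla_{H_t}^2 u$, and $\sqrt{-1}\Lambda_{\omega_{\rm FLY,t}}F_{H_t}$ at the point $su$, each at least quadratic in $u$ and its first derivatives. Using the algebra estimates in weighted H\"older spaces, the multiplicative structure $C^{2,a}_\beta\cdot C^{2,a}_\beta\subset C^{0,a}_{2\beta}\subset C^{0,a}_{\beta-2}$ (valid since $\beta-2\leq 2\beta$ for $\beta\geq -2$), together with the uniform bound $\|F_{H_t}\|_{C^{0,a}_{-2}}\leq C$, yields the estimates
\[
\|\Q(u)\|_{C^{0,a}_{\beta-2}}\leq C\|u\|_{C^{2,a}_\beta}^2,\qquad \|\Q(u)-\Q(v)\|_{C^{0,a}_{\beta-2}}\leq C(\|u\|+\|v\|)\|u-v\|_{C^{2,a}_\beta},
\]
valid as long as $\|u\|_{C^{2,a}_\beta},\|v\|_{C^{2,a}_\beta}\leq 1$, uniformly in $t$. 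Combining these with the bound on $\LL^{-1}$, we obtain
\[
\|\mathcal{N}(u)\|_{C^{2,a}_\beta}\leq C|t|^\gamma + C\|u\|^2_{C^{2,a}_\beta},\qquad \|\mathcal{N}(u)-\mathcal{N}(v)\|_{C^{2,a}_\beta}\leq C(\|u\|+\|v\|)\|u-v\|_{C^{2,a}_\beta}.
\]

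To close the argument, I would choose $\beta\in(-1,0)$ with $|\beta|<\tfrac{3\gamma}{2}$ (which is possible since $\gamma>0$ is fixed by the rate $\lambda$ in Theorem \ref{thm: decayToTangent}), so that $(2/3)|\beta|<\gamma$ and hence both $|t|^\gamma$ and $|t|^{(4/3)|\beta|}$ are bounded by $\tfrac{1}{2C}|t|^{(2/3)|\beta|}$ for $|t|$ small; then $\mathcal{N}$ maps $\mathcal{U}_t$ into itself and is a contraction with ratio at most $\tfrac{1}{2}$, proving the proposition. The main technical obstacle is keeping all constants uniform in $t$ in the weighted H\"older algebra estimates — since the geometry of $(X_t,g_t,H_t)$ degenerates as $|t|\to 0$, one must carefully work in holomorphic cylindrical coordinates (Lemma \ref{lem: cylCoords}) and on scaled annuli (as in Lemma \ref{holder-local2global}) to reduce the computations to standard Schauder multiplication inequalities on rescaled balls of uniformly bounded geometry.
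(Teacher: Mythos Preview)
Your overall strategy matches the paper's, but the quadratic estimate contains a weight-bookkeeping error that is exactly the crux of the argument. You claim
\[
\|\Q(u)\|_{C^{0,a}_{\beta-2}}\leq C\|u\|_{C^{2,a}_\beta}^2
\]
uniformly in $t$, appealing to $C^{2,a}_\beta\cdot C^{2,a}_\beta\subset C^{0,a}_{2\beta}\subset C^{0,a}_{\beta-2}$. But $\Q(u)$ is built from curvature-type expressions such as $(\nabla u)^2$ and $u\,\partial\nabla u$, which lie in $C^{0,a}_{2\beta-2}$, not $C^{0,a}_{2\beta}$. The inclusion you would actually need is $C^{0,a}_{2\beta-2}\subset C^{0,a}_{\beta-2}$, i.e.\ $r^{\beta}\leq C$; since $\beta<0$ and $\min_{X_t} r\sim |t|^{1/3}\to 0$, this fails uniformly. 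So your constant $C$ secretly carries a factor $|t|^{-|\beta|/3}$, and your contraction inequality as stated is false. (A smaller issue: your justification for passing from weight $-2$ to weight $\beta-2$ on $\F(0)$ via ``support'' is incorrect---$\F(0)$ is not supported only in the transition annulus---but the conclusion is still right for the trivial reason that $r^{|\beta|}\leq C$ since $r$ is bounded above on $X_t$.)

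The paper fixes exactly this point by proving a \emph{mixed-weight} bilinear estimate (its Lemma following equation \eqref{diff-approxL-actualL}):
\[
\|(\delta\F)|_w(s)-\LL(s)\|_{C^{0,a}_{\beta-2}}\leq C\,\|w\|_{C^{2,a}_0}\,\|s\|_{C^{2,a}_\beta},
\]
with one factor at weight $0$. Then, for $u\in\mathcal{U}_t$, one converts using $r^3\geq |t|$:
\[
\|u\|_{C^{2,a}_0}\leq (\min_{X_t} r)^{-|\beta|}\,\|u\|_{C^{2,a}_\beta}\leq |t|^{-|\beta|/3}\cdot |t|^{(2/3)|\beta|}=|t|^{|\beta|/3},
\]
so the Lipschitz constant is $\leq C|t|^{|\beta|/3}\to 0$. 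This is precisely the mechanism that makes the radius $|t|^{(2/3)|\beta|}$ of $\mathcal{U}_t$ the right choice. Your argument becomes correct once you replace the claimed uniform quadratic bound by this mixed-weight estimate; without it, the contraction step does not close.
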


\begin{proof}
We start by assuming (\ref{contract-map}) and prove that $\mathcal{N}$ preserves
$\mathcal{U}_t$. We have
\[
\F(0) = \sqrt{-1} \Lambda_{\omega_{{\rm FLY},t}} F_{H_t}
\]
and we can estimate
\[
\| \F(0) \|_{C^{0,a}_{-2}} \leq \| \Lambda_{\omega_t}
F_{H_t} \|_{C^{0,a}_{-2}} + \| \omega_t^{-1} - \omega_{{\rm
      FLY},t}^{-1} \|_{C^{0,a}_{0}} \| F_{H_t} \|_{C^{0,a}_{-2}}.
\]
Proposition \ref{prop-small-hym}, Lemma
\ref{lem:gfly-gt} and (\ref{eq: F_H_t-bound}) imply
\[
\| \F(0) \|_{C^{0,a}_{-2}} \leq C (|t|^{2/3} +
|t|^{|\alpha \lambda|/3}).
\]
The contribution $|t|^{|\alpha \lambda|/3}$ is the slowest rate. For any $\beta \in (-1,0)$, we therefore have
\[
\| \F(0) \|_{C^{0,a}_{\beta-2}} \leq C |t|^{|\alpha \lambda|/3}.
\]
Since $\| \mathcal{L}^{-1} \| \leq C$ by (\ref{inverse-est}), it follows that
\[
\| \mathcal{N}(0) \|_{C^{2,a}_\beta} \leq C |t|^{|\alpha \lambda|/3},
\]
and hence (\ref{contract-map}) implies that for $u \in \mathcal{U}_t$ then
\[
\| \mathcal{N}(u) \| \leq \| \mathcal{N}(u)- \mathcal{N}(0) \| + \|
\mathcal{N}(0) \| \leq {1 \over 2} |t|^{(2/3)|\beta|} + C
|t|^{|\alpha \lambda| /3} < |t|^{(2/3)|\beta|}
\]
for $\beta = - {1 \over 4} |\alpha \lambda|$ and $t$ small enough. 
\smallskip
\par Thus it remains to
prove (\ref{contract-map}). By definition
\[
\mathcal{N}(u) - \mathcal{N}(v) = \mathcal{L}^{-1} ( \mathcal{Q}(v) - \mathcal{Q}(u)).
\]
Since $\| \mathcal{L}^{-1} \| \leq C$ by (\ref{inverse-est}), we have
\be \label{diff-N}
\| \mathcal{N}(u) - \mathcal{N}(v) \|_{C^{2,a}_\beta} \leq C \|
\mathcal{Q}(u) - \mathcal{Q}(v) \|_{C^{0,a}_{\beta-2}}.
\ee
So we need to estimate $\mathcal{Q}(u) - \mathcal{Q}(v)$. One way to
write this is
\[
\mathcal{Q}(u) - \mathcal{Q}(v) = \int_0^{1} {d \over ds}
\mathcal{Q}(w_s) \, ds
\]
where $w_s = s u + (1-s) v$. By the definition (\ref{Q-defn}) of
$\mathcal{Q}$, its variation is
\[
{d \over ds} \mathcal{Q}(w_s) = (\delta \F)|_{w_s} (u-v) - \LL (u-v).
\]
We claim that the approximate linearized operator $\LL$ is close enough
to the actual linearization $\delta \F$, so that for all $w,s \in
\mathcal{U}$ then
\be \label{diff-approxL-actualL}
\| (\delta \F)|_w(s) - \LL(s) \|_{C^{0,a}_{\beta-2}} \leq C \| w \|_{C^{2,a}_0} \|
s \|_{C^{2,a}_\beta}.
\ee
Assuming this, we conclude
\[
\|
\mathcal{Q}(u) - \mathcal{Q}(v) \|_{C^{0,a}_{\beta-2}} \leq C (\| u \|_{C^{2,a}_0}+\| v \|_{C^{2,a}_0} )
\| u -v \|_{C^{2,a}_\beta}.
\]
By (\ref{diff-N}), we see that 
\[
\| \mathcal{N}(u) - \mathcal{N}(v) \|_{C^{2,a}_\beta} \leq C (\| u \|_{C^{2,a}_0}+\| v \|_{C^{2,a}_0} )
\| u -v \|_{C^{2,a}_\beta}.
\]
If $u \in \mathcal{U}$, then $\| u \|_{C^{2,a}_\beta} < 
|t|^{{2 \over 3}|\beta|}$, and since $r^3\geq |t|$, we have
\[
\| u \|_{C^{2,a}_0} \leq {1 \over \min_{X_t} r^{|\beta|}} \| u \|_{C^{2,a}_\beta} \leq {1
  \over |t|^{|\beta|/3}} |t|^{(2/3)|\beta|} \leq \epsilon^{|\beta|/3}.
 \]
Thus if $\epsilon$ is small enough, then $\mathcal{N}$ is a contraction map. The proof is complete given (\ref{diff-approxL-actualL}), which is proved in the following lemma.
\end{proof}

\begin{lem}
There exists $C>1$ such that for all $t \in \mathbb{C}^*$ and $u,s \in W_t$ with $\| u \|_{C^{2,a}_0(X_t)} \leq 1$, we can estimate
\[
\| (\delta \F)|_u(s)- \LL(s) \|_{C^{0,a}_{\beta-2}(X_t)} \leq C \| u \|_{C^{2,a}_0(X_t)}  \|
s \|_{C^{2,a}_\beta(X_t)}.
\]
\end{lem}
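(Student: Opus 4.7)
The approach is a direct Taylor expansion of $\F$ around $u=0$, estimating the remainder term-by-term in the weighted H\"older norms. Writing $H_{t,u}=H_t e^u$, a standard computation gives
\[
(\delta \F)|_u(s) = -e^{u/2}\bigl[(g_{{\rm FLY},t})^{j\bar k}\partial_{\bar k}\nabla^{H_{t,u}}_j(e^{-u}\delta_s e^u)\bigr]e^{-u/2}+ (\delta_s e^{u/2})\sqrt{-1}\Lambda_{\omega_{{\rm FLY},t}}F_{H_{t,u}}e^{-u/2}+ e^{u/2}\sqrt{-1}\Lambda_{\omega_{{\rm FLY},t}}F_{H_{t,u}}\delta_s e^{-u/2},
\]
where $\delta_s e^u=\int_0^1 e^{\tau u} s\, e^{(1-\tau)u}\,d\tau$, so that $\delta_s e^u|_{u=0}=s$. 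Taking the difference with $\LL(s)$, one writes
\[
(\delta \F)|_u(s) - \LL(s) = I_1(u,s) + I_2(u,s) + I_3(u,s),
\]
where $I_1$ is the difference of the ``Laplacian'' terms, $I_2$ collects contributions from the connection $\nabla^{H_{t,u}}-\nabla^{H_t}$, and $I_3$ collects contributions from the curvature $F_{H_{t,u}}-F_{H_t}$ and from replacing the conjugation by $e^{\pm u/2}$ with the identity.

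The plan is then to estimate each $I_j$ via the elementary inequality
\[
\|AB\|_{C^{0,a}_{\alpha+\beta}} \leq C \|A\|_{C^{0,a}_\alpha}\|B\|_{C^{0,a}_\beta},
\]
which holds uniformly in $t$ in our setup (since weighted norms on $X_t$ are built from local pieces of uniformly bounded scale-invariant geometry by Lemma~\ref{lem: cylCoords} and Lemma~\ref{holder-local2global}). For $I_1$, one uses the identity $\delta_s e^u = e^u s + \int_0^1 e^{\tau u}[s, e^{(1-\tau)u}-I]d\tau$, which gives $e^{-u}\delta_s e^u = s + E(u,s)$ with $E$ at least linear in $u$; applying $(g_{{\rm FLY},t})^{j\bar k}\partial_{\bar k}\nabla_j^{H_t}$ and using the uniform equivalence $H_t\sim g_t$ from Lemma~\ref{lem: H_t-est} produces a bound of the form $C\|u\|_{C^{2,a}_0}\|s\|_{C^{2,a}_\beta}$ in $C^{0,a}_{\beta-2}$. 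For $I_2$ one replaces $\nabla^{H_{t,u}}$ by $\nabla^{H_t}+[H_t^{-1}\partial H_t(e^u-I)\cdot,\,\cdot]$-type corrections (and similarly for the conjugations by $e^{\pm u/2}$), each of which is at least linear in $u$; the same product estimate closes the bound. Finally, $I_3$ is handled using formula \eqref{diff-chern-curv} to write $F_{H_{t,u}}-F_{H_t}=-\partial_{\bar k}(e^{-u}\nabla^{H_t}_j e^u)$, which is linear in $u$ to leading order with quadratic remainder, and the curvature $F_{H_t}$ contributes through the uniform bound $\|F_{H_t}\|_{C^{0,a}_{-2}}\leq C$ from \eqref{eq: F_H_t-bound}.

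The main (mildly tedious) obstacle is \emph{bookkeeping}: one must verify that every product of factors lands in the correct weighted space. The critical weight accounting is that each $u$-dependent factor is measured in $C^{2,a}_0$ (weight $0$) by hypothesis, so it can absorb a first or second derivative without shifting weights; $s$ and $\nabla s,\nabla^2 s$ are measured with weights $\beta,\beta-1,\beta-2$ respectively; and any curvature factor $F_{H_t}$ contributes weight $-2$. In every term one finds a total weight of $\beta-2$, matching the target norm. The hypothesis $\|u\|_{C^{2,a}_0}\leq 1$ is used only to convert the formally nonlinear factors $e^u-I$, $e^{\pm u/2}-I$ and their inverses into ones controlled linearly by $\|u\|_{C^{2,a}_0}$ with uniform constants. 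Combining the three estimates yields the desired inequality.
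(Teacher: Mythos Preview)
Your proposal is correct and follows essentially the same approach as the paper: compute the linearization explicitly, subtract $\LL(s)$, and estimate the remainder term-by-term using the multiplicative property of the weighted H\"older norms together with the uniform bounds $\|F_{H_t}\|_{C^{0,a}_{-2}}\leq C$ and the weight accounting you describe. The only organizational difference is that the paper extracts the linear-in-$u$ factor by writing each piece as $\int_0^1 \frac{d}{d\gamma}\bigl[\text{expression at }\gamma u\bigr]\,d\gamma$ (which automatically produces one factor of $u$ upon differentiating $e^{\gamma u}$, $F_{H_{\gamma u}}$, or $\nabla^{H_{\gamma u}}$), whereas you do the equivalent thing by direct Taylor expansion of $e^{\pm u}$ and $e^{-u}\delta_s e^u$; both devices yield the same estimate with the same constants.
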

\begin{proof}
To simplify notation, for this estimate we will write
$g=g_{{\rm FLY},t}$, $H_0=H_t$ and $H_u=H_{t,u}$. The linearization was computed in (\ref{linearized-eqn}),
and the difference is explicitly given by
\bea
& \ & (\delta \F)|_u(s)- \LL(s) \nonumber\\
&=& \bigg[ [\delta \exp]|_u(s/2) \bigg](\sqrt{-1} \Lambda_\omega F_{H_u}) e^{-u/2} - {1 \over 2} s (\sqrt{-1}
\Lambda_{\omega} F_{H_0})\nonumber\\
&&+ ( e^{u/2}) (\sqrt{-1} \Lambda_\omega F_{H_u}) \bigg[ [\delta
\exp]|_u (-s/2) \bigg] + {1 \over 2} (\sqrt{-1}
\Lambda_{\omega} F_{H_0} )s \nonumber\\
&& - e^{u/2} [ g^{j \bar{k}}
\partial_{\bar{k}} \nabla^{H_{u}}_j e^{-u} [\delta \exp]|_u (s)] e^{-u/2} + g^{j \bar{k}}
\partial_{\bar{k}} \nabla_j^{H_0} s\nonumber\\
&=& ({\rm I}) + ({\rm II}) + ({\rm III}), \nonumber
\eea
where $ ({\rm I}), ({\rm II}),({\rm III})$ denotes the terms on each line and the derivative of the matrix exponential is given by the formula
\[
[\delta \exp]|_u(\delta u) = \int_0^1 e^{\lambda u} (\delta u)
e^{(1-\lambda)u} d \lambda.
\]
 \par $\bullet$ Terms $({\rm I})$ and $({\rm II})$. These two terms are
estimated in the same way, so we only give the estimate for $({\rm I})$. We start by writing
\bea
({\rm I}) &=& [\delta \exp]|_u (s/2) (\sqrt{-1} \Lambda_{\omega} F_{H_u}) e^{-u/2} - {1 \over 2} s (\sqrt{-1}
\Lambda_{\omega} F_{H_0}) \nonumber\\
&=&\int_0^1  {d \over d \gamma}  \bigg[ [\delta \exp]|_{\gamma u} (s/2)(\sqrt{-1} \Lambda_{\omega} F_{H_{\gamma u}}) e^{-(\gamma/2) u } \bigg] d \gamma.
\eea
There are 3 terms depending on where ${d \over d
  \gamma}$ lands. The first is
\[
({\rm Ia})_\gamma =  \bigg[ {d \over d \gamma}  [\delta
\exp]|_{\gamma u} (s/2)
\bigg] (\sqrt{-1} \Lambda_{\omega} F_{H_{\gamma u}}) e^{-(\gamma/2) u }.
\]
This can be estimated as
\[
\| ({\rm I a})_\gamma \|_{C^{0,a}_{\beta-2}} \leq C \| F_{H_{\gamma u}} \|_{C^{0,a}_{-2}} \| u
\|_{C^{0,a}_0} \| s \|_{C^{0,a}_\beta}
\]
Next, we consider
\[
({\rm Ib})_\gamma =  [\delta \exp]|_{\gamma u} (s/2)
\bigg[ {d \over d \gamma}   (\sqrt{-1} \Lambda_{\omega} F_{H_{\gamma u}}) \bigg]  e^{-(\gamma/2) u }.
\]
Since ${d \over d\gamma} F_{H_{\gamma u}} = \bar{\partial} \partial_{H_{\gamma
    u}} u$, we have
\[
\| ({\rm I b})_\gamma \|_{C^{0,a}_{\beta-2}} \leq C \|
\bar{\partial} \nabla_{H_{\gamma u}} u \|_{C^{0,a}_{-2}} \| s \|_{C^{0,a}_\beta}.
\]
The other term is
\[
({\rm Ic})_\gamma =  [\delta \exp]|_{\gamma u} (s/2)
(\sqrt{-1} \Lambda_{\omega} F_{H_{\gamma u}}) \bigg[ {d \over d
  \gamma}    e^{-(\gamma/2) u } \bigg]
\]
and can be estimated by
\[
\| ({\rm Ic})_\gamma \|_{C^{0,a}_{\beta-2}} \leq C \| F_{H_{\gamma
    u}} \|_{C^{0,a}_{-2}} \| u \|_{C^{0,a}_0} \| s \|_{C^{0,a}_\beta}.
\]
Altogether, using $\| u \|_{C^{2,a}_0} \leq 1$, the formula for the difference of connections $A_{H_{\gamma u}} - A_{H_0} = e^{-\gamma u} \nabla_{H_0} e^{\gamma u}$ which gives a formula for $F_{H_{\gamma u}}$, and $\| F_{H_0} \|_{C^{0,a}_{-2}} \leq C$, we have
\[
\| ({\rm I}) \|_{C^{0,a}_{\beta-2}} \leq C \| u \|_{C^{2,a}_0} \| s \|_{C^{0,a}_\beta}
\]
$\bullet$ Term $({\rm III})$. We write
\[
({\rm III}) = -\int_0^1 {d \over d \gamma} \bigg[ e^{(\gamma /2)u}[g^{j \bar{k}}
\partial_{\bar{k}} \nabla_j^{H_{\gamma u}} e^{-\gamma u} [\delta
 \exp]_{\gamma u} (s)]
e^{-(\gamma /2)u} \bigg] d \gamma.
\]
Using $\| u \|_{C_0^{2,a}} \leq 1$, from here we can derive the estimate
\[
\| ({\rm III}) \|_{C^{0,a}_{\beta-2}} \leq C \| u \|_{C^{2,a}_0} \| s \|_{C^{2,a}_\beta}.
  \]
  To do this, the covariant derivative $\nabla_{H_{\gamma u}}$ can be converted to $\nabla_{H_0}$ via $\nabla_{H_{\gamma u}} - \nabla_{H_0} = e^{-\gamma u} \nabla_{H_0} e^{\gamma u}$ and we can use the variational formula ${d \over d \gamma} \nabla_{H_{\gamma u}} = \nabla_{H_{\gamma u}} u$.
  \end{proof}
\medskip
\par By Proposition \ref{prop: contract} and the Banach fixed point theorem, there exists $\check{u} \in
C^{2,a}_\beta(W(X_t))$ with $\| \check{u} \|_{C^{2,a}_\beta} < |t|^{(2/3)
  \beta}$ such that $\mathcal{N}(\check{u})=\check{u}$, meaning that
$\Lambda_{\omega_{{\rm FLY},t}} F_{\check{u}} = 0$
where $F_u$ is the curvature of $H_t e^u$. This proves the existence of a pair solving
\[
    d \omega_{{\rm FLY},t}^2 = 0, \quad F_{\check{H}_t} \wedge
    \omega_{{\rm FLY},t}^2=0
  \]
on $X_t$. To complete the proof of the main theorem, we describe the behavior of $(g_{{\rm FLY},t},\check{H}_t)$ near the vanishing cycles.
\bigskip
\par {\it Proof of Theorem \ref{thm: mainTheorem}:} The local estimate (\ref{eq:fly-local}) follows from Lemma \ref{lem:gfly-gt} since $g_t=c_i
g_{co,t}$ in a fixed neighborhood of the vanishing cycles. The
Hermitian-Yang-Mills metric is given by $\check{H}_t = H_t
e^{\check{u}}$ where $\check{u}$ is the fixed-point solving
$\mathcal{N}(\check{u})=\check{u}$ and satisfying $\| \check{u} \|_{C ^{2,a}_\beta} \leq
|t|^{(2/3)|\beta|}$. Near the nodes, by the gluing construction (see (\ref{H_t-defn})) we have $\check{H}_t = d_i
g_{co,t} e^{\check{u}}$ in the region
\[
\mathcal{R}_{\lambda} = \{ |t| \leq \|z\|^2 \leq |t|^{\frac{3}{3+\lambda}}\},
\]
 which implies
\[
| \check{H}_t - d_i g_{co,t} |_{g_{co,t}} \leq Cr^{-|\beta|}
|t|^{(2/3)|\beta|} \leq C |t|^{|\beta|/3}
\]
since $r^3 \geq |t|$. We obtain similar estimates for
$\nabla^k \check{H}_t$ for $k=1$ and $k=2$. For the higher order
estimates, we write the equation
$g^{j \bar{k}} (F_{\check{H}})_{j \bar{k}}=0$ in holomorphic
cylindrical coordinates as
\[
r^2 (g_{{\rm FLY},t})^{j \bar{k}} \partial_j \partial_{\bar{k}}
\check{H}_t= r^2  (g_{{\rm FLY},t})^{j \bar{k}} \partial_j \check{H}_t
\check{H}_t^{-1} \partial_{\bar{k}} \check{H}_t.
\]
Note that since $g_t = r^2 O(I)$ (see Lemma \ref{lem: cylCoords}), by (\ref{eq:fly-local}) we also have $g_{{\rm FLY},t} =
r^2O(1)$ for small $t$. Schauder estimates and a bootstrap argument
imply
\[
|\partial^\ell \check{H}_{\bar{k} j}|_{g_{euc}} \leq C_\ell |t|^{|\beta|/3}.
\]
Converting $g_{euc}$ in holomorphic cylindrical coordinates to
$g_{co,t}$ gives the stated estimate with $\lambda = |\beta|/3$. $\qed$

\bigskip

\begin{rk}\label{rk: confRescale}
To simultaneously solve the Hermitian-Yang-Mills and the conformally balanced equations
\[
    d(\| \Omega_t \|_{\check{\omega}_t} \, \check{\omega}_t^2) = 0, \quad F_{\check{H}_t} \wedge \check{\omega}_{t}^2=0,
  \]
  we can set
  \[
\check{\omega}_t = \| \Omega_t \|^{-2}_ {\omega_{{\rm FLY},t}} \omega_{{\rm FLY},t}
\]
so that $\| \Omega_t \|_{\check{\omega}_t} \, \check{\omega}_t^2=
\omega_{{\rm FLY},t}^2$. Here $\Omega_t$ is a holomorphic volume form on $X_t$, whose existence through conifold transitions is proved in \cite{Fried1}, normalized by $\int_{X_t} \sqrt{-1} \Omega_t \wedge \overline{\Omega}_t = 1$. It is straightforward to show that this conformally balanced metric $\check{g}_{t}$ associated to
$\check{\omega}_t$ satisfies a decay estimate near the vanishing
cycles.  Namely, near any ODP $p_i \in \underline{X}$ there is a constant $c_i$ such that we have the estimates
\[
|\nabla^{k}_{g_{FLY,t}} (\check{g}_{t} - c_ig_{{\rm FLY},t})|_{g_{{\rm FLY},t}} \leq C_k r^{\frac{3}{2}-k},
\]
for all $k \in \mathbb{Z}_{\geq 0}$.  Note that, unlike Lemma \ref{lem:gfly-gt}, one can no longer expect decay in $|t|$ for fixed $r$.  This is due to the the fact that, near the ODP singularities, the holomorphic volume form $\Omega_{t}$ will not necessarily converge to a constant multiple of the natural equivariant holomorphic volume form on the conifold.  Nevertheless, these estimates still imply that there is a constant $d_i>0$ so that at a suitable scale the pair $(\check{g}_t, \check{H}_{t})$ converges smoothly to the pair $(g_{co,0},  d_{i}g_{co,0})$  as $|t|\rightarrow 0$.  In particular, at suitably small scales, the pair $(\check{g}_t, \check{H}_{t})$ converges to a solution of the Strominger system near the ODP singularities as $t\rightarrow 0$.
\end{rk}

\appendix

\section{The Fu-Li-Yau Gluing Construction}\label{sec: FLYapp}
In this appendix we will explain the gluing result of Fu-Li-Yau, which establishes Propositions~\ref{prop: FLY2.6} and~\ref{prop: FLY2.6-resolution}.  Before beginning, we recall the notation.  We are primarily interested in the small resolution of the conifold, given by $p: \mathcal{O}_{\mathbb{P}^1}(-1)^{\oplus 2} \rightarrow \mathbb{P}^1$.  Let $h_{FS}$ denote the Fubini-Study metric on $\mathcal{O}_{\mathbb{P}^1}(-1)$.  Let
\[
\pi:\mathcal{O}_{\mathbb{P}^1}(-1)^{\oplus 2}\rightarrow V_0
\]
be the map contracting $\mathbb{P}^1$. The pull-back radial function of the conical Calabi-Yau metric on $V_0$ is given by
\[
r^2 = \left(|u|^2_{h_{fs}} + |v|^2_{h_{fs}}\right)^{2/3}
\]
where $(u,v)$ are fiber coordinates on $\mathcal{O}_{\mathbb{P}^1}(-1)^{\oplus 2}$.  The holomorphic Reeb field on $V_0$ induces a holomorphic $\mathbb{C}^*$ action given by
\[
S_{\lambda}(x,u,v)= (x, \lambda^{3/2}u, \lambda^{3/2}v).
\]
where $x\in \mathbb{P}^1$.  Unless otherwise specified, we will consider the restriction to $\lambda \in \mathbb{R}_{>0} \subset \mathbb{C}^*$.  Note that we have
\[
S_{\lambda}^*r^2= \lambda^2r^2.
\]
We define a smooth K\"ahler metric on $ \mathcal{O}(-1)^{\oplus 2}$ by
\[
\omega_{sm} = \ddb r^3 + p^*\omega_{FS}.
\]
We will consider also the cone metric
\[
\omega_{co,0} =\frac{3}{2} \ddb r^2.
\]
It will be important for us to compare $\omega_{co,0}^2$ and $\omega_{sm}^2$.  

\begin{lem}\label{lem: wsmwcoComp}
There is a uniform constant $C>0$  such that following estimates hold
\begin{itemize}
\item[$(i)$]If $0<r <1$, then we have
\[
r^2C^{-1}\omega_{co,0}^2 \leq \omega_{sm}^2 \leq C r^{-1}\omega_{co,0}^2
\]
\item[$(ii)$] If $0<r<1$ then
\[
C^{-1}r^{2}\omega_{co,0}^2 \leq \omega_{sm} \wedge \omega_{co,0} \leq r^{-1}\omega_{co,0}^2
\]

\end{itemize}
\end{lem}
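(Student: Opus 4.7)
The plan is pure scaling: I would reduce both estimates to a compactness argument on the annulus $A = \{1/2 \leq r \leq 2\}$ using the $\mathbb{R}_{>0}$-action $S_\lambda$. The basic identities I would use are that $S_\lambda^* r = \lambda r$ (immediate from the defining formula for $r$) gives $S_\lambda^* \omega_{co,0} = \lambda^2 \omega_{co,0}$, while $p \circ S_\lambda = p$ gives $S_\lambda^* \omega_{sm} = \lambda^3 \ddb r^3 + p^*\omega_{FS}$. Expanding squares and using that $(p^*\omega_{FS})^2 = 0$ (since $\dim_{\mathbb{C}} \mathbb{P}^1 = 1$),
\begin{align*}
S_\lambda^* \omega_{sm}^2 &= \lambda^6 (\ddb r^3)^2 + 2\lambda^3\, \ddb r^3 \wedge p^*\omega_{FS},\\
S_\lambda^* (\omega_{sm} \wedge \omega_{co,0}) &= \lambda^5\, \ddb r^3 \wedge \omega_{co,0} + \lambda^2\, p^*\omega_{FS} \wedge \omega_{co,0},
\end{align*}
while $S_\lambda^* \omega_{co,0}^2 = \lambda^4 \omega_{co,0}^2$.

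Next I would observe that $A$ is a compact subset of $\mathcal{O}_{\mathbb{P}^1}(-1)^{\oplus 2}$ on which $\omega_{co,0}$, $\ddb r^3$, and $\omega_{sm}$ are smooth, strictly positive $(1,1)$-forms, and $p^*\omega_{FS}$ is smooth and non-negative. Pointwise convex-geometry in the cone of weakly positive $(2,2)$-forms, together with continuity on $A$, produces a single constant $C_A \geq 1$ with
\[
C_A^{-1}\omega_{co,0}^2 \leq (\ddb r^3)^2 \leq C_A \omega_{co,0}^2, \qquad 0 \leq \ddb r^3 \wedge p^*\omega_{FS} \leq C_A \omega_{co,0}^2,
\]
together with $C_A^{-1} \omega_{co,0}^2 \leq \ddb r^3 \wedge \omega_{co,0} \leq C_A \omega_{co,0}^2$ and $0 \leq p^*\omega_{FS} \wedge \omega_{co,0} \leq C_A \omega_{co,0}^2$ on $A$.

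Given any $x$ with $r(x) = r_0 \in (0,1)$, I would write $x = S_{r_0}(y)$ for the unique $y$ with $r(y) = 1$, plug the annulus bounds at $y$ into the pullback expressions above, and absorb $r_0^6 \leq r_0^3$ and $r_0^5 \leq r_0^2$ (valid since $r_0 \leq 1$), obtaining at $y$ the $(2,2)$-form comparisons
\[
C_A^{-1} r_0^6\, \omega_{co,0}^2 \leq S_{r_0}^* \omega_{sm}^2 \leq 3 C_A r_0^3\, \omega_{co,0}^2,\qquad C_A^{-1} r_0^5\, \omega_{co,0}^2 \leq S_{r_0}^*(\omega_{sm} \wedge \omega_{co,0}) \leq 2 C_A r_0^2\, \omega_{co,0}^2.
\]
Dividing through by $S_{r_0}^* \omega_{co,0}^2 = r_0^4 \omega_{co,0}^2$ and applying $(S_{r_0}^{-1})^*$, which is a biholomorphism and hence preserves the positivity cone, transports these inequalities to $x$ and yields $C^{-1} r^2 \omega_{co,0}^2 \leq \omega_{sm}^2 \leq C r^{-1} \omega_{co,0}^2$, proving $(i)$; the analogous calculation from the second displayed bound yields $(ii)$.

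The main subtlety is just the interpretation of $\leq$ for $(2,2)$-forms on a complex threefold, which I would take pointwise in the cone of weakly positive $(2,2)$-forms (equivalently, tested against positive $(1,1)$-forms to produce top forms). With this convention, $S_\lambda^*$ preserves the partial order and all inequalities above are genuine cone inequalities; the only analytic input is continuity of smooth, positive tensors on the compact annulus $A$, so no further work is required.
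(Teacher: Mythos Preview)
Your argument is correct and follows the same scaling strategy as the paper: reduce to uniform comparison on a fixed compact annulus and transport by $S_\lambda$. The paper's packaging is marginally slicker---it first bounds $A^{-1}\omega_{co,0}\le\omega_{sm}\le A\omega_{co,0}$ as $(1,1)$-forms on $\{1\le r\le 2\}$ and then uses the sandwich $\lambda^6\omega_{sm}^2\le S_\lambda^*\omega_{sm}^2\le\lambda^3\omega_{sm}^2$ (both summands being non-negative) before comparing with $S_\lambda^*\omega_{co,0}^2=\lambda^4\omega_{co,0}^2$---whereas you split the pulled-back $(2,2)$-forms into homogeneous pieces and bound each separately, but the content and the exponents are identical.

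One small point worth flagging: your computation for $(ii)$ actually produces an upper bound $Cr^{-2}\omega_{co,0}^2$ (from the degree-$2$ piece $p^*\omega_{FS}\wedge\omega_{co,0}$, after dividing by $r_0^4$), not $r^{-1}$. The paper's own method yields the same exponent, and since $p^*\omega_{FS}\wedge\omega_{co,0}$ is nonzero and homogeneous of degree $-2$ relative to $\omega_{co,0}^2$, the $r^{-1}$ in the displayed statement appears to be a typo; it is harmless for the subsequent applications in the appendix.
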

\begin{proof}
First, since $\omega_{co,0}$ and $\omega_{sm}$ define smooth K\"ahler metrics on the compact set  $\{1 \leq r \leq 2 \}$ we can fix a constant $A$ such that
\[
A^{-1}\omega_{co,0} \leq \omega_{sm} \leq A\omega_{co,0}.
\]
Consider
\[
S_{\lambda}:  \{1 \leq r \leq 2 \}\rightarrow  \{ \lambda \leq r \leq 2\lambda\}.
\]
From the homogeneity of $r$ we have 
\[
S_{\lambda}^*\omega_{co,0}^2 = \lambda^{4}\omega_{co,0}^2.
\]
On the other hand,
\[
S_{{\lambda}}^* \omega_{sm} = \lambda^{3}\ddb r^3 + p^*\omega_{FS},
\]
and so
\[
S_{{\lambda}}^* \omega_{sm}^2 = \lambda^{6}(\ddb r^3)^2 +2\lambda^{3}\ddb r^3\wedge p^*\omega_{FS}
\]
using that $p^*\omega_{FS}^2=0$.  If $\lambda \leq 1$ then we get
\[
\lambda^{6} \omega_{sm}^2 \leq S_{{\lambda}}^* \omega_{sm}^2  \leq \lambda^{3} \omega_{sm}^2 
 \]
 From the definition of $A$ we have
\[
 S_{{\lambda}}^* \omega_{sm}^2  \leq \lambda^{3} \omega_{sm}^2 \leq \lambda^{3}A^2 \omega_{co,0}^2 \leq \lambda^{-1}A^2S_{\lambda}^*\omega_{co,0}^2
 \]
 and similarly
 \[
 S_{{\lambda}}^* \omega_{sm}^2  \geq \lambda^{6} \omega_{sm}^2 \geq \lambda^{2}A^{-2}S_{\lambda}^*\omega_{co,0}^2.
 \]
 Since $\lambda \leq r \leq 2\lambda$, proves $(i)$.  The proof of $(ii)$ is similar.  

\end{proof}

Consider the four form
\[
\Omega:=\ddb (\chi(r^2)\ddb r^2)
\]
where $\chi(\cdot)$ is some smooth function to be determined.  We compute
\[
\Omega = \chi''(r^2)\sqrt{-1}\del r^2\wedge \dbar r^2 \wedge \ddb r^2+ \chi'(r^2)\omega_{co,0}^2.
\]
To understand the first term write
\[
\ddb r^2= 4\sqrt{-1}\del r \wedge \dbar r+r^2\ddb \log r^2.
\]
from which it follows that 
\[
(\ddb r^2)^2 = 8r^2\sqrt{-1}\del r \wedge \dbar r \wedge \ddb \log r^2 + (r^2\ddb \log r^2)^2
\]
and also
\[
\sqrt{-1}\del r \wedge \dbar r \wedge \ddb r^2 =r^2\sqrt{-1}\del r \wedge \dbar r \wedge \ddb \log r^2.
\]
Since $\ddb \log r^2 \geq 0$ we conclude

\begin{lem}\label{lem: OmegaRbnd}
The following estimate holds everywhere on $\mathcal{O}_{\mathbb{P}^1}(-1)^{\oplus 2}$
\[
\sqrt{-1}\del r^2\wedge \dbar r^2 \wedge \ddb r^2 \leq \frac{r^2}{2}(\ddb r^2)^2.
\]
In particular, whenever $\chi'' <0$ we have the lower bound
\[
\Omega \geq \left(\chi'(r) + \frac{r^2}{2}\chi''(r^2)\right) \omega_{co,0}^2
\]
\end{lem}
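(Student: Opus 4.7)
The plan is to reduce both assertions to the expansion of $(\ddb r^2)^2$ recorded in the paragraph immediately preceding the lemma, together with the semi-positivity of $\ddb\log r^2$.

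First I would verify the pointwise inequality. Using $\del r^2 = 2r\,\del r$ (and the conjugate identity) together with the formula $\sqrt{-1}\,\del r\wedge\dbar r\wedge \ddb r^2 = r^2\,\sqrt{-1}\,\del r\wedge\dbar r\wedge\ddb\log r^2$ computed in the excerpt, one obtains
\[
\sqrt{-1}\,\del r^2\wedge\dbar r^2\wedge \ddb r^2 \;=\; 4r^4\,\sqrt{-1}\,\del r\wedge\dbar r\wedge\ddb\log r^2.
\]
Multiplying the identity $(\ddb r^2)^2 = 8r^2\sqrt{-1}\,\del r\wedge\dbar r\wedge\ddb\log r^2 + (r^2\ddb\log r^2)^2$ by $r^2/2$ produces exactly the same leading term together with the additional piece $\tfrac{r^6}{2}(\ddb\log r^2)^2$. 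Subtracting then yields the clean identity
\[
\tfrac{r^2}{2}(\ddb r^2)^2 - \sqrt{-1}\,\del r^2\wedge\dbar r^2\wedge \ddb r^2 \;=\; \tfrac{r^6}{2}(\ddb\log r^2)^2,
\]
so the pointwise inequality reduces to checking $(\ddb\log r^2)^2 \geq 0$. For this I would use that, away from the zero section, $\log r^2 = \tfrac{2}{3}\log(|u|^2_{h_{FS}}+|v|^2_{h_{FS}})$ is (up to a positive multiple) the logarithm of the squared norm of the tautological section, so by Poincar\'e--Lelong $\ddb\log r^2$ is a smooth semi-positive $(1,1)$-form on the total space, and its wedge square is a nonnegative $(2,2)$-form.

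For the lower bound on $\Omega$, I would substitute the pointwise inequality into the formula
\[
\Omega \;=\; \chi''(r^2)\,\sqrt{-1}\,\del r^2\wedge\dbar r^2\wedge \ddb r^2 \;+\; \chi'(r^2)\,\omega_{co,0}^2
\]
already derived in the excerpt. Since $\chi''(r^2)<0$, multiplying the pointwise inequality by $\chi''(r^2)$ reverses it, giving $\chi''(r^2)\,\sqrt{-1}\,\del r^2\wedge\dbar r^2\wedge\ddb r^2 \geq \tfrac{r^2}{2}\chi''(r^2)(\ddb r^2)^2$, and the claimed lower bound follows after expressing $(\ddb r^2)^2$ as the appropriate multiple of $\omega_{co,0}^2$ via $\omega_{co,0} = \tfrac{3}{2}\ddb r^2$. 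No substantive obstacle is expected in this argument; the only nontrivial input is the Poincar\'e--Lelong identification making $\ddb\log r^2$ semi-positive, everything else being algebraic manipulation of the two identities already on the page.
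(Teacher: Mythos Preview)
Your argument is correct and follows the same route as the paper: both use the decomposition $\ddb r^2 = 4\sqrt{-1}\del r\wedge\dbar r + r^2\ddb\log r^2$ and the resulting identities for $(\ddb r^2)^2$ and $\sqrt{-1}\del r\wedge\dbar r\wedge\ddb r^2$ recorded just before the lemma, together with the semi-positivity of $\ddb\log r^2$. The paper simply asserts $\ddb\log r^2\ge 0$, while you spell out the subtraction to get the clean identity $\tfrac{r^2}{2}(\ddb r^2)^2 - \sqrt{-1}\del r^2\wedge\dbar r^2\wedge\ddb r^2 = \tfrac{r^6}{2}(\ddb\log r^2)^2$; the second assertion then follows exactly as you say.
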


Fix $R \gg 1$.  Our goal is to find $R \gg 1$, and a constant $C_{R}>0$ such that we can glue
\[
\Omega_{R}:= C_{R}^2S_{R}^*\ddb(\chi(r^2)\ddb r^2)
\]
to the Calabi-Yau metric $\omega_{CY}^2$.  This will require carefully choosing $\chi$, and the constant $C_{R}$.  We are going to assume that
\[
\chi(s)= s
\]
for $s\in [0,4]$, so that $\Omega_{R}$ agrees with a rescaling of $\omega_{co,0}^2$ on $\{0\leq r \leq \frac{2}{R}\}$.  More precisely, we have
\[
C_{R}^2S_{R}^*\ddb(\chi(r^2)\ddb r^2) = C_{R}^2 R^{4} \omega_{co,0}^2 \qquad \{0<r<2R^{-1}\}.
\]
In the remainder of the appendix we will determine conditions on $\chi, C_{R}, R$ for this to be possible.

We now consider Calabi-Yau metric $\omega_{CY}$.  Consider the set $U=\{ r<4\} \subset X$.  Since $U$ is contractible onto $\mathbb{P}^1$, we can write
\[
\omega_{CY} = \lambda p^*\omega_{FS} + \del\beta + \dbar \overline{\beta}
\]
for some $(1,0)$ form $\beta$ on $U$, and some $\lambda >0$.  To simplify notation, let us assume $\lambda=1$. By solving the $\dbar$-equation we can write
\[
\omega_{CY} = p^*\omega+\ddb\phi
\]
where $\omega$ is a K\"ahler form on $\mathbb{P}^1$, with $[\omega]= [\omega_{FS}] \in H^{1,1}(\mathbb{P}^1,\mathbb{R})$ and $\phi:U\rightarrow \mathbb{R}$ a smooth function with $\phi|_{\mathbb{P}^1}=0$; see \cite[Lemma 2.4]{FLY}.  Let $h_1$ denote the degree $\frac{3}{2}$ part of $\phi$ under $S_{\lambda}$ (recall that $S_{\lambda}$ corresponds to scaling with weight $\frac{3}{2}$ along the fibers), so that $|\phi-h_{1}| \sim O(|u|^2+|v|^2)$, or in other words
\[
|\phi-h_1|\leq Cr^3.
\]

Let $\sigma(x)$ be a positive cut-off function with $0\leq \sigma(x) \leq 1$ and $\sigma(x)=0$ for $x\in[0,1]$ and $\sigma(x)=0$ for $x\geq 8$.  Define
\[
\begin{aligned}
\Psi_{R} &= \omega_{CY}^2 - \ddb \Gamma_{R}\\
\Gamma_{R}&=\ddb \left( \sigma(R^3r^3) \left((\phi-h_1)(2p^*\omega + \ddb( \phi + h_1)) + h_1\ddb h_1\right)\right)
\end{aligned}
\]
where, as before, $R \gg 1$ is a parameter to be determined.  From the definition of $\sigma$ we have
\[
\Psi_{R} = \omega_{CY}^2 \text{ if } \{r> 2R^{-1}\}
\]
We claim that
\[
\Psi_{R} =0  \text{ if } \{r< R^{-1}\}.
\]
To see this, note that if $x,y,w$ are commutative variables satisfying $yw=0$, then
\[
(x-y)(2w+(x+y)) = x(2w+x) + xy - 2wy-xy-y^2 = x(2w+x)-y^2.
\]
We apply this formula with $x =\ddb \phi, y= \ddb h_1, w= p^*\omega$ and product being the wedge product.  We only need to check that $\ddb h_1 \wedge p^*\omega_{FS} = \ddb h_1^2=0$, but this is clear since $\ddb h_1$ is linear along the fibers of $\mathcal{O}_{\mathbb{P}^1}(-1)^{\oplus 2}\rightarrow \mathbb{P}^1$. 

The main task is to find a lower bound for $\Psi_{R}$ in terms of $\omega_{co,0}^2$ in the transition region $\{ R^{-1}<r< 2R^{-1}\}$.  To do this we expand  
\[
\Psi_{R} = (1-\sigma)\omega_{CY}^2 - {\rm (I)} - {\rm (II)}- {\rm (III)}
\]
where
\[
\begin{aligned}
{\rm(I)} &= 2{\rm Re}\left((\sqrt{-1} \sigma'(R^{3}r^3)R^3\del r^3 \wedge \dbar(\phi-h_1)\wedge (2p^*\omega + \ddb( \phi + h_1))\right)\\
&+2{\rm Re}\left( \sigma'(R^{3}r^3) R^3\del r^3 \wedge \dbar h_1 \wedge \ddb h_1\right)\\
{\rm (II)} &= \sigma''(L^3r^3) L^6\sqrt{-1}\del r^3\wedge \dbar r^3 \wedge \left((\phi-h_1)(2p^*\omega + \ddb( \phi + h_1)) + h_1\ddb h_1\right)\\
{\rm (III)} &= \sigma'(L^3r^3) L^3\ddb r^3 \wedge \left((\phi-h_1)(2p^*\omega + \ddb( \phi + h_1)) + h_1\ddb h_1\right).
\end{aligned}
\]

Our goal is to estimate each term from below by $\omega_{co,0}^2$.  Each term will be treated differently, depending on whether it is homogeneous or not.  

\par $\bullet$ Term ${\rm (I)}$.  Observe that
\[
\del r^3 \wedge \dbar(\phi-h_1) \sim r^3\omega_{sm}
\]
To see this recall that, in coordinates we have
\[
r^3 = |u|_{h_{FS}}^2+|v|^2_{h_{FS}}
\]
so that, in coordinates where $\del h_{FS}=0$, we have
\[
\del r^3 = \bar{u}du + \bar{v}dv.
\]
On the other hand, since $h_1$ is linear along the fibers of $p: \mathcal{O}_{\mathbb{P}^1}(-1)^{\oplus 2}\rightarrow \mathbb{P}^1$ we have
\[
\dbar (\phi-h_1)= O(u,v)(d\bar{u} + d\bar{v}) 
\]
and so
\[
\del r^3 \wedge \dbar (\phi-h_1)\leq C r^3 \omega_{sm}.
\]
Thus, by Lemma~\ref{lem: wsmwcoComp}, the first term in ${\rm (I)}$ can be controlled by $r^3\omega_{sm}^2 \leq r^2\omega_{co,0}^2$.

To analyze the second term in ${\rm (I)}$ we observe that
\[
S_{\lambda}^*(\del r^3 \wedge \dbar h_1 \wedge \ddb h_1 )= \lambda^6\del r^3 \wedge \dbar h_1 \wedge \ddb h_1,
\]
which, by the homogeneity of $\omega_{co,0}$, implies
\[
\del r^3 \wedge \dbar h_1 \wedge \ddb h_1 \leq C r^2\omega_{co,0}^2.
\]
In total, we have
\[
{\rm (I)} \leq Cr^2\omega_{co,0}^2
\]

\par $\bullet$ Term ${\rm (II)}$.  Again, by homogeneity we have
 \[
 0 \leq \sqrt{-1}\del r^3\wedge \dbar r^3 \leq C r^2 \omega_{co,0},
 \]
 while the bound $|\phi-h_1| \leq C r^3$ yields a bound for the first term in ${\rm (II)}$ 
 \[
 \begin{aligned}
 \sqrt{-1}\del r^3\wedge \dbar r^3 \wedge \left((\phi-h_1)(2p^*\omega + \ddb( \phi + h_1)\right)&\leq r^2\omega_{co,0} \wedge r^3\omega_{sm}\\
 & \leq C r^4 \omega_{co,0}^2
 \end{aligned}
 \]
 where we used Lemma~\ref{lem: wsmwcoComp}, $(ii)$. The second term in ${\rm (II)}$ can be treated directly by scaling.  We have
\[
\sqrt{-1}\del r^3\wedge \dbar r^3 \wedge h_1 \ddb h_1 \leq C r^3\omega_{co,0}^2.
\]
In total, we have
 \[
 {\rm (II)} \leq  C r^{3} \omega_{co,0}^2
 \]

 \par $\bullet$ Term ${\rm (III)}$ can be treated similarly to term $({\rm II})$.  The first term can be estimated as
 \[
 \ddb r^3 \wedge(\phi-h_1)(2p^*\omega + \ddb( \phi + h_1))  \leq C r^3 \omega_{sm}^2 \leq C r^2\omega_{co,0}^2,
 \]
thanks to Lemma~\ref{lem: wsmwcoComp} $(i)$ again.   The homogeneous term is easily estimated as
 \[
 \ddb r^3 \wedge h_1\ddb h_1\leq C r^2\omega_{co,0}^2.
 \] 
 In summation, we have proved
 
 \begin{lem}\label{lem: PsiRlowBnd}
 There is a constant $C>0$ so that on the region $\{R^{-1}<r<2R^{-1}\}$ we have
 \[
 \Psi_{R} \geq -C R^3r^{2}\omega_{co,0}^2 \geq -CR\omega_{co,0}^2.
 \]
 \end{lem}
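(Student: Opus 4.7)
The plan is to assemble the three per-term estimates from the preceding paragraphs. Starting from the decomposition
\[
\Psi_{R} = (1-\sigma)\omega_{CY}^{2} - (\mathrm{I}) - (\mathrm{II}) - (\mathrm{III}),
\]
I would first observe that $(1-\sigma)\omega_{CY}^{2}$ is a nonnegative $(2,2)$-form since $0 \le \sigma \le 1$ and $\omega_{CY}$ is K\"ahler; this term contributes only positively to the lower bound and may be discarded. It then remains to bound $(\mathrm{I}), (\mathrm{II}), (\mathrm{III})$ from above by a controlled multiple of $\omega_{co,0}^{2}$ on the transition annulus $\{R^{-1} < r < 2R^{-1}\}$, where $\sigma', \sigma''$ are supported.

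For each of the three pieces the pointwise estimates have already been established. The key accounting point is to retain the factor $\sigma'(R^{3}r^{3})R^{3}$ appearing in $(\mathrm{I}), (\mathrm{III})$ and the factor $\sigma''(R^{3}r^{3})R^{6}$ appearing in $(\mathrm{II})$. Combining these with the Taylor expansion $\phi - h_{1} = O(r^{3})$, the homogeneity of $h_{1}$ under the scaling $S_{\lambda}$, and Lemma \ref{lem: wsmwcoComp} for converting $\omega_{sm}$-bounds into $\omega_{co,0}$-bounds, one gets upper bounds of the form $C R^{3} r^{2}\omega_{co,0}^{2}$ for $(\mathrm{I})$ and $(\mathrm{III})$, and of the form $C R^{6} r^{5}\omega_{co,0}^{2}$ for $(\mathrm{II})$. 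The latter is in fact absorbed into the same form, because on the annulus $R^{3}r^{3} \le 8$, and hence $R^{6}r^{5} = (R^{3}r^{3})(R^{3}r^{2}) \le 8\, R^{3}r^{2}$.

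Putting the three pieces together gives
\[
\Psi_{R} \ge -CR^{3}r^{2}\,\omega_{co,0}^{2}
\]
on $\{R^{-1} < r < 2R^{-1}\}$, which is the first inequality of the lemma. The second inequality is immediate from the crude bound $r < 2R^{-1}$ in this region, yielding $R^{3}r^{2} < 4R$. I do not foresee a substantive obstacle: all of the analytic content was carried out in the term-by-term bounds on $(\mathrm{I}), (\mathrm{II}), (\mathrm{III})$ and in Lemma \ref{lem: wsmwcoComp}, and the present statement is their collation together with the elementary substitution $r \lesssim R^{-1}$. The only place one must be deliberate is tracking the powers of $R$ that come from differentiating $\sigma(R^{3}r^{3})$; these factors balance exactly against the excess powers of $r$ in the cone-metric comparison.
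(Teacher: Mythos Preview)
Your proposal is correct and follows exactly the paper's approach: drop the nonnegative term $(1-\sigma)\omega_{CY}^2$, invoke the already-proved pointwise bounds on $(\mathrm{I}),(\mathrm{II}),(\mathrm{III})$, and then substitute $r\lesssim R^{-1}$ on the annulus. Your explicit bookkeeping of the factors $\sigma'R^{3}$ and $\sigma''R^{6}$ (and the absorption $R^{6}r^{5}\le 8R^{3}r^{2}$ via $R^{3}r^{3}\le 8$) is precisely what the paper's intermediate displays suppress notationally but what is needed to land on the stated bound $-CR^{3}r^{2}\omega_{co,0}^{2}$.
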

 
At this point we consider
 \[
 \Psi_{R} + C_0C_{R}^2 S_{R}^*\Omega.
 \]
In order for this form to be positive we need to show that $C_0, C_{R}$ can be chosen consistently.  To do this we consider the conditions for positivity in four different regions.  In the following $C$ will denote a uniform constant which can change from line to line, but depends only on the fixed background data and is, in particular, independent of $R, C_0, C_{R}$.
 \smallskip
 
 \par \noindent $\bullet$ {\bf Region $\{0 <r < R^{-1}\}$}.
 
\noindent In this region we have $\sigma =0$ and $S_{R}^*\chi(r^2)=R^{2}r^2$, so that
 \[
  \Psi_{R} + C_0C_{R}^2 S_{R}^*\Omega = \frac{4}{9}C_0C_{R}^2R^{4}\omega_{co,0}^2 >0
  \]
  so this region contributes no restriction.
 
\par \noindent $\bullet$ {\bf Region $\{R^{-1}<r<2R^{-1}\}$}.
 
\noindent Thanks to Lemma~\ref{lem: PsiRlowBnd}, and the fact that $\chi(s)=s$ for $s\in[0,4]$ we have
 \[
 \Psi_{R} \geq -CR\omega_{co,0}^2 \qquad C_0C_{R}^2 S_{R}^*\Omega = \frac{4}{9}C_{0}C_{R}^2R^{4}\omega_{co,0}^2.
 \]
Thus, in order to ensure positivity we need $C_0 >3C$, $C_{R}^2R^{4} = R$, so we must take $C_{R}^2=R^{-3}$.
\smallskip

\par \noindent $\bullet$ {\bf Region $\{ 2R^{-1}<r < 1\}$}.

 \noindent By definition we have
\[
\Psi_{R} = \omega_{CY}^2 \geq C \omega_{sm}^2.
\]
On the other hand whenever $\chi'' <0$, Lemma~\ref{lem: OmegaRbnd} gives the estimate
\[
C_0C_{R}^2 S_{R}^*\Omega \geq \frac{4}{9}C_0R(2\chi'(R^2r^2)+ (Rr)^2 \chi''(R^2r^2))\omega_{co,0}^2.
\]
 Now, from Lemma~\ref{lem: wsmwcoComp} we have $ \omega_{co,0}^2 \leq Cr^{-2}\omega_{sm}^2$ and so we can ensure positivity provided 
 \[
 \frac{4}{9}C_0Rr^{-2}(2\chi'(R^2r^2)+ (Rr)^2 \chi''(R^2r^2)) \geq -C.
 \]
 If we can constant $\chi$ so that $\chi' \geq 0$, then when $\chi''\geq 0$ we have $S_{R}^*\Omega \geq 0$ so no additional restriction is contributed from this case.

 \par \noindent $\bullet$ {\bf Region $\{ 1<r \}$}.

\noindent In this region we take $\chi(s)= {\rm const.}$, and so, by definition 
 \[
  \Psi_{R} + C_0C_{R}^2 S_{R}^*\Omega =\omega_{CY}^2
  \]
 
 In summary,  $\Psi_{R} + C_0C_{R}^2 S_{R}^*\Omega$ will be positive definite provided we can construct a cut-off function $\chi(s)$ (upon defining $s= R^2r^2$) with the following properties: for $0<\epsilon \ll 1$ given, we have
 \begin{itemize}
 \item $\chi(s)=s$ for $s\in[0,4]$, and $\chi'(s) \geq 0$,
  \item $\chi(s)$ is constant for $s\geq R^2$,
 \item $\chi(s)$ satisfies
 \[
 \frac{1}{s}\left(2\chi'(s) + s\chi''(s)\right) \geq -\frac{\epsilon}{R^3}
 \]
 for $s\in[4, R^2]$

 \end{itemize}
 
A cut-off function with these properties is constructed in \cite[Lemma 2.2]{FLY}, but for the readers convenience we give a slightly different proof here.
 
 \begin{lem}
 For $R\gg1$ sufficiently large there exists a smooth function $\chi(s)$ with the following properties
 \begin{itemize}
 \item[$(i)$] $\chi(s)=s$ for $s\in[0,4]$, and $\chi'(s) \geq 0$.
 \item[$(ii)$] $\chi(s)$ is constant for $s> R^2$,
 \item[$(iii)$] there is a uniform constant $C'$ such that
 \[
  \frac{1}{s}\left(2\chi'(s) + s\chi''(s)\right) \geq -\frac{C'}{R^4}
  \]
 
  \end{itemize}
  \end{lem}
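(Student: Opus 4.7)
\medskip

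\noindent\textbf{Proof plan.} My strategy is to reformulate the three conditions on $\chi$ as conditions on the auxiliary function
\[
\psi(s) := s^2 \chi'(s),
\]
since the identity $\bigl(s^2 \chi'(s)\bigr)' = 2s\chi'(s) + s^2 \chi''(s)$ shows that condition $(iii)$ is equivalent to the clean one-sided differential inequality
\[
\psi'(s) \;\geq\; -\frac{C'}{R^4}\,s^2 \qquad \text{for } s \in [4, R^2].
\]
In terms of $\psi$, the remaining conditions read $\psi(s) = s^2$ on $[0,4]$ (forcing $\psi(4) = 16$), $\psi(s) \equiv 0$ for $s \geq R^2$, and $\psi \geq 0$ everywhere. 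Once $\psi$ is built, we recover $\chi$ by $\chi(s) = 4 + \int_4^s \psi(\sigma)/\sigma^2 \, d\sigma$.

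\medskip

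\noindent The plan is to build $\psi$ in three pieces. First, on $[0,4]$ set $\psi(s) = s^2$; this gives $\chi(s) = s$ as required. Second, I will take $\psi \equiv 16$ on $[4+\delta, R^2/2]$ for some fixed small $\delta >0$ and glue smoothly to $s^2$ on the tiny interval $[4, 4+\delta]$ using a non-decreasing smooth interpolation; in this sliver one has $\psi' \geq 0$ and so $(iii)$ is automatic. On the slab $[4+\delta, R^2/2]$ the formula $\chi(s) = 8 - 16/s$ is realized, and a direct computation gives $2\chi'(s) + s\chi''(s) \equiv 0$, saturating the inequality $(iii)$ trivially. Third, on $[R^2/2, R^2]$ I mollify the descent from $\psi = 16$ to $\psi = 0$ by a scaled standard cutoff, which yields the pointwise bound $|\psi'(s)| \leq K/R^2$ for a universal constant $K$. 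Extending by $\psi \equiv 0$ past $s = R^2$ gives a function which is $C^\infty$, non-negative, and constant on both ends, so $\chi$ inherits smoothness, monotonicity $\chi' = \psi/s^2 \geq 0$, and the constancy condition $(ii)$.

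\medskip

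\noindent It remains to verify the crucial inequality $(iii)$ on the decay interval $[R^2/2, R^2]$. Here $s \geq R^2/2$, hence $s^2/R^4 \geq 1/4$, and the bound $|\psi'(s)| \leq K/R^2$ gives
\[
\psi'(s) \;\geq\; -\frac{K}{R^2} \;\geq\; -\frac{4K}{R^2}\cdot \frac{s^2}{R^4}\cdot R^2 \cdot \frac{1}{s^2}\cdots
\]
more cleanly: we want $K/R^2 \leq (C'/R^4)\,s^2$, which for $s^2 \geq R^4/4$ reduces to $C' \geq 4K/R^2$, and this holds for all $R \geq 1$ as soon as $C'$ is chosen to be a fixed multiple of $K$. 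Thus a single universal constant $C'$ works uniformly in $R \gg 1$.

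\medskip

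\noindent\textbf{Main obstacle.} The only delicate point is the smooth matching at $s=4$: the jump from $\chi(s)=s$ (with $\chi''=0$) to $\chi(s) = 8 - 16/s$ (with $\chi''(4^+) = -1/2$) is only $C^1$, and naively smoothing it introduces a region where $\chi''<0$, threatening $(iii)$. This is handled precisely by the reformulation in terms of $\psi$: across the transition $\psi$ rises monotonically from $16^-$ to $16$, so $\psi' \geq 0$ automatically, giving $(iii)$ with no loss. All remaining estimates are elementary ODE bookkeeping, and the freedom to enlarge $R$ and the universal constant $C'$ absorbs the constants from the mollifications.
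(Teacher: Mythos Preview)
Your reformulation via $\psi(s)=s^{2}\chi'(s)$ is exactly the right key move, and it is the same identity the paper exploits (they write $v=\chi'$ and study $(s^{2}v)'/s^{2}$). Your decay argument on $[R^{2}/2,R^{2}]$ is also fine. The gap is in the left transition.

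You claim that on the sliver $[4,4+\delta]$ one can glue $\psi=s^{2}$ to the plateau $\psi\equiv 16$ by a \emph{non-decreasing} smooth interpolation. This is impossible: the endpoint values are $\psi(4)=4^{2}=16$ and $\psi(4+\delta)=16$, so any non-decreasing $\psi$ with these endpoints is identically $16$, which cannot match $C^{\infty}$ with $s^{2}$ on the left since $(s^{2})'|_{s=4}=8\neq 0$. More generally, any smooth interpolation satisfies $\int_{4}^{4+\delta}\psi'\,ds=0$ while $\psi'(4^{+})=8>0$, forcing $\psi'<0$ somewhere in $(4,4+\delta)$. Since your sliver and its smoothing are $R$-independent, this gives $\min_{[4,4+\delta]}\psi'\le -c_{0}$ for some fixed $c_{0}>0$; but on that interval condition $(iii)$ reads $\psi'(s)\ge -(C'/R^{4})\,s^{2}\ge -C'(4+\delta)^{2}/R^{4}\to 0$, so it fails for $R$ large. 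The phrase ``$\psi$ rises monotonically from $16^{-}$ to $16$'' misidentifies the left endpoint value: it is exactly $16$, not $16^{-}$.

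The repair is minor: raise the plateau above $16$. For instance, keep $\psi(s)=s^{2}$ on $[0,4+\delta]$ (this still gives $\chi(s)=s$ on $[0,4]$, which is all $(i)$ asks), and on $[4+\delta,4+2\delta]$ set $\psi(s)=(1-\eta(s))\,s^{2}+\eta(s)\,M$ with a monotone cutoff $\eta$ and any $M\ge (4+2\delta)^{2}$. Then
\[
\psi'(s)=(1-\eta(s))\cdot 2s+\eta'(s)\bigl(M-s^{2}\bigr)\ge 0
\]
since $M\ge s^{2}$ on that interval. With the plateau at $M$ in place of $16$, your cutoff on $[R^{2}/2,R^{2}]$ and the verification of $(iii)$ go through verbatim.

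For comparison, the paper uses the same $\psi$-identity but a different middle piece: it takes $\chi'=1$ on $[0,5]$, then the explicit ansatz $\chi'(s)=as^{-3}+bs^{-2}+cs^{2}+d$ on $[5,R^{2}-1]$, solves the four $C^{1}$-matching conditions for $a,b,c,d$, checks the sign of each monomial's contribution to $(s^{2}\chi')'$ directly, and mollifies at the end. Your plateau-and-cutoff route is arguably cleaner once the plateau value is corrected.
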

  \begin{proof}
  Let $v=\chi'$.  Then we need to find $v$ such that $v(s)=1$ for $s\in[0,4]$, $v(s)=0$ for $s>R^2$, and 
  \[
  \frac{1}{s^2}\frac{d}{ds}(s^2v) \geq -\frac{C'}{R^4}.
  \]
  Consider 
  \[
  v(s) =\begin{cases} 1, & s\in [0,5]\\
   as^{-3}+bs^{-2}+cs^2+d, & s \in [5, R^2-1]\\
   0, & s \geq R^{2}-1.
   \end{cases}
   \]
   Demanding that $v(s)$ is $C^{1}$ gives a system of $4$ equations in the unknowns $a,b,c,d$, whose solutions are given by
   \[
   a= -250 +O(R^{-2}), \quad b= 75+O(R^{-2}), \quad c=75 R^{-8} +O(R^{-10}), \quad d=-150R^{-4} + O(R^{-6}).
   \]
   On the other hand, if $w(s)= C_{\alpha}s^{\alpha}$, then
   \[
    \frac{1}{s^2}\frac{d}{ds}(s^2w) = (\alpha+2)C_{\alpha}s^{\alpha-1} = \begin{cases} \geq 0 & \text{ if } \alpha > -2, \text{ and } C_{\alpha} \geq 0\\
     0 & \text{ if } \alpha =-2\\
     \geq 0 & \text{ if } \alpha < -2, \text{ and } C_{\alpha} \leq 0.
     \end{cases}
    \]
Since the terms corresponding to $a,b,c$ fall into one of these cases, it follows that
\[
  \frac{1}{s^2}\frac{d}{ds}(s^2v) \geq  -\frac{300}{R^{4}} +O(R^{-6}).
  \]
 It only remains to check that $v(s) \geq 0$ for $s\in[4, R^{2}]$.  To see this, observe that
 \[
 0= v'(\lambda R^{2}) \Longleftrightarrow \lambda^{5}-\lambda = O(R^{-2}).
 \]
Since the equation $\lambda^5-\lambda=0$ has only three real solutions $\lambda = \pm 1, 0$, it follows that for $R$ sufficiently large $v'(s)=0$ has only two real solutions on $[4, R^2]$.  Since these solutions are given by $s=4$ and $s= R^{2}-1$, it follows that $v'(s)\ne0$ in $(4,R^{2}-1)$, and hence $v$ does not have an interior minimum in $[4,R^2]$.  This immediately implies $v(s) \geq 0$.

Finally, let $\hat{v}$ be the result of convolving $v$ with a positive, symmetric mollifier with sufficiently small support.  Then $\hat{v}$ is smooth and has the same properties as $v$.  Integrating $\hat{v}$ yields $\chi$.

  \end{proof}

Now let us explain the extension of this construction to the metrics $\omega_{co,a}$ on the small resolution. For this, recall that
\[
\omega_{co, a} = \ddb f_{a}(\|z\|^2) + 4a^2\pi^*\omega_{FS}
\]
where $f_{a}(x) = a^2 f_{1}(\frac{x}{a^3})$ and $f_{1}$ satisfies
\[
(xf_{1}')^3+6(xf_1')^2=x^2.
\]
Rewriting this equation in terms of the variable $z$ defined by $x=z^{-3/2}$ and applying standard ODE techniques we obtain the following result, whose proof we leave to the reader
\begin{lem}\label{lem: wcoaUnif}
For $x\gg 1$, the function $f_1(x)$ has a convergent expansion
\[
f_1(x) = \frac{3}{2}x^{2/3} -2\log(x) + \sum_{n=0}^{\infty}c_nx^{-2n/3}.
\]
In particular, $f_{a}(x)\rightarrow \frac{3}{2}x^{2/3}$ smoothly and uniformly on any compact set as $a \rightarrow 0$.
\end{lem}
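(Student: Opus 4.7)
\medskip
\noindent\textbf{Proof plan for Lemma \ref{lem: wcoaUnif}.}

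My plan is to reduce the cubic ODE for $f_1$ to an algebraic equation in a single auxiliary variable, extract the convergent expansion from the holomorphic implicit function theorem, and then push this expansion through the explicit rescaling $f_a(x)=a^2 f_1(x/a^3)$ to obtain smooth uniform convergence on compact subsets of $\{x>0\}$.

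First, set $u=xf_1'$ so that the ODE becomes the purely algebraic relation $u^3+6u^2=x^2$. Following the hint in the lemma statement, I substitute $z:=x^{-2/3}$ and $w:=z\cdot u=x^{-2/3}u$, which turns the relation into
\[
w^3+6z\,w^2=1.
\]
At $z=0$ the sign condition $f_1'\ge 0$ selects the branch $w(0)=1$, and since $\partial_w(w^3+6zw^2-1)|_{(z,w)=(0,1)}=3\neq 0$, the holomorphic implicit function theorem produces a unique holomorphic solution $w(z)$ on some disc $\{|z|<\rho\}$, with convergent Taylor expansion $w(z)=1+\sum_{n\ge 1}a_n z^n$. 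Differentiating $w^3+6zw^2=1$ once at $z=0$ yields $a_1=w'(0)=-2$.

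Next, I translate back: since $f_1'(x)=u/x=x^{-1/3}w(x^{-2/3})$, for $x>\rho^{-3/2}$ the series
\[
f_1'(x)=x^{-1/3}+\sum_{n\ge 1}a_n x^{-(2n+1)/3}
\]
converges. Termwise integration yields the displayed expansion
\[
f_1(x)=\tfrac{3}{2}x^{2/3}-2\log x+\sum_{n=0}^{\infty}c_n x^{-2n/3},\qquad x>\rho^{-3/2},
\]
where $c_0$ is the constant of integration and $c_n=-\tfrac{3a_{n+1}}{2n}$ for $n\ge 1$; the resulting series converges absolutely and uniformly, together with all its derivatives, on any set $\{x\ge X_0\}$ with $X_0>\rho^{-3/2}$.

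Finally, I transfer this to $f_a$. Fix a compact set $K\subset\{x>0\}$ with $\inf_K x=x_0>0$. For $a$ sufficiently small (depending on $x_0,\rho$) we have $x/a^3>\rho^{-3/2}$ uniformly on $K$, so the expansion applies and gives
\[
f_a(x)=a^2 f_1(x/a^3)=\tfrac{3}{2}x^{2/3}-2a^2\log x+6a^2\log a+\sum_{n\ge 0}c_n\,a^{2n+2}x^{-2n/3}.
\]
Each error term tends to $0$ as $a\to 0$ uniformly on $K$; differentiating in $x$ only produces additional negative powers of $x$ bounded on $K$, while each error still carries a positive power of $a$, establishing smooth convergence to $\tfrac{3}{2}x^{2/3}$. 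The only step requiring genuine care—rather than routine calculation—is confirming that the implicit function theorem gives a positive radius of convergence for $w(z)$, so that the large-$x$ expansion of $f_1$ is honestly analytic in $x^{-2/3}$ (after removing the logarithm); this is immediate from the nonvanishing $w$-derivative at $(0,1)$, so in fact no serious obstacle arises.
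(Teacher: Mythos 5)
Your proof is correct and takes essentially the approach the paper itself sketches: the paper's one-line hint is to rewrite the equation $(xf_1')^3+6(xf_1')^2=x^2$ in terms of $z$ with $x=z^{-3/2}$ (equivalently $z=x^{-2/3}$) and apply standard ODE techniques, which is precisely what you do via the auxiliary variable $w=x^{-2/3}\cdot xf_1'$, the implicit function theorem at $(z,w)=(0,1)$, and termwise integration. The computation $a_1=-2$, the identification $c_n=-\tfrac{3a_{n+1}}{2n}$, and the scaling $f_a(x)=a^2 f_1(x/a^3)$ all check out, and the final uniform estimate on compacts of $\{x>0\}$ is handled correctly since $z=a^2x^{-2/3}$ stays within the radius of convergence once $a$ is small relative to $\inf_K x$.
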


We now describe how to glue $\omega_{co,a}$ to the Calabi-Yau metric $\omega_{CY}$ to obtain a balanced metric.  We first recall that
\[
\Omega_{R} = C_0R^{-3}\ddb(\chi(R^{2}r^2) R^{2}\ddb r^2) = C_0R^{-1}\ddb(\chi(R^{2}r^2)\ddb r^2)
\]
On the other hand, we have
\[
\begin{aligned}
\omega_{co,a}^2 &= (\ddb f_a(\|z\|^2))^2 + 8a^2\ddb f_{a}(\|z\|^2)\wedge \pi^*\omega_{FS}\\
&= \ddb\left( f_{a}(\|z\|^2) \left(\ddb f_a(\|z\|^2) + 8a^2 \pi^*\omega_{FS}\right)\right)
\end{aligned}
\]
This suggests that we define
\[
\Omega_{R,a} = C_{0} \frac{2R^{-1}}{3} \ddb\left(\chi\left(\frac{2R^2}{3}f_{a}(\|z\|^2)\right)\left(\ddb f_a(\|z\|^2) + 8a^2 \pi^*\omega_{FS}\right)\right)
\]
\begin{lem}
For $0\leq a\ll 1$ sufficiently small there is an open set $U$ containing the $(-1,-1)$ rational curve such that
\[
\Omega_{R,a} = C_{0}R \omega_{co, a}^2
\]
\end{lem}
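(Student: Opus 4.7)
The strategy is to exploit the fact that $\chi(s) = s$ for $s \in [0,4]$, so that on any neighborhood where the argument of $\chi$ in the definition of $\Omega_{R,a}$ stays below $4$, the cutoff $\chi$ disappears and $\Omega_{R,a}$ reduces to an exact multiple of an expression which we can identify with $\omega_{co,a}^2$. Concretely, I plan to choose $U$ of the form
\[
U = U_{\eta} = \{(u,v,z) : f_a(\|z\|^2) < \eta\}
\]
for $\eta > 0$ with $\frac{2R^2}{3}\eta \leq 4$, i.e.\ $\eta \leq 6/R^2$. The definition of $\Omega_{R,a}$ then becomes, on $U$,
\[
\Omega_{R,a} = \tfrac{2C_0 R^{-1}}{3} \cdot \tfrac{2R^2}{3}\, \ddb\bigl( f_a(\|z\|^2)\,(\ddb f_a(\|z\|^2) + 8a^2 \pi^*\omega_{FS})\bigr).
\]

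The first step will be to verify the identity
\[
\omega_{co,a}^2 = \ddb\bigl( f_a(\|z\|^2)\,(\ddb f_a(\|z\|^2) + 8a^2 \pi^*\omega_{FS})\bigr).
\]
This follows from expanding $\omega_{co,a}^2 = (\ddb f_a)^2 + 8a^2\, \ddb f_a \wedge \pi^*\omega_{FS} + 16 a^4 (\pi^*\omega_{FS})^2$, using $(\pi^*\omega_{FS})^2 = 0$ since $\omega_{FS}$ is a $(1,1)$-form on $\mathbb{P}^1$, rewriting $(\ddb f_a)^2 = \ddb(f_a \ddb f_a)$, and observing that closedness of $\pi^*\omega_{FS}$ gives $\ddb f_a \wedge \pi^*\omega_{FS} = \ddb(f_a \pi^*\omega_{FS})$. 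Once this identity is in hand, the displayed formula above immediately yields $\Omega_{R,a} = \frac{4 C_0 R}{9}\omega_{co,a}^2$, which is the claim (up to absorbing the factor $4/9$ into the constant $C_0$, consistent with the earlier normalization for $a=0$).

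The remaining task is to check that such a $U$ exists and can be chosen uniformly in $a$ as $a \to 0$. Since $f_a(x) = a^2 f_1(x/a^3)$ is increasing in $x$ (as follows from the defining ODE $(xf_1')^3+6(xf_1')^2 = x^2$ with $f_1' \geq 0$) and $f_a(0) = 0$, the set $U_\eta$ is a well-defined open neighborhood of the zero section $\{u = v = 0\}$, which contains the $(-1,-1)$ curve. By Lemma~\ref{lem: wcoaUnif}, $f_a \to \tfrac{3}{2}(\cdot)^{2/3}$ locally uniformly as $a \to 0$ on the complement of any fixed neighborhood of $0$, and $f_a$ is uniformly bounded near the zero section by monotonicity. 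Hence, fixing any $\eta < 6/R^2$, we can find a fixed radius $\rho > 0$ (independent of $a$ for $a$ small) such that $U_{\eta}$ contains the region $\{\|z\|^2 < \rho\}$, which in turn is a uniform neighborhood of the $(-1,-1)$ curve in $X$. I do not anticipate any real obstacle here; the only mild subtlety is that $f_a'(0)$ blows up as $a \to 0$, but this is irrelevant for the sup-norm bound on $f_a$ over a small fixed neighborhood, which is all that is needed.
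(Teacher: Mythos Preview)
Your proposal is correct and follows essentially the same approach as the paper: both arguments reduce to the observation that $\chi(s)=s$ for $s\le 4$, so $\Omega_{R,a}$ agrees with a constant multiple of $\omega_{co,a}^2$ on the set where $\tfrac{2R^2}{3}f_a(\|z\|^2)\le 4$, combined with the uniform convergence $f_a\to \tfrac{3}{2}r^2$ from Lemma~\ref{lem: wcoaUnif} to ensure this set contains a fixed neighborhood of the exceptional curve for all small $a$. Your remark about the factor $4/9$ is also consistent with the paper's earlier normalization.
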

\begin{proof}
We only need to observe that the formula holds whenever
\[
\frac{2R^{2}}{3}f_{a}(\|z\|^2) \leq 4.
\]
Now since $f_{a}(\|z\|^2)$ converges uniformly to $\frac{3}{2}r^2$ on compact sets away from the $(-1,-1)$ rational curve, this inequality will hold for $a$ sufficiently small provided $r< 2R^{-1}$.
\end{proof}

Next, recall that the gluing of $\omega_{co,0}$ and $\omega_{CY}$ depending on only two estimates.

\par $\bullet$ The bounds
\[
 \Psi_{R} \geq -CR\omega_{co,0}^2, \qquad C_0R^{-3} S_{R}^*\Omega = \frac{4C_{0}}{9}R\omega_{co,0}^2
 \]
in the region $\{R^{-1}<r<2R^{-1}\}$.  Since $\Omega_{R,a}$ converges uniformly to $\omega_{co,0}^2$ on this region, the same bound holds with $\omega_{co,0}$ replaced by $\omega_{co,a}^2$, after possibly changing the constants.
\par $\bullet$ The bound
\[
\omega_{co,0}^2 \leq Cr^{-2}\omega_{sm}^2
\]
in the region $\{ 2R^{-1}<r < 1\}$.  Again, from the uniform convergence of $\omega_{co,a}$ to $\omega_{co,0}$ this bound holds, up to possibly increasing $C$ for $\omega_{co,a}^2$ as well, from the uniform convergence. 

It follows that the gluing procedure used to glue $\omega_{co,0}^2$ to $\omega_{CY}^2$ carries over in exactly the same way to glue $\omega_{co,a}^2$ to $\omega_{CY}^2$ for $0<a\ll 1$.  Furthermore, from Lemma~\ref{lem: wcoaUnif} we obtain the smooth, uniform convergence of $\omega_{a}$ to $\omega_{0}$ on compact sets away from the $(-1,-1)$ rational curves.

\bigskip
\bigskip

\end{document}